\documentclass[11pt,leqno,a4paper,english]{amsart}
\usepackage[utf8]{inputenc}
\usepackage[T1]{fontenc}
\usepackage[english]{babel}
\usepackage[a4paper, total={155mm, 245mm}, centering]{geometry}
\usepackage{enumerate} 

\numberwithin{equation}{section}
\usepackage{amsmath, amssymb, amsthm}
\usepackage{mathtools}
\usepackage{cite}
\usepackage{hyperref}
\hypersetup{
colorlinks=true,
linkcolor=blue,
filecolor=magenta,
urlcolor=cyan,
citecolor=red
}
\newtheorem{theorem}{Theorem}[section]
\newtheorem{proposition}[theorem]{Proposition}
\newtheorem{lemma}[theorem]{Lemma}
\newtheorem{corollary}[theorem]{Corollary}
\newtheorem{conjecture}[theorem]{Conjecture}
\theoremstyle{definition}
\newtheorem{definition}{Definition}[section]
\newtheorem{remark}[theorem]{Remark}
\newcommand{\xR}{\mathbb{R}}
\newcommand{\xT}{\mathbb{T}}
\newcommand{\xZ}{\mathbb{Z}}
\newcommand{\xN}{\mathbb{N}}
\newcommand{\xS}{\mathbb{S}}
\DeclareMathOperator{\Id}{Id}
\DeclareMathOperator{\diff}{d}
\DeclareMathOperator{\cn}{div}
\DeclareMathOperator{\curl}{curl}
\DeclareMathOperator{\RE}{Re}
\DeclareMathOperator{\supp}{supp}
\DeclareMathOperator{\diam}{diam} 
\newcommand{\defn}{\coloneqq}
\def\eucl{\textrm{eucl}}
\def\LF{\textrm{LF}}
\def\HF{\textrm{HF}}
\def\la{\left\lvert}
\def\ra{\right\rvert}
\def\lA{\left\lVert}
\def\blA{\big\lVert}
\def\rA{\right\rVert}
\def\brA{\big\rVert}
\def\bla{\big\lvert}
\def\bra{\big\rvert}
\def\init{\textrm{init}}
\def\tot{\textrm{tot}}

\def\lin{\textrm{lin}}
\def\fract{\frac{\diff}{\dt}}
\def\dr{\diff \! r}
\def\dt{\diff \! t}
\def\dz{\diff \! z}
\def\e{\eqref}
\def\be{\begin{equation}}
\def\ee{\end{equation}}
\def\Feps{J_\eps}
\def\Peps{P_\eps}

\def\tPeps{\tilde{P}_\eps}
\def\eps{\varepsilon}
\def\dx{\diff \! x}
\def\dtheta{\diff \! \theta}
\def\dy{\diff \! y}
\def\dtau{\diff \! \tau}
\def\les{\lesssim}
\def\ges{\gtrsim}
\def\SDiff{\mathrm{SDiff}}
\def\para{\mathrm{para}}
\newcommand{\RBony}{R_{\mathcal{B}}}        
                
\newcommand{\LapChi}{\Delta^{\chi}_{\para}}

\newcommand{\RCM}{R_{\mathrm{CM}}}
\title{Generic small-scale creation in the two-dimensional Euler equation}
\author{Thomas Alazard}
\address{CNRS - Centre de Mathématiques Laurent Schwartz, {\'E}cole Polytechnique, 
Institut Polytechnique de Paris}
\email{thomas.alazard@polytechnique.edu}
\author{Ayman Rimah Said}
\address{Laboratoire de Mathématiques de Reims (LMR-CNRS UMR9008), Reims, France}
\email{ayman.said@cnrs.fr}
\date{}
\begin{document}

\begin{abstract}
The Cauchy problem for the two-dimensional incompressible Euler equation is globally well-posed for smooth 
initial data. In this paper, we show that for a dense $G_\delta$ set of initial data, 
the solutions 
lose regularity in infinite time, thereby confirming a long-standing conjecture of Yudovich in the smooth setting. 
\end{abstract}

\maketitle

\section{Introduction}
\label{S1}

The global well-posedness of the Cauchy problem for the two-dimensional incompressible Euler equations for smooth initial data is rooted in the transport
of the vorticity, denoted by~$\omega$, by the fluid flow map. 
Let $\Phi_t \colon \xR^2 \to \xR^2$ be the flow generated by the velocity field.
The vorticity satisfies the Lagrangian identity
\begin{equation}\label{E11}
\omega(t, \Phi_t(x)) = \omega_0(x).
\end{equation}
This identity implies the conservation 
of all Lebesgue norms $L^p$ for $p \in [1, \infty]$. However, this conservation 
law is also believed to drive 
a generic growth of higher-order norms of the solution (see the discussion in \S\ref{S12}). 
The main purpose of this paper is to establish this fact. We denote by $H^s_0(\xT^2)$ (respectively $C^\infty_0(\xT^2)$) the 
Sobolev space of index $s$ (respectively the space of smooth functions) 
on the torus $\xT^2$ consisting of functions with vanishing mean value.

\begin{theorem}\label{T11}
Let $s> 4$ be a real number.
The set of initial data $\omega_0\in H^{s}_0(\xT^2)$ 
such that the corresponding unique solution of the two-dimensional incompressible Euler equations satisfies
$$
\limsup_{t\to +\infty} \lA \omega(t,\cdot) \rA_{H^{s}(\xT^2)} = +\infty,
$$
is a dense $G_\delta$ set in $H^s_0(\xT^2)$.
\end{theorem}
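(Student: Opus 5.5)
The plan is to use a Baire category argument. The $G_\delta$ part is soft, and density is the real content.

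\emph{The set is $G_\delta$.} For $N,T\in\xN$ let
$$
U_{N,T}=\Big\{\omega_0\in H^s_0(\xT^2) : \sup_{t\ge T}\lA \omega(t)\rA_{H^s}>N\Big\}.
$$
The target set is $\bigcap_{N}\bigcap_{T} U_{N,T}$. Each $U_{N,T}$ is open, because it is a union over $t\ge T$ of the sets $\{\omega_0 : \lA\omega(t)\rA_{H^s}>N\}$. These are open since, for $s>2$, the data-to-solution map $H^s_0\to C([0,t];H^s)$ is continuous (Kato, Bona--Smith). So it suffices to prove that the target set is dense, or equivalently that each $U_{N,T}$ is dense.

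\emph{Density by contradiction.} Suppose the set of data with unbounded $H^s$ norm is not dense. Since the complement of a dense $G_\delta$ set is meagre, it is enough to show that the complement $B$ (data with $\limsup<\infty$) is meagre. Write $B=\bigcup_{M,T}B_{M,T}$ with
$$
B_{M,T}=\Big\{\omega_0 : \sup_{t\ge T}\lA\omega(t)\rA_{H^s}\le M\Big\}.
$$
Each $B_{M,T}$ is closed by continuity of the flow. If $B$ were not meagre, Baire's theorem would give some $B_{M,T}$ containing a ball $\mathcal B(\bar\omega_0,r)$ in $H^s_0$. Replacing the data by $\omega(T)$, which is legitimate because the flow map is a homeomorphism of $H^s_0$ by time reversibility, we may take $T=0$. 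We may also assume $\bar\omega_0\in C^\infty_0$. So all solutions starting from an open set of smooth-enough data stay uniformly bounded in $H^s$ for all time, and hence, by compactness, precompact in $H^{s'}$ for $s'<s$.

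\emph{Producing the contradiction.} Exhibit, inside this ball, a datum whose $H^s$ norm grows. Choose $\omega_0=\bar\omega_0+\eps\phi$ with a suitable smooth perturbation. Uniform $H^s$ bounds with $s>4$ give uniform $C^{2+}$ control of the vorticity, and hence of the velocity and of the Lagrangian flow in $C^{3}$. The natural mechanism is differential rotation or shear. Generically near $\bar\omega_0$ one should find a datum and a region where the flow shears, so that $|\nabla\Phi_t|$ grows at least linearly in $t$. Then the identity \eqref{E11}, $\omega(t)=\omega_0\circ\Phi_t^{-1}$, forces growth of $\nabla\omega$ wherever $\nabla\omega_0$ is transverse to the stretching direction. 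This contradicts the $H^s$ bound, which already controls $\lA\nabla\omega\rA_{L^\infty}$.

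To get linear growth of $\nabla\Phi_t$ out of bounded $H^s$ norms, I would exploit the precompactness of the trajectory. An $\omega$-limit set argument gives a recurrent, nearly stationary or quasi-periodic regime. Comparing two nearby data $\omega_0$ and $\omega_0+\eps\phi$ whose solutions both remain bounded, a linearization or first-variation argument shows that the derivative of the flow map with respect to the data grows. A Nadirashvili/Yudovich-type argument then produces unbounded growth unless some rigidity holds, for instance the velocity being stationary and the particle periods being constant on streamlines. Genericity, meaning the openness of the ball, lets us perturb away from such rigid cases. The $s>4$ threshold presumably enters when controlling the linearized flow and the period function in $C^2$.

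\emph{Main obstacle.} I expect the hardest step to be this final one: turning uniform-in-time $H^s$ bounds on an open set of data into a structural rigidity, such as nearly stationary flows with nondegenerate period function, and then perturbing to break it. I would first try to prove that boundedness on an open set forces every solution in it to be uniformly close to a stationary solution in $H^{s'}$. That leaves the problem of ruling out an open set of stationary Euler flows, which is false, because stationary flows are rigid and form a thin set.
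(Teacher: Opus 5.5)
Your $G_\delta$ argument and the Baire reduction are fine, and they agree with the paper's Section~\ref{S:Baire}, which uses $\mathcal{U}_N=\{\sup_{t\ge 0}\|S(t)\omega_0\|_{H^s}>N\}$. What you reach is an open set of data whose solutions all stay bounded by $M$ in $H^s$ for $t\ge 0$, and this is exactly the paper's contradiction hypothesis. The gap is everything after that point: the density step is not proved, and several of its steps fail as stated.

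\textbf{Flow control and transfer to vorticity.} Uniform-in-time $H^s$ bounds on $\omega$ control $u$ uniformly in time, but they do not control $\Phi_t$. For a stationary shear $u=(U(x_2),0)$ on $\xT^2$ with $U$ non-constant, the vorticity is constant in time while $\partial_2\Phi^1_t(x)=t\,U'(x_2)$ grows linearly. So your claim of $C^3$ control of the Lagrangian flow is false, and it also contradicts the growth of $\nabla\Phi_t$ you want next. The same example shows that stretching does not transfer to the vorticity of the \emph{same} datum. There $(D\Phi_t)^{-\top}\nabla\omega_0=\nabla\omega_0$, because $\nabla\omega_0=(0,\partial_2\omega_0)$ is never transverse to the stretching. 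To exploit stretching one must \emph{perturb} the datum by a wave packet of amplitude $\delta^{s-1}$ oscillating at frequency $1/\delta$ in the stretched direction. One must then prove that the perturbed flow still stretches up to the fixed time $T$. This is Koch's argument (Proposition~\ref{PK:7.1}). It needs a mollified base datum and difference estimates precisely because the data-to-solution map is not $C^1$ in $H^s$, so your ``first-variation'' step cannot be run as written.

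\textbf{Producing an unbounded flow.} The deeper missing idea is how to find, inside an arbitrary ball, a datum whose flow is unbounded in the first place. ``Generically one should find a region where the flow shears'' is exactly the hard point on $\xT^2$, where there are no boundary stagnation points or symmetry axes. Your rigidity route does not close either:
\begin{enumerate}
\item Precompactness of an orbit in $H^{s'}$ gives a compact invariant $\omega$-limit set, not convergence to steady states; time-periodic or quasi-periodic behaviour is not excluded.
\item Even granting closeness to steady states, the thinness of the steady set gives no contradiction without a quantitative instability mechanism near each of them.
\end{enumerate}

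\textbf{The paper's route.} The paper instead works with the dense class $\mathcal{H}^s$ of Definition~\ref{defi:GP}: data of \emph{exact} $H^s$ regularity, with a heavy Fourier tail and a nondegenerate minimum. This is precisely what your reduction to smooth $\bar\omega_0$ discards.
\begin{enumerate}
\item Proposition~\ref{P:4.1} shows that $\Phi_t$ must be unbounded in $H^s$. The tools are Shnirelman's paradifferential unknown $X=T_{(D\Phi)^{-1}}\Phi$, the parametrix $Q$ of the para-Laplace--Beltrami operator, and testing $\curl X$ against $P_\eps(\omega_0-m)^\alpha$.
\item A Schauder fixed-point argument (Proposition~\ref{P71}) converts this into either direct instability in the ball or $\sup_t\|D\varphi_t\|_{L^\infty}=+\infty$.
\item Koch's construction finishes the argument.
\end{enumerate}
Without some substitute for these steps, your proposal establishes only the soft $G_\delta$ part.
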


As a corollary, smooth solutions generically lose regularity in infinite time.

\begin{corollary}\label{C:1.2}
The set of initial data $\omega_0$ in the space $C^{\infty}_0(\xT^2)$ such that there exists an integer $k\in \xN$ 
such that 
$$
\limsup_{t\to +\infty} \lA \omega(t,\cdot) \rA_{C^{k}(\xT^2)} = +\infty,
$$
is a dense $G_\delta$ set in $C^\infty_0(\xT^2)$.
\end{corollary}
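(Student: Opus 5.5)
We derive Corollary~\ref{C:1.2} from Theorem~\ref{T11} with a Baire category argument in the Fréchet space $C^\infty_0(\xT^2)$. Fix some $s>4$, say $s=5$. For each integer $k$, let
\[
A_k \defn \Big\{\omega_0\in C^\infty_0(\xT^2) : \limsup_{t\to+\infty}\lA \omega(t)\rA_{C^k}=+\infty\Big\}
\]
and set $A=\bigcup_k A_k$. It is enough to show that the smaller set $A_k$, for one fixed $k$ large enough that $C^k\hookrightarrow H^s$ on $\xT^2$ (for example $k=\lceil s\rceil$, since $\lA f\rA_{H^m}\les \lA f\rA_{C^m}$ on the compact torus), contains a dense $G_\delta$ subset of $C^\infty_0$. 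Then $A$ contains a dense $G_\delta$. We will also check that $A$ itself is $G_\delta$, since this is what the statement asserts.

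\emph{$G_\delta$ property.} We write
\[
A_k=\bigcap_{N\in\xN}\ \bigcap_{T\in\xN}\ \bigcup_{t>T}\big\{\omega_0 : \lA \omega(t)\rA_{C^k}>N\big\}.
\]
The solution map $\omega_0\mapsto\omega(t)$ is continuous from $C^\infty_0$ to $C^k$ for each fixed $t$, by continuous dependence of smooth 2D Euler solutions in $H^{\sigma}$ for every $\sigma$. Hence each set in the union is open, and $A_k$ is $G_\delta$.

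The union $A=\bigcup_k A_k$ is increasing in $k$, because $\lA\cdot\rA_{C^k}\le\lA\cdot\rA_{C^{k+1}}$. It is also in fact equal to $A_{k_0}$ for $k_0$ large. Indeed, by the classical Beale--Kato--Majda type estimate, growth of a high norm forces growth of $\lA\nabla\omega\rA_{L^\infty}$: one has $\lA\omega(t)\rA_{H^\sigma}\le C_\sigma(\omega_0,T)$ whenever $\sup_{[0,T]}\lA\nabla\omega\rA_{L^\infty}$ is bounded, uniformly in $T$, via a log-Gronwall bound. So unbounded $C^k$ norm for some $k$ is equivalent to unbounded $C^{1}$ (or $W^{1,\infty}$) norm, and $A=A_{k_0}$ is $G_\delta$. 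Should this step be delicate, we can instead note only that $A\supset A_{k_0}$, where $A_{k_0}$ is a dense $G_\delta$; this already gives genericity, which is the substance of the statement.

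\emph{Density.} This is the main step. Let $U=\{\omega_0\in H^s_0:\limsup\lA\omega(t)\rA_{H^s}=\infty\}$, which is dense $G_\delta$ in $H^s_0$ by Theorem~\ref{T11}. We pull it back along the continuous inclusion $\iota: C^\infty_0\to H^s_0$. Write $U=\bigcap_n O_n$ with each $O_n$ open and dense in $H^s_0$. Then $\iota^{-1}(O_n)$ is open in $C^\infty_0$. It is also dense, but this needs an argument: density of $O_n$ in $H^s$ does not automatically give density in the finer $C^\infty$ topology. The point I expect to need care is therefore to use the structure of the sets rather than abstract Baire. The sets $O_{N,T}=\bigcup_{t>T}\{\lA\omega(t)\rA_{H^s}>N\}$ are open in $H^s$, and they are unions of conditions invariant under smoothing perturbations.

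For density we take $\omega_0\in C^\infty_0$, a $C^\infty$-neighborhood of it, $N$ and $T$. By Theorem~\ref{T11} there is $\tilde\omega_0\in H^s_0$, arbitrarily close to $\omega_0$ in $H^s$, with $\lA\tilde\omega(t_*)\rA_{H^s}>N$ for some $t_*>T$. By continuity at the fixed time $t_*$, some mollification $\omega_0+\rho_\delta*(\tilde\omega_0-\omega_0)$ is smooth and still satisfies this inequality. This mollification is close to $\omega_0$ in $H^s$, but not necessarily in $C^m$ for large $m$. To obtain closeness in every $C^m$, I would instead rerun the paper's density mechanism. Presumably the proof of Theorem~\ref{T11} perturbs $\omega_0$ by an arbitrarily small smooth (indeed trigonometric-polynomial or localized) perturbation. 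Small $H^{\sigma}$ perturbations for all $\sigma$ suffice, so the same construction yields smooth perturbations that are small in every $C^m$ norm. Density of each $\iota^{-1}(O_{N,T})$ in $C^\infty_0$ then follows, and Baire's theorem in the Fréchet space $C^\infty_0$ completes the proof.
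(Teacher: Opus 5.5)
\textbf{There is a genuine gap in the density step.} Your Baire framework is fine, as is the $G_\delta$ check for each fixed $A_k$. You also correctly identify the real difficulty: density of the blow-up set in $H^s_0$ does not, by itself, give density in the Fréchet topology of $C^\infty_0(\xT^2)$. This is a real obstruction, not a technicality. $C^\infty_0$ is meager in $H^s_0$, since it lies in $\bigcup_n\{\|h\|_{H^{s+1}}\le n\}$, a countable union of closed sets with empty interior. So a dense $G_\delta$ of $H^s_0$ may contain no smooth function at all; $H^s_0\setminus H^{s+1}$ is an example.

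Your remedy rests on a false presumption: the density mechanism behind Theorem~\ref{T11} is not a smooth perturbation that is small in every norm.
\begin{itemize}
\item Its first step (Proposition~\ref{prop:density}) replaces the datum by $f+c\,l_f$ with $(1+|n|^2)^{s/2}\hat l_f(n)\ge (|n|\ln(1+|n|))^{-1}$. This perturbation lies in no $H^{s'}$ with $s'>s$.
\item That exact $H^s$ regularity (the class $\mathcal H^s$) is precisely what Proposition~\ref{P:4.1} and the dichotomy of Proposition~\ref{P71} exploit.
\item Koch's perturbation in Proposition~\ref{PK:7.1} satisfies $\|\eta_{\init}\|_{H^\nu}\sim r\delta^{s-\nu}$ (see \eqref{n818}). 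For $\nu>s$ this blows up as $\delta\to 0$, and small $\delta$ is exactly the regime where the growth is produced.
\end{itemize}
So the mechanism cannot be rerun with perturbations small in every $C^m$.

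Fixing a single $k$ makes matters worse. Fréchet density of $\{\sup_t\|S(t)h\|_{C^k}>N\}$ would require perturbations small in $H^j$, with $j\gg k$, to create growth of a fixed lower norm. Theorem~\ref{T11} at regularity $j$ only produces growth in $H^j$.

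What is missing is an instability statement for smooth data: for every smooth $g$ and every $j$, $\eps$, $N$, a smooth $h$ with $\|h-g\|_{H^j}<\eps$ and $\sup_t\|S(t)h\|_{H^m}>N$, where $m$ does not depend on $j$. The paper's argument (Proposition~\ref{P:Patrick}) does not supply this either. It runs the same Baire argument in $C^\infty(\xT^2)$, fixing $m>\max(k_0,3)$ only after the open set $\mathcal O\supset B_{k_0}(f,\eps)$ is given. But its density step also only makes $\|h-g\|_{H^m}$ small, so your objection applies to it too.

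\textbf{A secondary error concerns $A=A_{k_0}$.} Your justification via a Beale--Kato--Majda/log-Gronwall bound does not work. A uniform-in-time bound on $\|\nabla\omega\|_{L^\infty}$ only gives $\frac{d}{dt}\|\omega\|_{H^\sigma}\lesssim(\|\nabla u\|_{L^\infty}+\|\nabla\omega\|_{L^\infty})\|\omega\|_{H^\sigma}$, hence $\|\omega(t)\|_{H^\sigma}\le \|\omega_0\|_{H^\sigma}e^{Ct}$. The BKM criterion concerns finite-time blow-up and gives no bound uniform in $T$. Consequently your fallback yields at best that $A$ contains a dense $G_\delta$, and even that only once the density step is repaired. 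It does not show that $A$ itself is a $G_\delta$, as the statement asserts. The paper does not address the $G_\delta$ character of the union over $k$ either.
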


It is worth emphasizing that Theorem~\ref{T11} holds in a domain 
without boundary and without imposing any symmetry constraints. 
This contrasts with previous generic growth results, 
such as those by Yudovich~\cite{zbMATH01620965} 
and Morgulis, Shnirelman and Yudovich~\cite{zbMATH05308906}, 
where the growth mechanism relies on boundary 
effects (specifically, fluid particles approaching stagnation points 
on the boundary). Similarly, it differs from the work 
of Elgindi, Murray and Said~\cite{zbMATH08109714}, 
which relies on specific symmetry classes where the point on the axis of symmetry are naturally stagnation points. 

We also establish generic small-scale creation 
for solutions defined on the whole space $\xR^2$ rather than the torus $\xT^2$. 
This change in geometry simplifies the 
analysis. Indeed, through the scaling symmetries 
of the Euler equations, spatial infinity can effectively act as a stagnation point 
(at least from the perspective of the Lagrangian flow, as observed by Shankar 
in~\cite{zbMATH06654624}). We show in Section~\ref{sec: shankar} that this 
phenomenon induces a generic growth of the Lipschitz norm of the flow, 
which in turn yields the following result.

\begin{theorem}\label{T:R2}
Let $s> 1$ be a real number. The set of initial data $\omega_0\in H^s_0(\xR^2) \cap L^1(\xR^2)$ 
satisfying the zero-mean condition $\int_{\xR^2} \omega(x) \dx = 0$ and such that
$$
\limsup_{t\to +\infty} \lA \omega(t,\cdot) \rA_{H^{s}(\xR^2)} = +\infty,
$$
is a dense $G_{\delta}$ set in $H^s_0(\xR^2) \cap L^1(\xR^2)$.
\end{theorem}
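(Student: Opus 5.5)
Following the standard Baire category scheme, I would write the set in question as
\[
\mathcal{G}=\bigcap_{N\in\xN}\bigcap_{T\in\xN}\ \mathcal{U}_{N,T},\qquad
\mathcal{U}_{N,T}=\Big\{\omega_0 : \exists\, t>T,\ \lA \omega(t)\rA_{H^s}>N\Big\},
\]
working in $X=H^s_0(\xR^2)\cap L^1(\xR^2)$ with the zero-mean constraint; this is a closed subspace, hence complete. The first step is to show that each $\mathcal{U}_{N,T}$ is open. This follows from continuous dependence of the Euler flow on data in $X$ on every finite time interval: for $s>1$ the data are (at least) Yudovich-class in $L^1\cap L^\infty$ after a small smoothing, and the $H^s$ norm of $\omega(t)$ is lower semicontinuous with respect to weak convergence. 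So if $\lA\omega(t)\rA_{H^s}>N$ at one time $t$, the same holds for nearby data. Concretely, I would use strong continuity of the data-to-solution map $X\to C([0,t];H^s)$. Since $\mathcal{G}$ is then a countable intersection of open sets, it is a $G_\delta$. By the Baire theorem, density of $\mathcal{G}$ reduces to density of each $\mathcal{U}_{N,T}$. Equivalently, it suffices to show that the set of data whose $H^s$ norm stays bounded for all large times has empty interior.

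For density I argue by contradiction. Suppose there is a ball $B(\omega_0^\ast,\delta)\subset X$ in which every solution satisfies $\sup_{t\ge T}\lA\omega(t)\rA_{H^s}\le N$. I may take $\omega_0^\ast$ smooth and compactly supported with zero mean. I then perturb it by a small, far-away, rescaled vortex configuration $\eps\,\lambda^{a}\,g(\lambda^{-1}(x-x_0))$, with zero mean and zero total mass contribution. Its $X$-norm is small for a suitable choice of exponent $a$, using $s>1$ so that the $H^s$ norm and the $L^1$ norm scale differently. As noted in the introduction, scaling makes spatial infinity behave like a stagnation point for the Lagrangian flow, in the spirit of Shankar. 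The velocity generated by a zero-mean compactly supported vorticity decays like $|x|^{-2}$ at infinity, with a dipole structure. Its far field therefore acts on distant particles like a weak hyperbolic strain field. Over long times, this separates nearby trajectories and produces growth of $\lA\nabla\Phi_t\rA_{L^\infty}$ without bound along a sequence of times.

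The key intermediate claim is the following. Uniform boundedness of $\lA\omega(t)\rA_{H^s}$ with $s>1$ gives uniform bounds on $\lA\nabla u\rA_{L^\infty}$ up to log losses, and actually on $\omega\in C^{0,\alpha}$. Combined with transport, this yields a uniform bound on the flow's gradient in the region where the perturbation lives, in the form
\[
\lA \nabla\Phi_t\rA \le C e^{Ct}.
\]
The real lever, however, is the Lagrangian identity~\eqref{E11}. Since $\omega(t)=\omega_0\circ\Phi_t^{-1}$, we get
\[
\lA \nabla\omega(t)\rA\gtrsim |\nabla\omega_0|\cdot |\nabla\Phi_t^{-1}|
\]
wherever $\nabla\omega_0\neq0$. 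Hence unbounded stretching of the inverse flow on the support of a nonconstant piece of vorticity forces $\lA\omega(t)\rA_{H^s}\ge\lA\omega(t)\rA_{\dot H^1}$-type quantities to blow up, using interpolation with the conserved $L^p$ norms when $s<2$. This contradicts the assumed bound within the ball.

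The main obstacle is the step showing that the inverse flow gradient genuinely grows on the perturbation's support for \emph{every} base state $\omega_0^\ast$ in the ball, rather than for special symmetric data. I would first try to choose the perturbation far from $\supp\omega_0^\ast$ and rescaled so that, on long time scales, its own self-interaction is a rigidly translating or rotating dipole. The base state then contributes only a small $|x|^{-2}$ strain, which is controlled using the $L^1$ bound and the zero-mean condition. Finally, I would use the persistence of the assumed $H^s$ bound to rule out the alternative scenario, namely a bounded-gradient flow compatible with the dipole's far field. If this rigidity fails, a fallback is a two-scale argument: a small piece of vorticity sits near the hyperbolic point of the perturbation's self-generated flow, and the shear there produces exponential or linear-in-time gradient growth.
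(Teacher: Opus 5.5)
Your Baire reduction is fine and matches the paper: each $\mathcal{U}_{N,T}$ is open by continuity of the data-to-solution map at a fixed time. The density step, however, has a genuine gap. You never prove that $\sup_t\|D\Phi_t\|_{L^\infty}=+\infty$ for a dense set of data. The far-field dipole acting as a hyperbolic strain, the rigidly translating dipole, and the two-scale fallback are heuristics, and the exponential bound $\|\nabla\Phi_t\|\le Ce^{Ct}$ contradicts nothing.

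The idea you are missing is Shankar's virial-type identity, which you cite only as motivation. For every nonzero zero-mean $\omega_0\in C^\infty_c(\xR^2)$, the quantity $G(t)=\int_{\xR^2}\partial_t\Phi_t\cdot\big(\Phi_t-(x\cdot\nabla)\Phi_t\big)\dx$ satisfies $G'(t)=2\|u_0\|_{L^2}^2$. The pressure contribution vanishes because $y-(\Phi_t)_*x$ is divergence free, the push-forward of the radial field having divergence $2$ under a volume-preserving map. Hence $\int|(x\cdot\nabla)\xi|^2\dx\ge\|u_0\|_{L^2}^2t^2$ for the displacement $\xi=\Phi_t-\Id$. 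A uniform Lipschitz bound is incompatible with this quadratic growth. It combines with $\|\xi(t)\|_{L^2}\le t\|u_0\|_{L^2}$, which forces $|\xi(t,0)|\lesssim\sqrt t$, and with the $|x|^{-3}$ decay of $Du$ for zero-mean vorticity. So every such datum already has unbounded flow gradient, and no far-away perturbation or rigidity argument is needed.

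Your second step, turning stretching into growth of $\|\omega(t)\|_{H^s}$ for the \emph{same} solution, is false. Since $\nabla\omega(t,y)=(D\varphi_t(y))^{\top}\nabla\omega_0(\varphi_t(y))$ with $\det D\varphi_t=1$, the matrix $(D\varphi_t)^\top$ has singular values $\Lambda$ and $\Lambda^{-1}$. Nothing forces $\nabla\omega_0$ to lie along the expanding direction, so $|\nabla\omega(t)|\gtrsim|\nabla\omega_0|\,|D\varphi_t|$ fails. Concretely, a radial, compactly supported, zero-mean $\omega_0$ is a steady state whose flow is a differential rotation. There $\|D\Phi_t\|_{L^\infty}$ grows linearly while $\omega(t)=\omega_0$ for all $t$.

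Stretching can only be converted into growth of a \emph{nearby} solution, and this is what the paper does with Koch's argument (Proposition~\ref{PK:7.1}):
\begin{enumerate}
\item Pick $T$, $x_0$ and a unit vector $e$ with $|(D\Phi_T(x_0))^{-\top}e|$ large.
\item Add to a mollified datum a wave packet of scale $\delta$ and amplitude $\delta^{s-1}$ oscillating in direction $e$. It is $O(r)$ in $H^s$ and small in lower norms.
\item Use the stability of the flow under such high-frequency perturbations, the velocity being one derivative smoother than the vorticity, to show that the perturbed solution exceeds $N$ in $H^s$ at time $T$.
\end{enumerate}
Since your contradiction hypothesis bounds every solution in the ball, this perturbative statement is exactly the ingredient your scheme needs in place of the pointwise inequality.
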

\begin{remark}
A closer inspection of the arguments in Section~\ref{S:Koch} reveals that the growth rate of the Sobolev norms (and even H\"older norms) 
in Theorem~\ref{T:R2} is bounded from below by $t^{s}$. 
\end{remark}
\begin{remark}
Returning to the periodic setting, the absence of spatial infinity or physical 
boundaries complicates significantly the analysis. To the best of our knowledge, 
Theorem~\ref{T11} provides the first rigorous confirmation of generic 
turbulent behavior for Euler equations \emph{driven solely by the internal nonlinear dynamics}, 
independent of boundary geometry or symmetry properties that inherently create 
small scales.

While we cannot rule out the existence of a 
hidden identity on the torus, analogous 
to the one presented in Section~\ref{sec: shankar}, that could simplify 
the analysis of pointwise growth, we believe 
that the microlocal theory 
developed here offers further applications where finer information 
on fluid dynamics is required. In particular those techniques can be applied in various geometrical settings (on any compact manifold and potentially higher dimensions for instance) 
and for many active scalar equations (notably for the SQG equation), which will be the subjects of further works.
\end{remark}

The proof strategy builds upon the seminal work of Shnirelman~\cite{zbMATH01026417}, 
who pioneered the use of Alinhac's paracomposition operators to investigate the 
geometry of the group of diffeomorphisms. This framework was recently revisited 
by the second author~\cite{zbMATH07682221} to construct a Lyapunov functional 
detecting small-scale formation. Independently, analogous paradifferential transformations 
have been used in~\cite{arXiv:2312.13971} to solve small divisor problems in KAM theory.

The proof of Theorem~\ref{T11} then relies on three ideas.
The first one is the introduction of a convenient paradifferential modification 
of the flow $\Phi_t$ that completely paralinearizes the Lagrangian dynamics and allows to identify  
new structures. The second one is to probe the dynamics of vortices through 
a microlocal analysis of H\"olderian cusps. 
The third one is the construction of a specific trajectory exhibiting unbounded 
growth, obtained via a Schauder fixed point argument. 
Finally, this yields a dense set of solutions blowing-up in infinite time by 
adapting firstly an idea of Koch~\cite{zbMATH01801548}, which shows that Lagrangian 
stretching drives Eulerian growth, and secondly a Baire category argument introduced 
by Hani~\cite{zbMATH06303378} 
in the context of dispersive equations.

In the remainder of this introduction, we discuss related results and outline the proof.

\subsection{The equations}
\label{S11}

The two-dimensional incompressible Euler equation reads
\begin{equation}\label{E12}
\left\{
\begin{aligned}
&\frac{\partial u}{\partial t} + (u \cdot \nabla) u + \nabla p = 0, \\
&\cn u = 0,
\end{aligned}
\right.
\end{equation}
where $u=(u^1,u^2)$ and $p$ denote the fluid velocity and pressure, respectively.
The variables are the time $t \in \xR$ and the spatial
position $x=(x^1,x^2) \in \mathbb{R}^2$. We consider bi-periodic functions $f\colon\xR^2\to\xR$, satisfying
$$
f(x_1+2\pi ,x_2)=f(x_1,x_2+2\pi) = f(x_1, x_2).
$$
Such functions are identified with functions defined on the $2$-torus $\xT^2 \defn ( \xR /2\pi\xZ)^2$, and the 
Sobolev spaces of these bi-periodic functions are denoted by $H^{s}(\xT^2)$ 
for $s\in\xR$. We denote by $H^s_0(\xT^2)$ the subspace of functions $f$ 
with mean value zero $\int_{\xT^2}f(x)\dx=0$.

A key quantity in 2D fluid dynamics is the scalar vorticity, defined by
$\omega \defn \partial_1 u^2 - \partial_2 u^1$.
Taking the curl of the momentum equation eliminates the pressure term
and yields a transport equation for the vorticity:
\begin{equation}\label{E13}
\frac{\partial  \omega}{\partial t} + u \cdot \nabla \omega = 0.
\end{equation}
The incompressibility condition allows one to recover the velocity field
$u$ from $\omega$ by introducing a stream function $\psi$, such that
$u = \nabla^\perp \psi \defn (-\partial_2\psi, \partial_1\psi)$. 
Solving for $\psi$ gives the explicit non-local relation between velocity
and vorticity, known as the Biot-Savart law (see~\e{BS} below for more details):
\begin{equation}\label{E15}
u= \nabla^{\perp}\Delta^{-1}\omega.
\end{equation}
The system formed by \eqref{E13} and \eqref{E15} is self-contained:
the evolution of $\omega$ depends on $u$, which is itself determined by $\omega$.
Hence, equation~\eqref{E13} shows that vorticity is transported by the very
flow it generates.
The transport nature of \eqref{E13} is most apparent from a Lagrangian perspective.
We trace the path of a fluid particle $\Phi_t(x)$ originating at
$x\in \xR^2$ by solving the ordinary differential equation
$$
\frac{\diff}{\dt}\Phi_t(x) = u(t, \Phi_t(x)), \quad \text{with} \quad \Phi_0(x) = x.
$$
Equation \eqref{E13} implies that vorticity is constant along these
particle trajectories, which yields $\omega(t, \Phi_t(x)) = \omega_0(x)$. Consequently, 
the $L^p$ norms are conserved for all $p \in [1, \infty]$:
\begin{equation}\label{E16}
\lA \omega(t, \cdot) \rA_{L^p} = \lA \omega_0 \rA_{L^p}.
\end{equation}

\subsection{The Yudovich Conjecture}
\label{S12}

The conservation law~\eqref{E16} leaves open the possibility that the derivatives
of vorticity may grow.
This is the object of the following 

\begin{conjecture}[Yudovich (1974), \cite{yudovic1974loss,zbMATH01620965}, quote from \cite{zbMATH05308906}]
\label{C11}
There is a ``substantial set'' of inviscid incompressible flows whose
vorticity gradients grow without bound.
This set is dense enough to cause a loss of smoothness for arbitrarily
small perturbations of any steady flow.
\end{conjecture}
\begin{remark}
This conjecture has two distinct components: one
concerning the \emph{topological size} of the set of initial data leading
to turbulence, and the other concerning the \emph{regularity} of the flow.
Regarding the size, our result gives a precise answer by proving that it forms a dense $G_\delta$ set.
Regarding regularity, the challenge is twofold. The first aspect is to prove
that growth occurs regardless of smoothness, distinguishing this phenomenon
from instabilities that might be facilitated by limited regularity (such
as the $C^{1,\alpha}$ framework used in the pioneering works of
Zlato\v{s}~\cite{Zlatos2015} or Elgindi~\cite{zbMATH07441733}). Our theorem
addresses this by establishing instability for all high-regularity Sobolev
spaces, which is strikingly illustrated in Corollary \ref{C:1.2}. 
The second aspect is to lower the regularity requirement to the
optimal threshold $s>1$ or $C^{0,\alpha}$ with $\alpha>0$. We solve this question (see Theorem~\ref{T:R2}) 
for solutions defined on~$\xR^2$. For periodic solutions our proof extends to $s>3$, 
which would require working with negative regularity cusps (see Section~\ref{S:cusps}) 
and would burden 
the presentation with technical difficulties. 
Closing the threshold $1<s<3$ remains a challenging open question.
\end{remark}
The heuristic mechanism driving this growth is the phenomenon of \emph{vorticity filamentation}. 
We refer the reader to the recent review articles~\cite{zbMATH07803735,zbMATH07658194} 
for a detailed exposition on this phenomena. 
The fluid flow acts to stretch and fold initially smooth patches of vorticity
into increasingly thin and complex structures.
While the magnitude of $\omega$ is preserved along trajectories (see~\eqref{E11}),
the spatial gradient across these filaments increases inversely proportional
to their thickness.
This geometric deformation induces a transfer of energy to higher frequencies,  which characterizes
turbulent behavior.

Historically, the pioneering examples of unbounded 
growth for the vorticity gradient were constructed 
by Bahouri and Chemin~\cite{zbMATH00687668} in the context 
of singular initial data, specifically for vortex patches with corners. 
In the realm of smooth solutions, a landmark breakthrough by Kiselev and \v{S}ver\'{a}k~\cite{KiselevSverak2014} established 
the existence of double-exponential growth on the disk, 
a result subsequently extended to other domains by Xu~\cite{zbMATH06569430}. Building on 
these advances, Zlato\v{s}~\cite{Zlatos2015} 
demonstrated exponential growth on the torus for solutions 
with limited smoothness and, 
more recently~\cite{Zlatos2025} proved that double-exponential 
growth does occur for smooth solutions in the half-plane. 
Even setting aside the question of genericity, 
constructing solutions with superlinear growth in a domain 
without boundary remains a formidable challenge, with the notable exception 
of the work by Denisov~\cite{zbMATH06393811}. 
In contrast, the regime of linear growth near 
steady states and traveling waves is now well-understood thanks to the 
recent breakthroughs of Nadirashvili~\cite{zbMATH00022841}, Choi and Jeong~\cite{zbMATH07488949}, Drivas, Elgindi 
and~Jeong~\cite{zbMATH07921933}, Bedrossian and Masmoudi~\cite{bedrossian2015inviscid}, 
Masmoudi and Zhao~\cite{masmoudi2024nonlinear}, and Ionescu and Jia~\cite{ionescu2020inviscid,ionescu2023nonlinear}.

\subsection{Strategy of the proof and plan of the paper}
In this section, we focus on the proof of Theorem~\ref{T11}, which combines paradifferential calculus with Lagrangian analysis and topological arguments. While the proof of Theorem~\ref{T:R2} shares the same Lagrangian 
instability mechanism (see Section \ref{S:Koch}) as Theorem~\ref{T11}, 
the proof is much simpler, due to Shankar's identity (which is 
specific to the scaling symmetries of $\mathbb{R}^2$), and is done in Section~\ref{sec: shankar}. 

\subsubsection*{Remark on the use of paradifferential calculus} We have made a substantial effort to ensure this 
paper is self-contained by employing 
a streamlined version of paradifferential calculus. 
We rely exclusively on elementary results 
on paraproducts (recalled in Appendix~\ref{appendix}), 
thereby bypassing the machinery of general paradifferential operators. 
Notably, we adopt an elementary definition of Alinhac's paracomposition 
(see Section~\ref{S:paracomp}). 

\subsubsection*{Step 1: Paralinearization of the Lagrangian flow (Sections~$\ref{S:paracomp}$ and~$\ref{S:Shnirelman}$)} 
Following the framework initiated by Shnirelman~\cite{zbMATH01026417}, we introduce the vector field
$$
X(t) \defn T_{(D\Phi_t)^{-1}} \Phi_t.
$$
Our first observation (Proposition~\ref{prop:4.3}) establishes that $X$ satisfies a closed evolution equation of the form
$$
\partial_t X + \nabla^\perp Q T_{\nabla\omega_0} \cdot X = \nabla^\perp Q \omega_0 - \mathcal{R},
$$
where $Q$ is an explicit parametrix for the paradifferential 
Laplace--Beltrami operator and $\mathcal{R}$ is a smoothing remainder.

\subsubsection*{The goal: Small scale creation}
Using this evolution equation, we will demonstrate that the creation of 
small scales is inevitable for any initial 
data possessing \emph{finite regularity}. 
The novelty here lies in the precise nature of this blow-up. 
While a previous result by the second author~\cite{zbMATH07682221} 
established an alternative between the blow-up of the vorticity in $H^s$ 
or the flow in $H^{s+1}$ (in a more general setting up to the critical regularity $s>1$), 
we establish in Proposition~\ref{P:4.1} a sharper 
alternative%\footnote{Strictly speaking, this alternative implies that 
%the inverse flow map \emph{always} blows up in $H^s$. 
%Indeed, if the flow remains bounded, then so does its inverse and hence 
%the vorticity since $\omega=\omega_0\circ \Phi^{-1}$.} 
for~$s>4$:
\begin{equation}\label{alt}
\begin{minipage}{0.85\textwidth}
\begin{itemize}
\item Either the vorticity $\omega(t)$ itself is unbounded in $H^s$;
\item Or the inverse flow map $\Phi_t^{-1}$ is unbounded in $H^{s}$.
\end{itemize}
\end{minipage}
\end{equation}

Securing the same regularity index $s$ for both the vorticity and the 
flow 
is pivotal. Loosely speaking, the idea is the following: 
since $\omega = \omega_0 \circ \Phi^{-1}$, a 
blow-up\footnote{Hereafter, we say that a function $t\mapsto f(t)$ defined for all time $t\ge 0$ 
blows up in a space $X$ if $\limsup_{t\ge 0}\lA f(t)\rA_X=+\infty$.} of $\Phi^{-1}$ solely in $H^{s+1}$ might remain invisible to the $H^s$-norm of $\omega$ (if $\nabla^{s+1}\Phi^{-1}$ 
blows up while $\nabla^{\le s}\Phi^{-1}$ remains bounded). By aligning the indices, 
we ensure the possibility that the instability of the flow can be \emph{transferred} to the vorticity.

Having outlined our objective, two problems have to be solved. 

\begin{enumerate}[(i)]
\item First, deriving small-scale creation requires obtaining a 
quantitative lower bound on the growth of the Lagrangian unknown $X$. The 
difficulty here is to find a suitable test function (which we shall identify as a 
H\"olderian cusp) capable of extracting growth information.
\item Second, proving that this Lagrangian stretching indeed translates 
into an Eulerian blow-up of the vorticity. 
Indeed, making the vorticity "feel" the stretching effect of the inverse flow typically requires perturbing the initial 
data. However, such a construction 
would normally rely on stability estimates, which are precisely the properties 
precluded by the very instability we intend to establish. 
\end{enumerate}
The subsequent 
analysis is devoted to resolving these two difficulties by 
combining microlocal analysis 
with various topological arguments.

\subsubsection*{Step 2: A microlocal analysis of H\"olderian cusps (Sections~\ref{S:cusps} and~\ref{S:cancel})}

To characterize the nature of the singularities arising in the Euler dynamics, 
our study relies on two distinct approaches. 
First, we establish an elementary result stating that generic $H^s$ 
functions exhibit a ``heavy-tail'' property in their Fourier distribution. 
More precisely, we show that, in any ball centered around an initial datum, one may find a 
function $\omega_0$ satisfying the following two properties:
\begin{enumerate}[(i)]
\item There exist a constant $c > 0$ and an integer $N \in \xN$ such that, 
for all $|n| \ge N$,
\be\label{finite}
(1+|n|^2)^{s/2} \RE \hat{\omega_0}(n) \ge \frac{c}{|n| \ln(|n|)}.
\ee
\item $\omega_0$ attains its global infimum at a unique point $x_0 \in \xT^2$, 
and the Hessian matrix $D^2 \omega_0(x_0)$ is positive definite.
\end{enumerate}

This setup is complemented by a systematic study of H\"olderian cusps, 
which act as prototypes for non-smooth behavior. While 
the techniques 
themselves remain relatively standard, the novelty of 
this singularity-tracking approach prompts us to outline the philosophy in detail.

The evolution equation for $\curl X$ provides the starting point for a 
quantitative analysis of small-scale creation. However, extracting growth 
information from this identity requires a carefully chosen test function. 
In a sub-critical regime, where the flow $\Phi_t$ is assumed to be bounded 
in time in $H^{s+\nu}(\xT^2)$ for some $\nu > 0$, one could directly test 
the equation against the initial vorticity $\omega_0$. At the critical 
threshold $\nu=0$, handling the interaction between the flow and the initial 
data demands a more refined strategy.

This motivates the introduction of H\"olderian singularities, specifically 
``cusps'' of the form $|x|^{2\alpha}$ with $\alpha > 0$, to probe the dynamics. 
Conceptually, we test the equation for $\curl X$ against the singular profile 
$H \defn (\omega_0-m)^\alpha$, where $m \defn \inf_{\xT^2}\omega_0$. 
The choice of a cusp-like profile is dictated by its analytical plasticity: not only 
does it exhibit a predictable decay in the Fourier domain, but it also yields a 
direct gain of regularity when multiplied by functions vanishing 
at the singularity's core. 
Specifically, the product of $|x|^\alpha$ with any 
smooth function vanishing at the origin generates a milder singularity of  order $|x|^{\alpha+1}$. 

Simultaneously, cusps possess a perfectly identifiable \emph{spectral signature}. 
Unlike generic functions in $H^s$, the frequency-domain behavior of a 
power-law singularity is explicit, which allows for sharp estimates of the 
Fourier tails. This dual nature (that is, tractability in physical space and 
explicitness in frequency space) makes the cusp a pivotal tool for computing 
interactions in both variables. By calibrating the cusp exponent $\alpha$ 
against the initial regularity index $s$, the qualitative question of 
the emergence of small scales is reduced to a concrete 
analytic lower bound in Section~\ref{S:scale}. 

\subsubsection*{Step 3: A Dichotomy (Section~$\ref{S:Growth}$)} 
Having established the core alternative~\eqref{alt}, 
our next objective is to upgrade this result into a 
stronger dichotomy. Specifically, we prove in Proposition~\ref{P71} that for any $\omega_0$ satisfying~\e{finite}:
\begin{itemize}
\item Either the flow map is unstable in the $H^s$ topology (meaning that, for arbitrarily large $N$ and for 
any neighborhood of $\omega_0$, there exists a perturbed solution satisfying $\sup_{t\ge 0}\lA \omega'(t)\rA_{H^s} > N$);
\item Or the inverse flow map $\varphi_t = \Phi_t^{-1}$ blows up in the Lipschitz norm $L^\infty$.
\end{itemize}
We proceed by contradiction. Assume that the solution remains stable in $H^s$ (Eulerian stability) 
and that the inverse flow remains uniformly Lipschitz (Lagrangian stability). 
As shown in Step 2, the assumption~\e{finite} forces the $H^s$ Sobolev norm 
of the flow to grow. We exploit this Sobolev growth to derive a contradiction.

To this end, we employ a topological argument based on Schauder's fixed point theorem. 
We construct a map $\mathcal{F}$ acting on the space of initial data by adding a specific 
perturbation, here a wave packet tailored to resonate with the stretching of the flow. This 
map is designed to enforce two conflicting properties: strict stability (the image by $\mathcal{F}$  
remains trapped within a small ball) and norm inflation 
(the trajectory eventually escapes). The construction 
ensures that the existence of a fixed point would imply the 
cancellation of a specific Fourier coefficient of the perturbation. 
We prove that this cancellation is impossible, 
thereby disproving the stability assumption.

\subsubsection*{Step 4: Construction of a growing perturbation (Section~$\ref{S:Koch}$)}
We are now in a position to conclude the proof of instability. We reason again by 
contradiction: suppose the vorticity remains locally bounded. By the dichotomy principle (Step 3), 
this forces the derivative of the inverse flow map to blow up in $L^{\infty}$.

We then exploit this Lagrangian stretching to construct 
a destabilizing perturbation, 
adapting an argument of Koch~\cite{zbMATH01801548}. 
Unlike the topological argument in Step 3, 
this construction is explicit and geometric. 
Roughly speaking, the idea is that if the Jacobian $D\varphi$ 
of the inverse flow stretches fluid elements by a factor $\Lambda$ 
in a direction $e$, we insert an initial perturbation $\eta_{\init}$ 
oscillating in the direction $e$ with a frequency proportional to $1/\delta$ and amplitude $\delta^{s-1}$. 
By Faà di Bruno's formula applied to the $s$-th derivative 
of $\eta_{\init}\circ \varphi$, this scaling isolates the term 
where all derivatives act on the profile $\eta_{\init}$, 
which generates $s$ factors of $D\varphi$. 
We then exploit the fact that the velocity is one 
derivative more regular than the vorticity 
to show the stability of $D\varphi$ with 
respect to such high-frequency wave packet 
perturbations. Thus, we capture a 
factor $\Lambda^{s}$ in the growth of $\eta_{\init}\circ \varphi$, 
which results in norm growth for the vorticity.

\subsubsection*{Step 5: A Baire argument}
The previous steps show that for generic finite regularity 
data~$\omega_0$, 
stability implies a contradiction. Thus, instability is dense by a Baire 
argument adapted from a proof introduced by Hani~\cite{zbMATH06303378} 
in the context of dispersive equations. Since the set of initial data 
leading to blow-up in infinite time ($\limsup \lA \omega(t) \rA_{H^s} = +\infty$) is a $G_\delta$ 
set (a countable intersection of open sets), the density of the instability 
implies that it is generic in the sense of Baire category.

\medskip

\noindent\textbf{Acknowledgements.} 
The authors warmly thank Patrick Gérard for very interesting discussions and notably 
for suggesting that the proof of the main theorem implies Corollary~\ref{C:1.2}. 
We are also grateful to Tarek M.\ Elgindi for bringing to our 
attention Shankar's work \cite{zbMATH06654624} and many helpful discussions. 

T.A.\ is partially
supported by the ANR projects BOURGEONS (grant ANR-23-CE40-0014-01) 
and NO-LIMIT (grant ANR-25-CE40-1380) and A.R.S.\ is partially supported by 
the ANR project SMASH (grant ANR-25-CE40-4532), of the French National Research Agency (ANR). 

\section{Preliminaries}

\subsection{Notations and conventions}\label{S:notation}

Let us fix some definitions and notational conventions that will be used in the sequel.

\subsubsection*{Vectors and Matrices}
We adopt the Einstein summation convention over repeated indices and adhere to
the following conventions regarding vector and matrix calculus.

Vectors are identified with column matrices.
The gradient operator in $\xR^2$, denoted by $\nabla$, transforms a scalar
function $f$ into the column vector field:
$$
\nabla f \defn \begin{pmatrix} \partial_1 f \\ \partial_2 f \end{pmatrix}.
$$
The perpendicular gradient
$\nabla^\perp$ is defined by: 
$$
\nabla^\perp f \defn \begin{pmatrix} -\partial_2 f \\ \partial_1 f \end{pmatrix} = J \nabla f,
\quad \text{with} \quad
J \defn \begin{pmatrix} 0 & -1 \\ 1 & 0 \end{pmatrix}.
$$

The components $a^i_j$ of a matrix $A$ are indexed such that $i$ corresponds
to the row (target component) and $j$ to the column (source component).
Consequently, the Jacobian matrix $Du$ of a vector field $u=(u^1,u^2)^\top$
is defined by:
$$
(Du)^i_j \defn \partial_j u^i \quad \implies \quad 
Du = \begin{pmatrix} \partial_1 u^1 & \partial_2 u^1 \\ \partial_1 u^2 & \partial_2 u^2 \end{pmatrix}.
$$
The transpose of a matrix is denoted by the superscript ${}^\top$.
The components of the transposed Jacobian are $((Du)^\top)^i_j = \partial_i u^j$.
We will use the shorthand notation $(Du)^{-\top}$ to denote the transpose
of the inverse matrix, i.e.\ $((Du)^{-1})^\top$.

\subsubsection*{Function Spaces} We work with periodic functions, defined on the $2$-torus
$$
\xT^2 \defn ( \xR /2\pi\xZ)^2.
$$
The Sobolev spaces of these periodic functions are denoted by $H^{s}(\xT^2)$ for $s\in\xR$. 
For an index $s\in [0,+\infty)$, $H^s(\xT^2)$ consists
of those functions $u$ in $L^2(\xT^2)$ for which the following norm is finite:
$$
\lA u\rA_{H^s} \defn \biggl( \sum_{\xi\in \xZ^2} \big(1+\la \xi\ra^2\big)^s \la \hat{u}(\xi) \ra^2\biggr)^{1/2}.
$$

For $k\in\xN$, we denote by $C^k(\xT^2)$ the usual space of functions on
$\xT^2$ whose derivatives of order $\le k$ are continuous.
For a non-integer $r>0$, $C^r(\xT^2)$ denotes the space of functions whose
derivatives up to order $[r]$ are bounded and uniformly H\"older continuous
with exponent $r-[r]$, where $[r]$ is the integer part of $r$.
The H\"{o}lder norm is defined as
$$
\la u\ra_{C^r} \defn \la u\ra_{C^{[r]}}
+\sum_{|\alpha|=[r]}\sup_{x,y\in\xT^2, x \neq y}\frac{\la\partial^{\alpha}u(x)-\partial^{\alpha}u(y)\ra}{\la x-y\ra^{r-[r]}}.
$$

\subsubsection*{Paradifferential Calculus}
We shall make extensive use of the paradifferential calculus introduced by Bony~\cite{zbMATH03779807}.
We denote by $T_a u$ the paraproduct of $u$ by $a$ and by $\RBony(a,u)$ the associated remainder.
The precise definitions of the Littlewood-Paley decomposition and of these operators, along with their 
main continuity properties, are recalled in Appendix~\ref{appendix}.

\subsection{Well-posedness and Flow Map}
\label{S:classique}
We consider the two-dimensional incompressible Euler equations with zero-mean vorticity on the torus $\xT^2$:
\begin{equation}\label{eq:Euler2}
\left\{
\begin{aligned}
\frac{\partial \omega}{\partial t}+ u\cdot \nabla\omega=0, \\
u = \nabla^{\perp}\Delta^{-1}\omega.
\end{aligned}
\right.
\end{equation}

\smallbreak
\noindent\textbf{Functional setting.}
In the sequel, we denote by $H^s_0(\xT^2)$ the subspace of Sobolev functions with zero mean:
$$
H^s_0(\xT^2) \defn \left\{ f \in H^s(\xT^2) : \int_{\xT^2} f(x) \dx = 0 \right\}.
$$
The Biot-Savart operator $\nabla^{\perp}\Delta^{-1}$ 
is readily defined as a Fourier multiplier. Its action on a frequency $k \in \xZ^2 \setminus \{0\}$ is given by the symbol
\be\label{BS}
m(k) = -\frac{i k^\perp}{|k|^2}, \quad \text{where } k^\perp = (-k_2, k_1).
\ee
This linear operator is bounded from $H^s_0(\xT^2)$ into $H^{s+1}(\xT^2)$.

\smallbreak
\noindent\textbf{The Flow Map.}
Associated with the velocity field $u$, we define the flow map $\Phi$ as the solution to the ordinary differential equation:
\begin{equation}\label{defi:Flow}
\fract\Phi(t,x) = u(t, \Phi(t,x)), \quad \Phi(0,x) = x.
\end{equation}
Throughout this paper, we denote by $\Phi^{\omega_0}_t$ the flow map associated with the unique 
solution generated by the initial data $\omega_0$.
Consequently, the solution map $S(t)$, which maps the initial data to the vorticity at time $t$, satisfies the transport formula:
$$
S(t)\omega_0 = \omega_0 \circ (\Phi^{\omega_0}_t)^{-1}.
$$

\medskip
We recall the standard sub-critical well-posedness result 
in Sobolev spaces (see e.g.\ Bertozzi and Majda~\cite{zbMATH01644218} or Chemin~\cite{zbMATH00785410}).

\begin{theorem}[Global Well-posedness]\label{T:classique}
Let $s > 1$. For any initial vorticity $\omega_0 \in H^s_0(\xT^2)$, there exists a unique global solution
$$
\omega \in C(\xR; H^s_0(\xT^2)).
$$
Moreover, there exists a constant $C_{s} > 0$, depending only on $s$, such that for all $t \in \xR$, we have the estimate:
\begin{equation}\label{Kato}
\lA S(t)\omega_0 \rA_{H^s} + \lA \Phi^{\omega_0}_t - \Id \rA_{H^{s+1}}
\le C_{s} \lA \omega_0 \rA_{H^s} \exp\left( e^{C_{s} \lA \omega_0 \rA_{L^{\infty}} |t|} \right).
\end{equation}
Furthermore, for any fixed $t \in \xR$, the solution map $S(t)$ and the 
flow map $\omega_0 \mapsto \Phi^{\omega_0}_t$ are continuous 
from $H^s_0(\xT^2)$ to $H^s(\xT^2)$ and $H^{s+1}(\xT^2)$ respectively.
\end{theorem}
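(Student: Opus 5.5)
The proof follows the classical route: Yudovich's Lipschitz-type bound on the flow, a logarithmic (Brezis--Gallouet) control of $\lA \nabla u\rA_{L^\infty}$, commutator energy estimates in $H^s$ closed by Gronwall, and finally continuity of the data-to-solution map. Local existence and uniqueness in $C([0,T];H^s_0)$ for $s>1$ are obtained by a Kato-type iteration on mollified transport equations. The Biot--Savart law \eqref{BS} maps $H^s_0$ into $H^{s+1}\subset W^{1,\infty}$ because $s+1>2$, so the velocity is Lipschitz and the flow \eqref{defi:Flow} is a well-defined measure-preserving diffeomorphism. The zero mean is preserved since $\cn u=0$. Uniqueness comes from an $L^2$ estimate on the difference of two solutions, using that $\nabla u$ is bounded. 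Global existence then reduces to an a priori bound on $\lA\omega(t)\rA_{H^s}$, for which the Beale--Kato--Majda criterion together with \eqref{E16} suffices.

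\textbf{Step 1: $H^s$ estimate.} I would apply $\Lambda^s=(1-\Delta)^{s/2}$ to \eqref{E13} and use the Kato--Ponce commutator estimate together with $\cn u=0$. This gives
$$
\frac{\diff}{\dt}\lA\omega\rA_{H^s}\le C\bigl(\lA\nabla u\rA_{L^\infty}\lA\omega\rA_{H^s}+\lA\omega\rA_{L^\infty}\lA u\rA_{H^{s}}\bigr)\le C\bigl(\lA\nabla u\rA_{L^\infty}+\lA\omega_0\rA_{L^\infty}\bigr)\lA\omega\rA_{H^s}.
$$

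\textbf{Step 2: logarithmic bound.} A Littlewood--Paley splitting at a frequency $N$ gives
$$
\lA\nabla u\rA_{L^\infty}\le C\lA\omega\rA_{L^\infty}\bigl(1+\log(e+\lA\omega\rA_{H^s})\bigr).
$$
Low frequencies are controlled by $\lA\omega\rA_{L^\infty}$ with a loss of $\log N$. High frequencies are controlled by $N^{1-s}\lA\omega\rA_{H^s}$, and I would choose $N\sim\lA\omega\rA_{H^s}^{1/(s-1)}$. The $L^2$ and $L^\infty$ norms of $\omega$ are conserved by \eqref{E16}. Setting $y=\log(e+\lA\omega\rA_{H^s})$, we get $y'\le C\lA\omega_0\rA_{L^\infty}y$. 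Hence $y(t)\le y(0)e^{C\lA\omega_0\rA_{L^\infty}|t|}$, which is the double exponential in \eqref{Kato}.

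The factor $\lA\omega_0\rA_{H^s}$ in front of the double exponential needs care. Starting from $\lA\omega\rA_{H^s}\le(e+\lA\omega_0\rA_{H^s})^{\exp(C\lA\omega_0\rA_{L^\infty}t)}$, I would obtain the stated form with a linear prefactor. The idea is to plug the bound on $\int_0^t\lA\nabla u\rA_{L^\infty}$ back into the linear Gronwall inequality of Step 1, and to use the homogeneity $\omega_0\mapsto\lambda\omega_0$ (a time rescaling) to normalize.

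\textbf{Step 3: flow estimate.} I would differentiate \eqref{defi:Flow}. The gradient satisfies $\frac{\diff}{\dt}D\Phi=(Du)\circ\Phi\, D\Phi$, so $\lA D\Phi\rA_{L^\infty}\le\exp\int_0^t\lA\nabla u\rA_{L^\infty}$. Higher derivatives are handled with composition estimates in $H^{s}$: $\lA f\circ\Phi\rA_{H^s}$ is controlled by $\lA f\rA_{H^s}$ times a polynomial in $\lA D\Phi\rA_{L^\infty}$ and $\lA\Phi-\Id\rA_{H^{s+1}}$, with the polynomial linear in the top-order norm. Applying these to $\partial_t(\Phi-\Id)=u\circ\Phi$ with $u\in H^{s+1}$, Gronwall bounds $\lA\Phi_t-\Id\rA_{H^{s+1}}$ by the same double-exponential quantity, after enlarging $C_s$.

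\textbf{Step 4: continuity of $S(t)$ and $\omega_0\mapsto\Phi_t$.} This is the main obstacle, since the estimates lose derivatives in differences. I would use the Bona--Smith argument. Regularize the data as $\omega_0^\eps=J_\eps\omega_0$ and compare solutions in $H^{s-1}$, where differences are Lipschitz with respect to the data. Then interpolate using the uniform $H^{s+1}$ bounds for the regularized solutions and the fact that $\lA J_\eps\omega_0-\omega_0\rA_{H^s}=o(1)$ uniformly on compact sets of data. The estimate for differences in $H^{s-1}$ is where I expect the commutator bookkeeping to be most delicate. Continuity of the flow map in $H^{s+1}$ follows by the same scheme applied to the ODE for $\Phi$.
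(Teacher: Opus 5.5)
\textbf{Verdict: the qualitative parts go through, but the quantitative bound \eqref{Kato} does not follow from your argument, and in fact it cannot be proved as written.}

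Your qualitative scheme is the classical one behind the references the paper cites in lieu of a proof: Beale--Kato--Majda, a Brezis--Gallou\"et bound, Gronwall, and Bona--Smith for continuity. It does deliver global existence, uniqueness and the continuity statements. The gap sits exactly where you wrote that the prefactor ``needs care''.

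First, the logarithmic estimate must involve the scale-invariant ratio:
\[
\lA\nabla u\rA_{L^\infty}\le C\lA\omega\rA_{L^\infty}\Bigl(1+\log\bigl(e+\lA\omega\rA_{H^s}/\lA\omega\rA_{L^\infty}\bigr)\Bigr),
\]
which corresponds to choosing $N^{s-1}\sim\lA\omega\rA_{H^s}/\lA\omega\rA_{L^\infty}$. Your version fails for $\lambda\omega$ with $\lambda\to0$: it would force $\lA\nabla u\rA_{L^\infty}\les\lA\omega\rA_{L^\infty}$, which is false.

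Write $A=\lA\omega_0\rA_{L^\infty}$ and $B=\lA\omega_0\rA_{H^s}$. Steps 1--2 then give $\int_0^{|t|}\lA\nabla u\rA_{L^\infty}\le C\log(e+B/A)\,(e^{CA|t|}-1)$. Plugging this into the linear Gronwall inequality yields
\[
\lA S(t)\omega_0\rA_{H^s}\le B\,e^{CA|t|}\,(e+B/A)^{C(e^{CA|t|}-1)}.
\]
That is all the back-substitution produces. The power of $B/A$ remains, and the rescaling $\omega_0\mapsto\lambda\omega_0$, $t\mapsto t/\lambda$ leaves $B/A$ invariant, so homogeneity cannot normalize it away.

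No other argument can remove it either. If \eqref{Kato} held, $\lA S(t)\omega_0\rA_{H^s}/\lA\omega_0\rA_{H^s}$ would be bounded by the universal constant $C_s\exp(e^{C_s})$ whenever $\lA\omega_0\rA_{L^\infty}\le1$ and $|t|\le1$. However, smooth approximations of the Bahouri--Chemin cross have time-one inverse flows with arbitrarily large Lipschitz constant $\Lambda$. (The Yudovich flow of the cross is not Lipschitz for $t>0$, and flows depend continuously on the data.) Superposing a small-amplitude, sharply localized wave packet that dominates the $H^s$ norm then multiplies the $H^s$ norm by a factor of order $\Lambda^s$ at time one. This is Koch's mechanism, exactly as in Section~\ref{S:Koch}.

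Step 3 fails for a more elementary reason. Integrating $\partial_t(\Phi-\Id)=u\circ\Phi$ produces a factor $|t|$, and no enlargement of $C_s$ absorbs it when $A$ is small and $A|t|$ is of order one. Take the steady shear $\omega_0=\eps\cos x^1$. Then $u=(0,\eps\sin x^1)$ and $\Phi_t(x)=(x^1,x^2+\eps t\sin x^1)$. At $t=1/\eps$, the left-hand side of \eqref{Kato} is at least $\lA\Phi_t-\Id\rA_{H^{s+1}}=2^{s/2}$. The right-hand side equals $C_s2^{(s-1)/2}\eps\exp(e^{C_s})$, which tends to $0$.

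So \eqref{Kato} as displayed is not provable. What your method honestly gives is the data-dependent double-exponential bound above, and for the flow a bound of the same type times $\langle t\rangle$. That is the form of the classical estimate, and it suffices whenever one only needs finiteness on bounded time intervals for fixed data.

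A smaller point: for $1<s\le2$, the plain Kato--Ponce commutator estimate involves $\lA\nabla\omega\rA_{L^\infty}$, which is not available. You need its paraproduct refinement, using $\nabla\omega\in C^{-1}_*$ and $\lA u\rA_{H^{s+1}}\les\lA\omega\rA_{H^s}$. That refinement does give your final inequality in Step 1.
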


\section{Alinhac's Paracomposition Operators}\label{S:paracomp}
A key ingredient of our proof is paradifferential 
calculus. To analyze the nonlinear operation 
$f \circ \chi$ (the composition of a function $f$ 
with a diffeomorphism~$\chi$), Alinhac introduced~\cite{zbMATH03925354,zbMATH03960343} 
the paracomposition operator, denoted $\chi^*$. 
Much like Bony's paraproduct for function multiplication, 
this operator provides a paralinearization of the composition.

In this section, we introduce a simple variant of this 
paracomposition operator. We then establish fundamental 
identities describing its interaction with the gradient 
and perpendicular gradient  operators. This variant is 
advantageous, as it permits an elementary derivation 
of these identities and leads to simpler final formulas 
due to a crucial cancellation between remainder terms.

\subsection{The Paracomposition Operator}
We work with diffeomorphisms of the 
$2$-torus $\xT^2$ and identify such diffeomorphisms with their 
lifts to $\xR^2$. In the context of incompressible hydrodynamics, we restrict 
our attention to diffeomorphisms 
that preserve the Lebesgue measure and are homotopically equivalent to the identity. This leads to the following 
definition.

\begin{definition}\label{D:SDiff}
Consider a real number $s>2$. 
We say that a map $\chi\colon \xR^2\rightarrow \xR^2$ belongs to $\SDiff^s(\xT^2)$ 
if it is a diffeomorphism of $\xR^2$ admitting the decomposition
$$
\forall x\in \xR^2, \quad \chi(x)=x+\tilde{\chi}(x) 
\quad \text{with } \tilde{\chi} \in H^s(\xT^2)^2,
$$
and satisfies the incompressibility constraint:
\be\label{nAx3}
\det(D\chi) = \det(\Id + D\tilde{\chi}) = 1.
\ee 
\end{definition}
\begin{remark}
$i)$ The condition \e{nAx3} implies that the Jacobian determinant is strictly positive 
everywhere. Since $\xT^2$ is a compact manifold and $\chi$ is homotopic to the 
identity, this local invertibility is sufficient to guarantee 
that $\chi$ is a global diffeomorphism.

$ii)$ Although some identities could be extended to more general diffeomorphisms, 
we restrict our attention to this setting for the sake of simplicity. 

$iii)$ The assumption $s>2$ is the minimal requirement to ensure, 
via Sobolev embedding, that $\chi-\Id \in C^1(\xT^2)$.
\end{remark}

As in \cite{zbMATH05651293} we 
now define the paracomposition operator, relying 
solely on the concept of paraproducts 
(we refer to Appendix~\ref{appendix} for the definition 
of the paraproduct $T_au$ and the Bony remainder $\RBony(a,u)$).

\begin{definition}
Let $f \in H^1(\xT^2)$ be a periodic function 
and $\chi \in \SDiff^s(\xT^2)$ for some $s>2$.  
The \emph{paracomposition} of $f$ by $\chi$, 
denoted $\chi^*f$, is defined by the formula
$$
\chi^* f \defn f\circ \chi - T_{\nabla f \circ \chi}\cdot \chi.
$$
\end{definition}

This definition relies on several conventions which we clarify.

$i)$ \textbf{The $T_a \Id = 0$ convention.}
The definition is based on the convention that the 
paraproduct of any function $a$ with the identity 
map $\Id$ vanishes:
\be\label{convention}
T_a \Id = 0.
\ee
Since $\chi = \Id + \tilde{\chi}$ 
where $\tilde{\chi}$ is periodic, 
this implies $T_{\nabla f \circ \chi} \chi = T_{\nabla f \circ \chi} 
(\Id + \tilde{\chi}) = T_{\nabla f \circ \chi} \tilde{\chi}$. The definition is 
thus equivalent to
$$
\chi^* f = f\circ \chi - T_{\nabla f \circ \chi}\cdot \tilde{\chi},
$$
a form which involves only periodic functions.

$ii)$ \textbf{Scalar Paraproduct.}
The notation $T_{\nabla f \circ \chi}\cdot \chi$ 
stands for the component-wise contraction. In 
accordance with the Einstein summation convention, its coordinate expression is:
$$
T_{\nabla f \circ \chi}\cdot \chi \defn T_{(\partial_j f) \circ \chi} \chi^j.
$$

The subsequent analysis of $\chi^*f$ will involve 
matrix-vector operations. We thus extend the paraproduct 
and remainder notations to matrices and vectors on a 
component-wise basis.
\begin{itemize}
\item For two matrix fields $A$ (components $a^i_j$) 
and $B$ (components $b^j_k$), $T_A B$ and $\RBony(A,B)$ 
are the matrices defined by
$$
(T_A B)^i_k \defn T_{a^i_j} b^j_k, \qquad
(\RBony(A, B))^i_k \defn \RBony(a^i_j, b^j_k).
$$    
\item In particular, for a matrix 
field $A$ (components $a^i_j$) and 
a column vector $y$ (components $y^j$), 
$T_A y$ and $\RBony(A,y)$ are the column vectors
$$
(T_A y)^i \defn T_{a^i_j} y^j,\qquad (\RBony(A,y))^i \defn \RBony(a^i_j, y^j).
$$
\item For a row vector $w$ (components $w_i$) 
and a matrix $A$ (components $a^i_j$), $T_w A$ and $\RBony(w,A)$ are the row vectors
$$
(T_w A)_j \defn T_{w_i} a^i_j,
\qquad (\RBony(w,A))_j\defn \RBony(w_i,a^i_j).
$$
\end{itemize}

\subsection{Main Identities}

It follows from the chain rule that
$$
\nabla(f\circ \chi)=(D\chi)^\top (\nabla f)\circ \chi,
$$
recalling our convention 
$((D\chi)^\top)^i_j = \partial_i \chi^j$. 
Our first main identity establishes a similar 
commutation rule between the gradient operator 
and the paracomposition operator~$\chi^*$. 

\begin{lemma}\label{L1}
Let $f \in H^2(\xT^2)$ and $\chi\in \SDiff^s(\xT^2)$ for some $s>2$. 
The following identity holds:
\begin{equation}\label{n10}
\nabla (\chi^* f) = T_{(D\chi)^\top} (\chi^*(\nabla f)) + R_1+R_2
\end{equation}
where
\begin{align*}
R_1 & \defn \RBony((D\chi)^\top, (\nabla f) \circ \chi) ,\\
R_2 & \defn \left(T_{(D\chi)^\top}T_{(D\nabla f) \circ \chi}-T_{(D\chi)^\top((D\nabla f) \circ \chi)}\right) \chi.
\end{align*}
\end{lemma}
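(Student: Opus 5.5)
The plan is a direct algebraic computation: expand both sides using only the definition of $\chi^*$, the chain rule, Bony's decomposition of a product, and the Leibniz rule for paraproducts. The claim is then that everything cancels except $R_1$ and $R_2$. Throughout I use the component convention of the paper: the $i$-th component of $\nabla(\chi^*f)$ is $\partial_i(f\circ\chi) - \partial_i\bigl(T_{(\partial_j f)\circ\chi}\chi^j\bigr)$. The hypothesis $f\in H^2$ guarantees that $\nabla f\in H^1$, so $\chi^*(\nabla f)$ is well defined componentwise.

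\textbf{Step 1 (the composition term).} By the chain rule, $\partial_i(f\circ\chi) = \partial_i\chi^j\,(\partial_j f)\circ\chi$. Bony's decomposition $ab = T_a b + T_b a + \RBony(a,b)$ then gives
$$
\partial_i(f\circ\chi) = T_{\partial_i\chi^j}\bigl((\partial_j f)\circ\chi\bigr) + T_{(\partial_j f)\circ\chi}\,\partial_i\chi^j + \RBony\bigl(\partial_i\chi^j,(\partial_jf)\circ\chi\bigr).
$$
In matrix notation the first term is $T_{(D\chi)^\top}(\nabla f\circ\chi)$ and the last term is exactly $R_1$.

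\textbf{Step 2 (the paraproduct term).} I would use the exact Leibniz rule $\partial_i(T_a u) = T_{\partial_i a}u + T_a\partial_i u$. This holds because $T_a u = \sum_k S_{k-N}a\,\Delta_k u$ and $\partial_i$ commutes with the Littlewood--Paley blocks $S_{k-N}$ and $\Delta_k$. Applied to $\chi^j = x^j + \tilde\chi^j$ with the convention \eqref{convention}, it gives
$$
\partial_i\bigl(T_{(\partial_jf)\circ\chi}\chi^j\bigr) = T_{\partial_i((\partial_j f)\circ\chi)}\chi^j + T_{(\partial_jf)\circ\chi}\,\partial_i\chi^j .
$$
The second term cancels the middle term of Step 1. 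For the first term, the chain rule gives $\partial_i\bigl((\partial_j f)\circ\chi\bigr) = \partial_i\chi^k\,(\partial_k\partial_j f)\circ\chi$. Since the Hessian is symmetric, this is the $(i,j)$ entry of $(D\chi)^\top\bigl((D\nabla f)\circ\chi\bigr)$. Hence
$$
\nabla(\chi^*f) = T_{(D\chi)^\top}(\nabla f\circ\chi) - T_{(D\chi)^\top((D\nabla f)\circ\chi)}\chi + R_1 .
$$

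\textbf{Step 3 (recombination).} Applying the definition of paracomposition componentwise to $\nabla f$ gives $\chi^*(\nabla f) = \nabla f\circ\chi - T_{(D\nabla f)\circ\chi}\chi$. Therefore
$$
T_{(D\chi)^\top}(\nabla f\circ\chi) = T_{(D\chi)^\top}\chi^*(\nabla f) + T_{(D\chi)^\top}T_{(D\nabla f)\circ\chi}\chi .
$$
Substituting this into the formula from Step 2 produces exactly $T_{(D\chi)^\top}\chi^*(\nabla f) + R_1 + R_2$, which is \eqref{n10}.

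\textbf{Main obstacle.} The delicate point is the bookkeeping of Step 2 when the paraproduct acts on the non-periodic map $\chi$. Two facts must be checked against the precise definitions in Appendix~\ref{appendix}:
\begin{itemize}
\item the convention $T_a\Id=0$ must be compatible with differentiation, namely $T_a\,\partial_i x^j = T_a\,\delta_i^j = 0$;
\item the paraproduct of a constant must vanish.
\end{itemize}
Both hold if the paraproduct carries no zero-frequency contribution in $u$, i.e.\ $\Delta_k 1 = 0$ for the relevant blocks. Once these two facts are confirmed, all manipulations can be performed on $\tilde\chi$ alone, and the Leibniz rule is exact.
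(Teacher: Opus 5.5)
Your proposal is correct and follows the paper's proof step for step: chain rule plus Bony's decomposition of $\partial_i\chi^j\,(\partial_j f)\circ\chi$, then the exact Leibniz rule for $T_{(\partial_j f)\circ\chi}\chi^j$ with cancellation of the terms $T_{(\partial_j f)\circ\chi}\,\partial_i\chi^j$, then reinsertion of the definition of $\chi^*(\partial_j f)$ to produce $R_2$. The compatibility point you flag does hold, since $T_a u=\sum_{k\ge 3}S_{k-3}a\,\Delta_k u$ uses only blocks $\Delta_k$ with $k\ge 3$, and these annihilate constants; the paper relies on this implicitly.
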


\begin{remark}
The terms $R_1$ and $R_2$ are smoothing remainders 
in paradifferential calculus. To clarify the notation, 
let us rewrite them in terms of components:
\begin{align*}
(R_1)^i &= \RBony(\partial_i\chi^j, (\partial_j f) \circ \chi),\\
(R_2)^i &= T_{\partial_i\chi^j}
T_{(\partial_\ell \partial_j f)\circ \chi}\chi^\ell 
- T_{\partial_i\chi^j ((\partial_\ell \partial_j f)\circ \chi)}\chi^\ell.
\end{align*}
Note that $(D\nabla f)$ corresponds to the Hessian matrix of $f$, consistent with our matrix-vector multiplication rules.
\end{remark}

\begin{proof}
To simplify notations, we work in coordinates. 
The argument proceeds by direct differentiation 
of $\chi^* f \defn f\circ \chi - T_{\nabla f \circ \chi}\cdot\chi$.
Applying the chain rule, the partial derivatives 
of $f\circ \chi$ are given by
$$
\partial_i(f\circ \chi) = \partial_i\chi^j (\partial_j f) \circ \chi .
$$
We then use Bony's decomposition to write
$$
\partial_i\chi^j (\partial_j f) \circ \chi  = T_{\partial_i\chi^j }(\partial_j f) \circ \chi+T_{(\partial_j f) \circ \chi }\partial_i\chi^j
+\RBony(\partial_i\chi^j ,(\partial_j f) \circ \chi ).
$$
We now compute the partial derivatives of the 
second term $T_{\nabla f \circ \chi}\cdot \chi$. Since $\partial (T_a u)=T_{\partial a}u+T_{a}\partial u$, 
we have
$$
\partial_i(T_{\partial_jf \circ \chi} \chi^j) =
T_{(\partial_i\chi^\ell (\partial_j\partial_\ell f)\circ \chi)}\chi^j+
T_{\partial_jf \circ \chi} \partial_i\chi^j.
$$
By combining these expressions, the terms
$T_{\partial_jf \circ \chi} \partial_i\chi^j$ cancel, which leads to the intermediate identity:
$$
\partial_i(\chi^*f) =T_{\partial_i\chi^j }(\partial_j f) \circ \chi
+\RBony(\partial_i\chi^j ,(\partial_j f) \circ \chi )
-T_{(\partial_i\chi^\ell (\partial_j\partial_\ell f)\circ \chi)}\chi^j.
$$
Then, by applying the definition of $\chi^*(\partial_j f)$ to the first term, we may rewrite it as
$$
T_{\partial_i\chi^j }(\partial_j f) \circ \chi
=T_{\partial_i\chi^j }\chi^* (\partial_j f)+T_{\partial_i\chi^j }T_{(\partial_\ell \partial_j f)\circ \chi}\chi^\ell,
$$
from which we deduce the desired result~\e{n10}.
\end{proof}

Building on the previous result, we now derive 
the corresponding identity for the perpendicular 
gradient $\nabla^\perp$. This plays an essential 
role in our analysis, as the Biot-Savart law 
relates the velocity $u=\nabla^\perp \psi$ to 
the vorticity $\omega$ via $\Delta \psi=\omega$. 

\begin{lemma}\label{L2}
Let $f \in H^2(\xT^2)$ and $\chi\in \SDiff^s(\xT^2)$ for some $s>2$. 
The following identity holds:
\begin{equation}\label{n11}
\nabla^{\perp} (\chi^* f) = T_{(D\chi)^{-1}}(\chi^* \nabla^\perp f) + R_1' + R_2',
\end{equation}
where
\begin{align*}
R_1' & \defn \RBony((D\chi)^{-1},\nabla^\perp f \circ \chi), \\
R_2' & \defn \left(T_{(D\chi)^{-1}}T_{(D\nabla^\perp f)\circ \chi }-T_{(D\chi)^{-1}(D\nabla^\perp f)\circ \chi }\right) \chi.
\end{align*}
\end{lemma}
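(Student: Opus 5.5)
The plan is to mimic the proof of Lemma~\ref{L1} directly, working in coordinates. Write $\nabla^\perp = J\nabla$ and start from $\chi^*f = f\circ\chi - T_{(\partial_j f)\circ\chi}\chi^j$. The key algebraic input replacing the chain rule is the identity
$$
\nabla^\perp (f\circ\chi) = (D\chi)^{-1} (\nabla^\perp f)\circ\chi ,
$$
valid because $\det D\chi=1$. Indeed $\nabla^\perp(f\circ\chi) = J (D\chi)^\top (\nabla f)\circ\chi$, and for a $2\times2$ matrix $A$ with $\det A=1$ one has $J A^\top = A^{-1} J$ (equivalently $A^\top J A = (\det A) J$). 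Hence $\nabla^\perp(f\circ \chi)=(D\chi)^{-1}J(\nabla f)\circ\chi=(D\chi)^{-1}(\nabla^\perp f)\circ\chi$. I will write $M\defn (D\chi)^{-1}$, with components $m^i_j$; since $D\chi-\Id\in H^{s-1}\subset L^\infty$ and $\det D\chi=1$, $M=J(D\chi)^\top J^{-1}$ is a bounded matrix field whose entries are linear in $\partial\chi$, so all paraproducts below make sense.

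Next I apply Bony's decomposition to $m^i_j (\nabla^\perp f)^j\circ\chi$:
$$
T_{m^i_j}(\nabla^\perp f)^j\circ\chi + T_{(\nabla^\perp f)^j\circ\chi} m^i_j + \RBony(m^i_j,(\nabla^\perp f)^j\circ\chi).
$$
The Bony remainder is exactly $R_1'$. For the paraproduct part of $\chi^*f$, I use the Leibniz rule $\partial(T_a u)=T_{\partial a}u+T_a\partial u$ componentwise, with $(\nabla^\perp)^i=J^i_k\partial_k$, to get
$$
\nabla^\perp\bigl(T_{(\partial_j f)\circ\chi}\chi^j\bigr) = T_{\nabla^\perp((\partial_j f)\circ\chi)}\chi^j + T_{(\partial_j f)\circ\chi}\nabla^\perp\chi^j .
$$
The cancellation step is to check that $T_{(\partial_j f)\circ\chi}(\nabla^\perp\chi^j)^i$ equals $T_{(\nabla^\perp f)^j\circ\chi}m^i_j$. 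This is the main point to verify, and it holds by linearity of the paraproduct in the low-frequency slot after the algebraic identity $(\nabla^\perp\chi^j)^i\,(\partial_j f) = m^i_j (J\nabla f)^j$. In fact both sides equal the $i$-th component of $J(D\chi)^\top\nabla f$ and of $(D\chi)^{-1}J\nabla f$ respectively, which agree by the identity above. One must take care that this pointwise identity is linear in the entries of $D\chi$, since $M$ is linear in $D\chi$ in dimension two. Consequently the rearrangement is legitimate inside $T$, with each side a finite sum of terms of the form $T_{g}\partial_k\chi^j$ having identical coefficients.

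For the remaining terms, the first-order term satisfies $\nabla^\perp((\partial_j f)\circ\chi)= M(\nabla^\perp\partial_j f)\circ\chi$ by the same identity, so it contributes $-T_{M (D\nabla^\perp f)\circ\chi}\chi$ after reindexing, since $\partial_j\nabla^\perp f$ is the $j$-th column of $D\nabla^\perp f$. Finally, I write $(\nabla^\perp f)\circ\chi=\chi^*(\nabla^\perp f)+T_{(D\nabla^\perp f)\circ\chi}\chi$ in the first Bony term, which produces $T_M\chi^*(\nabla^\perp f)$ plus $T_M T_{(D\nabla^\perp f)\circ\chi}\chi$. Combined with the previous term, this gives $R_2'$. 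I expect the main obstacle to be only bookkeeping of indices in this cancellation, since everything else is formally identical to Lemma~\ref{L1}.
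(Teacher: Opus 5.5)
Your proof is correct, but it is organized differently from the paper's. The paper does not redo the differentiation. It applies $J$ to the identity \eqref{n10} of Lemma~\ref{L1} and converts each of the three terms using $JA^\top=A^{-1}J$ for $\det A=1$, in the operator forms $JT_{A^\top}=T_{A^{-1}}J$ and $J\RBony(A^\top,\cdot)=\RBony(A^{-1}J,\cdot)$. It treats the two halves of $R_2$ separately, and for the symbol product it uses $(D\chi)^\top(D\nabla f\circ\chi)=-J(D\chi)^{-1}(D\nabla^\perp f\circ\chi)$. In that route $(D\chi)^{-1}$ only ever occupies the symbol slot of a paraproduct or the first slot of $\RBony$, so a pointwise matrix identity plus linearity with constant coefficients suffices. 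The cancellation of the terms $T_{(\partial_j f)\circ\chi}\partial_i\chi^j$ is inherited from Lemma~\ref{L1}, where it is between literally identical terms.

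In your direct recomputation, the cancellation instead pairs $T_{(\partial_j f)\circ\chi}(\nabla^\perp\chi^j)^i$ with $T_{(\nabla^\perp f)^j\circ\chi}m^i_j$, where the entries of $D\chi$ sit in the high-frequency slot. There, a mere pointwise identity between products would not transfer to paraproducts. What saves you is exactly what you flag: for $\det D\chi=1$ in dimension two, $(D\chi)^{-1}=\mathrm{adj}(D\chi)$ is a signed rearrangement of the entries of $D\chi$. Hence $JA^\top a=\mathrm{adj}(A)Ja$ holds coefficientwise, and both sides expand into the same combination of terms $T_{\partial_l f\circ\chi}\partial_k\chi^p$. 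Note that this uses bilinearity of $T$ in both slots, not only linearity in the low-frequency one as your first phrasing suggests.

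Your route is self-contained and makes explicit where the unimodular two-dimensional structure enters. The paper's route is shorter given Lemma~\ref{L1} and needs only the weaker, symbol-level form of the algebraic identity.
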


\begin{proof}
On applying the matrix $J=\left(\begin{smallmatrix}0 &-1\\1&\phantom{-}0\end{smallmatrix}\right)$ 
to the identity~\eqref{n10} from Lemma~\ref{L1}, we get
\begin{align*}
\nabla^{\perp} (\chi^* f) = J \nabla (\chi^* f) = J T_{(D\chi)^{\top}} (\chi^*\nabla f) + J R_1 + J R_2.
\end{align*}
The proof proceeds by transforming each of the three 
terms on the right-hand side. The key tool is the algebraic 
identity valid for any $2\times2$ matrix $A$ with $\det(A)=1$:
$$
J A^\top = A^{-1} J.
$$
Since $J$ is a constant matrix, it commutes with the 
frequency localization operators used in the paraproduct. 
Consequently, the algebraic identity implies the corresponding 
operator identities:
\begin{equation}\label{n12}
J T_{A^\top} = T_{A^{-1}} J \quad \text{and} 
\quad J \RBony(A^\top, \cdot) = \RBony(A^{-1} J, \cdot).
\end{equation}
Using this rule with $A = D\chi$, the transformation 
of the main term is straightforward. Since $J$ is 
constant, we have $J (\chi^* \nabla f) 
= \chi^* (J \nabla f) = \chi^* \nabla^\perp f$. Thus:
$$
J T_{(D\chi)^\top} (\chi^*\nabla f) 
= T_{(D\chi)^{-1}} J (\chi^*\nabla f) = T_{(D\chi)^{-1}} (\chi^*\nabla^\perp f).
$$
Similarly, we transform the remainder $R_1$. 
The identity \eqref{n12} gives
\begin{align*}
J R_1 &= J \RBony((D\chi)^\top, (\nabla f) \circ \chi) \\
&= \RBony( (D\chi)^{-1} J, (\nabla f) \circ \chi ).
\end{align*}
Since the Bony remainder is defined component-wise, 
we may use the obvious identity $(AJ)v=A(Jv)$ for any 
matrix $A$ and any vector $v$, to write
\begin{align*}
J R_1 &= \RBony( (D\chi)^{-1}, J ((\nabla f) \circ \chi) ) \\
&= \RBony((D\chi)^{-1}, (\nabla^\perp f) \circ \chi) = R_1'.
\end{align*}

It remains to prove that $J R_2 = R_2'$. Recall from Lemma~\ref{L1} that
$$
R_2 = \left(T_{(D\chi)^\top}T_{(D\nabla f) \circ \chi}
-T_{(D\chi)^\top((D\nabla f) \circ \chi)}\right) \chi.
$$
We transform the two parts of $R_2$ separately. 
First, for the composition of operators, we use \eqref{n12} 
to get
\begin{align*}
J T_{(D\chi)^{\top}}T_{(D\nabla f) \circ \chi}\chi
&= T_{(D\chi)^{-1}}J T_{(D\nabla f) \circ \chi}\chi \\
&= T_{(D\chi)^{-1}}T_{J(D\nabla f) \circ \chi}\chi \\
&= T_{(D\chi)^{-1}}T_{(D\nabla^\perp f) \circ \chi}\chi,
\end{align*}
where we used the identity $J(D\nabla f) = D(J \nabla f) = D(\nabla^\perp f)$.

Second, for the term involving the 
product of symbols, we use the algebraic identity 
$A^\top J = J A^{-1}$ (which is equivalent to 
$J A^\top = A^{-1} J$ by taking the inverse on both sides 
and using $J^{-1}=-J$). Using $J^2 = -\Id$, this allows 
us to rewrite the symbol of the second part of $R_2$:
\begin{align*}
(D\chi)^{\top} (D\nabla f \circ \chi)
&= -(D\chi)^{\top} J^2 (D\nabla f \circ \chi) \\
&= -(D\chi)^{\top} J (D\nabla^\perp f \circ \chi) \\
&= -J (D\chi)^{-1} (D\nabla^\perp f \circ \chi).
\end{align*}
Applying the operator $J$ to the paraproduct with this symbol 
yields:
$$
J T_{(D\chi)^{\top} (D\nabla f) \circ \chi}\chi 
= T_{- J^2 (D\chi)^{-1} (D\nabla^\perp f) \circ \chi}\chi 
= T_{(D\chi)^{-1} (D\nabla^\perp f)\circ \chi }\chi.
$$
Subtracting the second transformed part from the 
first confirms that $J R_2 = R_2'$, which completes the proof.
\end{proof}

\subsection{Paracomposition of the Laplacian}

Our next goal is to express $\chi^*(\Delta f)$ in terms 
of $\chi^* f$. To do so,
we first recall the pullback of the Laplacian by $\chi$.
This operator, denoted by $\Delta^\chi$, is defined by the identity:
$$
\Delta^\chi(f\circ \chi) = (\Delta f)\circ \chi.
$$
It is the Laplace--Beltrami operator associated with the pullback metric
$g \defn (D\chi)^\top (D\chi)$.
In coordinates, $\Delta^\chi$ is given by the general formula
$$
\Delta^\chi = \frac{1}{\sqrt{|g|}} \partial_i \left(\sqrt{|g|} \, g^{ij} \partial_j\right),
$$
where $|g| \defn \det(g)$ and $g^{ij}$ are the entries of the inverse metric
matrix $g^{-1}=(D\chi)^{-1} (D\chi)^{-\top}$.
Assuming that $\det(D\chi)=1$, it follows that $|g|=1$, and hence this formula
simplifies to the divergence form:
$$
\Delta^\chi v = \cn \left( g^{-1} \nabla v \right) = \partial_i \left( g^{ij} \partial_j v \right).
$$
This motivates the following definition of the paradifferential Laplace--Beltrami operator.

\begin{definition}\label{def:LapChi}
Consider a $C^1(\xT^2)$ diffeomorphism $\chi$ satisfying $\det(D\chi)=1$.
We define the paradifferential operator $\LapChi$ by:
$$
\LapChi \defn \cn\big( T_{(D\chi)^{-1}(D\chi)^{-\top}} \nabla \cdot \big).
$$
\end{definition}

The following proposition asserts that $\chi^*$ 
conjugates the paradifferential
Laplacian to the standard Laplacian, up to a 
remainder that is as regularizing
as the smoothness of $\chi$ permits.

\begin{proposition}\label{P24}
Consider two real numbers $s>2$ and $\mu \in (3, s+3]$. Let $f \in H^\mu(\xT^2)$
and let $\chi\in\SDiff^{s}(\xT^2)$ be an $H^s$-diffeomorphism satisfying $\det(D\chi)=1$.
Then, the following identity holds:
$$
\chi^*(\Delta f) = \LapChi(\chi^* f) + R_{\Delta}(\chi,f),
$$
where the remainder term $R_{\Delta}(\chi,f)$ satisfies the following estimate:
\be\label{n570}
\lA R_{\Delta}(\chi,f) \rA_{H^{\mu+s-4}} \le \mathcal{F}(\lA \chi - \Id \rA_{H^{s}})\lA f\rA_{H^\mu},
\ee
for some function $\mathcal{F} \colon \xR_+ \to \xR_+$ depending only on $\mu$ and $s$.
\end{proposition}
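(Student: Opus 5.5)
The plan is to derive the identity by two successive paralinearizations, mirroring Lemma~\ref{L1}. First I express $\chi^*(\nabla f)$ through $\nabla(\chi^*f)$. Then I paralinearize the divergence via the Piola identity, and collect every error into $R_\Delta(\chi,f)$.

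\emph{Step 1 (inverting Lemma~\ref{L1}).} Apply $T_{(D\chi)^{-\top}}$ to \eqref{n10}. This gives
$$
\chi^*(\nabla f)=T_{(D\chi)^{-\top}}\nabla(\chi^*f)-\bigl(T_{(D\chi)^{-\top}}T_{(D\chi)^{\top}}-\Id\bigr)\chi^*(\nabla f)-T_{(D\chi)^{-\top}}(R_1+R_2).
$$
The composition $T_{(D\chi)^{-\top}}T_{(D\chi)^\top}-T_{\Id}$ is smoothing. It is controlled by the symbolic calculus for paraproducts in the Appendix, since $T_aT_b-T_{ab}$ gains as many derivatives as $a,b$ have Hölder/Sobolev smoothness, here coming from $D\chi\in H^{s-1}$. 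The difference $\Id-T_{\Id}$ is a low-frequency projector and is harmless.

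\emph{Step 2 (Piola).} Since $\det D\chi=1$, for a vector field $v$ we have
$$
(\cn v)\circ\chi=\cn\bigl((D\chi)^{-1}\,v\circ\chi\bigr).
$$
Take $v=\nabla f$ and write $\chi^*(\Delta f)=\Delta f\circ\chi-T_{\nabla\Delta f\circ\chi}\cdot\chi$. Expand $(D\chi)^{-1}(\nabla f\circ\chi)$ with Bony's decomposition into $T_{(D\chi)^{-1}}(\nabla f\circ\chi)$, plus $T_{\nabla f\circ\chi}(D\chi)^{-1}$, plus a Bony remainder. Then replace $\nabla f\circ\chi$ by $\chi^*(\nabla f)+T_{D\nabla f\circ\chi}\chi$, exactly as in the proof of Lemma~\ref{L1}.

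The expected algebraic cancellation is between two groups of terms. The first group is $\cn T_{\nabla f\circ\chi}(D\chi)^{-1}$ together with the derivative of $T_{D\nabla f\circ\chi}\chi$. The second is the term $T_{\nabla\Delta f\circ\chi}\cdot\chi$. The Piola identity $\partial_k((D\chi)^{-1})^k_j=0$ and the relation $\partial(T_au)=T_{\partial a}u+T_a\partial u$ are what make this work. After the cancellation the remaining main term is $\cn\bigl(T_{(D\chi)^{-1}}\chi^*(\nabla f)\bigr)$. Inserting Step 1 then yields $\cn\bigl(T_{(D\chi)^{-1}}T_{(D\chi)^{-\top}}\nabla\chi^*f\bigr)$. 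Replacing $T_{(D\chi)^{-1}}T_{(D\chi)^{-\top}}$ by $T_{(D\chi)^{-1}(D\chi)^{-\top}}$ gives $\LapChi(\chi^*f)$, modulo one more paraproduct-composition remainder.

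\emph{Step 3 (estimates), the main obstacle.} Each remainder has one of three forms:
\begin{itemize}
\item a Bony remainder $\RBony(a,u)$ with $a\in H^{s-1}$ (entries of $(D\chi)^{\pm1}$, using that $H^{s-1}$ is an algebra for $s>2$) and $u\in H^{\min(\mu-1,s-1)}$ a composed derivative of $f$;
\item a composition defect $(T_aT_b-T_{ab})$ applied to $\chi-\Id\in H^s$ or to $\nabla\chi^*f$;
\item a divergence of such terms.
\end{itemize}
The delicate point is to reach exactly the index $\mu+s-4$. This requires tracking that $(D^2f)\circ\chi\in L^\infty\cap H^{\min(\mu-2,s-1)}$ when $\mu>3$, with a tame composition estimate. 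It also requires using the paraproduct bounds $T_a\colon H^\sigma\to H^\sigma$ for $a\in L^\infty$, and $T_a\colon H^\sigma\to H^{\sigma+\rho}$ for $a\in H^{1+\rho}$ with negative index $\rho$, so that the low regularity of the symbol is traded against the high regularity of the argument.

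The worst terms should be $T_{(D\chi)^{-1}}T_{D\nabla f\circ\chi}\chi$-type defects under one divergence. Their regularity is $(s-1)+(\mu-2)+1-1-1=\mu+s-4$, which fixes the target index. The upper constraint $\mu\le s+3$ is what keeps $\mu+s-4$ within the range where these composition bounds hold. All constants depend polynomially or exponentially on $\lA\chi-\Id\rA_{H^s}$, via $(D\chi)^{-1}=\operatorname{adj}(D\chi)$ and composition with $\chi$, which produces the function $\mathcal F$ in \eqref{n570}.
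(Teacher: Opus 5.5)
Your route differs from the paper's. The paper never uses the nonlinear Piola identity. It applies the inverted form of Lemma~\ref{L1} twice (to $f$ and to $\partial_i f$), which gives $(D_1^2+D_2^2)\chi^*f$ with $D=T_{(D\chi)^{-\top}}\nabla$. Only at the end does it recognize $\LapChi$, using $D_k=\cn(T_{V_k}\,\cdot)$, which holds because the columns $V_k$ of $(D\chi)^{-1}$ are divergence free. Your single paralinearization of $\cn\bigl((D\chi)^{-1}\,\nabla f\circ\chi\bigr)$ uses Step~1 only once, but then the Step~2 cancellation must be checked by hand, and Piola plus Leibniz are not enough. Write $A=D\chi$ and $h_{lj}=(\partial_l\partial_j f)\circ\chi$. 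The terms with a third derivative of $f$ cancel $-T_{\nabla\Delta f\circ\chi}\cdot\chi$ by the chain rule $(A^{-1})^k_j\partial_k h_{lj}=(\partial_l\Delta f)\circ\chi$, up to a composition defect. The remaining terms $T_{h_{lj}\partial_k\chi^l}(A^{-1})^k_j$ and $T_{(A^{-1})^k_j}T_{h_{lj}}\partial_k\chi^l$ cancel only after you paralinearize $AA^{-1}=\Id$. That is, you need $T_{A^l_k}(A^{-1})^k_j+T_{(A^{-1})^k_j}A^l_k=\delta^l_j-\RBony(A^l_k,(A^{-1})^k_j)$ together with $T_{h_{jj}}1=0$. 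This leaves an extra error $T_{h_{lj}}\RBony(A^l_k,(A^{-1})^k_j)$, which lies in $H^{2s-3}$.

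The genuine gap is the index count in Step~3, which fails on part of the range. However smooth $f$ is, $\nabla f\circ\chi$ and $D\nabla f\circ\chi$ are no better than $H^s$ (Proposition~\ref{P:2025} only allows $\sigma\le s$), and $(D\chi)^{\pm1}$ are no better than $H^{s-1}\subset C^{s-2}$. So the following are all capped at $H^{2s-3}$:
\begin{itemize}
\item the term $\cn\RBony((D\chi)^{-1},\nabla f\circ\chi)$;
\item the Step~1 contribution $\cn T_{(D\chi)^{-1}}T_{(D\chi)^{-\top}}(R_1+R_2)$;
\item the error above;
\item the defect $\bigl(T_{(D\chi)^{-1}}T_{D\nabla f\circ\chi}-T_{(D\chi)^{-1}D\nabla f\circ\chi}\bigr)\chi$ under the divergence, since Theorem~\ref{T:para3} gains at most $s-2$ derivatives on it.
\end{itemize}
Your count $(s-1)+(\mu-2)+1-1-1$ lets the gain grow with $\mu$, but it saturates once $\mu>s+1$. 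What your argument actually proves is $R_\Delta\in H^{\min\{\mu+s-4,\,2s-3\}}$, i.e.\ the proposition for $\mu\in(3,s+1]$.

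No better argument reaches $\mu=s+3$. Take the shear $\chi(x)=(x_1+a(x_2),x_2)$ and $f=\cos x_1$. Up to much smoother terms, $\chi^*f=\RE\bigl(e^{ix_1}\mathcal{R}(x_2)\bigr)$, where $\mathcal{R}=e^{ia}-iT_{e^{ia}}a$ is a one-dimensional paralinearization remainder. Its quadratic part $-\tfrac12\RBony(a,a)$ is in general no better than $H^{2s-1/2}$. Since $\Delta f=-f$, and here $\LapChi$ is $\partial_2^2$ plus terms with at most one $\partial_2$, one gets $R_\Delta=-\RE\bigl(e^{ix_1}\mathcal{R}''\bigr)$ modulo $H^{2s-3/2}$. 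Hence \eqref{n570} fails for $\mu>s+\tfrac32$.

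The paper's proof overreaches in the same way. Its Claim is stated for $\rho\le s+2$ but applied with $\rho=\mu$, and its bound on $R_1$ treats $(\nabla g)\circ\chi$ as an $H^{\rho-1}$ function. In the only use of the proposition ($f=\psi$, $\mu=s+2$, in the proof of Proposition~\ref{P:4.1}), the weaker bound $H^{2s-3}$ suffices, since $2s-3\ge s+\min\{s-3,1\}$.
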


\begin{proof}
The proof proceeds in three steps. We first derive an algebraic identity for
the conjugated Laplacian. We then identify the principal part with the
divergence form operator $\LapChi$. Finally, we establish the required
regularity estimates for the remainder.

\smallbreak
\noindent\textit{Step 1: Algebraic identity.}
We begin by expressing $\chi^*(\nabla f)$ in terms of $\nabla(\chi^* f)$.
Recall the identity from Lemma~\ref{L1}:
$\nabla (\chi^* f) = T_{(D\chi)^\top} (\chi^*(\nabla f)) + R_1 + R_2$.
Although the operator $T_{(D\chi)^\top}$ is not strictly invertible, it admits
an approximate inverse $T_{(D\chi)^{-\top}}$. Indeed, since the symbol
$(D\chi)^\top$ belongs to $C^{r-1}$ with $r=s-1>1$, the composition calculus (Theorem~\ref{T:para3})
ensures that $T_{(D\chi)^{-\top}} T_{(D\chi)^\top} = \Id + \mathcal{E}$,
where the error $\mathcal{E}$ is smoothing of order $-(r-1)$.
This yields the expression:
\begin{equation}\label{n10b}
\chi^*(\nabla f)=T_{(D\chi)^{-\top}} \nabla (\chi^* f) + r(f),
\end{equation}
where the term $r(f)$ collects all regularizing remainders:
\be\label{n569}
r(f) \defn - T_{(D\chi)^{-\top}}(R_1+R_2) + (\Id - T_{(D\chi)^{-\top}}T_{(D\chi)^{\top}}) \chi^*(\nabla f).
\ee
Since $\det(D\chi) = 1$, we have
$$
(D\chi)^{-\top} = \begin{pmatrix} \partial_2 \chi^2 & -\partial_1 \chi^2 \\ 
-\partial_2 \chi^1 & \partial_1 \chi^1 \end{pmatrix}.
$$
Consequently, the paradifferential operator $D \defn T_{(D\chi)^{-\top}}\nabla$
can be written as a vector of operators $D=(D_1,D_2)^\top$ defined by:
\begin{align*}
D_1 &\defn T_{\partial_2 \chi^2} \partial_1 - T_{\partial_1 \chi^2} \partial_2, \\
D_2 &\defn -T_{\partial_2 \chi^1} \partial_1 + T_{\partial_1 \chi^1} \partial_2.
\end{align*}
Denoting the components of the remainder by $r(f)=(r_1(f),r_2(f))^\top$,
equation \eqref{n10b} reads component-wise as
$\chi^*(\partial_i f) = D_i(\chi^* f) + r_i(f)$ for $i=1,2$.
We now iterate these identities to compute $\chi^*(\Delta f)$.
Applying the decomposition to $g = \partial_1 f$ yields:
\begin{align*}
\chi^*(\partial_1^2 f) = \chi^*(\partial_1 g)
&= D_1(\chi^* g) + r_1(g) \\
&= D_1(D_1(\chi^* f) + r_1(f)) + r_1(\partial_1 f) \\
&= D_1^2(\chi^* f) + D_1(r_1(f)) + r_1(\partial_1 f).
\end{align*}
A similar identity holds for $\chi^*(\partial_2^2 f)$. Summing the two, we obtain
\begin{equation*}
\chi^*(\Delta f) = (D_1^2 + D_2^2)(\chi^* f) + R_{\triangle}(\chi,f),
\end{equation*}
where the total remainder is given by
$$
R_{\triangle}(\chi,f) \defn D_1(r_1(f)) + r_1(\partial_1 f) + D_2(r_2(f)) + r_2(\partial_2 f).
$$

\smallbreak
\noindent\textit{Step 2: Geometric equivalence.}
We now verify that the operator $P \defn D_1^2 + D_2^2$ coincides, up to
smoothing terms, with $\LapChi$. Let us examine the structure of $D_1$.
Using the Leibniz rule $\partial_i T_a = T_a \partial_i + T_{\partial_i a}$,
we rewrite it in divergence form:
$$
D_1 = \partial_1 T_{\partial_2 \chi^2} - \partial_2 T_{\partial_1 \chi^2} 
- (T_{\partial_1 \partial_2 \chi^2} - T_{\partial_2 \partial_1 \chi^2}).
$$
The zero-order term vanishes identically thanks to Schwarz's theorem.
Thus, $D_1$ acts as a divergence. Specifically, if we define the vector
fields $V_1 \defn (\partial_2 \chi^2, -\partial_1 \chi^2)^\top$ and
$V_2 \defn (-\partial_2 \chi^1, \partial_1 \chi^1)^\top$, we have
$D_k = \cn(T_{V_k} \cdot)$ for $k=1,2$.
Notice that $V_1$ and $V_2$ correspond exactly to the columns of $(D\chi)^{-1}$.
It follows that the sum of squares becomes:
\begin{align*}
D_1^2 + D_2^2 &= \cn \left( T_{V_1} (T_{V_1} \cdot \nabla) + T_{V_2} (T_{V_2} \cdot \nabla) \right) \\
&= \cn \left( (T_{V_1} T_{V_1}^\top + T_{V_2} T_{V_2}^\top) \nabla \right).
\end{align*}
Since the rows of $(D\chi)^{-\top}$ are $V_1^\top$ and $V_2^\top$, the
matrix product satisfies
$$
T_{(D\chi)^{-1}} T_{(D\chi)^{-\top}} = T_{V_1} T_{V_1}^\top + T_{V_2} T_{V_2}^\top.
$$
Using the composition calculus (Theorem~\ref{T:para3}) to replace the
product of paraproducts with the paraproduct of the product (up to a
smoothing error of order $-(r-1)$ whose operator norm is bounded 
by the right-hand side of \e{n570}), we conclude that $D_1^2+D_2^2$
is equivalent to $\LapChi$.

\smallbreak
\noindent\textit{Step 3: Estimates.}
We finally address the regularity of $R_{\triangle}(\chi, f)$. We first establish a
bound for the auxiliary remainder $r(g)$ as defined by~\e{n569}.

\smallskip
\noindent\textbf{Claim.} \textit{Let $\rho \in [2, s+2]$ be a real number.
For any $g \in H^\rho(\xT^2)$, the remainder $r(g)$ belongs to $H^{\rho+s-3}(\xT^2)$
and satisfies
$$
\lA r(g) \rA_{H^{\rho+s-3}} 
\le \mathcal{F}(\lA \chi - \Id \rA_{H^s})\lA g\rA_{H^\rho}.
$$}

\noindent\textit{Proof of the claim.}
The term $R_1(g) = \RBony((D\chi)^\top, (\nabla g) \circ \chi)$ 
involves $(D\chi)^\top \in H^{s-1}$ 
and $(\nabla g) \circ \chi \in H^{s-2}$ (see Theorem~\ref{T:Moser}
and Proposition~\ref{P:2025}). Applying the Bony remainder
estimate (Theorem~\ref{T:para2}), which maps $H^{s-1} \times H^{\rho-1}$
to $H^{s+\rho-3}$, we obtain:
$$
R_1 \in H^{\rho + s - 3}, \quad \text{with} \quad \lA R_1 \rA_{H^{\rho+s-3}}
\les \lA \chi -\Id\rA_{H^{s}} \lA g \rA_{H^\rho}.
$$

We now estimate 
the term
$$
R_2(g) =\left(T_{(D\chi)^\top}T_{(D\nabla g) \circ \chi}-T_{(D\chi)^\top((D\nabla g) \circ \chi)}\right) \chi.
$$
Firstly, we use the fact that a paraproduct by a function in $L^2(\xT^2)$ is an operator of order $1$ (see~\e{niS}) to get
\begin{align*}
\lA R_2(g)\rA_{H^{s-1}}
&\le \blA T_{(D\chi)^\top}T_{(D\nabla g) \circ \chi}\chi\brA_{H^{s-1}}
+\blA T_{(D\chi)^\top((D\nabla g) \circ \chi)} \chi\brA_{H^{s-1}}\\
&\les \left(\blA (D\chi)^\top\brA_{L^\infty}
\lA (D\nabla g) \circ \chi\rA_{L^2}
+\blA (D\chi)^\top((D\nabla g) \circ \chi)\brA_{L^2}\right)
\lA\chi-\Id\rA_{H^{s}}
\\
& \le \mathcal{F}(\lA \chi-\Id\rA_{H^{s}}) \lA g\rA_{H^2}.
\end{align*}
On the other hand, consider a real number $3<\mu\le s+2$ and $g\in H^{\mu}$. 
Then $\partial^2 g \circ \chi$ belongs to $H^{\mu-2}\subset C^{\mu-3}$. Then, 
according to the composition calculus for 
paraproducts (see Theorem~\ref{T:para3}), the remainder $R_2(g)$ satisfies
\begin{align*}
\lA R_2(g)\rA_{H^{s+\mu-3}}
&\les \blA (D\chi)^\top\brA_{C^{\mu-3}} 
\lA D\nabla g \circ \chi\rA_{C^{\mu-3}}\lA \chi-\Id\rA_{H^{s}}\\
&\le \mathcal{F}(\lA \chi-\Id\rA_{H^{s}})
\lA g\rA_{H^\mu}.
\end{align*}
By interpolating the two previous inequalities we have
$$
\forall \rho \in [2,s+2],\quad 
\lA R_2(g)\rA_{H^{s+\rho-3}}\le \mathcal{F}(\lA \chi-\Id\rA_{H^{s}})
\lA g\rA_{H^\rho}.
$$
Finally, using again Theorem~\ref{T:para3}, we estimate the inversion error
$$
(\Id - T_{(D\chi)^{-\top}} T_{(D\chi)^\top}) \chi^*(\nabla g)
$$
as the action of a smoothing operator of order $-(s-2)$ to a function in $H^{\rho-1}$,
resulting in regularity $H^{\rho+s-3}$. This proves the claim.

\smallskip
We now apply this claim to the terms composing $R_{\triangle}(\chi,f)$ with $f \in H^\mu(\xT^2)$ where $\mu \in (3, s+3]$.
\begin{itemize}
\item For the terms $r_i(\partial_i f)$, we set $g = \partial_i f$ which 
belongs to $H^{\rho}$ with $\rho=\mu-1\in (2, s+2]$. Then, we apply
the claim to deduce that $r(\partial_i f) \in H^{(\mu-1)+s-3} = H^{\mu+s-4}$.
\item For the terms $D_i(r_i(f))$, we set $g=f \in H^\mu$. The claim implies
$r(f) \in H^{\mu+s-3}$. Since $D_i$ is a paradifferential operator
of order 1, it maps $H^{\mu+s-3}$ to $H^{\mu+s-4}$.
\end{itemize}
Combining these estimates, we conclude that $R_{\triangle}(\chi,f) \in H^{\mu+s-4}$.
\end{proof}

\subsection{Parametrix Construction}

We seek a parametrix (that is, an approximate inverse) 
for the operator $\LapChi$. Instead of using the general 
symbolic calculus of para-differential operators, we present 
here a self-contained construction based on the specific 
structure of homogeneous symbols in dimension~$2$. This 
construction is simpler and allows us to obtain sharp results 
in terms of regularity.

\subsubsection{Preliminaries}
Before proceeding, we recall the definition of the order 
of an operator in the Sobolev scale.
\begin{definition}[Operator of order $m$]
Let $m \in \xR$. A linear operator $T$ 
is said to be of \emph{order $m$} if, 
for every $s \in \xR$, it defines a bounded 
operator from $H^s(\xT^2)$ to $H^{s-m}(\xT^2)$.
\end{definition}

We also introduce the Beurling transform, 
which serves as the elementary building block 
for our parametrix. Following 
classical definitions in complex analysis 
(see e.g. \cite{zbMATH01006676}), this transform 
is a singular integral operator which acts as the 
complex analogue of the Hilbert transform. 
In the periodic setting, it is most easily defined as a Fourier multiplier.

\begin{definition}The Beurling transform $B$ 
is the Fourier multiplier with symbol:
$$
\sigma(B)(\xi) = \frac{(\xi_1+i\xi_2)^2}{|\xi|^2} = e^{2i\theta},
\quad \text{where } \xi \in \xZ^2 \setminus \{0\} \text{ and } \theta = \arg(\xi).
$$
It can be expressed via the classical Riesz transforms $R_j$ (the Fourier multipliers with symbols $-i\xi_j/|\xi|$) as:
$$
B = R_2^2 - R_1^2 - 2i R_1 R_2.
$$
Since $|\sigma(B)(\xi)| = 1$, 
the operator $B$ defines an isometry on all Sobolev spaces.
\end{definition}

\subsubsection{The Elliptic Parametrix}

We can now construct the parametrix for $\LapChi$. 

\begin{lemma}[Elliptic Parametrix]\label{L:2.5}
Assume $\chi\in\SDiff^s(\xT^2)$ for some $s > 2$. There exists a sequence of bi-periodic functions $(m_k(x))_{k\in\xZ}$ 
belonging to $H^{s-2}(\xT^2)$ such that the operator
$$
Q \defn \sum_{k \in \xZ} T_{m_k} B^k \Delta^{-1}
$$
is a parametrix for $\LapChi$. Specifically, we have
$$
Q \LapChi = \Id + S,
$$
where $S$ is a regularizing operator of order $-\min\{s-2,1\}$. 
Explicitly, for any $\sigma \in \xR$, there exists a function $\mathcal{F}\colon\xR_+\to\xR_+$  such that:
$$
\lA S f \rA_{H^{\sigma+\min\{s-2,1\}}} \le \mathcal{F}(\lA \chi-\Id\rA_{H^{s}})\lA f \rA_{H^{\sigma}}.
$$
Moreover, there exists $\delta' > 0$ such that the coefficients decay exponentially:
\be\label{n90}
\lA m_k \rA_{H^{s-1}} \le C e^{-\delta' |k|}\mathcal{F}(\lA \chi-\Id\rA_{H^{s}}),
\ee
which ensures the absolute convergence of the series in the operator topology.
\end{lemma}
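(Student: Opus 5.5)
We construct $Q$ by inverting the principal symbol of $\LapChi$ pointwise in $x$, expanding the inverse in Fourier modes in the angular variable, and then quantizing with paraproducts. The symbol is $-\langle g^{-1}(x)\xi,\xi\rangle$ with $g^{-1}=(D\chi)^{-1}(D\chi)^{-\top}$. Its entries are quadratic in $D\chi$, so they lie in $H^{s-1}$, and since $\det g^{-1}=1$ the matrix is uniformly positive definite. Writing $\xi=|\xi|e^{i\theta}$ gives
$$
\langle g^{-1}(x)\xi,\xi\rangle=|\xi|^2\bigl(a(x)+\RE(b(x)e^{2i\theta})\bigr),
$$
where $a=\tfrac12\operatorname{tr} g^{-1}$ and $b$ is a complex combination of the entries. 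Ellipticity together with $\det=1$ gives $a\ge 1$ and $a^2-|b|^2=1$, so $|b|/a\le\kappa<1$. Here $\kappa$ depends only on $\lA D\chi\rA_{L^\infty}$, and hence, by Sobolev embedding for $s>2$, only on $\lA\chi-\Id\rA_{H^s}$.

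The next step is the expansion of the inverse symbol. We have
$$
\frac{1}{a+\RE(be^{2i\theta})}=\sum_{k\in\xZ}m_k(x)e^{2ik\theta}.
$$
This is an explicit geometric series: with $q=b/(a+1)$ one gets $m_0=1$, $m_k=(-\bar q)^{k}$ for $k>0$, and $m_{-k}=\overline{m_k}$, up to normalizations I will fix. Since $|q|\le\kappa'<1$, we obtain
$$
\lA m_k\rA_{L^\infty}\le \kappa'^{|k|}.
$$
For the $H^{s-1}$ norm, the Moser estimates (Theorem~\ref{T:Moser}) give $\lA q^k\rA_{H^{s-1}}\le C|k|\,\lA q\rA_{L^\infty}^{|k|-1}\lA q\rA_{H^{s-1}}$, and this is absorbed by a slightly smaller rate $e^{-\delta'|k|}$, which is \e{n90}. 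Because $e^{2ik\theta}$ is the symbol of $B^k$, and the symbol of $\Delta^{-1}$ is $-|\xi|^{-2}$, the operator $Q=\sum_k T_{m_k}B^k\Delta^{-1}$ (signs adjusted) has principal symbol inverse to that of $\LapChi$. Since $B$ is an isometry and $\lA T_{m}\rA\les\lA m\rA_{L^\infty}$, the series converges absolutely in operator norm.

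The remaining step is to compute $Q\LapChi$, and this is where the main work lies. Write
$$
\LapChi=\cn T_{g^{-1}}\nabla=T_{g^{ij}}\partial_i\partial_j+T_{\partial_i g^{ij}}\partial_j .
$$
The first-order part $T_{\partial_ig^{ij}}\partial_j$ has a coefficient in $H^{s-2}$. When $s-2>1$ this coefficient is bounded, so $Q$ composed with it has order $-1$. When $s-2\le1$, the paraproduct by an $H^{s-2}$ function only loses $3-s$ derivatives (estimate \e{niS}), giving order $-(s-2)$. Hence this part is of order $-\min\{s-2,1\}$.

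For the principal part, the Fourier multiplier $\partial_i\partial_j\Delta^{-1}$ is a combination of $\Id$, $B$ and $B^{-1}$, so $T_{g^{ij}}\partial_i\partial_j=\bigl(T_a+\tfrac12(T_bB+T_{\bar b}B^{-1})\bigr)\Delta$. Next we commute $B^k\Delta^{-1}$ past $T_a$ and $T_b$. The commutator $[B^k,T_c]$ with $c\in H^{s-1}\subset C^{s-2}$ is of order $-\min\{s-2,1\}$, with norm growing at most polynomially in $|k|$, since $B^k$ has symbol seminorms of size $O(|k|^N)$. Then $T_{m_k}T_c$ is replaced by $T_{m_kc}$ using Theorem~\ref{T:para3}, again with an error of order $-\min\{s-2,1\}$. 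After these replacements, the algebraic identity $\sum_k m_k e^{2ik\theta}\,(a+\RE(be^{2i\theta}))=1$, read coefficientwise as a convolution recursion on the $m_k$, makes the principal terms telescope to $\Id$. The polynomial growth in $|k|$ of the errors is dominated by $e^{-\delta'|k|}$, so the total error $S$ is summable, is of order $-\min\{s-2,1\}$, and has norm bounded by $\mathcal{F}(\lA\chi-\Id\rA_{H^s})$.

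The main obstacle is obtaining these uniform-in-$k$ commutator bounds with only polynomial growth. To do this, I would estimate $[B^k,T_c]$ dyadically, using the fact that on the $j$-th dyadic block the symbol $e^{2ik\theta}$ has $\xi$-derivatives bounded by $C|k|\,2^{-j}$.
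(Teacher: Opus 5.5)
Your proposal is correct and follows essentially the same route as the paper. Both arguments quantize the angular Fourier expansion of the inverse principal symbol as $\sum_k T_{m_k}B^k\Delta^{-1}$. Both split $\LapChi$ into $\bigl(T_a+\tfrac12 T_bB+\tfrac12 T_{\bar b}B^{-1}\bigr)\Delta$ plus the first-order term $T_{\partial_i g^{ij}}\partial_j$. Both absorb the commutator and composition errors, which grow polynomially in $k$, using the exponential decay of the $m_k$. And both close with the identity $m\tilde p=1$ read coefficientwise.

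Your only departure is that you compute the $m_k$ explicitly from $a^2-|b|^2=\det g^{-1}=1$, which gives a decay rate depending only on $\lA D\chi\rA_{L^\infty}$; the paper instead just invokes analyticity in $\theta$. Two small corrections to that computation:
\begin{itemize}
\item For $k>0$, the coefficient of $e^{2ik\theta}$ is $(-q)^k$ with $q=b/(a+1)$. The conjugate $(-\bar q)^k$ is the coefficient of $e^{-2ik\theta}$.
\item The Moser bound for $q^k$ in $H^{s-1}$ carries a factor $|k|^N$ rather than $|k|$. This is harmless, since it is still absorbed by a slightly smaller exponential rate.
\end{itemize}
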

\begin{proof}
The proof is decomposed into three steps.

\subsection*{1. Symbol Analysis}
The principal symbol of $\LapChi$ is 
the quadratic form $p(x,\xi) = \big| (D\chi(x))^{-\top}\xi \big|^2$. Let us denote the 
matrix $M(x) = (D\chi)^{-1}(D\chi)^{-\top}$. We can write
$$
p(x,\xi) = \langle M(x)\xi, \xi \rangle_{L^2} = M_{11}\xi_1^2 + 2M_{12}\xi_1\xi_2 + M_{22}\xi_2^2.
$$
We introduce the angular profile $\tilde{p}(x,\theta)$ defined by restriction to the circle: 
in polar coordinates ($\xi_1 = |\xi|\cos\theta, \xi_2 = |\xi|\sin\theta$), 
we have $p(x,\xi) = |\xi|^2 \tilde{p}(x,\theta)$ with:
$$
\tilde{p}(x,\theta) = \frac{M_{11}+M_{22}}{2} + \frac{M_{11}-M_{22}}{2}\cos(2\theta) + M_{12}\sin(2\theta).
$$
This formula shows that $\theta\mapsto\tilde{p}(x,\theta)$ is a trigonometric polynomial of degree 2, which is $\pi$-periodic (and not only $2\pi$-periodic).

Moreover, since $\chi$ is a diffeomorphism, 
there exists a constant $c>0$ such that $\tilde{p}(x,\theta) \ge c$ for all $x\in\xR^2$ and for 
all $\theta\in\mathbb{S}^1$. We define the symbol of the parametrix as the inverse of this profile:
$$
m(x,\theta) \defn  \frac{1}{\tilde{p}(x,\theta)}.
$$
By construction, the function $\theta \mapsto m(x,\theta)$ is real-analytic and $\pi$-periodic. 
Consequently, its Fourier series only involves even frequencies. We write:
$$
m(x,\theta) = \sum_{k \in \xZ} m_k(x) e^{2ik\theta}, \quad \text{where } m_k(x) \defn \frac{1}{2\pi} \int_{0}^{2\pi} m(x,\theta) e^{-2ik\theta} \diff \theta.
$$
Standard results on analytic periodic functions 
imply the exponential decay of the coefficients. 
The regularity in $x$ follows from the $H^{s-1}$ 
regularity of the metric coefficients $M(x)$ 
(since $D\chi \in H^{s-1}$) and standard nonlinear  
estimates (see Theorem~\ref{T:Moser}). Which gives the 
desired estimate~\e{n90}.

\subsection*{2. Definition of the approximate inverse}
We define the operator $Q$ by quantizing this series. Observe 
that the symbol of $B$ satisfies $\sigma(B)(\xi) = e^{2i\theta}$. 
We identify the term $e^{2ik\theta}$ with the iterate $B^k$ and set
$$
Q \defn \sum_{k \in \xZ} T_{m_k} B^k \Delta^{-1}.
$$
Since $\sigma(B)$ has modulus 1, $B$ is an 
isometry on Sobolev spaces ($\| B^k \|_{\mathcal{L}(H^\sigma)} = 1$). 
The absolute convergence of the series for $Q$ thus follows 
directly from the exponential decay of $\lA m_k \rA_{L^\infty}$ 
and the operator norm estimate for paraproducts (cf Theorem~\ref{T:para1}) implies that $Q$ is of order $-2$.

\subsection*{3. Remainder Estimate}
We explicitly compute the composition $Q \LapChi$ to isolate the remainder. 
To do this, first, we decompose $\LapChi$ using the angular Fourier expansion 
of its symbol. Recall that $\tilde{p}(x,\theta)$ 
is a trigonometric polynomial of degree $2$. 
We can write it as $\tilde{p}(x,\theta) = \sum_{j=-1}^{1} p_j(x) e^{2ij\theta}$. 
By definition of the quantization, we have:
$$
\LapChi = \left( \sum_{j=-1}^{1} T_{p_j} B^j \right) \Delta + R_{\text{lin}},
$$
where $R_{\text{lin}}=T_{\partial_i M_{ij}} \partial_j $ corresponds 
to the sub-principal term. By using the estimate~\e{boundpara} with $s_1=s-2>0$, 
we verify that this operator satisfies
$$
\lA R_{\text{lin}} f \rA_{H^{\sigma}} \lesssim \lA f \rA_{H^{\sigma+1 +\max\{3-s,0\}  }}.
$$

We now apply $Q = \sum_{k \in \xZ} T_{m_k} B^k \Delta^{-1}$ to this decomposition:
$$
Q \LapChi = \sum_{k \in \xZ} \sum_{j=-1}^{1} T_{m_k} B^k \Delta^{-1} T_{p_j} B^j \Delta + Q R_{\text{lin}}.
$$
Our aim is to show that the 
remainder $Q \LapChi - \Id$ 
is of order $\min\{s-2,1\}$. Notice that, since $Q$ is 
of order $-2$, 
the term $Q R_{\text{lin}}$ is clearly 
of order $-1+\max\{3-s,0\}$ (regularizing), so we need only to focus on the double sum. 
We commute the operator $B^k \Delta^{-1}$ (which is of order $-2$) directly with the paraproduct $T_{p_j}$. We write:
$$
B^k \Delta^{-1} T_{p_j} = T_{p_j} B^k \Delta^{-1} + [B^k \Delta^{-1}, T_{p_j}].
$$
The commutator $[B^k \Delta^{-1}, T_{p_j}]$ is of order $-2-\min\{s-2,1\}$ with an operator norm that grows at 
most polynomially in $k$ (due to the derivatives of the symbol $e^{2ik\theta}$). 
Indeed, according to the commutator estimate 
(Theorem \ref{T:para4} in Appendix),
\be\label{n100}
\lA [B^k \Delta^{-1}, T_{p_j}]\rA_{\mathcal{L}(H^\sigma,H^{\sigma+2+\min\{s-2,1\}})}\le P(k)\lA p_j\rA_{C^{s-1}},
\ee
for some polynomial $P$. Therefore, the term involving this commutator, once 
composed with $\Delta$ (order 2), is of order $-\min\{s-2,1\}$.
For the principal part, we use the fact that the Fourier multipliers 
commute to write $B^k \Delta^{-1} B^j \Delta = B^{k+j}$.

Thus, up to order $-1$ remainders, the general term simplifies to $T_{m_k} T_{p_j} B^{k+j}$.
Using the composition rule for paraproducts (Theorem \ref{T:para3} in Appendix), 
we have $T_{m_k} T_{p_j} = T_{m_k p_j} + R_{\text{alg}}(m_k, p_j)$, where the remainder is of order $-(s-1)$, satisfying
\be\label{n101}
\lA R_{\text{alg}}(m_k, p_j)\rA_{\mathcal{L}(H^\sigma,H^{\sigma+s-1})}\lesssim \lA m_k\rA_{C^{s-2}}\lA p_j\rA_{C^{s-1}}. 
\ee  
Substituting these expansions back into the sum and re-indexing with $n = k+j$, we obtain:
$$
Q \LapChi = \sum_{n \in \xZ} T_{c_n} B^n + R_{\text{total}} \quad\text{where}\quad 
c_n(x) \defn \sum_{j=-1}^{1} m_{n-j}(x) p_j(x).
$$
The remainder $R_{\text{total}}$ collects all the error terms. 

Now the exponential decay of the coefficients $m_k$ 
(see~\e{n90}) compensates for the polynomial 
factor $P(k)$ in \e{n100} and controls the other error terms. This 
ensures that the series defining $R_{\text{total}}$ converges absolutely in the operator 
norm $\mathcal{L}(H^\sigma, H^{\sigma+\min\{s-2,1\}})$.

To conclude the proof, it remains only to prove that 
$$
\sum_{n \in \xZ} T_{c_n} B^n = \Id.
$$
To see this, notice that $c_n(x)$ is exactly the $n$-th 
Fourier coefficient of the product $m(x,\theta) \tilde{p}(x,\theta)$ 
(relative to the variable $2\theta$). 
Since $m = 1/\tilde{p}$ by construction, this product is identically equal 
to $1$. Therefore, its Fourier coefficients are $c_0(x) = 1$ and $c_n(x) = 0$ for $n \neq 0$. The proof is complete. 
\end{proof}
\section{The vector field behind Shnirelman's unknown}\label{S:Shnirelman}

Following the approach of Shnirelman \cite{zbMATH01026417}, our analysis 
centers on the vector field\footnote{We may clarify the interpretation of $X$ as follows. 
Recall the identity $\Phi^{-1}\circ \Phi = \Id$. Applying the paracomposition definition to 
the vector field $\Phi^{-1}$ yields
$$
\Phi^*(\Phi^{-1}) + T_{D(\Phi^{-1})\circ \Phi} \Phi = \Id.
$$
By the chain rule, $D(\Phi^{-1})\circ \Phi = (D\Phi)^{-1}$. Substituting this gives the identity
$X = \Id - \Phi^*(\Phi^{-1})$.
}
$$
X \defn T_{(D\Phi)^{-1}} \Phi.
$$
While related works \cite{zbMATH01026417,said2024small} 
focus on the properties of $\curl X$, a finer analysis 
of the full vector field $X$ is required here. Our main 
new observation is that the evolution equation simplifies 
significantly when the velocity field has the specific structure $u = \nabla^\perp \psi$.

\begin{proposition}\label{L:4.3}
The evolution equation for $X=T_{(D\Phi)^{-1}} \Phi$ simplifies to
$$
\partial_tX = \nabla^\perp (\Phi^* \psi) - \RBony((D\Phi)^{-1},u \circ \Phi).
$$
\end{proposition}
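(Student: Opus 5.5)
The plan is to differentiate $X=T_{(D\Phi)^{-1}}\Phi$ in time directly, and then recognize the result by applying Lemma~\ref{L2} with $\chi=\Phi_t$ and $f=\psi$, where $u=\nabla^\perp\psi$. The two expressions should coincide up to the single Bony remainder.

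\emph{Step 1: time derivative of $X$.} Paraproducts are bilinear and the Littlewood--Paley cut-offs do not depend on $t$. Since $\Phi=\Id+\tilde\Phi$ and $T_a\Id=0$ by~\eqref{convention}, we may differentiate the periodic part only, which gives
$$
\partial_t X = T_{\partial_t (D\Phi)^{-1}}\Phi + T_{(D\Phi)^{-1}}\,\partial_t\Phi .
$$
From~\eqref{defi:Flow}, $\partial_t\Phi=u\circ\Phi$. Differentiating in $x$ gives $\partial_t D\Phi=(Du\circ\Phi)\,D\Phi$, hence
$$
\partial_t (D\Phi)^{-1} = -(D\Phi)^{-1}\,(Du\circ\Phi).
$$
Therefore
$$
\partial_t X = T_{(D\Phi)^{-1}}(u\circ\Phi) - T_{(D\Phi)^{-1}(Du\circ\Phi)}\Phi .
$$

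\emph{Step 2: apply Lemma~\ref{L2} to $\psi$.} Lemma~\ref{L2} with $\chi=\Phi$, $f=\psi$ (the stream function is in $H^{s+2}\subset H^2$) gives
$$
\nabla^\perp(\Phi^*\psi)=T_{(D\Phi)^{-1}}\big(\Phi^*\nabla^\perp\psi\big)+R_1'+R_2'.
$$
We then unfold each term using the definitions and $\nabla^\perp\psi=u$.
\begin{itemize}
\item By definition of the paracomposition, $\Phi^*\nabla^\perp\psi = u\circ\Phi - T_{Du\circ\Phi}\Phi$.
\item The first remainder is $R_1'=\RBony((D\Phi)^{-1},u\circ\Phi)$.
\item The second remainder is
$$
R_2'=\big(T_{(D\Phi)^{-1}}T_{Du\circ\Phi}-T_{(D\Phi)^{-1}(Du\circ\Phi)}\big)\Phi.
$$
\end{itemize}
Substituting, the composed term $T_{(D\Phi)^{-1}}T_{Du\circ\Phi}\Phi$ appears once with a minus sign and once with a plus sign, so it cancels. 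This leaves
$$
\nabla^\perp(\Phi^*\psi)=T_{(D\Phi)^{-1}}(u\circ\Phi)-T_{(D\Phi)^{-1}(Du\circ\Phi)}\Phi+\RBony((D\Phi)^{-1},u\circ\Phi).
$$

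\emph{Step 3: compare.} The first two terms on the right are exactly $\partial_t X$ from Step~1. Subtracting the remainder then yields the claimed identity
$$
\partial_tX=\nabla^\perp(\Phi^*\psi)-\RBony((D\Phi)^{-1},u\circ\Phi).
$$

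\emph{Main obstacle.} The algebra above is short, and the delicate point is bookkeeping rather than estimates. We must check that the matrix conventions are consistent: in $R_2'$ the symbol is $(D\Phi)^{-1}(D\nabla^\perp\psi)\circ\Phi$ with the same ordering as in $\partial_t(D\Phi)^{-1}$, and the vector paraproducts contract indices in the same way. We must also justify differentiating under the paraproduct in time. For the latter, I would use Theorem~\ref{T:classique}: for $s>1$, $\Phi-\Id\in C^1_t H^{s+1}$ and $u\in C_tH^{s+1}$, which makes $(D\Phi)^{-1}$ a $C^1$ curve in $L^\infty$. The continuity of $T_a$ in $a\in L^\infty$ (Theorem~\ref{T:para1}) then allows the product rule for $\partial_t$.
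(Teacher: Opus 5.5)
Your proposal is correct and follows the paper's proof: you use the same product-rule differentiation of $T_{(D\Phi)^{-1}}\Phi$ with $\partial_t(D\Phi)^{-1}=-(D\Phi)^{-1}(Du\circ\Phi)$, the same application of Lemma~\ref{L2} with $\chi=\Phi$, $f=\psi$, and the same cancellation. The only difference is bookkeeping. The paper packages $\bigl(T_{(D\Phi)^{-1}}T_{Du\circ\Phi}-T_{(D\Phi)^{-1}(Du\circ\Phi)}\bigr)\Phi$ as a remainder $R_C$ and observes $R_C=R_2'$, whereas you unfold $\Phi^*\nabla^\perp\psi$ directly and watch the composed term cancel; your justification of differentiating under the paraproduct in time is a small addition that the paper omits.
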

\begin{remark}
Two remarks are in order.

$(i)$ The fact that the remainder admits such a simple and 
compact form is due to a cancellation inherent 
to our definition of the paracomposition operator. 

$(ii)$ It is worth noting that this remainder 
exhibits a significant gain in regularity compared to the main term. 
Specifically, while the main term is in $H^{s+1}$ provided that $\psi\in H^{s+2}$ 
and $\Phi$ is Lipchitz, the remainder belongs to $H^{2s}$, thus possessing nearly double the regularity.
\end{remark}

\begin{proof}
The proof relies on two steps. First, we derive the general evolution equation for $X$. 
Second, we apply the specific structure $u = \nabla^\perp \psi$ to reveal the cancellation.

\textit{Step 1: General evolution equation.}
This step is parallel to the analysis in \cite{zbMATH01026417,said2024small}. 
We show that the dynamics of $X$ is governed by 
the para-pulled-back velocity, $T_{(D\Phi)^{-1}}(\Phi^*u)$, up to a smoothing remainder.
We begin by applying the product rule for differentiation:
\be\label{I10}
\partial_tX=\partial_t\left(T_{(D\Phi)^{-1}} \Phi\right) 
= T_{\partial_t(D\Phi)^{-1}}\Phi + T_{(D\Phi)^{-1}} (\partial_t \Phi).
\ee
We analyze each term on the right-hand side of \e{I10}. 
For the first term, we compute $\partial_t (D\Phi)^{-1}$. Using the 
identity $\partial_t(A^{-1}) = -A^{-1} (\partial_t A) A^{-1}$ with $A = D\Phi$ 
and $\partial_t \Phi = u \circ \Phi$, we have
$$
\partial_t(D\Phi)^{-1} = -(D\Phi)^{-1} D(\partial_t \Phi) (D\Phi)^{-1} = -(D\Phi)^{-1} D(u \circ \Phi) (D\Phi)^{-1}.
$$
Since $D(u \circ \Phi) = (Du \circ \Phi) D\Phi$, this simplifies to
$$
\partial_t(D\Phi)^{-1} = -(D\Phi)^{-1}(Du \circ \Phi).
$$
The first term on the right-hand side of \e{I10} thus becomes $-T_{(D\Phi)^{-1} Du\circ \Phi}\Phi$.

For the second term on the right-hand side of \e{I10}, we use $\partial_t \Phi = u \circ \Phi$ 
and apply the paracomposition definition $u\circ \Phi = \Phi^*u + T_{Du\circ \Phi} \Phi$ to write
\begin{align*}
T_{(D\Phi)^{-1}} (\partial_t \Phi) 
&= T_{(D\Phi)^{-1}} (u\circ \Phi) \\
&= T_{(D\Phi)^{-1}} (\Phi^*u + T_{Du\circ \Phi} \Phi) \\
&= T_{(D\Phi)^{-1}}(\Phi^*u) + T_{(D\Phi)^{-1}} T_{Du\circ \Phi} \Phi.
\end{align*}
Combining these results in \e{I10}, we find
\begin{align*}
\partial_t\left(T_{(D\Phi)^{-1}} \Phi\right) &= -T_{(D\Phi)^{-1} Du\circ \Phi}\Phi
+ T_{(D\Phi)^{-1}}(\Phi^*u) + T_{(D\Phi)^{-1}} T_{Du\circ \Phi} \Phi \\
&= T_{(D\Phi)^{-1}}(\Phi^*u) + R_C(u,\Phi),
\end{align*}
where $R_C(u,\Phi) \defn \left(T_{(D\Phi)^{-1}}T_{Du\circ \Phi} - T_{(D\Phi)^{-1} Du\circ \Phi}\right)\Phi$.
This concludes the first step.

\textit{Step 2: Simplification for $u = \nabla^\perp \psi$.}
We have proved that
\be\label{I14}
\partial_tX = T_{(D\Phi)^{-1}}(\Phi^*u) + R_C(u,\Phi).
\ee
We now apply Lemma \ref{L2} (with $\chi=\Phi$ and $f=\psi$) to rewrite the first term on the right-hand side. 
Rearranging \e{n11} gives:
$$
T_{(D\Phi)^{-1}}(\Phi^*\nabla^\perp \psi) = \nabla^\perp (\Phi^* \psi) - R_1' - R_2',
$$
where $R_1' = \RBony((D\Phi)^{-1},\nabla^\perp \psi \circ \Phi)$ and $R_2' 
= \left(T_{(D\Phi)^{-1}}T_{D\nabla^\perp \psi\circ \Phi }-T_{(D\Phi)^{-1}D\nabla^\perp \psi\circ \Phi }\right) \Phi$.

Substituting this into \e{I14} yields
$$
\partial_tX = (\nabla^\perp (\Phi^* \psi) - R_1' - R_2') + R_C(u, \Phi).
$$
We now observe that $R_C$ exactly cancels $R_2'$. Since $u = \nabla^\perp \psi$, 
we have $Du = D\nabla^\perp \psi$ so that, by their respective definitions, $R_C$ and $R_2'$ are identical. 
The evolution equation for $X$ thus simplifies to
$$
\partial_tX = \nabla^\perp (\Phi^* \psi) - R_1',
$$
which is the desired result with $ R_1' = \RBony((D\Phi)^{-1},u \circ \Phi)$.
\end{proof}

We are now in a position to derive the main equation governing the evolution of the vector field $X$. 
We begin by relating the paracomposed vorticity to the stream function. 
\begin{proposition}\label{prop:4.3}
The vector field $X=T_{(D\Phi)^{-1}} \Phi$ solves the closed evolution equation
\begin{equation}\label{n:4.3}
\partial_t X+\nabla^\perp Q T_{\nabla\omega_0} \cdot X = \nabla^\perp Q \omega_0 -\mathcal{R},
\end{equation}
where  
\begin{align*}
\mathcal{R}&\defn- \nabla^\perp \left( S(\Phi^*\psi) 
+ Q R_{\triangle}(\Phi,\psi) \right) - \RBony((D\Phi)^{-1},u \circ \Phi)\\
&- \nabla^\perp Q \left(R_{\mathrm{alg}}\left((D\Phi)^{-\top},\nabla\omega_0\right)\cdot\Phi\right).
\end{align*}
and $S$ is the smoothing remainder introduced in Lemma \ref{L:2.5}.
\end{proposition}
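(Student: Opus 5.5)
The plan is to start from the identity of Proposition~\ref{L:4.3},
$$
\partial_t X = \nabla^\perp (\Phi^*\psi) - \RBony((D\Phi)^{-1},u\circ\Phi),
$$
and to rewrite $\Phi^*\psi$ in terms of $\omega_0$ and $X$. The tools are the parametrix of Lemma~\ref{L:2.5}, the conjugation identity of Proposition~\ref{P24}, and the Lagrangian identity~\eqref{E11}. The last remainder above is already the term $-\RBony((D\Phi)^{-1},u\circ\Phi)$ appearing in $-\mathcal{R}$, so nothing needs to be done with it.

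First, apply Lemma~\ref{L:2.5} with $\chi=\Phi_t\in\SDiff^{s+1}(\xT^2)$. Since $Q\LapChi=\Id+S$, we get
$$
\Phi^*\psi = Q\LapChi(\Phi^*\psi) - S(\Phi^*\psi).
$$
Next, use Proposition~\ref{P24} with $f=\psi$ and $\Delta\psi=\omega$. This gives $\LapChi(\Phi^*\psi)=\Phi^*\omega - R_\triangle(\Phi,\psi)$, hence
$$
\nabla^\perp(\Phi^*\psi)=\nabla^\perp Q(\Phi^*\omega) - \nabla^\perp\bigl(S(\Phi^*\psi)+QR_\triangle(\Phi,\psi)\bigr).
$$
The second term is the first part of $-\mathcal{R}$. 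The regularity hypotheses ($\psi\in H^{s+2}$, $\Phi-\Id\in H^{s+1}$ by Theorem~\ref{T:classique}) are comfortably within the ranges required by these results.

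The core step is to expand $\Phi^*\omega$ by its definition:
$$
\Phi^*\omega=\omega\circ\Phi - T_{\nabla\omega\circ\Phi}\cdot\Phi .
$$
By~\eqref{E11}, $\omega\circ\Phi=\omega_0$, which produces the source term $\nabla^\perp Q\omega_0$. For the paraproduct, differentiate~\eqref{E11} with the chain rule: $\nabla\omega_0=(D\Phi)^\top(\nabla\omega\circ\Phi)$, so $\nabla\omega\circ\Phi=(D\Phi)^{-\top}\nabla\omega_0$. In coordinates, $(\partial_j\omega)\circ\Phi=((D\Phi)^{-1})^i_j\,\partial_i\omega_0$. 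On the other hand,
$$
T_{\nabla\omega_0}\cdot X = T_{\partial_i\omega_0}T_{((D\Phi)^{-1})^i_j}\Phi^j .
$$
The composition rule (Theorem~\ref{T:para3}), $T_aT_b=T_{ab}+R_{\mathrm{alg}}(a,b)$, applied componentwise and contracted, then yields
$$
T_{\nabla\omega\circ\Phi}\cdot\Phi = T_{\nabla\omega_0}\cdot X - R_{\mathrm{alg}}\bigl((D\Phi)^{-\top},\nabla\omega_0\bigr)\cdot\Phi .
$$
Throughout, the convention $T_a\Id=0$ means that only the periodic part $\Phi-\Id$ is involved. Substituting, $\nabla^\perp Q(\Phi^*\omega)$ becomes
$$
\nabla^\perp Q\omega_0 - \nabla^\perp Q\,T_{\nabla\omega_0}\cdot X + \nabla^\perp Q\bigl(R_{\mathrm{alg}}((D\Phi)^{-\top},\nabla\omega_0)\cdot\Phi\bigr).
$$
Moving the $X$ term to the left-hand side gives~\eqref{n:4.3}, with exactly the stated $\mathcal{R}$.

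I expect the main obstacle to be bookkeeping rather than analysis. The index placement and transposes in $(D\Phi)^{-\top}$ versus $(D\Phi)^{-1}$ must match the vector/matrix paraproduct conventions of Section~\ref{S:paracomp}. The scalar contraction $T_{\nabla\omega_0}\cdot T_{(D\Phi)^{-1}}\Phi$ must be identified with $T_{(D\Phi)^{-\top}\nabla\omega_0}\cdot\Phi$ up to a single $R_{\mathrm{alg}}$ term whose sign and ordering match the statement. Finally, each operation ($Q$, $S$, $R_\triangle$) must be legitimately applied at the available regularity, which I would check against Lemma~\ref{L:2.5} and Proposition~\ref{P24}.
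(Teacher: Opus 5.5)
Your proposal follows the paper's proof step for step: use $Q\LapChi=\Id+S$ and Proposition~\ref{P24} to get $\Phi^*\psi=Q(\Phi^*\omega)-S(\Phi^*\psi)-QR_{\triangle}(\Phi,\psi)$, insert this into Proposition~\ref{L:4.3}, and expand $\Phi^*\omega$ using $\omega\circ\Phi=\omega_0$, $(\nabla\omega)\circ\Phi=(D\Phi)^{-\top}\nabla\omega_0$ and the paraproduct composition rule.

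The one thing to fix is your closing sign check, which does not come out as you claim. With $R_{\mathrm{alg}}(a,b)=T_aT_b-T_{ab}$, your own computation yields $\partial_t X+\nabla^\perp QT_{\nabla\omega_0}\cdot X=\nabla^\perp Q\omega_0-\nabla^\perp\bigl(S(\Phi^*\psi)+QR_{\triangle}(\Phi,\psi)\bigr)-\RBony((D\Phi)^{-1},u\circ\Phi)+\nabla^\perp Q\bigl(R_{\mathrm{alg}}\cdot\Phi\bigr)$. So the first two remainders enter with the signs they carry in $\mathcal{R}$, not in $-\mathcal{R}$. In addition, your ordering $T_{\partial_i\omega_0}T_{((D\Phi)^{-1})^i_j}$ produces $R_{\mathrm{alg}}$ with arguments $(\nabla\omega_0,(D\Phi)^{-1})$, which differs from $R_{\mathrm{alg}}((D\Phi)^{-\top},\nabla\omega_0)$ by a commutator of paraproducts. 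Both discrepancies come from a misprint in the stated $\mathcal{R}$ that the paper's proof also glosses over: it writes $T_{ab}=T_aT_b+R_{\mathrm{alg}}$ and silently commutes two paraproducts. This is harmless downstream, since only norms of these smoothing terms are used later. Still, you should state the identity as holding up to the signs and ordering of the remainders, rather than ``with exactly the stated $\mathcal{R}$''.
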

\begin{remark}
The equation~\e{n:4.3} highlights the linear driving mechanism: the growth of $X$ 
is governed by the operator $\nabla^\perp Q T_{\nabla\omega_0} \cdot$, forced by the initial 
vorticity profile $\nabla^\perp Q \omega_0$, up to smoothing terms.
\end{remark}
\begin{proof}
Recall that $\Delta \psi = \omega$. Applying the paracomposition operator and using the 
conjugation formula for the Laplacian from Proposition~\ref{P24} (with $\chi=\Phi$), we obtain
$$
\Phi^*\omega = \Phi^*(\Delta \psi) = \LapChi(\Phi^*\psi) + R_{\triangle}(\Phi,\psi).
$$
We now apply the parametrix $Q$ constructed in Lemma~\ref{L:2.5} to invert the 
operator $\LapChi$. Using the identity $Q \LapChi = \Id + S$, where $S$ is the smoothing remainder, we find
\begin{align*}
Q(\Phi^*\omega) &= Q\LapChi(\Phi^*\psi) + Q R_{\triangle}(\Phi,\psi) \\
&= (\Id + S)(\Phi^*\psi) + Q R_{\triangle}(\Phi,\psi).
\end{align*}
Solving for the stream function term yields:
\begin{equation}\label{eq:para pull backed stream}
\Phi^*\psi = Q(\Phi^*\omega) - S(\Phi^*\psi) - Q R_{\triangle}(\Phi,\psi).
\end{equation}
This expression can be substituted directly into the evolution equation for $X$ 
derived in Proposition~\ref{L:4.3}. Since $\partial_t X = \nabla^\perp (\Phi^* \psi) - R$, we have:
\begin{equation}\label{eq:proof evo X 1}
\partial_t X = \nabla^\perp Q(\Phi^*\omega) 
- \nabla^\perp \left( S(\Phi^*\psi) + Q R_{\triangle}(\Phi,\psi) \right) - \RBony((D\Phi)^{-1},u \circ \Phi).
\end{equation}
We must express the source term $\Phi^* \omega$ in terms of the initial data 
and the unknown $X$. By definition, $\Phi^*\omega = \omega\circ \Phi - T_{\nabla\omega\circ \Phi}\Phi$. 
Recall that, using the Lagrangian conservation of vorticity, we have $\omega\circ \Phi = \omega_0$. 
Furthermore, the chain rule implies $(\nabla\omega)\circ \Phi = (D\Phi)^{-\top}(\nabla\omega_0)$. 
We can therefore expand the paraproduct as follows:
\begin{align*}
T_{(\nabla\omega)\circ \Phi}\cdot\Phi 
&= T_{(D\Phi)^{-\top}(\nabla\omega_0)}\cdot\Phi=T_{(D\Phi)^{-\top}}
T_{\nabla\omega_0}\cdot\Phi+R_{\mathrm{alg}}\left((D\Phi)^{-\top},\nabla\omega_0\right)\cdot\Phi \\
&= T_{\nabla\omega_0} \cdot T_{(D\Phi)^{-1}}\Phi +R_{\mathrm{alg}}\left((D\Phi)^{-\top},\nabla\omega_0\right)\cdot\Phi \\
&= T_{\nabla\omega_0} \cdot X +R_{\mathrm{alg}}\left((D\Phi)^{-\top},\nabla\omega_0\right)\cdot\Phi,
\end{align*}
where we have used the composition estimate (Theorem~\ref{T:para3}) 
and recognized the definition of $X = T_{(D\Phi)^{-1}}\Phi$.
Substituting this back into the expression for $\Phi^*\omega$, we conclude that:
\begin{equation}\label{n105}
\Phi^*\omega = \omega_0 - T_{\nabla\omega_0}\cdot X - R_{\mathrm{alg}}\left((D\Phi)^{-\top},\nabla\omega_0\right)\cdot\Phi.
\end{equation}
Injecting this into \eqref{eq:proof evo X 1} gives the desired result.
\end{proof}

\section{On cusps and generic properties of singularities}\label{S:cusps}

In this section, we develop the quantitative tools 
required to extract a strict growth lower bound from 
the paradifferential evolution equation for $X$.  

First, in Section~\ref{S:HT}, we introduce a dense class of generic initial data, 
denoted $\mathcal{H}^s(\xT^2)$, which is characterized by a strictly 
non-degenerate global minimum and a slowly decaying Fourier tail. 
Second, in Section~\ref{S:Cusps}, we isolate the singular behavior of these functions near their minimum 
by defining localized H\"olderian cusps. We provide a 
comprehensive description of these profiles, establishing both their explicit 
high-frequency asymptotics and the regularizing effect of their remainders in physical space. 
Third, Section~\ref{S:MicroCusps} is devoted to the microlocal 
interaction between these cusps and the parametrix $\Delta Q$. 
We prove a symbol-freezing lemma, showing that the operator acts on 
the leading-order singularity essentially as a constant-coefficient multiplier. 
We use these tools in Section~\ref{S:Tails}, where we evaluate 
the high-frequency inner product $\langle \Delta Q g, P_\eps(g-m)^\alpha \rangle_{L^2}$. 
This yields the fundamental lower bound driving the small-scale generation argument.

\subsection{The set of unstable initial data}\label{S:HT}

Let $s > 3$. We introduce the class of functions $\mathcal{H}^s(\xT^2)$ 
that exhibit both a slow Fourier decay and a generic landscape 
for their global minimum.

\begin{definition}\label{defi:GP}
We denote by $\mathcal{H}^s(\xT^2)$ the set of real-valued functions $g \in H^s(\xT^2)$ 
satisfying the following four properties:
\begin{enumerate}[(i)]
\item\label{G:(i)} There exist a constant $c > 0$ and an integer $N \in \xN$ such that, 
for all $|n| \ge N$,
\be\label{n1197}
(1+|n|^2)^{s/2} \RE \hat{g}(n) \ge \frac{c}{|n| \ln(1+\la n\ra )}.
\ee
\item\label{G:(ii)} The function $g$ attains its global infimum at a unique point $x_0 \in \xT^2$.
\item\label{G:(iii)} The minimum is non-degenerate: the Hessian matrix $D^2 g(x_0)$ 
is positive definite.
\item\label{G:(iv)} The function has zero average, i.e., $\hat{g}(0) = 0$.
\end{enumerate}
\end{definition}
\begin{remark}
The spectral lower bound \eqref{n1197} ensures that the regularity of $g$ 
is sharp: indeed, $g \notin H^{s'}(\xT^2)$ for any $s' > s$. Consequently, we shall refer 
to the elements of $\mathcal{H}^s(\xT^2)$ as functions with \emph{exact} 
$H^s$ regularity.
\end{remark}

We begin by proving the following
\begin{proposition}\label{prop:density}
For all $s > 3$, the set $\mathcal{H}^s(\xT^2)$ is dense in the space $H^s_0(\xT^2)$.
\end{proposition}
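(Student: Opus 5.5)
Given $f\in H^s_0(\xT^2)$ and $\eps>0$, I will build $g\in\mathcal{H}^s(\xT^2)$ with $\lA f-g\rA_{H^s}<\eps$ in three steps. Each step perturbs the previous function by a small amount in $H^s$ and is designed not to destroy the properties already secured.

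\emph{Step 1: smoothing.} Truncate $f$ in frequency to a trigonometric polynomial $f_1=\sum_{|n|\le M}\hat f(n)e^{in\cdot x}$. It is real-valued, has zero mean, is $C^\infty$, and satisfies $\lA f-f_1\rA_{H^s}<\eps/3$.

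\emph{Step 2: a strict non-degenerate minimum.} Pick a point $x_0$ where $f_1$ attains its minimum. Add a small smooth bump $-\delta\,\theta(x-x_0)$, where $\theta\ge0$ is supported in a small ball, $\theta(x)=1-|x|^2$ near $0$, and $0$ is the unique maximum of $\theta$. Then $x_0$ becomes the unique global minimum of $f_2=f_1-\delta\theta(\cdot-x_0)+\delta\bar\theta$, where subtracting the mean $\bar\theta$ restores zero average. The Hessian at $x_0$ is $D^2f_1(x_0)+2\delta\Id$, which is positive definite because $D^2f_1(x_0)\ge0$. The $H^s$ cost is $O(\delta)$.

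The key quantitative point I keep from this step is a margin. For every $r>0$ there is $\kappa(r)>0$ with $f_2\ge f_2(x_0)+\kappa(r)$ outside $B(x_0,r)$, and on $B(x_0,r_0)$ the strict convexity gives $D^2f_2\ge\lambda\Id$ for some $\lambda>0$.

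\emph{Step 3: the heavy Fourier tail.} Set $h(x)=\sum_{|n|\ge N}\frac{e^{in\cdot x}}{(1+|n|^2)^{s/2}|n|\ln(1+|n|)}$. This $h$ is real (the sum is even in $n$), has zero mean, and lies in $H^s$, since $\sum_n|n|^{-2}\ln^{-2}|n|$ converges in 2D. Its tail $\lA h\rA_{H^s}$ tends to $0$ as $N\to\infty$.

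Define $g=f_2+\eta h$ with $\eta>0$ small and $N$ large. Then for $|n|\ge N$ we have $\hat f_2(n)=0$, because $f_2$ is smooth plus a bump. That is not quite exact: $\hat\theta$ is not compactly supported. I will handle this by choosing the Step~2 bump $\theta$ itself to be a trigonometric polynomial. For example take $\theta(x)=\prod_j\big(\tfrac{1+\cos x_j}{2}\big)^K$, centred at $x_0$, which has a unique maximum and a non-degenerate Hessian. Then $f_2$ is a trigonometric polynomial, so for $N$ beyond its degree $\RE\hat g(n)=\eta\,\hat h(n)$ and \eqref{n1197} holds with $c=\eta$. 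Properties (iv) and the real-valuedness are immediate.

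\emph{Main obstacle: keeping (ii)–(iii) after adding $\eta h$.} This requires controlling $\eta h$ in $C^2$, and for $s>3$ we have $h\in H^s\subset C^{2+\sigma}$ for some $\sigma>0$ by Sobolev embedding. So $\lA\eta h\rA_{C^2}\les\eta\lA h\rA_{H^s}$ is as small as desired. The argument then runs as follows:
\begin{itemize}
\item On $B(x_0,r_0)$, $D^2g\ge\lambda\Id/2$, so $g$ is strictly convex there. Its minimum over the ball is unique, at a point $x_0'$ near $x_0$ by the implicit function theorem on $\nabla g=0$, and it is non-degenerate.
\item Outside the ball, $g\ge f_2(x_0)+\kappa(r_0)-\eta\lA h\rA_{L^\infty}>g(x_0')$ once $\eta\lA h\rA_{L^\infty}<\kappa(r_0)/3$.
\end{itemize}
Hence $x_0'$ is the unique global minimizer with positive definite Hessian. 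Finally, summing the three errors gives $\lA f-g\rA_{H^s}<\eps$, which proves the density.
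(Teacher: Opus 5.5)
Your proof is correct, but it orders the steps differently from the paper. The paper does not smooth $f$. It adds to $f$ a multiple of a function $l_f$ whose weighted coefficients $l_n$ are built in Lemma~\ref{Psl} from the tail sums $R_N$. The construction makes them dominate those of $f$, in the sense that $(1+|n|^2)^{s/2}|\hat f(n)|=o(l_n)$, while also $l_n\ge (|n|\ln(1+|n|))^{-1}$. This secures \eqref{n1197} even though the signs of $\hat f(n)$ are uncontrolled. Only afterwards does the paper fix the minimum. It picks a minimizer $x_0$ of this new function, subtracts a bump peaked at $x_0$, and adds $-\eps\cos x^1-\eps\cos x^2$ centred at $x_0$. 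Both added terms have $x_0$ as their own unique minimizer. Hence uniqueness and positive definiteness of the Hessian follow algebraically from $D^2 g_1(x_0)\ge 0$, with no stability argument. The tail survives because these terms have finitely many modes or rapidly decaying coefficients.

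Your frequency truncation removes the tail of $f$ altogether. So (i) becomes an exact identity $\hat g(n)=\eta\hat h(n)$ for large $|n|$, and no domination lemma is needed. The price is that you add the tail last, so you need a perturbative step to keep (ii)--(iii). That step uses $H^s\subset C^2$ for $s>3$, strict convexity on $B(x_0,r_0)$, and the gap $\kappa(r_0)$ outside the ball, and you carry it out correctly. The implicit function theorem is not actually needed there: convexity plus the gap already force a unique interior minimizer with Hessian at least $\lambda/2$.

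Combining the two routes gives the shortest proof. Truncate, add $\eta h$, and only then add your trigonometric well $-\delta\theta(\cdot-x_0)+\delta\bar\theta$ centred at a minimizer $x_0$ of $f_1+\eta h$. Then (i) holds exactly beyond the degrees of $f_1$ and $\theta$, and (ii)--(iii) follow algebraically as in the paper.
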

\begin{proof}
Let $f \in H^s_0(\xT^2)$ and $r > 0$. We first show the existence of a function 
satisfying the spectral lower bound \eqref{n1197} in the ball $B_{H^s}(f, r)$. 
To this end, we employ a construction that extracts a sequence 
decaying arbitrarily slowly relative to the Fourier coefficients 
of $f$.

\begin{lemma} \label{Psl}
Let $s \in \xR$ and $f \in H^s(\xT^2)$. There exists a 
square-summable sequence of positive real numbers 
$(l_n)_{n \in \xZ^2 \setminus \{0\}}$ such that
$$
(1+|n|^2)^{s/2} |\hat{f}(n)| = o(l_n) \quad \text{as } |n| \to \infty,
$$
while satisfying $l_n=l_{-n}$ and the lower bound $l_n \ge (|n| \ln(1+|n|))^{-1}$ 
for all $|n| \ge 1$.
\end{lemma}
\begin{proof}
Define the weighted coefficients $a_n \defn (1+|n|^2)^{s/2} |\hat{f}(n)|$ 
and, for $N \ge 1$, the tail of the series
$$
R_N \defn \sum_{|j| \ge N} a_j^2,
$$
which converges monotonically to zero. We define the sequence $(l_n)$ by
$$
l_n \defn \frac{a_n}{R_{|n|}^{1/4}} + \frac{1}{|n| \ln(1+|n|)},
$$
where the first term is understood to be zero if $a_n = 0$. 
The required lower bound on $l_n$ is satisfied by construction. Since $f$ is real-valued, we have $a_n = a_{-n}$. 
Therefore, the sequence $(l_n)$ is real-valued and satisfies $l_n = l_{-n}$. Moreover, the required lower bound on $l_n$ is satisfied by construction. 

To verify square-summability, note that the logarithmic term is in $\ell^2$. 
Moreover, since $\sum_{|n|=N} a_n^2 = R_N - R_{N+1}$, an integral comparison 
for the decreasing function $x \mapsto x^{-1/2}$ yields
$$
\sum_{n \neq 0} \frac{a_n^2}{\sqrt{R_{|n|}}} = 
\sum_{N=1}^\infty \frac{R_N - R_{N+1}}{\sqrt{R_N}} \le 
\sum_{N=1}^\infty \int_{R_{N+1}}^{R_N}
\frac{1}{\sqrt{x}} \dx= 
\int_{0}^{R_1} \frac{1}{\sqrt{x}} \dx = 2\sqrt{R_1}.
$$
Thus $(l_n) \in \ell^2(\xZ^2)$. Finally, since $a_n \le l_n R_{|n|}^{1/4}$ 
and $R_{|n|} \to 0$, we have $a_n = o(l_n)$.
\end{proof}

We now define the perturbation $g$ satisfying $g \in B_{H^s}(f,r)$ and 
properties~\eqref{G:(i)}--\eqref{G:(iv)}. 

We define $l_f$ as the function with Fourier coefficients 
$\hat{l_f}(n) = l_n (1+|n|^2)^{-s/2}$ for $n \neq 0$, and $\hat{l_f}(0) = 0$. 
Notice that $l_f$ is real-valued since $l_n=l_{-n}$. 
By construction $\lA l_f \rA_{H^s} = \lA l_n \rA_{\ell^2}$ and $l_f$ has zero average. 
We then define the 
perturbed function $g_0$ by
$$
g_0 \defn f + \frac{r}{2 \lA l_f \rA_{H^s}} l_f.
$$
It follows that $g_0 \in B_{H^s}(f, r)$, and $g_0$ satisfies property~\eqref{G:(i)} and~\eqref{G:(iv)}. 

Now, let $m_0 \defn \inf_{x \in \xT^2} g_0(x)$. If the set of 
points $X \defn \{x \in \xT^2 : g_0(x) = m_0\}$ is not a singleton, we select 
an arbitrary point $x_0 \in X$ and, by applying a spatial translation, 
we may assume $x_0 = 0$. Consider a non-negative function $\psi \in C^\infty(\xT^2)$ 
supported in a small neighborhood of the origin and such that $\psi(0) = 1>\hat{\psi}(0)$. 
For any $\eta > 0$ sufficiently small, we define $g_1 \defn g_0 - \eta (\psi-\hat{\psi}(0))$, 
ensuring property~\eqref{G:(ii)}. 
To satisfy property~\eqref{G:(iii)}, we ensure that the Hessian $D^2 g(0)$ is positive 
definite by adding a quadratic-like perturbation:
$$
g \defn g_1 - \eps \cos(x^1) -\eps \cos(x^2),
$$
where $\eps > 0$ is chosen sufficiently small. Notice that $\hat{g}(0) = \hat{g_1}(0) = 0$. 
Furthermore, at the origin, the Hessian of the perturbation is $\eps \mathrm{Id}$. 
This ensures the non-degeneracy of the global minimum for any $\eps > 0$, 
as $D^2 g_1(0)$ is necessarily non-negative. In addition, this perturbation preserves 
property~\eqref{G:(ii)} for $\eps$ small enough.

The final function $g$ belongs to $H^s(\xT^2)$ and has zero average. 
For $\eta$ and $\eps$ sufficiently small, $g$ belongs to the ball $B_{H^s}(f, r)$. 
As the centering constants and smooth terms only affect a finite number of Fourier 
coefficients or decay rapidly, the spectral lower bound \eqref{n1197} remains valid 
for $|n|$ large enough. This completes the proof of density.
\end{proof}

Now consider a function $g\in \mathcal{H}^s(\xT^2)$ with $s>3$. 
By applying a spatial translation, 
we may assume without loss of 
generality that $x_0=0$, that is 
$$
g(0)=m \defn \inf_{x \in \xT^2}g(x).
$$
Furthermore, by definition of $\mathcal{H}^s(\xT^2)$, 
the Hessian 
matrix $D^2 g(0)$ is positive definite, and we shall 
denote its eigenvalues by $\lambda_1, \lambda_2 > 0$.

For our problem, a key point is to study the local regularity of $(g-m)^\alpha$ near $x=0$. 
This analysis will be performed in depth in the next section; 
as a preparation, we establish the following elementary result 
characterizing its Hölder regularity.

\begin{lemma} \label{L32}
Let $s > 3$ and $g \in \mathcal{H}^s(\xT^2)$ and set 
$m=\inf_{\xT^2}g$. 
If $0 \le \alpha < (s-1)/2$, 
then the function $(g-m)^\alpha$ 
belongs to $C^{2\alpha}(\xT^2)$.
\end{lemma}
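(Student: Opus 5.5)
The plan is to use all the structure packed into $\mathcal{H}^s(\xT^2)$: Sobolev regularity of $g$ together with properties (ii) and (iii) of Definition~\ref{defi:GP}. After the translation already made, the unique minimum sits at $x_0=0$. Set $h\defn g-m\ge 0$.

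\emph{Step 1: localize.} Since $s>3$, Sobolev embedding gives $g\in C^{r}(\xT^2)$ for every $r<s-1$. Fix $r$ with $2\alpha<r<s-1$; then $r>2$ because $s>3$. By (ii) and compactness, for every $\rho>0$ we have $\inf_{|x|\ge\rho}h>0$. On $\{|x|\ge\rho\}$ the map $t\mapsto t^\alpha$ is smooth on the range of $h$, so $h^\alpha\in C^{r}\subset C^{2\alpha}$ there. Hence only a neighborhood of $0$ matters, and we fix a cutoff equal to $1$ near $0$.

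\emph{Step 2: quantitative behavior of $h$ near $0$.} By (iii) and Taylor's formula (valid since $h\in C^2$), there are $\rho>0$ and $c,C>0$ such that
\[
c|x|^2\le h(x)\le C|x|^2,\qquad |\nabla h(x)|\le C|x|,\qquad |D^k h(x)|\le C\ \text{ for } 2\le k\le [r],
\]
for $|x|\le\rho$. The last family of bounds comes from $h\in C^r$. Combining them gives the uniform statement $|D^kh(x)|\le C|x|^{(2-k)_+}$ for $1\le k\le [r]$.

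\emph{Step 3: pointwise derivative bounds for $F\defn h^\alpha$ away from $0$.} Let $j\le [r]$ and $0<|x|\le\rho$. By Fa\`a di Bruno, $D^jF$ is a combination of terms
\[
h^{\alpha-\ell}\prod_{i=1}^{\ell}D^{k_i}h,\qquad k_i\ge1,\quad \sum_i k_i=j .
\]
Using Step 2, each such term is bounded by
\[
|x|^{2\alpha-2\ell}\prod_i |x|^{(2-k_i)_+}\le |x|^{2\alpha-2\ell+\sum_i(2-k_i)}=|x|^{2\alpha-j}
\]
on $|x|\le\rho\le1$, where we used $(2-k)_+\ge 2-k$. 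Thus $|D^jF(x)|\lesssim|x|^{2\alpha-j}$ for $j\le [r]$. Since $2\alpha<r$, this covers both $j=[2\alpha]$ and $j=[2\alpha]+1$.

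\emph{Step 4: from pointwise bounds to $C^{2\alpha}$.} This is the standard dyadic argument, and it is where the care lies. Write $k=[2\alpha]$ and $\beta=2\alpha-k$.

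First, $|D^jF|\lesssim |x|^{2\alpha-j}$ for $j\le k$ shows that $F$ is $C^k$ up to $0$. At the origin, $D^jF(0)=0$ for $j<2\alpha$; if $2\alpha$ is an integer, $D^kF$ is bounded but need not vanish, and the argument is adjusted accordingly.

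Next, for $x,y$ with $|x-y|\le\frac12\max(|x|,|y|)$, the segment $[x,y]$ stays at distance $\sim|x|$ from $0$. The mean value theorem with $j=k+1$ then gives
\[
|D^kF(x)-D^kF(y)|\lesssim |x|^{\beta-1}|x-y|\le |x-y|^{\beta}.
\]
In the complementary case $|x-y|\gtrsim\max(|x|,|y|)$, we simply bound
\[
|D^kF(x)-D^kF(y)|\lesssim |x|^{\beta}+|y|^{\beta}\lesssim|x-y|^\beta .
\]

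The main obstacle is the borderline bookkeeping: the endpoint cases $\beta=0$ (integer $2\alpha$) and $\alpha<1/2$ (where $k=0$), and making sure the exponent $r$ chosen in Step 1 really permits $[2\alpha]+1\le[r]$ derivatives. When $[2\alpha]+1>[r]$, I would instead interpolate the H\"older seminorm of $D^{[r]}h$ directly inside the Fa\`a di Bruno terms.
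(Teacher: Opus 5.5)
Apart from one point, your plan is the paper's own argument carried out in detail. The paper does a Taylor expansion at the non-degenerate minimum, so that $(g-m)^\alpha$ behaves like $|x|^{2\alpha}$, and uses Sobolev embedding away from the minimum. Your Fa\`a di Bruno bounds $|D^jF(x)|\lesssim |x|^{2\alpha-j}$, combined with the near/far-diagonal splitting, do give the $C^{2\alpha}$ seminorm when $2\alpha\notin\xN$.

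\textbf{The gap: odd integer $2\alpha$.} You claim the integer case $\beta=0$ can be ``adjusted accordingly''. With the paper's convention that $C^k$, $k\in\xN$, is the classical space of $k$ times continuously differentiable functions, the conclusion is false when $2\alpha$ is odd, so no adjustment exists. Take $\alpha=1/2$, which the hypothesis allows since $(s-1)/2>1$, and set $A=D^2g(0)$. For $|\omega|=1$ and $t\to0^+$,
$\nabla F(t\omega)=\nabla h(t\omega)/(2\sqrt{h(t\omega)})\to A\omega/\sqrt{2\,\omega\cdot A\omega}$.
This limit is non-zero and odd in $\omega$, so $F=(g-m)^{1/2}$ is Lipschitz but not even differentiable at $0$. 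The same happens for every odd $2\alpha$: the leading profile $(\tfrac12 x\cdot Ax)^{\alpha}$ is homogeneous of degree $2\alpha$ but not a polynomial. Hence its derivatives of order $2\alpha$ are homogeneous of degree $0$ and non-constant. The paper's one-line justification has the same blind spot.

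The repair has to be made on the statement, in one of two ways:
\begin{enumerate}
\item Exclude odd integers $2\alpha$. This is harmless, since Lemma~\ref{L33} and Proposition~\ref{P:aeps} assume $2\alpha\notin\xN$ anyway.
\item Read $C^{2\alpha}$ as the Zygmund space $C^{2\alpha}_*$. Then your bound $|D^{2\alpha}F|\lesssim 1$ shows that $D^{2\alpha-1}F$ is Lipschitz, which suffices.
\end{enumerate}
For even $2\alpha=2n$ nothing needs adjusting, since $F=h^n$ with $h\in C^r$ and $r>2n$.

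\textbf{Two smaller points.}
\begin{enumerate}
\item $r>2$ does not follow from $2\alpha<r<s-1$; you should choose $r\in(\max\{2,2\alpha\},s-1)$.
\item When no integer lies in $(2\alpha,s-1)$, you have $k=[2\alpha]=[r]$ and $D^{k+1}F$ is unavailable. This case does close along the lines you indicate. In the near-diagonal regime, every Fa\`a di Bruno factor can still be differenced by the mean value theorem except $D^kh$. That factor occurs only in the term $\alpha h^{\alpha-1}D^kh$, since $\sum k_i=k=[r]$ forces $\ell=1$. For that term, $|x|^{2\alpha-2}|x-y|^{r-k}\le |x|^{r-2}|x-y|^{\beta}\le|x-y|^\beta$ whenever $|x-y|\le|x|\le 1$, because $r-k>\beta$ and $r>2$.
\end{enumerate}
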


\begin{proof}
Since $g\in C^2(\xT^2)$, at the point $x_0 = 0$ where the infimum is attained, a Taylor 
expansion of the non-degenerate minimum yields the local 
behavior
$$
g(x) - m = \frac{1}{2} \mathfrak{q}(x) + o(|x|^2),
$$ 
where $\mathfrak{q}$ is a positive definite quadratic form. It follows that, 
in a neighborhood of the origin, the function $(g-m)^\alpha$ behaves 
asymptotically like $|x|^{2\alpha}$, which precisely defines the 
$C^{2\alpha}$ regularity. 
Conversely, at any point $x\neq 0$, we have $g(x) > m$, and since the mapping $u \mapsto (u-m)^\alpha$ is 
locally smooth, we deduce that $(g-m)^\alpha$ is locally 
in $H^{s}\subset C^{2\alpha}$ by the Sobolev embedding theorem. 
\end{proof}

\subsection{Some results about cusps}\label{S:Cusps}

When performing Taylor expansions on the torus $\xT^2$, one has to pay attention to 
the non-periodicity of the global coordinate $x$. To remain within the space of 
periodic distributions, we introduce the following notation.

\begin{definition}
We fix the following objects for the remainder of this section:
\begin{enumerate}
\item \textbf{A cut-off:} Let $\chi \in C^\infty(\xT^2)$ be a bump function 
such that $\chi \equiv 1$ and such that the support of $\chi$ is strictly 
contained within the fundamental cell $(-\pi, \pi]^2$.
\item \textbf{A periodic coordinate:} We introduce a periodic  
function $\sigma = (\sigma_1, \sigma_2) \in C^\infty(\xT^2; \xR^2)$ 
such that $\sigma(x) = x$ for all $x \in \supp \chi$. This map allows 
us to define the "distance to the origin" in a smooth, periodic manner.
\item \textbf{A quadratic form:} Given the Hessian 
eigenvalues $\lambda_1, \lambda_2 > 0$ of $g$ at the origin, 
we define $\mathfrak{q} \in C^\infty(\xT^2)$ by:
\be\label{defi:Q}
\mathfrak{q}(x) \defn \lambda_1 \sigma_1(x)^2 + \lambda_2 \sigma_2(x)^2.
\end{equation}
So, $\mathfrak{q}$ is a smooth, periodic function that coincides 
with $\mathfrak{q}_{\eucl}(x)\defn\lambda_1 x_1^2 + \lambda_2 x_2^2$ 
in a neighborhood of $0$.
\end{enumerate}
\end{definition}

The following lemma identifies the principal part of the Fourier expansion. 
The key observation is that the high-frequency behavior is entirely 
determined by the singularity at the origin.
\begin{lemma} \label{L33}
Let $\alpha > 0$ such that $2\alpha \notin \xN$. The singular profile 
$H(x) \defn \chi(x) \mathfrak{q}(x)^\alpha$ belongs to $C^{2\alpha}(\xT^2) 
\cap H^{1+2\alpha'}(\xT^2)$ for all $\alpha' < \alpha$. Moreover, there exists a non-zero 
constant $C_\alpha$ such that its Fourier coefficients satisfy:
\be\label{n717_final}
\hat{H}(n) = \frac{C_\alpha}{\sqrt{\lambda_1 \lambda_2}} \left( \frac{n_1^2}{\lambda_1} 
+ \frac{n_2^2}{\lambda_2} \right)^{-(1+\alpha)} + O(|n|^{-\infty}).
\ee
\end{lemma}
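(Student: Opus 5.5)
The plan is to reduce to the Euclidean homogeneous distribution $\mathfrak{q}_{\eucl}^\alpha$ on $\xR^2$ and to use the classical Fourier transform of homogeneous functions. Since $\chi$ is supported in the fundamental cell, where $\sigma(x)=x$, we have $H(x)=\chi(x)\,\mathfrak{q}_{\eucl}(x)^\alpha$ there. Hence $\hat H(n)$, for $n\in\xZ^2$, equals (up to the normalisation $(2\pi)^{-2}$) the Euclidean Fourier transform $\mathcal{F}[\chi\,\mathfrak{q}_{\eucl}^\alpha](n)$, where $\chi$ is viewed as a compactly supported function on $\xR^2$.

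\emph{Step 1: the whole-space transform.} First perform the linear change of variables $y_j=\sqrt{\lambda_j}\,x_j$. This turns $\mathfrak{q}_{\eucl}^\alpha$ into $|y|^{2\alpha}$, produces the Jacobian $(\lambda_1\lambda_2)^{-1/2}$, and replaces the dual variable $\xi$ by $\eta=(\xi_1/\sqrt{\lambda_1},\xi_2/\sqrt{\lambda_2})$. The classical formula for the Fourier transform of $|y|^{2\alpha}$ as a tempered distribution on $\xR^2$ gives
$$
\widehat{|y|^{2\alpha}}(\eta)=c_\alpha|\eta|^{-2-2\alpha} \quad \text{on } \eta\neq 0,
\qquad c_\alpha=\pi^{-1}2^{2\alpha+2}\,\frac{\Gamma(1+\alpha)}{\Gamma(-\alpha)}.
$$
The constant $c_\alpha$ is nonzero precisely because $\Gamma(-\alpha)$ is finite, which holds as soon as $\alpha\notin\xN$. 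This is guaranteed by the hypothesis $2\alpha\notin\xN$; when $\alpha\in\xN$ the function is a polynomial and its transform is supported at $0$. Since $|\eta|^2=\xi_1^2/\lambda_1+\xi_2^2/\lambda_2$, this yields $\mathcal{F}[\mathfrak{q}_{\eucl}^\alpha](\xi)=C_\alpha(\lambda_1\lambda_2)^{-1/2}\bigl(\xi_1^2/\lambda_1+\xi_2^2/\lambda_2\bigr)^{-1-\alpha}$ away from $\xi=0$.

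\emph{Step 2: the cutoff error.} Write $\chi\,\mathfrak{q}_{\eucl}^\alpha=\mathfrak{q}_{\eucl}^\alpha-(1-\chi)\mathfrak{q}_{\eucl}^\alpha$, using $\chi\equiv1$ near $0$. The term $(1-\chi)\mathfrak{q}_{\eucl}^\alpha$ is smooth and has symbol-type growth, so its transform is smooth away from the origin and decays rapidly there. The cleanest way to see this is to write $\mathcal{F}[\chi u]=\hat\chi\ast\hat u$ with $u=\mathfrak{q}_{\eucl}^\alpha$. One then splits $\hat u$ into a piece near $\eta=0$ (a compactly supported distribution, whose convolution with the Schwartz function $\hat\chi$ is $O(|\xi|^{-\infty})$) and a smooth homogeneous symbol of order $-2-2\alpha$ away from $0$. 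Convolving the latter with $\hat\chi$, which has total integral $\chi(0)=1$ and all moments finite, reproduces the symbol up to $O(|\xi|^{-\infty})$. To justify the last claim, Taylor-expand the symbol and use that the moments $\int \eta^\beta\hat\chi$ vanish for $|\beta|\ge1$, because the derivatives of $\chi$ vanish at $0$. This gives \eqref{n717_final}.

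\emph{Step 3: regularity.} The $H^{1+2\alpha'}$ statement follows directly from \eqref{n717_final}. We have $|\hat H(n)|^2(1+|n|^2)^{1+2\alpha'}\sim|n|^{-4-4\alpha+2+4\alpha'}$, which is summable over $\xZ^2$ exactly when $\alpha'<\alpha$. The $C^{2\alpha}$ regularity is the standard fact that $|x|^{2\alpha}$ is locally $C^{2\alpha}$ for $2\alpha\notin\xN$. It follows from homogeneity: derivatives of order $k\le[2\alpha]$ are homogeneous of degree $2\alpha-k$. Their Hölder quotient is then bounded by a scaling argument that separates the cases $|x-y|\le|x|/2$ and $|x-y|>|x|/2$.

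The main obstacle is Step 2: showing that the cutoff contributes only $O(|n|^{-\infty})$ with no lower-order correction terms. The key point to verify carefully is the vanishing-moment argument. Equivalently, one can argue that $\nabla\chi$ vanishes near the singular support, so $(1-\chi)\mathfrak{q}_{\eucl}^\alpha$ is smooth and its transform is $\mathcal{F}[\mathfrak{q}_{\eucl}^\alpha]-\mathcal{F}[\chi\mathfrak{q}_{\eucl}^\alpha]$. Integration by parts, $|\xi|^{2N}\mathcal{F}[(1-\chi)u]=\mathcal{F}[(-\Delta)^N((1-\chi)u)]$, together with the fact that the latter function decays like $|x|^{2\alpha-2N}$ and is therefore integrable for large $N$, handles the region away from the origin.
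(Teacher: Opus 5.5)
Your proposal is correct and follows the paper's proof. It uses the same steps: reduce to the Euclidean transform via $y_j=\sqrt{\lambda_j}\,x_j$, invoke the classical transform of $|y|^{2\alpha}$, show that the cutoff term $(1-\chi)\mathfrak{q}_{\eucl}^\alpha$ contributes only $O(|n|^{-\infty})$, and read off the $H^{1+2\alpha'}$ regularity from the decay of $\hat H$. On two points your argument is more careful than the paper's, though your explicit value of $c_\alpha$ is off by a normalization factor, which is harmless since only $c_\alpha\neq 0$ matters. First, your Step~2 rests on the symbol-type bounds $\partial^\beta\bigl[(1-\chi)\mathfrak{q}_{\eucl}^\alpha\bigr]=O(|x|^{2\alpha-|\beta|})$, whereas the paper only invokes ``smooth with polynomial growth'', which would not suffice on its own. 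Second, you actually prove the $C^{2\alpha}$ claim by the homogeneity/scaling argument.
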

\begin{remark}
Up to considering~$H/C_{\alpha}$ instead of $H$, 
we will assume without loss of generality that $C_\alpha=1$.
\end{remark}
\begin{proof}
By a linear change of variables $y_j = \sqrt{\lambda_j} x_j$, we reduce 
the problem to the radial case where $\mathfrak{q}(x) = |y|^2$. 
Since $\chi$ is 
supported within the fundamental cell, the periodic 
coefficients $\hat{H}(n)$ coincide with the Fourier transform on $\xR^2$:
$$
\hat{H}(n) = \frac{1}{(2\pi)^2} \int_{\xR^2} \chi(y) |y|^{2\alpha} e^{-i n \cdot y} \dy.
$$
Now we observe that $D(y)=(1-\chi(y)) \la y\ra^{2\alpha}$ is a 
smooth function with polynomial growth at infinity and 
hence its Fourier transform contributes to the $O(|n|^{-\infty})$ 
remainder. On the other hand, it is a classical result (see~\cite{zbMATH01194445}) that 
the Fourier transform of $\la y\ra^{2\alpha}$ is equal to $\la \xi\ra^{-2-2\alpha}$ and we obtain~\e{n717_final}. 

The Sobolev regularity follows from $\hat{H}(n) \sim |n|^{-(2+2\alpha)}$ 
which implies $\sum |n|^{2s} |\hat{H}(n)|^2 < \infty$ for 
$s < 1+2\alpha$.
\end{proof}

To justify the cusp extraction, we must show that any term vanishing more rapidly 
than $\mathfrak{q}$ at the origin results in a more regular distribution. 

\begin{lemma} \label{L35}
Let $t> 2$ and $G \in H^{t}(\xT^2)$ be such that $G(0) = 0$. 
For all real number $\alpha > -1/2$, the function $f$ 
defined by
$$
f(x) \defn \chi(x) \mathfrak{q}(x)^\alpha G(x)
$$ 
belongs to $H^{\min(2\alpha'+2, t-1)}(\xT^2)$ for all $\alpha' < \alpha$.
\end{lemma}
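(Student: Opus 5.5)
The plan is to Taylor-expand $G$ at the origin, separating the first-order part, which produces an explicit homogeneous singularity, from a remainder that vanishes to second order. Since $t>2$, Sobolev embedding gives $G\in C^{1}$ near $0$ (indeed $C^{1,\beta}$ for some $\beta>0$). Using the periodic coordinate $\sigma$, write
\[
G(x)=\nabla G(0)\cdot\sigma(x)+G_2(x),
\]
where $G_2\in H^t$ vanishes to second order at $0$ in the sense that $G_2(0)=0$ and $\nabla G_2(0)=0$. Then
\[
f=\chi\,\mathfrak{q}^\alpha\,\nabla G(0)\cdot\sigma+\chi\,\mathfrak{q}^\alpha G_2 .
\]

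The first term is, near $0$, a linear function times $\mathfrak q^\alpha$. This is a homogeneous function of degree $2\alpha+1$, namely $x_j\,\mathfrak{q}_{\eucl}(x)^{\alpha}$. After the change of variables $y_j=\sqrt{\lambda_j}x_j$, its Fourier transform is, as in Lemma~\ref{L33}, a derivative $\partial_{\xi_j}$ of $|\xi|^{-2-2\alpha}$ up to $O(|n|^{-\infty})$. So $\hat{f}_1(n)=O(|n|^{-3-2\alpha})$, which lies in $H^{r}$ for every $r<2+2\alpha$. The condition $\alpha>-1/2$ is exactly what makes this homogeneous function locally integrable with a well-defined distributional Fourier transform.

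For the second term I will not try to go further in the Taylor expansion, since $G$ only has $H^t$ regularity. Instead I will use a dyadic decomposition in physical space. Take a partition of unity $\sum_k \theta(2^k x)$ on $\supp\chi$ and write
\[
f_2=\sum_{k\ge 0}\theta(2^k\cdot)\,\chi\,\mathfrak q^\alpha G_2 .
\]
On the shell $|x|\sim 2^{-k}$, the factor $\mathfrak{q}^\alpha$ is smooth with $|\partial^\beta \mathfrak q^\alpha|\lesssim 2^{-2k\alpha+k|\beta|}$. Meanwhile $G_2$ is small: it is of size $\lesssim 2^{-k\min(2,t-1-\epsilon)}$ in the appropriate scaled Sobolev sense, by Hardy-type inequalities for functions vanishing to second order. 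Rescaling each piece to unit size then gives
\[
\|f_{2,k}\|_{H^r}\lesssim 2^{k(r-1)}\,2^{-2k\alpha}\,2^{-k\min(2,t-1)}\,\|G\|_{H^t},
\]
possibly with an $\epsilon$ loss. The sum over $k$ converges when $r<2\alpha+2$ (in the case $t\ge 3$), and the interior contribution is bounded by $H^{t-1}$ or better. I would take the cutoff at $\min(2\alpha'+2,t-1)$ with $\alpha'<\alpha$ to absorb the $\epsilon$ losses. Away from the origin, $\mathfrak q^\alpha$ is smooth and $f\in H^t$ there, which is more than enough.

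The main obstacle is the Hardy-type control of $G_2$ at fractional order $t$, which may be non-integer and can lie in $(2,3)$. Only $C^{1,t-2}$ information is available there, so $G_2=O(|x|^{t-1})$ rather than $O(|x|^2)$. This is exactly why the statement has the $t-1$ cap. I would handle it by interpolation. First prove the bound $\mathfrak q^\alpha G_2\in H^{2\alpha'+2}$ for smooth $G$, and the bound $\chi\mathfrak{q}^{\alpha}G\in H^{\min(2\alpha'+1,\dots)}$ for $G\in H^{t}$ with the first-order structure. Then interpolate the linear map $G\mapsto \chi\mathfrak q^\alpha(G-G(0)-\nabla G(0)\cdot\sigma)$ between integer Sobolev levels $t$, taking care that the point-evaluation functionals remain bounded, which holds since $t>2$.
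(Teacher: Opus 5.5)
\textbf{There is a genuine gap, in the treatment of $f_2$.} Your handling of the linear piece $\chi\mathfrak{q}^\alpha\,\nabla G(0)\cdot\sigma$ is fine. The second piece $f_2=\chi\mathfrak{q}^\alpha G_2$, however, carries the whole difficulty, and it is not actually proved.

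The shell bound $\lA f_{2,k}\rA_{H^r}\les 2^{k(r-1-2\alpha-\min(2,t-1))}\lA G\rA_{H^t}$ rests on a fractional-order Hardy/scaling estimate for $G_2(2^{-k}\cdot)$. You only assert this estimate, and the interpolation scheme you propose for it does not work:
\begin{itemize}
\item \emph{The lower endpoint fails.} For $2<t<3$ the integer levels around $t$ are $2$ and $3$. The map $G\mapsto\chi\mathfrak{q}^\alpha(G-G(0)-\nabla G(0)\cdot\sigma)$ is not defined on $H^2(\xT^2)$, because $H^2\not\subset C^1$ in dimension two, so $\nabla G(0)$ is not controlled there. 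The boundedness of point evaluations for $t>2$ is exactly what fails at that endpoint.
\item \emph{The interpolation loses regularity.} Interpolating between an endpoint with output $H^{2\alpha'+2}$ and one with output at most $H^{2\alpha'+1}$ gives exponents strictly below $2\alpha'+2$ at intermediate levels. So the scheme cannot deliver $\min(2\alpha'+2,t-1)$. Your second endpoint is in any case left unspecified as $\min(2\alpha'+1,\dots)$.
\end{itemize}
Two side remarks. First, $\alpha>-1/2$ is not what makes $x_j\mathfrak{q}^\alpha$ locally integrable; that holds for $\alpha>-3/2$. Second, your own shell bound sums for $r<1+2\alpha+\min(2,t-1)$, so it does not explain the cap $t-1$ at all.

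\textbf{What the paper does instead.} Second-order vanishing is not needed; $G(0)=0$ suffices.
\begin{itemize}
\item The paper writes $G=\sum_j\sigma_j u_j$ on $\supp\chi$, with $u_j(x)=\int_0^1\partial_jG(\tau\sigma(x))\,\dtau$.
\item After localization, the dilates $\partial_jG(\tau\,\cdot)$ are uniformly bounded. Their $\dot H^{t-1}$ seminorm is $\tau^{t-2}\lA\partial_jG\rA_{\dot H^{t-1}}$, which is integrable on $(0,1)$ because $t>2$. Hence $u_j\in H^{t-1}$.
\item The factor $w_j=\chi\mathfrak{q}^\alpha\sigma_j$ is homogeneous of degree $2\alpha+1$ near $0$, so it lies in $H^{2\alpha'+2}$.
\item For $\alpha'$ close to $\alpha$ we have $2\alpha'+2>1$; this is where $\alpha>-1/2$ is used. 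Also $t-1>1$. The algebra property of $H^\sigma(\xT^2)$ for $\sigma>1$ then puts $\chi w_ju_j$ in $H^{\min(2\alpha'+2,t-1)}$.
\end{itemize}
The cap $t-1$ is just the derivative lost in passing from $G$ to $u_j$.

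Your dyadic route could still be made to work, and would even give a better exponent. For that you would need to prove the shell estimate directly rather than by interpolation. For example, you could combine $\nabla G\in C^{t-2}$ when $t<3$, $L^p$ bounds for $D^2G$ on shells, and the scaling of localized Gagliardo seminorms.
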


\begin{proof}
Since $G \in H^{t}$ and $G(0) = 0$, 
we represent $G$ on $\supp \chi$ 
using the periodic coordinate map $\sigma$:
$$
G(x) = \sum_{j=1}^2 \sigma_j(x) u_{j}(x), \quad \text{where} \quad u_{j}(x)
\defn \int_0^1 \partial_j G(\tau \sigma(x)) \dtau.
$$
The condition $\nabla G \in H^{t-1}(\xT^2)$ implies $u_{j} \in H^{t-1}(\xT^2)$ 
by standard composition and integration arguments. 
We then write $f = \sum_{j=1}^2 \chi w_{j} u_{j}$ 
with $w_{j}(x) \defn \chi(x) \mathfrak{q}(x)^\alpha \sigma_j(x)$. 

The regularity of $w_{j}$ follows from the fact that near 
the origin it is positively homogeneous of 
degree $2\alpha+1$. In dimension $d=2$, 
a distribution that is homogeneous of degree $\beta$ 
belongs to $H^s_{\mathrm{loc}}$ if and only if $s < \beta+1$. 
Substituting $\beta = 2\alpha+1$ 
yields $w_{j} \in H^{2\alpha'+2}(\xT^2)$ for all $\alpha' < \alpha$.

Finally, since $t> 2$, we have $t-1 > 1$. 
Moreover, $2\alpha'+2 > 1$ for $\alpha > -1/2$. 
In dimension $d=2$, the product of two functions 
in $H^{s_1}$ and $H^{s_2}$ with $s_1, s_2 > 1$ 
belongs to $H^{\min(s_1, s_2)}$. 
Applying this multiplication theorem to 
each term $\chi w_{j} u_{j}$ yields the claimed regularity.
\end{proof}

Let $s>4$ and consider a function $g$ in 
the class $\mathcal{H}^s(\xT^2)$ 
introduced in Definition~\ref{defi:GP}. This ensures that $g$ 
attains a unique non-degenerate minimum $m$ 
at the origin, where the Hessian matrix admits positive 
eigenvalues $\lambda_1, \lambda_2 > 0$.

We are now at a position to provide a refined 
analysis of the profile $(g-m)^\alpha$. 
The following result shows that $(g-m)^\alpha$ can be decomposed into a 
singular profile, whose Fourier transform is known explicitly, and a remainder 
that is more regular in the Sobolev scale.

\begin{proposition} \label{P34}
Consider $\alpha$ and $s$ such that $0<\alpha<(s-4)/2$ 
and $g \in \mathcal{H}^s(\xT^2)$. 
Then, provided the cut-off function 
$\chi \in C^\infty_c(\xT^2)$ is supported in a sufficiently small neighborhood 
of the origin,
\begin{equation}
(g-m)^{\alpha} = H(x) + R(x),
\end{equation}
where $H = \chi \mathfrak{q}^\alpha$ is the singular profile from 
Lemma \ref{L33} and the 
remainder satisfies $R \in H^{\min(2\alpha'+2, s-3)}(\xT^2)$ for 
any $\alpha' < \alpha$.

In particular, $(g-m)^{\alpha}$ belongs to $H^{1+2\alpha'}(\xT^2)$ for all $\alpha' < \alpha$.
\end{proposition}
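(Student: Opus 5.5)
We plan to factor $g-m$ near the origin as $\tfrac12$ times the quadratic form times a positive function that equals $1$ at the origin. The power $\alpha$ of that factor is then $1+G$ with $G(0)=0$, and Lemma~\ref{L35} applies to the correction. We may assume (rotating coordinates, which is harmless since we work on a small neighbourhood inside the fundamental cell) that $D^2g(0)=\mathrm{diag}(\lambda_1,\lambda_2)$. The constant $\tfrac12$ is absorbed by renormalizing $\mathfrak{q}$, which only changes $C_\alpha$ in Lemma~\ref{L33}.

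\textbf{Step 1: factorization.} Since $g\in H^s\subset C^{s-1-}$ and $\nabla g(0)=0$, Taylor's formula with integral remainder gives
$$g(x)-m=\tfrac12\,\sigma(x)^\top A(x)\,\sigma(x),\qquad A(x)\defn 2\int_0^1(1-\tau)\,D^2g(\tau\sigma(x))\,\dtau,$$
valid on $\supp\chi$. The map $A$ is continuous with $A(0)=D^2g(0)$, and it inherits the regularity of $D^2g$, namely $H^{s-2}$ up to a harmless loss; we only need $H^{s-2}\subset C^{s-3-}$ with $s-2>2$. Set
$$\rho(x)\defn\frac{\sigma^\top A\sigma}{\mathfrak{q}}.$$
We want $\rho$ to be regular, but it is only a quotient, homogeneous of degree $0$ in $\sigma$. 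This is the main obstacle: $\rho$ is generally discontinuous-like (bounded, but only $0$-homogeneous) at the origin, so we cannot simply say $\rho\in H^{s-2}$.

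To bypass it, expand $A(x)=A(0)+E(x)$ with $E(0)=0$. Then
$$g-m=\tfrac12\big(\mathfrak{q}+\sigma^\top E\sigma\big),\qquad \rho=1+\frac{\sigma^\top E\sigma}{\mathfrak{q}}\eqqcolon 1+e,$$
where $|e|\lesssim|E|\le C|x|$ is small on a small support. This is why $\chi$ must be supported near the origin: it keeps $|e|<1/2$.

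\textbf{Step 2: expansion.} On $\supp\chi$,
$$(g-m)^\alpha=\mathfrak{q}^\alpha(1+e)^\alpha=\mathfrak{q}^\alpha+\mathfrak{q}^\alpha\big((1+e)^\alpha-1\big).$$
We write $(g-m)^\alpha=\chi(g-m)^\alpha+(1-\chi)(g-m)^\alpha$. The second piece lies away from the minimum, so it is in $H^s$ by composition (Theorem~\ref{T:Moser}). The first piece is $H+\chi\mathfrak{q}^\alpha\big((1+e)^\alpha-1\big)$.

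For the correction, we avoid the quotient by expanding the power in its Taylor series:
$$(1+e)^\alpha-1=\sum_{k\ge1}\binom{\alpha}{k}e^k.$$
Each term is
$$\mathfrak{q}^\alpha e^k=\mathfrak{q}^{\alpha-k}\big(\sigma^\top E\sigma\big)^k,$$
and here $(\sigma^\top E\sigma)^k=\sigma^{\otimes 2k}\cdot E^{\otimes k}$ combines homogeneous monomials of degree $2k$ with the factor $E^{\otimes k}\in H^{s-2}$, which vanishes at $0$.

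To each such term we apply the argument of Lemma~\ref{L35}, with exponent $\alpha-k$ and an extra homogeneous factor $\sigma^{\otimes 2k}$ of degree $2k$. Then $\mathfrak{q}^{\alpha-k}\sigma^{\otimes2k}$ is homogeneous of degree $2\alpha$, and after extracting one more vanishing factor from $E$ (writing $E=\sigma\cdot u$ with $u\in H^{s-3}$), the term is homogeneous of degree $2\alpha+1$ times an $H^{s-3}$ function. Hence it lies in $H^{\min(2\alpha'+2,\,s-3)}$: the homogeneous factor is in $H^{2\alpha'+2}$, the function $u$ is in $H^{s-3}$ with $s-3>1$, and the product theorem for $H^{s_1}\cdot H^{s_2}$ with $s_1,s_2>1$ applies. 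This is exactly the threshold $s-3$ in the statement, which explains the loss of one more derivative than in Lemma~\ref{L35} (since $E$ involves $D^2g$). The hypothesis $\alpha<(s-4)/2$ guarantees $2\alpha+2<s-2$, so these indices are consistent.

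Summability in $k$ follows from $|e|<1/2$ together with product estimates whose constants grow at most geometrically: choosing the support of $\chi$ small makes $\|E\|$ small on it, which compensates the growth. The last claim then follows from Lemma~\ref{L33}, since $H\in H^{1+2\alpha'}$ and $\min(2\alpha'+2,s-3)>1+2\alpha'$.
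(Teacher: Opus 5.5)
Your argument is correct. It shares the paper's skeleton: Moser for $(1-\chi)(g-m)^\alpha$, Taylor's formula with integral remainder in the periodic coordinate $\sigma$, and reduction of the correction to ``homogeneous of degree $2\alpha+1$ times $H^{s-3}$'' via the mechanism of Lemma~\ref{L35}. Where you differ is in the treatment of $\chi\big((g-m)^\alpha-\mathfrak q^\alpha\big)$. The paper writes it, by the fundamental theorem of calculus, as $\alpha\chi\Delta\int_0^1(\mathfrak q+\theta\Delta)^{\alpha-1}\,d\theta$. It then asserts that $(\mathfrak q+\theta\Delta)^{\alpha-1}$ ``behaves like'' $|\sigma|^{2\alpha-2}$ and invokes Lemma~\ref{L35}. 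That factor contains the $0$-homogeneous quotient $\Delta/\mathfrak q$, so it is neither homogeneous nor homogeneous times Sobolev, and the step is only heuristic. Your binomial expansion of $(1+e)^\alpha$ is exactly what makes it rigorous. Each term $\mathfrak q^{\alpha-k}(\sigma^\top E\sigma)^k$ is a finite sum of products of an exactly homogeneous function, smooth off the origin, with entries of $E$, and the proof of Lemma~\ref{L35} then applies as you describe. You also correctly track the factor $\tfrac12$ from the Taylor weight $(1-\tau)$, which the paper's claim $M_{jk}(0)=\delta_{jk}\lambda_j$ drops. The price of your route is summability in $k$, which you should state more carefully. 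Shrinking $\supp\chi$ makes $E$ small in $L^\infty$, but not necessarily in the Sobolev norms you use, and $|e|<1/2$ alone only gives pointwise convergence. Use the tame estimate \eqref{Moser-uv}, iterated, so that every factor except one enters only through its $L^\infty$ norm. Then $\|E\|_{L^\infty(\supp\chi)}\lesssim\delta$, raised to a power at least $k-2$, beats the following geometric or polynomial growths in $k$:
\begin{itemize}
\item the $4^k$ terms of the expansion;
\item the factor $\lambda_{\min}^{-k}$ coming from $\mathfrak q^{-k}$;
\item the tame constants, which grow geometrically;
\item the polynomial growth of the angular $C^N$ norms controlling $\|\chi\mathfrak q^{\alpha-k}\sigma^{\beta}\sigma_c\|_{H^{2\alpha'+2}}$, with $|\beta|=2k$.
\end{itemize}
The series then converges absolutely in $H^{\min(2\alpha'+2,s-3)}$ once $\delta$ is small.
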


\begin{proof}
Let $F \defn g - m$. We decompose the function into a singular component 
concentrated near the origin and a smoother remainder:
$$
F(x)^\alpha = \chi(x) F(x)^\alpha + (1 - \chi(x)) F(x)^\alpha.
$$
Since $m$ is the unique minimum of $g$, 
the function $F$ is strictly positive on the support 
of $1 - \chi$. We may therefore 
represent $(1 - \chi) F^\alpha$ as $\Psi(x, F(x))$ 
for some smooth function 
$\Psi \in C^\infty(\xT^2 \times \xR)$. It then 
follows from standard Moser composition rules 
that $(1 - \chi) F^\alpha$ belongs to 
$H^s(\xT^2)$, 
and it is therefore absorbed into the remainder $R$.

We now focus on the local component $\chi F^\alpha$. 
By choosing an appropriate orthonormal basis, we may assume 
that the Hessian $D^2 g(0)$ is diagonal. Since $\nabla g(0) = 0$, 
we use the periodic coordinate $\sigma$ to write the Taylor expansion 
of $F=g-m$ on $\supp \chi$:
$$
F(x) = \sum_{j,k=1}^2 \sigma_j(x) \sigma_k(x) M_{jk}(x), 
\quad \text{where} \quad M_{jk}(x) \defn \int_0^1 (1-\tau) 
\partial_j \partial_k g(\tau \sigma(x)) \dtau.
$$
Recalling the definition \eqref{defi:Q} of the globalized 
quadratic profile $\mathfrak{q}(x) = \sum \lambda_i \sigma_i(x)^2$, 
we note that $M_{jk}(0) = \delta_{jk} \lambda_j$. We define 
the increment
$$
\Delta(x) \defn F(x) - \mathfrak{q}(x)= \sum_{j,k=1}^2 \sigma_j(x) \sigma_k(x) [M_{jk}(x) - M_{jk}(0)].
$$
Observe that, provided the cut-off function 
$\chi \in C^\infty_c(\xT^2)$ is supported in a sufficiently small neighborhood 
of the origin, we ensure that the quadratic form $\mathfrak{q}$ 
dominates the higher-order remainder $\Delta$. Specifically, since 
$\mathfrak{q}(x) \ge c|x|^2$ and $\Delta(x) = O(|x|^{2+\min(1,s-3)})$, taking 
$\supp \chi \subset B(0, \delta)$ with $\delta$ small enough guarantees that 
$\mathfrak{q}(x) + \theta \Delta(x) \ge \underline{c}\la x\ra^2> 0$ for all $\theta \in [0,1]$ and $x \neq 0$. 

For $x \neq 0$ on the support of $\chi$, the strict 
positivity $\mathfrak{q}(x) + \theta \Delta(x) \ge c|x|^2/2 > 0$ 
ensures that the straight line segment connecting $\mathfrak{q}(x)$ and $F(x)$ lies entirely 
within the interval $(0, +\infty)$ where the map $u \mapsto u^\alpha$ is $C^\infty$. 
Applying the fundamental theorem of calculus thus yields
\begin{align*}
&\chi(x) F(x)^\alpha = \chi(x) \mathfrak{q}(x)^\alpha + R_{\textrm{loc}}(x),\quad\text{where}\\
&R_{\textrm{loc}}(x) = \alpha \chi(x) \Delta(x) \int_0^1 (\mathfrak{q}(x) + \theta \Delta(x))^{\alpha-1} \dtheta.
\end{align*}
On the other hand, at $x=0$, we have 
$\chi F^\alpha =0= \chi \mathfrak{q}^\alpha$ and $R_{\textrm{loc}}(x) = O(|x|^{2\alpha+\min(1,s-3)})$, 
meaning that the remainder vanishes continuously as $x \to 0$ and hence the identity holds also at the origin.

The principal term $\chi \mathfrak{q}^\alpha$ is precisely the profile $H$ 
characterized in Lemma \ref{L33}. 
Consequently, to complete the proof, it remains only to obtain the 
claimed Sobolev regularity for the remainder $R_{\textrm{loc}}$. To do this, we 
begin by observing 
that by the non-degeneracy of the minimum, the term $(\mathfrak{q} + \theta \Delta)^{\alpha-1}$ 
behaves locally like $|\sigma(x)|^{2\alpha-2}$. The product 
$\Delta (\mathfrak{q} + \theta \Delta)^{\alpha-1}$ thus behaves as $|\sigma(x)|^{2\alpha} G(x)$, 
where $G(x)$ 
is a linear combination of the terms $M_{jk}(x) - M_{jk}(0)$. Since $g \in H^s$, 
the second derivatives $\partial_j \partial_k g$ are in $H^{s-2}$, 
and consequently the matrix elements $M_{jk}$ also belong to $H^{s-2}$. 

Furthermore, since $M_{jk}(x) - M_{jk}(0)$ vanishes at the origin, 
we may apply Lemma \ref{L35} to each component of the 
expansion of $R_{\textrm{loc}}$. This implies that $R_{\textrm{loc}}$ 
belongs to $H^{\min(2\alpha'+2, s-3)}(\xT^2)$, 
which completes the proof.
\end{proof}

\subsection{Microlocal stability of cusps}\label{S:MicroCusps}

In this section, we study the interaction between the paradifferential parametrix $Q$ 
(as constructed in Lemma~\ref{L:2.5}) and the singular cusp $H = \chi \mathfrak{q}^\alpha$. 
We show that the action of the operator $\Delta Q$ is equivalent, up to a smoother remainder, 
to a constant-coefficient Fourier multiplier obtained by freezing the principal symbol at the origin.

Recall from the proof of Lemma~\ref{L:2.5} that the principal symbol of $\Delta Q$ is 
$$
a(x, \xi) \defn \frac{|\xi|^2}{\xi \cdot M(x) \xi}\quad\text{where}\quad M(x)
\defn (D\phi(x))^{-1} \big(D\phi(x)\big)^{-\top}.
$$
By the properties of the diffeomorphism, $M$ is a smooth, symmetric, 
and positive definite matrix on $\xT^2$. Although $\phi$ generally depends 
on time, its temporal evolution is irrelevant for the following symbolic estimates; 
we thus omit the time variable from our notation. The positive definiteness of $M(x)$ 
ensures the ellipticity of $\Delta Q$; specifically, 
there exist constants $C \ge c > 0$ such that $c \le a(x, \xi) \le C$ 
for all $(x, \xi) \in \xT^2 \times \xS^1$.

\begin{lemma}[Symbol freezing] \label{L:freezing2}
Let $\alpha$ and $s$ be such that $0<\alpha<(s-4)/2$ . 
Consider the singular 
profile~$H = \chi \mathfrak{q}^\alpha$ from Lemma \ref{L33} and 
let $Q$ be the parametrix constructed in Lemma \ref{L:2.5}. The action of $\Delta Q$ 
on $H$ satisfies
\be\label{n919_new}
(\Delta Q) H = a(0, D) H + \mathcal{R},
\ee
where the remainder $\mathcal{R}$ 
belongs to $H^{\min(2\alpha'+2, s-3)}(\xT^2)$ for all $\alpha' < \alpha$.
\end{lemma}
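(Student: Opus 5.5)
The plan is to decompose $\Delta Q$ according to its construction in Lemma~\ref{L:2.5}. Since $\Delta$ is a Fourier multiplier commuting with $B^k$ and $\Delta^{-1}$, we have
$$
\Delta Q = \sum_{k\in\xZ} \Delta T_{m_k} B^k \Delta^{-1} = \sum_{k\in\xZ} T_{m_k} B^k + \sum_{k\in\xZ}[\Delta, T_{m_k}]B^k\Delta^{-1}.
$$
The commutator $[\Delta,T_{m_k}] = T_{\Delta m_k} + 2T_{\nabla m_k}\cdot\nabla$ is of order $1$ with coefficients in $H^{s-3}\subset C^{s-4}$, so composed with $B^k\Delta^{-1}$ it gains one derivative. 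Applied to $H\in H^{1+2\alpha'}$, it gives a term in $H^{2\alpha'+2}$. The exponential decay \e{n90} makes the series converge. This term is absorbed into $\mathcal{R}$.

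\textbf{Principal part.} For each $k$ I would write $m_k = m_k(0) + (m_k - m_k(0))$, so that
$$
T_{m_k}B^kH = m_k(0)B^kH + T_{m_k-m_k(0)}B^kH.
$$
Here I use that the paraproduct by a constant is multiplication by that constant, up to the zero-frequency convention, which is smoothing. Summing $m_k(0)B^k$ over $k$ gives exactly the Fourier multiplier with symbol $\sum_k m_k(0)e^{2ik\theta} = |\xi|^2/(\xi\cdot M(0)\xi) = a(0,\xi)$, that is $a(0,D)H$. It then remains to show that
$$
\sum_k T_{m_k-m_k(0)}B^kH \in H^{\min(2\alpha'+2,s-3)}.
$$

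\textbf{Main obstacle: the frozen remainder.} The key point is that $B^kH$ is again a cusp. Up to a term in $C^\infty$ coming from the region away from the origin, $B^kH$ is $\chi$ times a function homogeneous of degree $2\alpha$, smooth off the origin: its Fourier transform is $e^{2ik\theta}$ times the homogeneous symbol of \e{n717_final}. Its angular derivatives grow at most polynomially in $k$. So I would write $B^kH = \chi\,\mathfrak{q}^{\alpha}\Theta_k + \text{smooth}$, where $\Theta_k$ is homogeneous of degree $0$. Then I replace the paraproduct by the product:
$$
T_{a}v = av - T_v a - \RBony(a,v), \qquad a = m_k - m_k(0)\in H^{s-1}, \quad v = B^kH\in H^{1+2\alpha'}.
$$
The terms $T_v a$ and $\RBony(a,v)$ lie in $H^{s-1}$, respectively in $H^{s+2\alpha'-1}$, by Theorem~\ref{T:para2} and since $v\in L^\infty$. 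The product $(m_k-m_k(0))\chi\mathfrak{q}^\alpha\Theta_k$ is handled as in Lemma~\ref{L35}: write $m_k - m_k(0) = \sigma\cdot u_k$ with $u_k\in H^{s-2}$. The factor $\chi\mathfrak{q}^\alpha\Theta_k\sigma_j$ is homogeneous of degree $2\alpha+1$, hence in $H^{2\alpha'+2}$. The product rule in dimension $2$ then gives regularity $H^{\min(2\alpha'+2,s-2)}$, which is better than the target.

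The delicate points are twofold. First, one must check that the homogeneity argument of Lemma~\ref{L35} survives the non-radial angular factor $\Theta_k$, with polynomial dependence on $k$ of all constants. Second, this polynomial growth must be beaten by the exponential decay of $\lA m_k\rA_{H^{s-1}}$ from \e{n90}, so that the sum over $k$ converges in $H^{\min(2\alpha'+2,s-3)}$. The loss to $s-3$ in the final statement accommodates the regularity $m_k\in H^{s-2}$ used in the commutator terms. This gives~\e{n919_new}.
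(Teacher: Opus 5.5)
Your proposal is correct and follows essentially the same route as the paper. Both arguments commute $\Delta$ past $T_{m_k}$ to split off an order $-1$ remainder, freeze $m_k$ at the origin to produce $a(0,D)H$, and handle $T_{m_k-m_k(0)}B^kH$ via Bony's decomposition together with the gain from vanishing at the origin in Lemma~\ref{L35}; the decay \eqref{n90} then gives summability in $k$. Your explicit bookkeeping of the angular factor of $B^kH$ and of the polynomial-in-$k$ constants spells out what the paper only asserts, without changing the argument.
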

\begin{proof}
By definition of the parametrix (Lemma \ref{L:2.5}), we have $Q = \sum_k T_{m_k} B^k \Delta^{-1}$. 
The operator $A \defn \Delta Q$ can be written as:
\begin{equation}
A = \sum_{k \in \xZ} \Delta T_{m_k} B^k \Delta^{-1}.
\end{equation}
We first commute the Laplacian with the paraproduct. Using 
the identity $\Delta T_{m_k} = T_{m_k} \Delta + 2 T_{\nabla m_k} \cdot \nabla + T_{\Delta m_k}$, 
and observing that Fourier multipliers $\Delta$, $B^k$, and $\Delta^{-1}$ commute, 
we get
$$
A = \sum_{k \in \xZ} T_{m_k} B^k + \sum_{k \in \xZ} \left( 2 T_{\nabla m_k} \cdot \nabla 
+ T_{\Delta m_k} \right) B^k \Delta^{-1}.
$$
Since $B^k \Delta^{-1}$ is an operator of order $-2$ and $T_{\nabla m_k} \cdot \nabla$ is an 
operator of order $1$, the composition in the second sum defines an operator 
of order $-1$. Its action on $H \in H^{1+2\alpha'}$ results in a remainder in $H^{2+2\alpha'}$, 
which is consistent with the required regularity for the remainder~$R$.

We now focus on the principal sum $\sum T_{m_k} B^k H$. For each $k \in \xZ$, we freeze 
the coefficient $m_k$ at the origin by writing $m_k(x) = m_k(0) + \tilde{m}_k(x)$, 
where $\tilde{m}_k(0) = 0$. Since the paraproduct $T_{m_k(0)}$ coincides with the 
multiplication by the constant $m_k(0)$ up to a smooth low-frequency truncation term 
(which we absorb into the remainder $R$ and omit hereafter for brevity), we have:
$$
\sum_{k \in \xZ} T_{m_k} B^k H
= \left( \sum_{k \in \xZ} m_k(0) B^k \right) H + \sum_{k \in \xZ} T_{\tilde{m}_k} B^k H.
$$
The first term on the right-hand side is exactly $a(0, D)H$. The terms in the second sum 
involve the paraproduct $T_{\tilde{m}_k}$ with a coefficient $\tilde{m}_k \in H^{s-2}$ that 
vanishes at the origin. Since $B^k$ is a Fourier multiplier of order $0$, $B^k H$ 
possesses the same cusp structure as $H$, being a homogeneous distribution of 
degree $2\alpha$ localized by $\chi$. We apply the Bony decomposition to each term:
\begin{equation}\label{n481}
T_{\tilde{m}_k} (B^k H) = \tilde{m}_k (B^k H) - \RBony(\tilde{m}_k, B^k H) - T_{B^k H} \tilde{m}_k.
\end{equation}
As established in Lemma \ref{L35}, since $\tilde{m}_k(0) = 0$ and $B^k H$ behaves 
locally like $|\sigma(x)|^{2\alpha}$, the product $\tilde{m}_k (B^k H)$ gains one full 
derivative compared to $H$. Specifically, this term behaves as $|\sigma(x)|^{2\alpha+1}$ 
and belongs to $H^{\min(2\alpha'+2, s-3)}$. The other two terms in \eqref{n481} 
are similarly regular: the Bony remainder $\RBony$ gains derivatives from the sum of 
indices, and the paraproduct $T_{B^k H} \tilde{m}_k$ inherits the $H^{s-2}$ regularity of $\tilde{m}_k$.

Finally, the exponential decay of the Sobolev norms $\|m_k\|_{H^{s-2}}$ established 
in \eqref{n90} ensures that the series of remainders converges absolutely 
in $H^{\min(2\alpha'+2, s-3)}$. Summing all error terms from the 
Laplacian commutation and the coefficient freezing yields the desired result.
\end{proof}

\section{A paradifferential Poisson structure}\label{S:cancel} 

In this section, we establish the paradifferential Poisson structure. Our analysis is structured as follows.

In subsection~\ref{S:Friedrichs}, we gather several properties 
of Friedrichs mollifiers 
which are needed to localize in the frequency 
space. Then, we establish in subsection~\ref{S:Upper} an upper bound 
for the scalar product $\langle f , P_{\eps}(g-m)^{\alpha} \rangle_{L^2}$, which serves as a necessary counterpart 
to the subsequent growth estimates in subsection~\ref{S:Tails}, where 
we establish the spectral tail lower bound 
for the scalar  
product $\langle \Delta Q g , P_{\eps}(g-m)^{\alpha} \rangle_{L^2}$. 
In subsection~\ref{S:perp}, we analyze a paradifferential 
variant of the cancellation $\int (v \cdot \nabla f) f = 0$ provided that $\cn v=0$. 
This result, 
summarized in Lemma~\ref{L:6.3}, allows us to treat the advection term as 
a remainder despite the limited regularity of the vorticity. 

This combination of lower bounds for the parametrix 
and cancellation for the transport term provides the quantitative 
estimates needed in the next section 
to close the small-scale generation argument.

\subsection{Friedrichs mollifiers}\label{S:Friedrichs}
Let~$\theta$ be a~$C^{\infty}$ 
function of~$\xi\in\xR^2$, satisfying
\begin{equation*}
0\le \theta \le 1, \quad \theta(\xi) = 1 \text{ for } |\xi| \le 1/2,
\quad \theta(\xi)=0 \text{ for } |\xi| \ge 1,\quad \theta(\xi)
=\theta\left(-\xi\right).
\end{equation*}
Set~$\theta_{\eps} (\xi) = \theta(\eps \xi)$, for~$0 < \eps \le 1$ 
and~$\xi\in\xR^2$; so that $\theta_{\eps}$ is supported in the 
ball of radius~$1/\eps$ about the origin. Then we define~$\Feps$ 
as the Fourier multiplier with symbol~$\theta_{\eps}$:
$$
\Feps = \theta(\eps D_x).
$$
The operator~$\Feps$ is self-adjoint 
and $\Feps u$ is real-valued for any real-valued function~$u$. 
We will often use the simple observation that
\begin{equation}\label{f0}
\Feps = \Feps J_{c \eps} \qquad \text{for all} \,\,\,\, 0\le c\le 1/2.
\end{equation}
We make extensive use of the following estimates.
\begin{lemma}\label{L:Friedrichs}
Consider the Friedrichs mollifiers $\Feps = \theta(\eps D)$.
\begin{enumerate}[(i)]
\item Let $s\ge 0$. 
There exists a constant $C$ such that, for all $f \in H^s(\xT^2)$,
\be\label{f1}
\lA (\Id - \Feps) f \rA_{L^2} \le C \eps^s \lA f \rA_{H^s}.
\ee
\item Let $\mu \in \xR$ 
and $(r,\rho)\in [0,1]^2$ be such that $r+\rho\le 1$. There exists a constant 
$C$ such that, for all $v \in C^{r+s}(\xT^2)$ and all $\eps\in (0,1]$,
\be\label{f2}
\lA [ (\Id - \Feps) , T_v ] \rA_{\mathcal{L}(H^\mu, H^{\mu+r})} \lesssim_{\mu} \eps^{\rho} \lA v \rA_{C^{r+\rho}}.
\ee
\item The spectrum of the 
paraproduct $T_a (\Feps b)$ is contained in the low-frequency region:
\be\label{f3}
\mathcal{S}_{low} \defn \left\{ \xi \in \xR^2 : |\xi| < \frac{9}{2\eps} \right\}.
\ee
\end{enumerate}
\end{lemma}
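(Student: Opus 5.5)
We prove the three items separately; all of them reduce to elementary Fourier-side considerations, together with the dyadic structure of paraproducts recalled in Appendix~\ref{appendix}.

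For \eqref{f1}, we work directly with Fourier coefficients. Since $\theta(\eps\xi)=1$ for $|\xi|\le 1/(2\eps)$, the symbol $1-\theta(\eps\xi)$ vanishes unless $|\xi|\ge 1/(2\eps)$, and it is bounded by $1$. On that region $1\le (2\eps|\xi|)^{s}\le 2^s\eps^s(1+|\xi|^2)^{s/2}$. Plancherel then gives $\lA(\Id-\Feps)f\rA_{L^2}^2\le 4^s\eps^{2s}\lA f\rA_{H^s}^2$, which is the claimed bound with $C=2^s$.

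For \eqref{f3}, we use the definition $T_a b=\sum_q S_{q-N}a\,\Delta_q b$, with the frequency cut-off of the appendix. The spectrum of $\Feps b$ lies in the ball $\{|\xi|\le 1/\eps\}$. Hence $\Delta_q\Feps b$ vanishes unless the dyadic annulus of $\Delta_q$, of size roughly $2^q$, meets this ball. The factor $S_{q-N}a$ has spectrum in a ball of radius $c\,2^{q}$ with $c$ small. The product $S_{q-N}a\,\Delta_q\Feps b$ therefore has spectrum in $\{|\xi|\le 1/\eps + c2^q\}$, and $2^q\lesssim 1/\eps$ on the relevant indices. Checking the explicit constants of the appendix's annuli should give the bound $9/(2\eps)$; for example, with annuli $\{2^{q-1}\le|\xi|\le 2^{q+1}\}$ and $N$ chosen as in the appendix, the low-frequency factor contributes at most a fixed multiple of $1/\eps$. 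This is pure bookkeeping.

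The main step is the commutator estimate \eqref{f2}. Set $\Peps=\Id-\Feps$; it is a Fourier multiplier with symbol $p_\eps(\xi)=1-\theta(\eps\xi)$. We decompose
\[
[\Peps,T_v]u=\sum_q\big[\Peps,S_{q-N}v\big]\Delta_q u .
\]
Each summand has spectrum in an annulus of size $2^q$, so it suffices to bound each dyadic block in $L^2$ by $2^{-q(r)}\eps^\rho\lA v\rA_{C^{r+\rho}}\lA\Delta_q u\rA_{L^2}$ and then resum with the $2^{q\mu}$ weights.

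We treat each block by the kernel (first-order Taylor) formula for the commutator of a multiplier with multiplication. Replace $\Peps$ by $\tilde p(D)$, where $\tilde p$ is $p_\eps$ localized to frequencies of size $2^q$. Then
\[
\big[\tilde p(D),w\big]f(x)=\int K(x-y)\big(w(y)-w(x)\big)f(y)\,\dy .
\]
This gives $\lA[\tilde p(D),w]\rA_{L^2\to L^2}\lesssim \lA \nabla_\xi\tilde p\rA\cdot\lA\nabla w\rA_{L^\infty}$ at scale $2^q$. In terms of the relevant quantities, $\lA\nabla_\xi p_\eps\rA_{L^\infty}\lesssim\eps$ and $|\nabla_\xi p_\eps|$ is supported where $|\xi|\sim 1/\eps$.

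Only the blocks with $2^q\sim\eps^{-1}$ contribute. On those blocks we have $\lA\nabla S_{q-N}v\rA_{L^\infty}\lesssim 2^{q(1-r-\rho)}\lA v\rA_{C^{r+\rho}}$, using $r+\rho\le1$. The block therefore has norm $\lesssim\eps\,2^{q(1-r-\rho)}\lA v\rA_{C^{r+\rho}}\sim 2^{-qr}\eps^{\rho}\lA v\rA_{C^{r+\rho}}$.

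For blocks with $2^q\ll\eps^{-1}$ or $2^q\gg\eps^{-1}$, the symbol $p_\eps$ is constant ($0$ or $1$) on the joint spectral region. The commutator then vanishes exactly. At worst it leaves a boundary layer of finitely many $q$, which is handled the same way.

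The delicate point is the endpoint $r+\rho=1$, where $C^1$ should be read with the appendix's Hölder–Zygmund convention. There we will use the bound $\lA\nabla S_{q}v\rA_{L^\infty}\lesssim\lA v\rA_{C^1}$, or, if needed, lose an arbitrarily small power, which is harmless for later use. Summing the almost-orthogonal blocks gives the $\mathcal L(H^\mu,H^{\mu+r})$ bound, with a constant depending on $\mu$.
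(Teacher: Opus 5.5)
Your proposal is correct. Items (i) and (iii) follow the paper: Plancherel on the region $|\xi|\ge 1/(2\varepsilon)$ for (i), and spectral bookkeeping of the blocks $S_{j-3}a\,\Delta_j(J_\varepsilon b)$ for (iii). For (iii), your bookkeeping uses that $\Delta_j J_\varepsilon b$ already has spectrum in $|\xi|<1/\varepsilon$. This gives $|\xi|<1/\varepsilon+2^{j-2}<3/(2\varepsilon)$, which is sharper than the paper's $\tfrac98 2^{j+1}<9/(2\varepsilon)$.

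For the main item (ii) you take a genuinely different route.

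The paper rescales. It observes that $A_\varepsilon=\varepsilon^{-\rho}(\mathrm{Id}-J_\varepsilon)$ is a Fourier multiplier of order $\rho$, uniformly in $\varepsilon$. It then invokes a commutator theorem for order-$\rho$ multipliers against $T_v$ with $v\in C^{r+\rho}$, $r+\rho\le 1$, which gains $r+\rho$ derivatives.

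You instead prove the needed special case by hand. The block $[P_\varepsilon,S_{q-3}v]\Delta_q u$ vanishes identically unless $2^q\sim\varepsilon^{-1}$, because $1-\theta(\varepsilon\cdot)$ is constant ($0$ or $1$) on the joint spectrum of $\Delta_q u$ and $S_{q-3}v\,\Delta_q u$. On the $O(1)$ surviving blocks, the first-order kernel formula gives the bound $2^{-q}\|\nabla S_{q-3}v\|_{L^\infty}\lesssim\varepsilon^{r+\rho}\|v\|_{C^{r+\rho}}$. The kernel estimate $\||z|K\|_{L^1}\lesssim 2^{-q}$ needs a few derivatives of the rescaled symbol, not just $\|\nabla_\xi\tilde p\|_{L^\infty}$. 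These derivatives are uniformly bounded because $2^q\varepsilon$ stays in a fixed compact interval, so this is harmless.

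The paper's argument is shorter, but it relies on a version of Theorem~\ref{T:para4} for non-homogeneous symbols of positive order, which is not the statement recorded in the appendix. Yours is self-contained and needs no symbolic calculus. It also shows transparently that the gain $\varepsilon^\rho$ comes only from the localization of $\nabla_\xi\theta(\varepsilon\xi)$ at $|\xi|\sim\varepsilon^{-1}$.

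At the endpoint $r+\rho=1$ no loss is needed. In the paper $C^1$ is the classical space, so $\|\nabla S_q v\|_{L^\infty}=\|S_q\nabla v\|_{L^\infty}\lesssim\|v\|_{C^1}$, since $S_q$ is uniformly bounded on $L^\infty$. Your fallback of losing a small power would not give the stated estimate, so you should drop it.
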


\begin{proof}
\textit{(i)} By Plancherel's theorem and the support 
property of $\theta$ (which equals $1$ for $|\xi|\le 1/2$), the 
symbol $1-\theta(\eps k)$ vanishes unless $|k| > (2\eps)^{-1}$. 
On this frequency set, we have $(1+|k|^2)^{-s} \lesssim \eps^{2s}$. Thus,
$$
\lA (\Id - \Feps) f \rA_{L^2}^2 = \sum_{|k| > (2\eps)^{-1}} (1 - \theta(\eps k))^2 |\hat{f}(k)|^2\les 
\sum_{|k| > (2\eps)^{-1}} \eps^{2s}(1+|k|^2)^s |\hat{f}(k)|^2,
$$
which yields the result.

\textit{(ii)} Consider 
the scaled operator $A_\eps \defn \eps^{-\rho}(\Id - \Feps)$. 
Its symbol is $a_\eps(\xi) = \eps^{-\rho}(1 - \theta(\eps \xi))$. We verify 
that $A_\eps$ is a Fourier multiplier of order $\rho$, uniformly in $\eps$. 
Indeed, for any multi-index $\alpha$, the derivative $\partial_\xi^\alpha a_\eps(\xi)$ 
is supported in the region $|\xi| \sim \eps^{-1}$. On this support, we have:
$$
|\partial_\xi^\alpha a_\eps(\xi)| = \eps^{-\rho} \eps^{|\alpha|} 
|(\partial^\alpha \theta)(\eps \xi)| \lesssim \eps^{|\alpha|-\rho} \lesssim (1+|\xi|)^{\rho-|\alpha|}.
$$
We now apply the standard commutator estimate (Theorem~\ref{T:para4}): 
the commutator of a Fourier multiplier of order $m=\rho$ with the paramultiplication 
by a function $v \in C^{r+\rho}$ maps $H^\mu$ to $H^{\mu-m+r+\rho} = H^{\mu+r}$. This gives:
$$
\lA [A_\eps, T_v] \rA_{\mathcal{L}(H^\mu, H^{\mu+r})} \lesssim \lA v \rA_{C^{r+\rho}}.
$$
This implies the desired estimate for $[ (\Id - \Feps) , T_v ] = \eps^\rho [A_\eps, T_v]$.

\textit{(iii)} Recall that $T_a (\Feps b) = \sum_{j \ge 3} S_{j-3} a \, \Delta_j (\Feps b)$. 
The spectrum of each term $S_{j-3} a \, \Delta_j v$ is contained in the ball 
of radius $2^{j-2} + 2^{j+1} = \frac{9}{8} 2^{j+1}$. 
For the term $\Delta_j (\Feps b)$ to be non-zero, the dyadic 
annulus $\{|\xi| \sim 2^j\}$ must intersect the support of $\Feps$, 
which is $\{|\xi| \le \eps^{-1}\}$, so $2^{j-1} < \eps^{-1}$. We find 
that the spectrum is contained in $\{ |\xi| < \frac{9}{8} \cdot 4 \eps^{-1} = \frac{9}{2\eps} \}$, 
which concludes the proof.
\end{proof}

\subsection{An upper bound}\label{S:Upper}
Consider a function $g$ in 
the class $\mathcal{H}^s(\xT^2)$ 
introduced in Definition~\ref{defi:GP}. This ensures that $g$ 
attains a unique non-degenerate minimum $m$ 
at the origin, where the Hessian matrix admits positive 
eigenvalues $\lambda_1, \lambda_2 > 0$.

Now, introduce the Fourier multiplier
$$
P_\eps=\Id-J_\eps=\Id-\theta(\eps D_x),
$$
where $\theta$ is as in the previous paragraph, so that $P_\eps$ localized at frequencies larger than $1/\eps$. 
Our first objective 
is to estimate the scalar 
product 
$$
\left\langle f , P_{\eps}(g-m)^{\alpha} \right\rangle_{L^2}.
$$
We want to characterize 
the maximal decay rate allowed by the cusp exponent $\alpha$. 

\begin{proposition}\label{P:6.4eps}
Consider $\alpha$ and $s$ such that $0<\alpha<(s-4)/2$. 
Assume that $g \in \mathcal{H}^s(\xT^2)$. 
For all $\alpha'<\alpha$, there exists a 
constant $C=C(\lA g\rA_{H^s},\alpha')$ such that, for all $f \in H^s(\mathbb{T}^2)$ and 
for all $0 < \eps < 1$,
\be\label{n:6.4}
\left| \left\langle f , P_{\eps}(g-m)^{\alpha} \right\rangle_{L^2} \right| \le C\eps^{s+1+2\alpha'} \lA f \rA_{H^s}.
\ee
\end{proposition}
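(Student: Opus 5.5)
The plan is a duality argument combined with the regularity of the cusp. Since $P_\eps$ is self-adjoint and localizes to frequencies $|k|>(2\eps)^{-1}$, we can move the cutoff onto both factors. We write $P_\eps = P_\eps \tilde P_{\eps}$, where $\tilde P_\eps$ is a slightly wider high-frequency projector; alternatively, we simply work on the Fourier side. Then
$$
\left\langle f , P_{\eps}(g-m)^{\alpha} \right\rangle_{L^2}
=\sum_{|k|>(2\eps)^{-1}} \hat f(k)\,(1-\theta(\eps k))\,\overline{\widehat{(g-m)^\alpha}(k)} .
$$
Applying Cauchy--Schwarz with weights $(1+|k|^2)^{s/2}$ and $(1+|k|^2)^{-s/2}$ bounds this by $\lA f\rA_{H^s}$ times
$$
\Bigl(\sum_{|k|>(2\eps)^{-1}}(1+|k|^2)^{-s}\,\bigl|\widehat{(g-m)^\alpha}(k)\bigr|^2\Bigr)^{1/2}.
$$
It therefore suffices to show that this tail is at most $C\eps^{s+1+2\alpha'}$.

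For the tail bound we use the regularity established in Proposition~\ref{P34}: $(g-m)^\alpha\in H^{1+2\alpha'}(\xT^2)$ for every $\alpha'<\alpha$. On the region $|k|>(2\eps)^{-1}$ we have
$$
(1+|k|^2)^{-s} = (1+|k|^2)^{-s-1-2\alpha'}(1+|k|^2)^{1+2\alpha'} \les \eps^{2(s+1+2\alpha')}(1+|k|^2)^{1+2\alpha'} .
$$
Summing gives the tail bound $\eps^{s+1+2\alpha'}\lA (g-m)^\alpha\rA_{H^{1+2\alpha'}}$. This is the same mechanism as estimate~\e{f1} of Lemma~\ref{L:Friedrichs}, applied in negative-index form: $P_\eps$ maps $H^{1+2\alpha'}$ into $H^{-s}$ with norm $O(\eps^{s+1+2\alpha'})$.

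It remains to check that the constant depends only on $\lA g\rA_{H^s}$ and $\alpha'$. To see this, we trace the norm of $(g-m)^\alpha$ in $H^{1+2\alpha'}$ through the decomposition $(g-m)^\alpha=H+R$ of Proposition~\ref{P34}. The cusp $H=\chi\mathfrak q^\alpha$ has the explicit Fourier asymptotics \e{n717_final}, so its $H^{1+2\alpha'}$ norm is finite and controlled by the eigenvalues $\lambda_1,\lambda_2$. The remainder $R$ lies in $H^{\min(2\alpha''+2,s-3)}$ for any $\alpha''\in(\alpha',\alpha)$. Since $2\alpha<s-4$, we have $1+2\alpha'<s-3$, so $R\in H^{1+2\alpha'}$ with norm bounded by the quantities entering Lemma~\ref{L35} and the Moser estimates.

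The only delicate point is this bookkeeping. Strictly speaking, the constant also depends on the geometry of the minimum (the values $\lambda_j$ and the size of the cutoff neighbourhood), which I would absorb into the dependence on $g$. Apart from that, the argument is a direct two-line Fourier computation, and I do not expect any genuine obstacle.
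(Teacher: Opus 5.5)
Your proof is correct and is essentially the paper's argument. The paper writes $\langle f,P_\eps V\rangle=\langle \tilde P_{\eps} f,P_\eps V\rangle$ and bounds $\|\tilde P_{\eps} f\|_{L^2}\lesssim\eps^{s}\|f\|_{H^s}$ and $\|P_\eps V\|_{L^2}\lesssim \eps^{1+2\alpha'}\|V\|_{H^{1+2\alpha'}}$ using the regularity from Proposition~\ref{P34}; this is the same Cauchy--Schwarz split of the gain $\eps^{s+1+2\alpha'}$ that you carry out with weights on the Fourier side. Your remark that the constant really depends on $g$ through the non-degeneracy of its minimum, and not only on $\|g\|_{H^s}$, is accurate and harmless here, since $g$ is fixed.
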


\begin{proof}
Recall from Proposition~\ref{P34} that 
the function $V = (g-m)^\alpha$ belongs to the Sobolev space $H^{1+2\alpha'}(\xT^2)$. 
Now, as above, with $\tPeps=P_{2\eps}$, there holds $\Peps=\tPeps\Peps$ so
$$
\la \langle f, P_\eps V \rangle_{L^2} \ra 
= \bla \langle \tPeps f, P_\eps V \rangle_{L^2} \bra\le \blA \tPeps f \brA_{L^2} \lA P_\eps V \rA_{L^2}
$$
We then write that
$$
\lA P_\eps f \rA_{L^2} \lesssim \eps^s \lA f \rA_{H^s},
$$
and, similarly,
$$
\lA P_\eps V \rA_{L^2} \lesssim \eps^{1+2\alpha'} \lA V \rA_{H^{1+2\alpha'}}.
$$
Combining these estimates, we obtain the wanted inequality.
\end{proof}

\subsection{Spectral tail estimates}\label{S:Tails}
We now establish a quantitative lower 
bound for the quantity
$$
\left\langle \Delta Q g , 
P_{\eps} ((g-m)^{\alpha}) \right\rangle_{L^2},
$$
where $Q$ is the paradifferential parametrix 
constructed in Lemma~\ref{L:2.5}. 
Before stating the following result, let us observe that the operator norm 
of the parametrix $Q$ inherently depends on the Sobolev norm $\lA \Phi - \Id \rA_{H^s}$. 
Nevertheless, for the sake 
of notational clarity, we shall systematically absorb this dependence into 
the generic constants without explicit mention.

\begin{proposition}\label{P:aeps}
Consider three real numbers $s,\alpha,\beta$, satisfying
\be\label{n1199}
2\alpha<s-4,\quad 2\alpha \notin \mathbb{N} \quad \text{and}\quad 0 < \alpha<\beta.
\ee
Let $g \in \mathcal{H}^s(\mathbb{T}^2)$. There exist constants $C=C(\lA g\rA_{H^s}) > 0$ and $\eps_0 > 0$ 
such that, for all $0 < \eps < \eps_0$, the following lower bound holds:
\be\label{n1201}
\left\langle \Delta Q g , P_{\eps}(g-m)^{\alpha} \right\rangle_{L^2} \ge C \eps^{s+1+2\beta}.
\ee
\end{proposition}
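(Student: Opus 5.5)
Move $\Delta Q$ onto the cusp using self-adjointness, freeze its symbol at the minimum via Lemma~\ref{L:freezing2}, and read off a positive frequency sum from Plancherel.

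\textbf{Step 1 (transfer to the cusp).} Since $P_\eps$ is a self-adjoint Fourier multiplier commuting with $\Delta$, I write
$$
\langle \Delta Q g, P_\eps (g-m)^\alpha\rangle_{L^2}=\langle g, (\Delta Q)^*P_\eps (g-m)^\alpha\rangle_{L^2}.
$$
The adjoint of $Q=\sum_k T_{m_k}B^k\Delta^{-1}$ is $\sum_k \Delta^{-1}B^{-k}T_{m_k}^*$ (using $(B^k)^*=B^{-k}$). Its principal symbol is the conjugate of that of $Q$, and $m$ is real, so up to operators of order $-2-\min\{s-2,1\}$ (Theorem~\ref{T:para4} together with the decay \e{n90}) $(\Delta Q)^*$ equals $\Delta Q$ plus a remainder $\mathcal{E}$. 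Similarly $[P_\eps,\Delta Q]$ is handled by \e{f2}. This reduces the quantity to
$$
\langle g, P_\eps \Delta Q\, (g-m)^\alpha\rangle_{L^2}
$$
plus error terms of the form $\langle g, P_\eps \mathcal{E}(g-m)^\alpha\rangle$.

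\textbf{Step 2 (decompose and freeze).} By Proposition~\ref{P34}, $(g-m)^\alpha=H+R$ with $R\in H^{\min(2\alpha'+2,s-3)}$. By Lemma~\ref{L:freezing2}, $\Delta Q H=a(0,D)H+\mathcal{R}$ with $\mathcal{R}$ in the same space. Every non-principal piece therefore lies in $H^{\gamma}$ with $\gamma=\min(2\alpha'+2,s-3)$. Since $2\alpha<s-4$, choosing $\alpha'$ close to $\alpha$ gives $\gamma>1+2\beta'$ for some $\beta'<\beta$; in fact $\gamma\ge 1+2\alpha+\delta$ for some $\delta>0$. By the argument of Proposition~\ref{P:6.4eps}, these pieces contribute $O(\eps^{s+\gamma})=o(\eps^{s+1+2\beta})$.

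\textbf{Step 3 (main term via Plancherel).} It remains to bound $\langle g,P_\eps a(0,D)H\rangle$ from below. By Plancherel, and since $g$ is real,
$$
\langle g,P_\eps a(0,D)H\rangle=(2\pi)^2\sum_n (1-\theta(\eps n))\,a(0,n)\,\RE\hat g(n)\,\hat H(n),
$$
where $\hat H(n)>0$ is real by Lemma~\ref{L33} up to $O(|n|^{-\infty})$. For $|n|\ge 1/(2\eps)\ge N$, condition \e{n1197} gives $\RE\hat g(n)\ge c|n|^{-s-1}/\ln(1+|n|)$. Ellipticity gives $a(0,n)\ge c$, and $\hat H(n)\gtrsim |n|^{-2-2\alpha}$. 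Each term of the sum is therefore nonnegative. Summing only over $|n|\ge 1/\eps$, where $1-\theta=1$, the sum is bounded below by
$$
\sum_{|n|\ge 1/\eps}\frac{|n|^{-s-3-2\alpha}}{\ln|n|}\gtrsim \frac{\eps^{s+1+2\alpha}}{\ln(1/\eps)}\ge \eps^{s+1+2\beta}
$$
for small $\eps$, because $\beta>\alpha$ absorbs the logarithm. The $O(|n|^{-\infty})$ part of $\hat H$ is negligible.

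\textbf{Main obstacle.} The delicate point is Step 1 together with the remainder bookkeeping. The adjoint and commutator errors must actually gain more than $\eps^{2\beta-2\alpha}$. A remainder of order only $-\min\{s-2,1\}=-1$ acting on $(g-m)^\alpha\in H^{1+2\alpha'}$ lands in $H^{2+2\alpha'}$, which does gain one full derivative and so suffices. I would verify this first.

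If the adjoint manipulation is awkward, the fallback is to expand $\Delta Q g$ directly as $a(0,D)g$ plus a term where $\tilde m_k$ multiplies $g$. I would then pair this against the cusp, exploiting the vanishing of $\tilde m_k$ at the origin: move $T_{\tilde m_k}$ onto the cusp by transposition, and apply Lemma~\ref{L35}.
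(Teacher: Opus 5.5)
Your proposal is correct and is essentially the paper's argument: the commutator with $P_\eps$, the splitting $(g-m)^\alpha=H+R$ from Proposition~\ref{P34}, symbol freezing via Lemma~\ref{L:freezing2}, and a positive Parseval sum driven by \eqref{n1197}. The only reordering is that you take the adjoint of $\Delta Q$ first and derive its self-adjointness modulo order $-1$ from $\overline{m_k}=m_{-k}$, which is exactly what the paper uses implicitly when it applies Lemma~\ref{L:freezing2} to $(\Delta Q)^*H$.

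Two details need fixing when you write it out.
\begin{itemize}
\item For the $[P_\eps,\Delta Q]$ term, \eqref{f2} alone is not enough. You must combine it with the spectral localization of Lemma~\ref{L:Friedrichs}(iii), since otherwise you do not gain the factor $\eps^{s}$ from $g$.
\item Compare the $O(\eps^{s+\gamma})$ errors with the main term $\eps^{s+1+2\alpha}/\ln(1/\eps)$, not with $\eps^{s+1+2\beta}$. Alternatively, first reduce $\beta$ as the paper does. As written, the claim $O(\eps^{s+\gamma})=o(\eps^{s+1+2\beta})$ fails for large $\beta$, because $\gamma>1+2\beta$ need not hold.
\end{itemize}
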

\begin{proof}
Without loss of generality, we may assume that 
the exponent $\beta$ 
satisfies the condition $\beta < \min\{\alpha + 1/2, (s-4)/2\}$. 
Throughout the following estimates, we denote by $C$ various 
positive constants depending solely on the $H^s$-norm of $g$, 
and we employ the notation $A \les B$ as a shorthand for 
$A \le C B$ for such a constant $C$.

By exploiting the self-adjointness 
of the spectral projector $P_\eps$, we decompose 
the expression 
into a principal term $\mathcal{I}_\eps$ and a commutator 
remainder~$\mathcal{J}_\eps$ as follows:
$$
\left\langle \Delta Q g , P_{\eps}(g-m)^{\alpha} \right\rangle_{L^2}
=\left\langle \Delta Q P_{\eps}g , (g-m)^{\alpha} \right\rangle_{L^2}
+\left\langle [\Peps,\Delta Q] g , (g-m)^{\alpha} \right\rangle_{L^2}
=\mathcal{I}_\eps +\mathcal{J}_\eps .
$$

\bigbreak
\noindent {\em Step 1: Estimate of the commutator term $\mathcal{J}_\eps$.}
We first establish the following bound for the commutator contribution:
\be\label{n1806}
\la \mathcal{J}_\eps \ra \les \eps^{s+2+2\alpha'}.
\ee
To this end, we introduce a slightly wider spectral projector 
$\tPeps = P_{\lambda \eps}$, with $\lambda > 1$ chosen large 
enough so that $\Peps \tPeps = \Peps$. 

By virtue of the third statement in Lemma \ref{L:Friedrichs}, 
when $\lambda$ is large enough, one has
$$
\Peps T_a b = \Peps T_{a}\tPeps b.
$$
Given that the parametrix $Q$ is constructed as a sum of 
operators involving paraproducts and Fourier multipliers, 
the commutator with $\Delta Q$ inherits this spectral localization. 
Specifically, we have
$$
[\Peps,\Delta Q] g = \tPeps [\Peps, \Delta Q] g.
$$

We now proceed by duality to write
$$
\la \mathcal{J}_\eps \ra \le \blA \tPeps [\Peps, \Delta Q] g \brA_{H^{-1-2\alpha'}} 
\lA (g-m)^{\alpha} \rA_{H^{1+2\alpha'}}.
$$
Invoking the high-frequency decay estimate provided in the 
first statement of Lemma~\ref{L:Friedrichs}, we find that
$$
\lA \tPeps [\Peps, \Delta Q] g \rA_{H^{-1-2\alpha'}} 
\les \eps^{s+1+2\alpha'} \lA [\Peps, \Delta Q] g \rA_{H^{s}}.
$$
Furthermore, applying the commutator estimate of 
Lemma \ref{L:Friedrichs}-(ii) (with $r=0$ and $\rho=1$), 
it follows that
$$
\lA [\Peps, \Delta Q] g \rA_{H^{s}} \les \eps \lA g \rA_{H^{s}}.
$$
Combining these bounds, and noting 
that $\lA (g-m)^\alpha\rA_{H^{1+2\alpha'}}\le C(\lA g\rA_{H^s})$ 
(see Proposition~\ref{P34}), we conclude 
that $\la \mathcal{J}_\eps \ra$ satisfies ~\e{n1806}, as required.

\smallbreak
\noindent {\em Step 2: estimate of $\mathcal{I}_\eps $.} We now address the 
estimate of $\mathcal{I}_\eps$. This is where we shall rely 
on the microlocal analysis of cusps established in Section~\ref{S:cusps}.

According to Proposition~\ref{P34}, the function $(g-m)^\alpha$ 
decomposes as $(g-m)^\alpha = H + R$, where $H = \chi \mathfrak{q}^\alpha$ 
is the singular profile analyzed in Lemma~\ref{L33}. The remainder $R$ 
belongs to $H^{k}(\xT^2)$ with $k = \min(2\alpha'+2, s-3)$ for any 
$\alpha' < \alpha$. The scalar product \eqref{n1201} thus splits as:
\be \label{n1203}
\mathcal{I}_\eps = 
\left\langle \Delta Q P_{\eps}g , H \right\rangle_{L^2} 
+ \left\langle \Delta Q P_{\eps}g , R \right\rangle_{L^2}.
\ee

The second term in \eqref{n1203} is estimated by $H^{-k} \times H^k$ 
duality. Since $\Delta Q$ is an operator of order $0$ and $P_\eps$ 
is a spectral projector localized at frequencies $|n| \ge 1/\eps$, we have:
$$
\la \langle \Delta Q P_{\eps}g , R \rangle_{L^2} \ra \les 
\lA P_\eps g \rA_{H^{-k}} \lA R \rA_{H^k} \les \eps^{s+k} 
\lA g \rA_{H^s} \lA R \rA_{H^k}.
$$
By assumptions on $\beta$, we have $k>1+2\beta$ and hence 
$s+k > s+1+2\beta$, so that this term is $o(\eps^{s+1+2\beta})$ 
and is absorbed into the final error.

We now focus on the first term in the right-hand side of \eqref{n1203}. 
By the self-adjointness of $P_\eps$ and the symbol-freezing 
result (Lemma~\ref{L:freezing2}), we have:
\be \label{n1204}
\langle \Delta Q P_{\eps}g , H \rangle_{L^2} 
= \langle P_\eps g, (\Delta Q)^* H \rangle_{L^2} = \langle P_\eps g, a(0, D) H \rangle_{L^2} 
+ \mathcal{R}_\eps.
\ee
Here $\mathcal{R}_\eps = \langle P_\eps g, \mathcal{R} \rangle_{L^2}$ 
where $\mathcal{R}$ is the smoothing remainder in $H^k$ from 
Lemma~\ref{L:freezing2}. As with the term in $R$, duality 
ensures $|\mathcal{R}_\eps| = O(\eps^{s+k})$ so $|\mathcal{R}_\eps| =o(\eps^{s+1+2\beta})$.

Using Parseval's identity and the fact that both $a(0, n)$ 
and $\hat{H}(n)$ are real and even, the main term in \eqref{n1204} satisfies:
\be \label{n1204_refined}
\langle P_\eps g, a(0, D) H \rangle_{L^2} 
= \sum_{n\in \xZ^2} (1-\theta(\eps n))\big( \RE \hat{g}(n) \big) a(0, n) \hat{H}(n),
\ee
where recall that $P_\eps=\Id-\theta(\eps D_x)$. We now apply the lower bounds established for each factor:
\begin{itemize}
\item From property~\eqref{G:(i)} of 
Definition~\ref{defi:GP}, $\RE \hat{g}(n) \ge c |n|^{-s} (|n| \ln |n|)^{-1}$.
\item From Lemma~\ref{L33}, $\hat{H}(n) \ge c' |n|^{-2(1+\alpha)}$.
\item From the ellipticity of the parametrix (cf Lemma~\ref{L:2.5}), $a(0, n) \ge c'' > 0$.
\item By definition of $\theta$, $1-\theta(\eps n)=1$ for $|n|\ge 1/\eps$.
\end{itemize}
Substituting these into \eqref{n1204_refined}, every term in the sum is positive for large $|n|$, and we obtain:
$$
\mathcal{I}_\eps \ge C \sum_{|n| \ge \frac{1}{\eps}} \frac{1}{|n|^{s+3+2\alpha} \ln|n|}.
$$
Applying an integral comparison in $\xR^2$:
$$
\mathcal{I}_\eps \ge C \int_{1/\eps}^\infty \frac{r}{r^{s+3+2\alpha} \ln r} \dr 
\ge \frac{C'}{\ln(1/\eps)} \eps^{s+1+2\alpha}.
$$
Since $\beta > \alpha$, the term $\eps^{s+1+2\alpha} |\ln \eps|^{-1}$ 
is dominant over $\eps^{s+1+2\beta}$ for sufficiently small $\eps$. 
Combined with the fact that the remainders are of order $O(\eps^{s+k})$ 
with $k > 1+2\beta$, we conclude that \eqref{n1201} holds.
\end{proof}

\subsection{A $\nabla^\perp$ cancellation}\label{S:perp}

Here, we prove an estimate which allows us to handle the most troublesome 
term in the equation for $\curl X$ coming from the linear 
propagator $X\mapsto \nabla^\perp Q (T_{\nabla\omega_0} \cdot X)$. 
The analysis relies upon the following observation:
\begin{equation}\label{n2007}
\int_{\xT^2} (\nabla^{\perp} f\cdot \nabla g) f \dx
=\frac{1}{2}\int_{\xT^2} \nabla^{\perp} (f^2) \cdot \nabla g \dx 
=-\frac{1}{2}\int_{\xT^2} \nabla \cdot (\nabla^{\perp} f^2)g \dx=0.
\end{equation}

\begin{lemma}\label{L:6.3}
Consider three real numbers $s,\alpha,\alpha'$ such that 
$$
s-4 >2\alpha,\quad \alpha >\alpha' >0.
$$
There exists a non-decreasing function $C\colon\xR_+\to\xR_+$ 
such that, for all $\eps \in (0,1]$ and for all $\sigma \in H^{s+1}(\xT^2)$ 
and $g \in H^s(\xT^2)$, 
\[
\la \langle T_{\nabla^{\perp} g} \cdot \nabla  \sigma, \Peps
(g-m)^\alpha \rangle_{L^2}\ra \le  \eps^{s + \mu} C(\lA g \rA_{H^s})
\lA \sigma \rA_{H^{s+1}} ,
\]
where $m = \inf g$ and $\mu = \min\{2+2\alpha', 
s-3\}$.
\end{lemma}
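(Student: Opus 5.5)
The plan is to reduce the paraproduct pairing to the exact cancellation \eqref{n2007}, applied to a function of $g$ alone, and to control everything else by frequency localization. Set $V \defn (g-m)^\alpha$. First, by Lemma~\ref{L:Friedrichs}(iii), I localize $\sigma$ at high frequencies. Since $P_\eps$ sees only frequencies $\ge 1/(2\eps)$, the paraproduct $T_{\nabla^\perp g}\cdot\nabla(J_{c\eps}\sigma)$ has spectrum in $|\xi|\lesssim 1/\eps$. Choosing $c$ small, its spectrum lies inside $\{\theta(\eps\cdot)=1\}$, so that term is killed by $P_\eps$. Hence we may replace $\sigma$ by $\tilde\sigma\defn(\Id-J_{c\eps})\sigma$. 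This function satisfies
\[
\lA \tilde\sigma\rA_{H^{t}}\les \eps^{s+1-t}\lA\sigma\rA_{H^{s+1}}\quad (t\le s+1).
\]

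Next I undo the paraproduct. Writing
\[
T_{\nabla^\perp g}\cdot\nabla\tilde\sigma=\nabla^\perp g\cdot\nabla\tilde\sigma - T_{\nabla\tilde\sigma}\cdot\nabla^\perp g-\RBony(\nabla^\perp g,\nabla\tilde\sigma),
\]
the last two terms have good regularity. The remainder lies in $H^{2s-2}$ with norm $\les \eps$-powers from $\tilde\sigma$. The term $T_{\nabla\tilde\sigma}\nabla^\perp g$ is in $H^{s-1}$ and carries the factor $\lA\nabla\tilde\sigma\rA_{L^\infty}\les\eps^{s-1-\delta}\lA\sigma\rA_{H^{s+1}}$. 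Pairing either term with $P_\eps V$ and using Proposition~\ref{P:6.4eps}-type bounds, with $\lA P_\eps V\rA_{H^{-r}}\les\eps^{r+1+2\alpha'}$, I check that both are $O(\eps^{s+\mu})$. This needs some bookkeeping of the exponents; note that $s>4$ gives room.

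For the main term $\langle \nabla^\perp g\cdot\nabla\tilde\sigma,P_\eps V\rangle$, I move $P_\eps$ across and write $P_\eps V=V-J_\eps V$. The key identity is
\[
\nabla^\perp g\cdot\nabla V=\alpha(g-m)^{\alpha-1}\nabla^\perp g\cdot\nabla g=0
\]
away from the minimum. Since $V\in H^{1+2\alpha'}$, this holds as distributions, and integration by parts gives
\[
\langle\nabla^\perp g\cdot\nabla\tilde\sigma,V\rangle=-\langle\tilde\sigma,\nabla^\perp g\cdot\nabla V\rangle=0,
\]
which is the analogue of \eqref{n2007}. It remains to handle $\langle\nabla^\perp g\cdot\nabla\tilde\sigma,J_\eps V\rangle=-\langle\tilde\sigma,\nabla^\perp g\cdot\nabla J_\eps V\rangle$. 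Using the vanishing of $\nabla^\perp g\cdot\nabla V$, I rewrite this as
\[
\langle\tilde\sigma,[\nabla^\perp g\cdot\nabla,P_\eps]V\rangle .
\]

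This commutator is the main obstacle. By Lemma~\ref{L:Friedrichs}(ii) and the analogous commutator for the product, I will split $\nabla^\perp g=T_{\nabla^\perp g}+(\text{rest})$. The commutator $[T_{\nabla^\perp g}\cdot\nabla,P_\eps]$ is of order $0$, with gain $\eps^\rho$ for $\rho\le 1$ since $\nabla g\in C^{s-2}$; it maps $H^{1+2\alpha'}$ to $H^{1+2\alpha'}$. Because $\tilde\sigma$ is high frequency, this is paired through $\lA\tilde\sigma\rA_{H^{-1-2\alpha'}}\les\eps^{s+2+2\alpha'}$. To reach $\mu=2+2\alpha'$ one must check that the output is still localized at frequencies $\sim 1/\eps$, which follows from the spectral support properties. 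The non-paraproduct pieces, $T_{\nabla V}\nabla^\perp g$ and $\RBony$, are handled through the $H^{s-1}$ smoothness of $\nabla g$, which limits us to the $s-3$ branch of $\mu$. I expect matching the exponent to exactly $\min\{2+2\alpha',s-3\}$ to be the delicate accounting; the cancellation itself is clean.
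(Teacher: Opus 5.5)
Your argument is correct, and it takes a genuinely different route from the paper.

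\textbf{The paper's route.} The paper keeps the transport operator in paradifferential form throughout. It paralinearizes the cusp, $h=T_{H'(g)}g+h_{\HF}$, and disposes of $T_{H'(g)}g$ using that $H'(g)=\alpha(g-m)^{\alpha-1}\in L^1$ makes $T_{H'(g)}$ an operator of order $2$. It then performs a paradifferential integration by parts on $h_{\HF}$, paying the adjoint remainder $T_{\nabla^\perp g}-T_{\nabla^\perp g}^*$. Finally it uses the symbolic cancellation $T_{\nabla^\perp g}\cdot T_{\nabla g}=T_{\nabla^\perp g\cdot\nabla g}+\RCM=\RCM$. Treating $H'(g)$ only as an $L^1$ symbol is what produces the $O(\eps^{2s-3})$ term, and hence the $s-3$ branch of $\mu$.

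\textbf{Your route.} You remove the paraproduct on the $\sigma$ side. The Bony pieces $T_{\nabla\tilde\sigma}\cdot\nabla^\perp g$ and $\RBony(\nabla^\perp g,\nabla\tilde\sigma)$ only contribute $O(\eps^{2s-1+2\alpha'-\nu})$ against $P_\eps V$. You then use the exact identity $\nabla^\perp g\cdot\nabla V=0$ in $L^2$ together with $\cn\nabla^\perp g=0$. This reduces the main term to the Friedrichs commutator $\nabla^\perp g\cdot\nabla P_\eps V=[\nabla^\perp g\cdot\nabla,P_\eps]V$.

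This commutator is bounded in $H^{1+2\alpha'}$ uniformly in $\eps$:
\begin{itemize}
\item the piece $[T_{\nabla^\perp g},J_\eps]\cdot\nabla V$ is handled by Lemma~\ref{L:Friedrichs}(ii) with $(r,\rho)=(1,0)$;
\item the remaining pieces, built from $T_{\nabla V}\cdot\nabla^\perp g$ and $\RBony(\nabla^\perp g,\nabla V)$, even lie in $H^{s-2}$, since $\nabla V\in H^{2\alpha'}$ and $\nabla^\perp g\in H^{s-1}$.
\end{itemize}
Pairing with $\lA\tilde\sigma\rA_{H^{-1-2\alpha'}}\les\eps^{s+2+2\alpha'}\lA\sigma\rA_{H^{s+1}}$ closes the estimate. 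No check of output frequency localization is needed, because $\tilde\sigma$ already carries the high-frequency gain.

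\textbf{Comparison.} Your route is more elementary: it avoids paralinearizing the singular composition, paraproducts with $L^1$ symbols, and the adjoint calculus. It also gives the stronger bound $O(\eps^{s+2+2\alpha'})$. Your expectation that the non-paraproduct pieces force the $s-3$ branch is too pessimistic.

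\textbf{Two small fixes.}
\begin{itemize}
\item In the first step, $c$ must be large, not small. You need $c\ge 9$, so that the spectrum $\{|\xi|<9/(2c\eps)\}$ given by Lemma~\ref{L:Friedrichs}(iii) lies in $\{\theta(\eps\,\cdot)=1\}$.
\item As in the paper, the bound $V\in H^{1+2\alpha'}$ you rely on requires $g\in\mathcal{H}^s(\xT^2)$ (unique non-degenerate minimum), not merely $g\in H^s$.
\end{itemize}
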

\begin{proof}
As in the proof of Proposition~\ref{P:aeps}, we use a slightly wider spectral projector $\tPeps = P_{\lambda \eps}$ 
where $\lambda > 1$ is chosen large enough to ensure that 
$\Peps \tPeps = \Peps$ as well as
\be\label{n2004}
T_{a}\Peps b = \tPeps T_{a}\Peps b, \quad \text{and} \quad 
\Peps T_a b = \Peps T_{a}\tPeps b.
\ee

Setting $h \defn (g-m)^\alpha$ and observing that $\Peps$ is 
self-adjoint, we decompose the integral 
$I \defn \langle \Peps h, T_{\nabla^\perp g} \cdot \nabla \sigma \rangle_{L^2}$ 
by introducing a commutator term as follows:
\begin{align*}
I &= \int_{\xT^2} \tPeps \Peps h \, (T_{\nabla^{\perp} g} \cdot \nabla \sigma) \dx \\
&= \int_{\xT^2} \tPeps h \, T_{\nabla^{\perp} g} \cdot \nabla (\Peps \sigma) \dx 
+ \int_{\xT^2} \tPeps h \, [\Peps, T_{\nabla^{\perp} g}] \cdot \nabla \sigma \dx \\
&\defn I_1 + I_2.
\end{align*}

We first address the commutator term $I_2$. Setting $\Sigma \defn \tPeps \sigma$,  
we observe that 
identity \e{n2004} implies that
$$
[\Peps, T_{\nabla^{\perp} g}] \cdot \nabla \sigma =
[\Peps, T_{\nabla^\perp g}]\cdot \nabla \Sigma.
$$ 
Since $g \in H^s$ with $s > 4$, the vector field $\nabla^\perp g$ 
belongs to $C^1(\xT^2)$. Applying Lemma \ref{L:Friedrichs}-(ii) 
with the parameters $r=1$ and $\rho=0$, we observe that $[\Peps, T_{\nabla^\perp g}]\cdot \nabla$ 
defines a bounded operator from $H^{-1}(\xT^2)$ to $L^2(\xT^2)$ 
with an operator norm $O(1)$ relative to $\eps$. Consequently, there exists a constant $K > 0$, independent of $\eps$, 
such that
\be \label{n1801}
\lA [\Peps, T_{\nabla^\perp g}]\cdot \nabla \Sigma \rA_{L^{2}} 
\le K \blA \nabla^\perp g \brA_{C^{1}} \lA \nabla \Sigma \rA_{H^{-1}}
\lesssim \lA g \rA_{C^{2}} \lA \Sigma \rA_{L^2}.
\ee
Recalling Lemma \ref{L:Friedrichs}, we have 
$\lA \Sigma \rA_{L^2} \lesssim \eps^{s+1} \lA \sigma \rA_{H^{s+1}}$. 
Similarly, for the singular profile $h$, we have 
$\blA \tPeps h \brA_{L^2} \lesssim \eps^{1+2\alpha'} \lA h \rA_{H^{1+2\alpha'}}$. 
Combining these bounds, we obtain
$$
|I_{2}| \lesssim \eps^{s+2+2\alpha'} \lA g \rA_{C^{2}} \lA \sigma \rA_{H^{s+1}} \lA h \rA_{H^{1+2\alpha'}}.
$$
Now, remembering 
from Proposition~\ref{P34} that 
$\lA h\rA_{H^{1+2\alpha'}}\le C(\lA g\rA_{H^s})$, we conclude that
\be \label{n1802}
|I_{2}| \le C \eps^{s+2+2\alpha'} \lA \sigma \rA_{H^{s+1}} 
\ee
for some constant $C$ depending only on $\lA g \rA_{H^{s}}$.

\smallbreak
\noindent {\em Analysis of the principal term $I_1$.} 
We now proceed to estimate the term $I_1$. Using again \e{n2004} and the fact that $\tPeps$ 
is self-adjoint and satisfies $\tPeps \Peps=\Peps$, we can rewrite $I_1$ as
$$
I_1 = \int_{\xT^2} h \, T_{\nabla^{\perp} g} \cdot \nabla (\Peps \sigma) \dx.
$$ 
We now invoke Bony's paralinearization formula to decompose 
$h = H(g)$ into two components:
\[
h = \underbrace{T_{H'(g)} g}_{h_{\LF}} + \underbrace{\left( H(g) - T_{H'(g)} g \right)}_{h_{\HF}}.
\]

The contribution of $h_{\LF}$ is estimated 
directly. Let $\gamma \defn H'(g) = \alpha(g-m)^{\alpha-1}$. 
Notice that since $\alpha-1>-1$, the function $H'(g)$ belongs to $L^1(\xT^2)$.
Since paraproducts by $L^\infty$ functions define operators of order zero 
while 
paraproducts by $L^{1}$ functions are operators of order $2$ (cf. Theorem \ref{T:para1}), we have:
\begin{align*}
\la \langle T_{\nabla^\perp g} \cdot \nabla \Peps \sigma,  \tPeps  T_\gamma g \rangle_{L^2} \ra 
&\les \lA T_{\nabla^\perp g} \cdot \nabla \Peps \sigma \rA_{L^2} 
\lA \tPeps T_\gamma g \rA_{L^2} \\
&\les \eps^{s} \lA g\rA_{C^1} \lA \sigma \rA_{H^{s+1}} \cdot \eps^{s-2} \lA \gamma \rA_{L^1} \lA g\rA_{H^s} \\
&\le \eps^{2s-2} C(\lA g \rA_{H^s})\lA \sigma \rA_{H^{s+1}}.
\end{align*}

The analysis of the high-frequency remainder $h_{\HF} \defn H(g) - T_{H'(g)} g$ 
requires a more refined treatment. We first perform a 
paradifferential integration by parts. Specifically, we observe that, for the divergence-free 
vector field $v = \nabla^\perp g$, the adjoint of the transport 
operator $T_v \cdot \nabla$ satisfies:
\begin{align*}
(T_v \cdot \nabla)^* u &= -\cn(T_v^* u) \\
&= -\cn(T_v u) - \cn((T_v^* - T_v) u) \\
&= -T_v \cdot \nabla u - (T_v^* - T_v) \cdot \nabla u,
\end{align*}
where the vanishing of $\cn v$ has been used to identify the 
principal part. This identity yields the following decomposition 
for the high-frequency integral:
\begin{multline*}
\int_{\xT^2} T_{\nabla^{\perp} g} \cdot \nabla \Peps \sigma \,  \tPeps h_{\HF} \dx
= \int_{\xT^2} \Peps \sigma \, T_{\nabla^{\perp} g} \cdot \nabla ( \tPeps h_{\HF} )\dx \\
+ \int_{\xT^2} P_\eps \sigma \, (T_{\nabla^{\perp} g} - T_{\nabla^{\perp} g}^*) 
\cdot \nabla ( \tPeps h_{\HF}) \dx.
\end{multline*}

The second term on the right-hand side, denoted hereafter~$R_{\text{adj}}$, is controlled by
\begin{align*}
|R_{\text{adj}}| &\le \lA P_\eps \sigma \rA_{H^{-2\alpha'-(s-2)}} 
\lA (T_{\nabla^{\perp} g} - T_{\nabla^{\perp} g}^*) 
\cdot \nabla (\tPeps h_{\HF}) \rA_{H^{2\alpha'+(s-2)}} .
\end{align*}
From Lemma \ref{L:Friedrichs}, we have $\lA P_\eps \sigma \rA_{H^{-2\alpha'-(s-2)}}
\les \eps^{s+1+(2\alpha'+s-2)} \lA \sigma \rA_{H^{s+1}}$. 
Moreover, Theorem \ref{T:para3} implies that 
$$
\lA (T_{\nabla^{\perp} g} - T_{\nabla^{\perp} g}^*) 
\cdot \nabla (\tPeps h_{\HF}) \rA_{H^{2\alpha'+(s-2)}}
\les \lA \nabla g \rA_{C^{s-2}}\lA \nabla (\tPeps h_{\HF}) \rA_{H^{2\alpha'}}
\le C(\lA g\rA_{H^s}).
$$
Combining these bounds, we arrive at
\be \label{n1803}
|R_{\text{adj}}| \le  \eps^{2s + 2\alpha'-1}C(\lA g\rA_{H^s})  \lA \sigma \rA_{H^{s+1}} .
\ee

\noindent \textit{Analysis of the principal term and Poisson cancellation.} 
It remains to estimate the principal contribution to $I_1$:
\be \label{n1804}
\mathcal{J} \defn \int_{\xT^2} \Peps \sigma \, \big(T_{\nabla^{\perp} g} 
\cdot \nabla h_{\HF}\big) \dx=\int_{\xT^2} \Peps \sigma \, \tPeps\big(T_{\nabla^{\perp} g} 
\cdot \nabla h_{\HF}\big) \dx.
\ee
The control of this term relies on a paradifferential variant 
of the geometric cancellation \eqref{n2007}. We 
first expand the gradient of the high-frequency remainder 
$h_{\HF} = H(g) - T_{H'(g)} g$. By the paralinearization 
formula (Theorem \ref{T:para2}), the derivative of $H(g)$ satisfies
\begin{align*}
\nabla (H(g)) &= (H' \circ g) \nabla g \\
&= T_{H'(g)} \nabla g + T_{\nabla g} H'(g) + R_{B}\left(\nabla g, H'(g)\right).
\end{align*}
Substituting this into the definition of $h_{\HF}$, and 
noting that $\nabla T_{H'(g)} g = T_{\nabla(H'(g))} g + T_{H'(g)} \nabla g$, 
we obtain:
\begin{align*}
\nabla h_{\HF} &= T_{\nabla g} H'(g) + R_{B}\left(\nabla g, H'(g)\right) 
- T_{\nabla(H'(g))} g.
\end{align*}

We make the following claim regarding the regularity of the 
advected remainder.
\smallbreak
\noindent \textbf{Claim.} \textit{For any $\nu>0$, the following 
estimate holds:}
\be \label{n1805}
\lA T_{\nabla^{\perp} g} \cdot \nabla h_{\HF} \rA_{H^{s-3-\nu}} 
\les_{s,\nu} \lA g \rA_{H^s}^2 \lA H'(g) \rA_{L^1}.
\ee
\smallbreak
\noindent \textit{Proof of the claim.} We analyze each 
component of $\nabla h_{\HF}$ individually. Since $H'(g)$ belongs to $L^1(\xT^2)$, by the 
continuity of the Bony remainder (Theorem \ref{T:para2}), 
we have
\[
\lA R_{B}\left(\nabla g, H'(g)\right) \rA_{H^{s-3}}
\les 
\lA \nabla g \rA_{H^{s-1}}\lA H'(g) \rA_{L^1}.
\]
Furthermore, we recall that Bernstein's inequality ensures the 
embedding  $L^1(\xT^2)\hookrightarrow C^{-2}_*(\xT^2)$. 
It follows that the distribution $\nabla(H'(g))$ belongs to $C^{-3}_*(\xT^2)$. 
Consequently, 
the continuity properties of the paraproduct stated in 
Theorem \ref{T:para1} yield the following estimate:
\[
\lA T_{\nabla(H'(g))} g \rA_{H^{s-3}} \les 
\lA \nabla(H'(g)) \rA_{C^{-3}_*} \lA g \rA_{H^s}\les \lA H'(g) \rA_{L^1} \lA g \rA_{H^s}.
\]
It remains to estimate the composition of 
paraproducts $T_{\nabla^{\perp} g} \cdot T_{\nabla g} H'(g)$. 
By the symbolic calculus for paraproducts (Theorem \ref{T:para3}), 
the product of two paraproducts is the paraproduct of the 
product of their symbols, up to a smoothing remainder:
\[
T_{\nabla^{\perp} g} \cdot T_{\nabla g} = T_{\nabla^{\perp} g 
\cdot \nabla g} + \RCM(\nabla^\perp g, \nabla g).
\]
Now observe that $\nabla^{\perp} g 
\cdot \nabla g \equiv 0$, thus the composition is determined solely by the commutator 
remainder $\RCM$. Since $H'(g)\in L^1(\xT^2)$ and since $L^1(\xT^2)\subset H^{-1-\nu}(\xT^2)$ for any $\nu>0$, 
we deduce that
\[
\lA T_{\nabla^{\perp} g} \cdot T_{\nabla g} H'(g) \rA_{H^{s-3-\nu}} 
\les \lA \nabla g \rA_{H^{s-1}}^2 \lA H'(g) \rA_{L^1}.
\]
This establishes the claim \eqref{n1805}.

\smallbreak
\noindent \textit{Conclusion of the estimate.} Once 
the claim is granted, we write
\begin{align*}
\la \mathcal{J} \ra &\les \lA P_\eps \sigma \rA_{L^2} 
\lA \tPeps \big(T_{\nabla^{\perp} g} \cdot \nabla h_{\HF}\big) \rA_{L^2} \\
&\les \eps^{s+1} \lA \sigma \rA_{H^{s+1}} \cdot \eps^{s-4} 
\lA T_{\nabla^{\perp} g} \cdot \nabla h_{\HF} \rA_{H^{s-4}}.
\end{align*}
Substituting \eqref{n1805} with $\nu=1$ into the above, we 
conclude that the principal term $\mathcal{J}$ is of the 
order $O(\eps^{2s-3})$, which completes the proof of the 
Lemma.
\end{proof}

\section{Small scale creation}\label{S:scale}

We are now at a position to prove the following proposition, which shows that 
for \emph{all} initial vorticity in the space $\mathcal{H}^s(\xT^2)$ (see Definition~\ref{defi:GP}) of functions with 
finite regularity, 
small scale creation is inevitable for the flow. 
\begin{proposition}\label{P:4.1}
Consider a real number $s> 4$ and an initial data $\omega_0$ in $\mathcal{H}^s(\xT^2)$. Then 
$$
\limsup_{t\to +\infty}\lA |D|^s  \Phi_t \rA_{L^2}=+\infty.
$$
\end{proposition}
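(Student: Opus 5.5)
We argue by contradiction. Suppose that $\sup_{t\ge 0}\lA |D|^s\Phi_t\rA_{L^2}=:A<+\infty$. Since $\Phi_t-\Id$ is periodic, and its mean is controlled because $\Phi_t$ preserves Lebesgue measure, this gives a uniform bound $\lA \Phi_t-\Id\rA_{H^s}\le C(A)$. It also gives a uniform lower bound on $\det$-type quantities: $D\Phi_t\in H^{s-1}\subset C^{2}$ with $\det D\Phi_t=1$, so $(D\Phi_t)^{-1}$ is bounded in $H^{s-1}$. Consequently every constant appearing in Lemma~\ref{L:2.5}, Proposition~\ref{P24}, Lemma~\ref{L:freezing2} and Proposition~\ref{P:aeps} is uniform in $t$, and $X(t)=T_{(D\Phi_t)^{-1}}\Phi_t$ is bounded in $H^s$ uniformly in time.

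\emph{Step 1: derive a scalar identity.} Take the curl of the evolution equation \eqref{n:4.3}. Since $\curl\nabla^\perp=\Delta$, this yields
\begin{equation*}
\partial_t \curl X+\Delta Q\,(T_{\nabla\omega_0}\cdot X)=\Delta Q\,\omega_0-\curl\mathcal{R}.
\end{equation*}
Pair this with the fixed test function $P_\eps h$, where $h\defn(\omega_0-m)^\alpha$ and the exponents satisfy $0<\alpha<\beta$, $2\beta<s-4$, $2\alpha\notin\xN$. Then integrate over $[0,T]$.

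\emph{Step 2: control each term.}
\begin{itemize}
\item \textbf{Left-hand side.} By Proposition~\ref{P:6.4eps} applied to $\curl X$ (which lies in $H^{s-1}$, so we lose one power of $\eps$), the boundary terms are $O(\eps^{s+2\alpha'})$ uniformly in $T$.
\item \textbf{Forcing.} By Proposition~\ref{P:aeps}, the forcing contributes at least $C\,T\,\eps^{s+1+2\beta}$, uniformly in time because $Q=Q(t)$ has uniformly controlled coefficients.
\item \textbf{Remainder.} The term $\mathcal{R}$ gains regularity: the Bony remainder lies in roughly $H^{2s-2}$, and the terms with $S$, $R_\triangle$, $R_{\mathrm{alg}}$ gain $\min\{s-2,1\}$ or more derivatives over the main term. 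Its pairing against $P_\eps h$ is therefore $O(\eps^{s+\mu})$ per unit time, with some $\mu>1+2\beta$, by the same duality argument as in Proposition~\ref{P:6.4eps}.
\end{itemize}

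\emph{Step 3: the transport term $\langle \Delta Q(T_{\nabla\omega_0}\cdot X),P_\eps h\rangle$.} This is the main obstacle. The plan is as follows.
\begin{enumerate}
\item Write $X=\nabla\sigma_1+\nabla^\perp\sigma$ (Hodge decomposition on the torus, plus a constant). Using $\det D\Phi=1$ paralinearly, one expects $\cn X=\cn T_{(D\Phi)^{-1}}\Phi$ to be a paradifferential remainder. Indeed, the linearized divergence of a volume-preserving map is cancelled by $T_{(D\Phi)^{-1}}$ up to Bony remainders and composition errors, so $\sigma_1$ is smoother by roughly $\min\{s-2,1\}$ derivatives. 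Its contribution is then treated like $\mathcal{R}$.
\item For the rotational part, note that $T_{\nabla\omega_0}\cdot\nabla^\perp\sigma=-T_{\nabla^\perp\omega_0}\cdot\nabla\sigma$ with $\sigma\in H^{s+1}$.
\item Move $\Delta Q$ onto the test function. By the symbol-freezing Lemma~\ref{L:freezing2} and the commutator estimates for $P_\eps$ used in Proposition~\ref{P:aeps}, one has $(\Delta Q)^*P_\eps h=P_\eps a(0,D)h+{}$smoother terms. Here $a(0,D)h$ is again a cusp $(\omega_0-m)^\alpha$-type profile, obtained after a linear change of variables freezing $M(0)$.
\item Lemma~\ref{L:6.3}, in a version with this frozen constant-coefficient multiplier (whose proof uses only the Poisson cancellation $\nabla^\perp g\cdot\nabla g=0$ and the cusp regularity), then bounds this term by $\eps^{s+\mu}$ with $\mu=\min\{2+2\alpha',s-3\}$.
\end{enumerate}
If commuting $a(0,D)$ past the Poisson structure fails, the fallback is to test directly against $P_\eps\big((\Delta Q)^*\big)^{-1}$-adapted cusps, namely $(\omega_0-m)^\alpha$ composed with the frozen linear map.

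\emph{Step 4: conclude.} Choose $\beta<\alpha'+1/2$ with $\beta<(s-4)/2$, so that $\mu>1+2\beta$. Collecting the bounds gives
\begin{equation*}
C\,T\,\eps^{s+1+2\beta}\le C'\eps^{s+2\alpha'}+C''\,T\,\eps^{s+\mu}.
\end{equation*}
Fix $\eps$ small enough that $C''\eps^{s+\mu}\le \tfrac{C}{2}\eps^{s+1+2\beta}$. Then letting $T\to\infty$ gives a contradiction, which proves $\limsup_{t\to\infty}\lA|D|^s\Phi_t\rA_{L^2}=+\infty$.
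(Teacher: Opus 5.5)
There is a genuine gap in Step~3, items (3)--(4). This is exactly where the smallness of the transport term has to come from.

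\textbf{Why the cited estimate does not apply.} After you transpose $\Delta Q$ onto the test function, you must bound $\langle T_{\nabla^\perp\omega_0}\cdot\nabla\sigma, P_\eps K\rangle$ with $K=a(0,D)h$. Lemma~\ref{L:6.3} says nothing about such a $K$, and its proof does not adapt. The gain there comes from three steps:
\begin{itemize}
\item write $h=H(\omega_0)$;
\item paralinearize, so that $\nabla h_{\HF}$ contains $T_{\nabla\omega_0}H'(\omega_0)$;
\item kill $T_{\nabla^\perp\omega_0}\cdot T_{\nabla\omega_0}$ using $\nabla^\perp\omega_0\cdot\nabla\omega_0=0$.
\end{itemize}
This uses essentially that the test function is a function of $\omega_0$.

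Your claim that $a(0,D)h$ is an $(\omega_0-m)^\alpha$-type profile after a linear change of variables is false. Modulo smoother terms, its Fourier coefficients are $a(0,\xi)\big(\xi_1^2/\lambda_1+\xi_2^2/\lambda_2\big)^{-1-\alpha}$ with $a(0,\xi)=|\xi|^2/(\xi\cdot M(0)\xi)$. This is not of the form $c\,(\xi\cdot B\xi)^{-1-\alpha}$ unless $M(0)$, which depends on $t$, is scalar. Even a genuine reparametrization $h\circ L$ would not Poisson-commute with $\omega_0$.

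Without a gain, the only available bound is $\eps^{s+1+2\alpha'}$. Since $\alpha'<\alpha<\beta$, this is larger than the forcing $\eps^{s+1+2\beta}$, so Step~4 does not close. Your fallback fails as well. A test function adapted to $((\Delta Q)^*)^{-1}$ depends on $t$ through $M$, so $y'(t)$ acquires the term $\langle \curl X, P_\eps\partial_t\tilde h\rangle$. That term is only $O(\eps^{s+2\alpha'})$ per unit time, again larger than the forcing.

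\textbf{The fix.} Commute rather than transpose, as the paper does. With $\sigma=\Delta^{-1}\curl X$,
\[
\Delta Q\big(T_{\nabla\omega_0}\cdot\nabla^\perp\sigma\big)=T_{\nabla\omega_0}\cdot\nabla^\perp\big(\Delta Q\sigma\big)+\big[\Delta Q,T_{\nabla\omega_0}\cdot\nabla^\perp\big]\sigma .
\]
The commutator has order $0$ because all coefficients are $C^1$, so its contribution is $O(\eps^{s+2+2\alpha'})$. The main term is exactly the setting of Lemma~\ref{L:6.3}, with $\Delta Q\Delta^{-1}\curl X\in H^{s+1}$ in place of $\sigma$ and the test function still equal to $(\omega_0-m)^\alpha$.

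Alternatively, the one-derivative gain for any degree-$2\alpha$ cusp $K$ centred at the minimum follows from $\nabla^\perp\omega_0(0)=0$: near $0$, $\nabla^\perp\omega_0\cdot\nabla K$ is then again homogeneous of degree $2\alpha$. That is a different argument from the one you invoke, though.

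\textbf{The rest of the plan.} The rest is sound, and your treatment of $\cn X$ genuinely differs from the paper's and is sharper. Set $\tilde\Phi\defn\Phi-\Id$. Paralinearizing $\det D\Phi=1$ and using the Piola identity gives
\[
\cn X=-\RBony(\partial_1\tilde\Phi^1,\partial_2\tilde\Phi^2)+\RBony(\partial_2\tilde\Phi^1,\partial_1\tilde\Phi^2)
\]
modulo a smooth low-frequency term. This is bounded in $H^{2s-3}$ uniformly in $t$. The paper instead integrates $\partial_t \cn X$ in time and only gets $\lA \cn X\rA_{H^{2s-3}}\le C(M)t$, which forces it to stop at $t\le\eps^{-1-\nu}$. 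Your uniform bound is what legitimately allows $T\to\infty$ in your Step~4.
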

\begin{proof}
We prove this proposition by contradiction. Assume that
\begin{equation}\label{eq: H}
\sup_{t\ge 0}\lA |D|^s  \Phi_t \rA_{L^2}<+\infty.\tag{H}
\end{equation}

\medskip
\noindent\emph{Step $1$: Preliminary estimates.}
We claim that
$$
\sup_{t\ge 0}\lA \Phi_t-\Id \rA_{H^{s}}<+\infty.
$$
Indeed, observe that 
$$
\fract\Phi=u\circ\Phi
\Longrightarrow\fract\int_{\xT^2} (\Phi-\Id)\dx
=\int_{\xT^2} u\circ\Phi\dx =\int_{\xT^2} u\dx =0,
$$
where we used  $\det(D\Phi)=1$ and the fact that and $u$ has mean-value 
zero since $u=\nabla^\perp \Delta^{-1}\omega$. Now 
the claim follows from the Poincar\'e inequality for 
functions with mean-value zero on the torus.

Since the diffeomorphism $\Phi_t$ belongs to $\SDiff^s(\xT^2)$ (see  
Definition~\ref{D:SDiff}), the incompressibility constraint 
$\det(D\Phi_t)=1$ ensures that, 
in the 
two-dimensional setting, the entries of $(D\Phi_t)^{-1}$ are merely 
a signed rearrangement of those of $D\Phi_t$, which allows for a 
direct control of $D(\Phi_t^{-1})$ in $H^{s-1}$ 
by $D\Phi_t$ (see Proposition~\ref{P:2025}). Given 
the transport identity $\omega(t,\cdot) = \omega_0 \circ \Phi_t^{-1}$, this 
yields a uniform estimate for the inhomogeneous Sobolev norm 
of the vorticity:
$$
\sup_{t\ge 0}\lA \omega(t) \rA_{H^{s}}<+\infty,
$$
which in turn implies that the velocity $u$, 
defined by $u=\nabla^\perp \Delta^{-1}\omega$, satisfies
$$
\sup_{t\ge 0}\lA u(t) \rA_{H^{s+1}}<+\infty.
$$

We shall make repeated use of the fact that we have a similar 
estimate for $u\circ \Phi$. Namely, it follows from 
Proposition~\ref{P:2025} and the previous estimates that
$$
\sup_{t\ge 0}\lA u\circ \Phi_t \rA_{H^{s}} <+\infty.
$$
Hereafter, we set
\begin{equation}\label{Mrr}
M\defn \sup_{t\ge 0}\lA \omega(t)\rA_{H^{s}}
+\sup_{t\ge 0}\lA \Phi_t-\Id \rA_{H^{s}}
+\sup_{t\ge 0}\lA u\circ \Phi_t \rA_{H^{s}},
\end{equation}
and denote by $C=C(M)$ various constants depending only on $M$ and $s$.

\medskip
\noindent\emph{Step $2$: The subprincipal terms.} 
The $\nabla^\perp$ structure highlighted in~Proposition~\ref{L:4.3} naturally suggest to 
study of the curl and divergence 
of $X=T_{(D\Phi)^{-1}} \Phi$. 
Focusing first on the curl, which was studied 
in~\cite{zbMATH01026417,said2024small}, the evolution equation 
established in Proposition \ref{prop:4.3} reads
$$
\partial_t \curl X+\Delta Q T_{\nabla\omega_0} \cdot X
= \Delta Q \omega_0 -\curl\mathcal{R},
$$
where we used 
the identity $\curl(\nabla^\perp f )=\Delta f$. 
The next step consists in decomposing 
the term $\Delta Q T_{\nabla\omega_0} \cdot X$ 
into components depending on  $\curl X$ and $\cn X$. 
To do this, we use the Helmholtz decomposition (remembering that $X$ has zero average):
$$
X=\nabla^{\perp} \Delta^{-1} \curl X+\nabla \Delta^{-1}\cn(X),
$$
thus
$$
\partial_t \curl X+\Delta Q T_{\nabla\omega_0} 
\cdot \nabla^{\perp} \Delta^{-1} \curl X 
= \Delta Q \omega_0 -\curl\mathcal{R}
-\Delta Q T_{\nabla\omega_0} \cdot \nabla \Delta^{-1}\cn(X).
$$
As we saw in Section \ref{S:cancel}, 
there is a key cancellation due to the operator 
$T_{\nabla\omega_0} \cdot \nabla^{\perp}$. To exploit 
this, we commute this operator with $\Delta Q$,  to get
\begin{equation}\label{eq: curl X evo}
\partial_t \curl X+T_{\nabla\omega_0} \cdot \nabla^{\perp}\Delta Q  \Delta^{-1} \curl X = \Delta Q \omega_0 -R_{\tot},
\end{equation}
with 
\be\label{d:Rtot}
R_{\tot}=\left[\Delta Q  ,T_{\nabla\omega_0} \cdot \nabla^{\perp}\right] \Delta^{-1}\curl X+\curl\mathcal{R}+\Delta Q T_{\nabla\omega_0} \cdot \nabla \Delta^{-1}\cn(X).
\ee

\medskip
\noindent\emph{Step $3$: Estimate of the remainder.}
Let
$$
\delta=\min\{s-3,1\}>0.
$$
Our goal here is to show the estimate
\be\label{n:Rtot}
\lA (\Id - \Feps) R_{\tot} \rA_{L^2}\le C(M)\left(\eps^{s+\delta}+t \eps^{s+1+\delta}\right), 
\ee
where recall that $M$ is as defined in~\e{Mrr}. 
We will make extensive use of classical results about Friedrichs mollifiers gathered in the previous section. 
In particular, 
\be\label{F}
\lA (\Id - \Feps) f \rA_{L^2} \les \, \eps^\sigma \lA f\rA_{H^\sigma}.
\ee

We begin by estimating the divergence term in~\e{d:Rtot}. To do so, 
we first apply the high-frequency 
decay estimate (Lemma~\ref{L:Friedrichs}-(i)) with index $2s-1$. 
Since the operator $\Delta Q T_{\nabla\omega_0} \cdot\nabla \Delta^{-1}$ is of order $-1$, we have:
\begin{align*}
&\lA (\Id - \Feps)[\Delta Q T_{\nabla\omega_0} \cdot \nabla \Delta^{-1}\cn(X)] \rA_{L^2}\\
&\qquad\leq C \eps^{2s-2} \lA \Delta Q T_{\nabla\omega_0} \cdot \nabla \Delta^{-1}\cn(X) \rA_{H^{2s-2}} \\
&\qquad\leq C \eps^{2s-2} \left(1+\lA \Phi -\Id\rA_{H^s}\right)
\lA \nabla\omega_0 \rA_{L^{\infty}}  \lA \cn(X) \rA_{H^{2s-3}}.
\end{align*}
To estimate the divergence of $X$, we recall from Proposition \ref{L:4.3} that
$$
\partial_t X = \nabla^\perp (\Phi^* \psi) - \RBony((D\Phi)^{-1},u \circ \Phi).
$$
Since $\cn(\nabla^\perp \cdot) = 0$ and $X(0)=T_{\Id} \Id=0$ (by convention~\e{convention}), integrating in time yields:
$$
\cn(X(t)) = -\int_0^{t}\cn\left(\RBony((D\Phi)^{-1},\nabla^{\perp}\psi \circ \Phi)\right)\dtau.
$$
We now apply the Bony remainder estimate (Theorem \ref{T:para2}). 
Under hypothesis \e{eq: H}, we have $(D\Phi)^{-1} \in H^{s-1}$ 
and $u \circ \Phi \in H^s$ uniformly in time. Since $s > 2$, 
the remainder belongs to $H^{(s-1)+s-1} = H^{2s-2}$. 
Taking the divergence, we see that the integrand is in $H^{2s-3}$ and hence
$$
\lA \cn(X) \rA_{H^{2s-3}}\leq C(M) t .
$$
Combining these estimates, and since the norms are bounded by $M$ under hypothesis \eqref{eq: H}, we conclude:
$$
\lA (\Id - \Feps)[\Delta Q T_{\nabla\omega_0} \cdot \nabla \Delta^{-1}\cn(X)] \rA_{L^2}
\le C(M) t \eps^{2s-2}\le C(M) t \eps^{s+1+\delta}.
$$

\medskip
We now move to the commutator term in~\e{d:Rtot}. By using 
the representation of $Q$ given by Lemma \ref{L:2.5}, we see that
$$
[\Delta Q,  T_{\nabla\omega_0}\cdot \nabla^{\perp}]
=\sum_{k\in \mathbb{Z}}[\Delta T_{m_{k}}B^{k}\Delta^{-1},  T_{\nabla\omega_0}\cdot \nabla^{\perp}] ,
$$
applying Theorem~\ref{T:para4}
for the commutators of Fourier multipliers with paraproducts gives
\begin{align*}
&\lA  \left[\Delta Q  ,T_{\nabla\omega_0} \cdot \nabla^{\perp}\right] \Delta^{-1} \curl X \rA_{H^{s+\delta}}\\
&\leq C\left(\sum_{k\in \mathbb{Z}}k^{4}\lA m_k \rA_{C^\delta}\right)
\lA \nabla\omega_0 \rA_{C^\delta}  \lA  \curl X \rA_{H^{s-1}}\\
&\leq C\left(\sum_{k\in \mathbb{Z}}k^{4}e^{-k\delta'}\right)
\mathcal{F}\left(\lA \Phi -\Id\rA_{H^{s}}\right)  \lA \nabla\omega_0 \rA_{H^{s-1}}  
\lA  \curl X \rA_{H^{s-1}},
\end{align*}
where we used the Sobolev injection and the estimate~\e{n90} 
for $m_k$. Now, by definition of $X$,  and the continuity of 
paraproducts (see~\e{cont}) and Proposition~\ref{P:2025}, we have
$$
\lA  \curl X \rA_{H^{s-1}}\les \lA  (D\Phi)^{-1}\rA_{L^\infty}\lA  D\Phi-\Id \rA_{H^{s-1}}\le C(M).
$$

Thus by Hypothesis \eqref{eq: H} and the estimate~\e{F}, we get
$$
\lA (\Id - \Feps) \left[\Delta Q  ,T_{\nabla\omega_0} \cdot \nabla^{\perp}\right]
\Delta^{-1} \curl X \rA_{L^2}\leq C(M)\eps^{s+\delta}.
$$

\medskip
It remains only to estimate $\curl \mathcal{R}$ in~\e{d:Rtot}. 
The latter contains four terms given in Proposition~\ref{prop:4.3} that we 
recall here:
\begin{align*}
\curl \mathcal{R}&= \Delta \left( S(\Phi^*\psi) + Q R_{\triangle}(\Phi,\psi)
+Q \left(R_{\mathrm{alg}}\left((D\Phi)^{-\top},\nabla\omega_0\right)\cdot\Phi\right) \right)\\
&+ \curl \RBony((D\Phi)^{-1},u \circ \Phi) ,
\end{align*}
and estimate them all separately. For the first term we recall from \eqref{eq:para pull backed stream}
$$
\Delta\left(S(\Phi^*\psi)\right)=\Delta\left(S\left(Q(\Phi^*\omega) 
- S(\Phi^*\psi) - Q R_{\triangle}(\Phi,\psi)\right)\right),
$$
hence by Lemma \ref{L:2.5}
$$
\lA (\Id - \Feps)\Delta\left(S(Q(\Phi^*\omega))\right) \rA_{L^2} \leq 
C \eps^{s+\delta} \mathcal{F}( \lA \Phi - \Id \rA_{H^{s}}) \lA \omega \rA_{H^s}\leq C(M) \eps^{s+\delta},
$$
and applying Proposition \ref{P24} with $f=\psi$ and $\mu=s+2$
\[\lA (\Id - \Feps)\Delta\left(SQ R_{\triangle}(\Phi,\psi)\right) \rA_{L^2} \leq C \eps^{2s-2}
\mathcal{F}( \lA \Phi - \Id \rA_{H^{s}})\lA \psi\rA_{H^{s+2}}\leq C(M) \eps^{s+\delta},\]
for the term $\Delta \left(S^2(\Phi^*\psi)\right))$ we iterate the previous computation to get 
$$
\lA (\Id - \Feps)\Delta\left(S(\Phi^*\psi)\right) \rA_{L^2}\leq C(M) \eps^{s+\delta}.
$$
For the term after we have 
\begin{align*}
&\lA (\Id - \Feps)\Delta\left( Q \left(R_{\mathrm{alg}}\left((D\Phi)^{-\top},\nabla\omega_0\right)
\cdot\Phi\right)\right) \rA_{L^2} \\
&\leq C(M)\eps^{s+1} \mathcal{F}( \lA \Phi - \Id \rA_{H^{s}}) \blA (D\Phi)^{-\top}\brA_{C^{\delta}}
\lA \nabla \omega_0\rA_{C^{\delta}}\lA \Phi\rA_{H^s}\leq C(M) \eps^{s+\delta},
\end{align*}
and finally for the last term again by the Bony remainder estimate
\begin{align*}
\lA (\Id - \Feps) \curl \RBony((D\Phi)^{-1},u \circ \Phi)\rA_{L^2}&\leq 
C \eps^{2s-3}\lA (D\Phi)^{-1} \rA_{H^{s-1}} \lA u\circ \Phi \rA_{H^{s}}\\
&\leq C(M)\eps^{s+\delta}.
\end{align*}
This completes the analysis of the remainder term $R_{\tot}$.

\medskip
\noindent\emph{Step $4$: Conclusion.} 
Once the estimate of $R_{\tot}$ is granted, we conclude the proof of Proposition~\ref{P:4.1} 
by exploiting the structure identified in Section \ref{S:cancel}, that is the equation
\be\label{n:curl}
\partial_t \curl X+T_{\nabla\omega_0} \cdot \nabla^{\perp}\Delta Q  \Delta^{-1} \curl X = \Delta Q \omega_0 -R_{\tot}.
\ee
Denote by $m_0$ the unique non degenerate minimum 
of $\omega_0$ 
and consider a 
cusp exponent $\alpha$ satisfying
$$
0<2\alpha<s-4,\quad 2\alpha\notin\xN.
$$
Set $V \defn (\omega_0 - m_0)^\alpha$. 
By combining the preceding estimates, we are now at a position to 
deduce that the time derivative of the $L^2$ inner product of $\curl X$ and $\Peps V$ 
grows without bound, which contradicts the hypothesis \eqref{eq: H}. Indeed, let us 
consider the functional
$$
y(t) \defn \langle \curl X(t), P_\eps V\rangle_{L^2}
$$ 
whose time derivative satisfies
$$
y'(t) =\langle\Delta Q \omega_0, \Peps V\rangle_{L^2}- \langle R_{\tot}, \Peps V\rangle_{L^2} 
- \langle T_{\nabla\omega_0} \cdot \nabla^{\perp}\Delta Q \Delta^{-1} \curl X, \Peps V\rangle_{L^2}.
$$
Using the lower bound established in 
Proposition~\ref{P:aeps}, the first term on the right-hand side 
provides a strictly positive driving term of order 
$\eps^{s+1+2\beta}$ for any $\beta>\alpha$. Conversely, the results of 
Proposition~\ref{P:6.4eps} and Lemma~\ref{L:6.3} ensure that the 
remaining terms are of higher order in $\eps$. Specifically, there 
exist constants $\gamma>0$, $c > 0$ and $\nu>0$ such that, for $\eps$ sufficiently small 
and $t \le \eps^{-1-\nu}$, we have
$$
y'(t) \ge c \eps^{s+1+2\beta} - C(M) (\eps^{s+1+2\alpha+\gamma} 
+ t \eps^{s+2+2\alpha+\gamma}) \ge \frac{c}{2} \eps^{s+1+2\beta}.
$$
By integrating this inequality over the interval $[0, \eps^{-1-\nu}]$, 
we obtain the lower bound
\be\label{E812}
\sup y(t) \ge \frac{c}{2} \eps^{s+2\beta-\nu}.
\ee
However, under the stability hypothesis \eqref{eq: H}, the vector 
field $X(t)$ remains bounded in $H^s(\xT^2)$ 
(uniformly in time), which implies 
that $\curl X(t)$ is bounded in $H^{s-1}(\xT^2)$. A direct 
application of Proposition~\ref{P:6.4eps} then yields that, for any $\alpha'<\alpha$ and any time $t$,
$$
\la y(t) \ra \le C(M) \eps^{s+2\alpha'}.
$$
Comparing this with \e{E812}, we arrive at a contradiction as 
soon as $2\beta -\nu< 2\alpha $, provided $\eps$ is chosen 
sufficiently small. It follows that the Sobolev norm of the 
flow cannot remain bounded in time, which concludes the proof.
\end{proof}

\section{Proof of the main theorem: the dichotomy}
\label{S:Growth}

In this section, we analyze the long-time behavior of the solution starting
from a specific initial vorticity $\omega_0$ in $\mathcal{H}^s(\xT^2)$ 
(see Definition~\ref{defi:GP}). Our objective is to establish the following dichotomy: either the vorticity
blows up in $H^s$, or $D\Phi$ blows up  in the $L^\infty$ norm (which is equivalent 
to saying that the differential of the inverse flow $\varphi=\Phi^{-1}$ blows up). 
Recall that we say that a function $t\mapsto f(t)$ defined for all time $t\ge 0$ blows 
up in a space $X$ if $\limsup_{t\ge 0}\lA f(t)\rA_X=+\infty$.

Recall from paragraph~\ref{S:classique} the following notation. 
For any $g\in H^s_0(\xT^2)$, we denote by $\Phi_t^g$ the flow of the 
Euler equation associated with an initial data $u_0\in H^{s+1}_0(\xT^2)$ 
satisfying $\cn u_0=0$ and $\curl u_0=g$. Hereafter, we denote by $\varphi_t^g$ the inverse flow given by 
$\varphi_t^g=(\Phi_t^g)^{-1}$. In particular, $\omega=g\circ \varphi_t^g$ 
solves the Euler equation (in vorticity formulation):
$$
\frac{\partial \omega}{\partial t}+u\cdot\nabla \omega=0\quad\text{with}\quad
u=\nabla^{\perp}\Delta^{-1} \omega=(\partial_t\Phi^g)\circ\varphi_t^g.
$$
\begin{proposition}\label{P71}
Consider an initial data $\omega_0\in \mathcal{H}^s(\xT^2)$. For any radius $r > 0$ 
and any threshold $N>0$, one of the following two
assertions holds:
\begin{enumerate}
\item \emph{Growth of the vorticity:} There exists an initial data
$f_0 \in \bar{B}_{H^s_0}(\omega_0, r)$ such that
\begin{equation}\label{n701}
\sup_{t \ge 0} \lA f_0 \circ \varphi_t^{f_0}\rA_{H^s} > N.
\end{equation}
\item \emph{Growth of the flow:} The inverse flow $\varphi_t^{\omega_0}$ satisfies
\begin{equation}\label{n701-Lip}
\sup_{t \ge 0} \lA D\varphi_t^{\omega_0}\rA_{L^\infty} =+\infty.
\end{equation}
\end{enumerate}
\end{proposition}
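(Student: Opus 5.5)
We argue by contradiction. Fix $r>0$ and $N>0$, and assume both alternatives fail:
\begin{itemize}
\item every $f_0\in \bar B_{H^s_0}(\omega_0,r)$ satisfies $\sup_{t\ge0}\lA f_0\circ\varphi^{f_0}_t\rA_{H^s}\le N$;
\item $L\defn\sup_{t\ge0}\lA D\varphi^{\omega_0}_t\rA_{L^\infty}<\infty$.
\end{itemize}
Since $\det D\Phi=1$ and $D\Phi=(D\varphi)^{-1}\circ\Phi$, the second assumption bounds $D\Phi^{\omega_0}_t$ in $L^\infty$ uniformly in time. Proposition~\ref{P:4.1} applies because $\omega_0\in\mathcal{H}^s(\xT^2)$. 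It gives a sequence $t_n\to\infty$ with $\lA |D|^s\Phi_{t_n}\rA_{L^2}\to\infty$, and hence, by the signed-rearrangement argument of Proposition~\ref{P:2025} combined with the Lipschitz bound, $\lA \varphi_{t_n}-\Id\rA_{H^s}\to\infty$.

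The idea is that this top-order growth of $\varphi$ must be invisible in $\omega=\omega_0\circ\varphi$. Applying $|D|^{s-1}$ to $\nabla\omega=(D\varphi)^\top(\nabla\omega_0\circ\varphi)$ and paralinearizing isolates the principal term
\[
T_{(D\varphi)^\top(D^2\omega_0\circ\varphi)}(\varphi-\Id).
\]
All other terms are controlled by $L$, $N$ and lower norms of $\varphi$ through tame estimates. The paralinearization is the step where the Lipschitz bound is essential. A bound $\lA\omega(t_n)\rA_{H^s}\le N$ therefore forces the growing high-frequency part of $\varphi_{t_n}$ to concentrate, microlocally, where this symbol degenerates.

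Next I perturb the data. Set $f_0=\omega_0+\lambda\eta$, where $\eta$ is a fixed smooth wave packet and $\lambda$ ranges over a small compact convex set $K\subset\xR^k$, chosen so that $\nabla\eta$ is non-degenerate on the region where the growth of $\varphi_{t_n}$ concentrates. The uniform bound $N$ on the whole ball supplies the compactness: all the trajectories $f_0\circ\varphi^{f_0}_t$ lie in a fixed $H^s$-ball, which is compact in $H^{s-\kappa}$. The velocities are then bounded in $H^{s+1}$ uniformly in $t$ and $f_0$. Since the data-to-flow map is continuous at fixed time (Theorem~\ref{T:classique}), the map $\lambda\mapsto \varphi^{\omega_0+\lambda\eta}_{t_n}$ is continuous into $H^{s-\kappa}$. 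The resulting bounds, however, are uniform in $n$ only at the level of velocity and vorticity, not of the flow.

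I then define a continuous map $\mathcal{F}:K\to K$. It corrects $\lambda$ by the value of a specific Fourier coefficient, namely the pairing at frequency $\sim 2^{j_n}$ of $(f_0\circ\varphi^{f_0}_{t_n})$ against a test function adapted to the stretching direction of $\varphi_{t_n}$. It is truncated so as to map $K$ into itself, which encodes the assumed stability. Schauder's (here Brouwer's) theorem gives a fixed point $\lambda_n$. At the fixed point, the pairing of $|D|^s(f_0\circ\varphi^{f_0}_{t_n})$ with the wave packet must vanish, up to errors that are $o(1)$ as $n\to\infty$.

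On the other hand, I expand this coefficient by Faà di Bruno. The term $T_{\lambda_n (D\varphi)^\top(D\nabla\eta\circ\varphi)}(\varphi-\Id)$, which has no counterpart from $\omega_0$, contributes a quantity comparable to $\lambda_n\lA\varphi_{t_n}-\Id\rA_{H^s}$ restricted to the relevant frequencies. This tends to infinity unless $\lambda_n\to0$. If $\lambda_n\to0$, the $\omega_0$-part alone must cancel, contradicting the non-degeneracy of $D^2\omega_0$ at its minimum together with the lower bound~\eqref{n1197}.

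The main obstacle is the stability input: I must show that $\varphi^{f_0}_{t_n}$ inherits the top-order growth of $\varphi^{\omega_0}_{t_n}$ along the wave-packet direction, uniformly for $\lambda\in K$, despite $t_n\to\infty$. My first attempt will be to use that the velocity is one derivative smoother than the vorticity, with the uniform bound $N$ giving $u\in H^{s+1}$. A Grönwall argument on $D\varphi^{f_0}-D\varphi^{\omega_0}$ in $L^\infty$ then bounds the difference by $C\lambda e^{Ct}$. This forces me to choose $|\lambda|\lesssim e^{-Ct_n}$ and to compensate by a frequency $2^{j_n}$ large enough that the growth of $\lA\varphi_{t_n}\rA_{H^s}$ still dominates.
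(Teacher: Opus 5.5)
The proposal has a genuine gap, and it sits exactly at the step you defer to the end. The fixed-point device does not make up for it.

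\textbf{The fixed point carries no information.} A self-map of $K$ obtained by truncating $\lambda\mapsto\lambda-c(\lambda)$ may have its fixed points on $\partial K$, where $c(\lambda)\neq0$. To get a genuine zero you would need an inward-pointing (degree) condition on $\partial K$, and a size bound by $N$ gives no sign information. More importantly, even an exact zero contradicts nothing:
\begin{itemize}
\item The top-order part of $\omega_0\circ\varphi$ is $T_{\nabla\omega_0\circ\varphi}\cdot(\varphi-\Id)$, whose symbol is $\nabla\omega_0$. The term $T_{(D\varphi)^\top(D^2\omega_0\circ\varphi)}(\varphi-\Id)$ that you isolate is one derivative lower and only sees $\|\varphi-\Id\|_{H^{s-1}}$.
\item The hypothesis applied to $f_0=\omega_0+\lambda\eta$ bounds only $f_0\circ\varphi^{f_0}_t$, whose top-order part is $T_{\nabla f_0\circ\varphi^{f_0}}\cdot(\varphi^{f_0}-\Id)$. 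So the contributions of $\omega_0\circ\varphi^{f_0}_{t_n}$ and of $\lambda\,\eta\circ\varphi^{f_0}_{t_n}$ can both be of size $|\lambda|\,\|\varphi^{f_0}_{t_n}-\Id\|_{H^s}$ and cancel. Since $\omega_0\circ\varphi^{f_0}_t$ is not a solution, nothing bounds it separately.
\item Hence a vanishing pairing is compatible with $\lambda_n\not\to0$.
\end{itemize}
Your closing step (``if $\lambda_n\to0$, contradiction with the non-degenerate minimum and \eqref{n1197}'') is also unfounded. Those properties are what force the growth in Proposition~\ref{P:4.1}; they say nothing about one high-frequency coefficient of $\omega(t_n)$. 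The only possible source of a contradiction is the comparison of $\varphi^{f_0}_{t_n}$ with $\varphi^{\omega_0}_{t_n}$, that is, your stability input.

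\textbf{The stability input cannot be obtained along the lines you sketch.}
\begin{itemize}
\item For any $\lambda$, the signal $\lambda\,T_{\nabla\eta\circ\varphi}\cdot(\varphi-\Id)$ has $H^s$-norm at most $C|\lambda|\,\|\eta\|_{H^s}\|\varphi_{t_n}-\Id\|_{H^s}$, because $H^s\subset W^{1,\infty}$ for $s>2$. Localizing at frequency $2^{j_n}$ can only lower this.
\item A packet oscillating at frequency $2^{j_n}$ instead brings Koch's factor $|D\varphi|^s\le L^s$, which is bounded in this alternative.
\item The discrepancy between $\varphi^{f_0}_{t_n}$ and $\varphi^{\omega_0}_{t_n}$ is controlled only by $|\lambda|$ times a constant growing with $t_n$. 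That constant is $e^{Ct_n}$ in your Gr\"onwall bound, and $\mathcal F_s(t_n,M)$ at top order in Proposition~\ref{P:diff}.
\end{itemize}
Shrinking $\lambda$ to $e^{-Ct_n}$ therefore does not help, because signal and error are both linear in $\lambda$. You would need $\|\varphi_{t_n}-\Id\|_{H^s}$ to outgrow that stability constant. Proposition~\ref{P:4.1} gives only $\limsup=+\infty$, with no rate. This is why amplification through $\dot H^s$-growth of $\varphi$ is the delicate case. Koch's argument (Section~\ref{S:Koch}) avoids it by working at a fixed time $T$ with $\delta\to0$, but it needs $D\varphi$ to be unbounded.

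\textbf{Comparison with the paper.} The paper runs Schauder's theorem in data space, using the $H^s$-orthogonal projector $P$ onto $\cos x^1$. Its perturbations are $\pm\zeta^{(y_0,\mathbf v)}_{\delta,\rho}$, which are linear on a ball, so composition reproduces $\mathbf v\cdot\varphi$ there (Proposition~\ref{P:Perturbation}). The contradiction comes from the phase condition $P\eta(\tilde f^*)\neq0$.

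This route does not fill your gap either:
\begin{itemize}
\item The inflation in Claim~1 is measured along $\tilde\varphi_t$, the flow of $\tilde f$, not the flow of $\tilde f+\eta$. It is never used in the final contradiction.
\item Non-vanishing of $P\eta$ on the whole segment $\mathcal K$ is incompatible with the strict bound $\|\tilde f+\eta(\tilde f)-\omega_0\|_{H^s}<r$ at the endpoints $\tilde f=\omega_0\pm r\cos x^1/\|\cos x^1\|_{H^s}$. That bound forces $\langle\cos x^1,\eta(\tilde f)\rangle_{H^s}$ to take opposite signs at the two endpoints, so by continuity it vanishes somewhere on $\mathcal K$.
\end{itemize}
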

\begin{proof}[Strategy of the proof]
Before entering into the details of the proof, we explain its strategy. 
We proceed by contradiction. Let us fix $r > 0$ and $N > 0$. Assume that
neither condition holds. Specifically, we suppose that for all
$f_0 \in \bar{B}_{H^s_0}(\omega_0, r)$, the solution satisfies
$\sup_{t \ge 0} \blA f_0 \circ \varphi_t^{f_0}\brA_{H^s} \le N$, and that
the reference flow satisfies $\sup_{t \ge 0} \lA D\varphi_t^{\omega_0}\rA_{L^\infty} < +\infty$.
As established in the previous sections (see Proposition~\ref{P:4.1}),
if the vorticity remains bounded while the initial data belongs to $\mathcal{H}^s(\xT^2)$, 
the flow map must exhibit unbounded
growth in $H^s$. Specifically, we have the following result 
for any $f_0 \in \mathcal{H}^s(\xT^2)\cap \bar{B}_{H^s_0}(\omega_0, r)$, there holds
\begin{equation}\label{n703-pre}
\sup_{t \ge 0} \blA \Phi_t^{f_0}-\Id\brA_{H^s} = +\infty.
\end{equation}
Note that, in light of Proposition~\ref{P:2025} (see~\e{n567}), the blow-up \e{n703-pre} is equivalent to
$$
\sup_{t \ge 0} \blA \varphi_t^{f_0} -\Id\brA_{H^s} = +\infty.
$$
Now by writing
$$
\varphi_t^{f_0}\circ \Phi_t^{f_0}=\Id\Longrightarrow \int_{\mathbb{T}^2} \varphi_t^{f_0}\circ \Phi_t^{f_0}\dx
=\int_{\mathbb{T}^2}\Id \dx\Longrightarrow \int_{\mathbb{T}^2} (\varphi_t^{f_0}-\Id) \dx=0, 
$$
we get by the Poincar\'e inequality
\begin{equation}\label{n703}
\sup_{t \ge 0} \blA |D|^s \varphi_t^{f_0} \brA_{L^2} = +\infty.
\end{equation}
It is this explosion of the flow map in the Sobolev norm that we shall
exploit to derive a contradiction.

The proof relies on a topological argument involving the Schauder fixed
point theorem. Let $P$ be the orthogonal projector in $H^s$ onto the space spanned
by the function $(x^1,x^2) \mapsto \cos(x^1)$.
For any $f \in \bar{B}_{H^s_0}(\omega_0, r)$, we define a "regularized" reference vorticity
$$
\tilde{f} \defn \omega_0 + P(f - \omega_0).
$$
Notice that, for $r$ sufficiently small (which can be assumed without loss of generality), 
$\omega_0$ belongs to $\mathcal{H}^s(\xT^2)$ if and only if $\tilde{f}$ belongs to $\mathcal{H}^s(\xT^2)$.

Let $\tilde{\varphi}_t=(\Phi_t^{\tilde{f}})^{-1}$ be the inverse of the
flow generated by $\tilde{f}$. By hypothesis, the solution $t\mapsto \tilde{f} \circ \tilde{\varphi}_t$
remains bounded in $H^s_0$. We claim that we can construct a specific
perturbation $\eta = \eta(\tilde{f}) \in H^s_0(\xT^2)$ of the initial data tailored to resonate
with the stretching of $\tilde{\varphi}_t$.

\smallskip
\noindent\textbf{Claim 1.} \textit{There exists a perturbation $\eta  \in H^s_0(\xT^2)$ satisfying
the three properties:
\begin{enumerate}
\item \emph{Strict Stability:} $\blA \tilde{f}+ \eta - \omega_0  \brA_{H^s} < r$.
\item \emph{Norm Inflation:} $\sup_{t \ge 0} \blA (\tilde{f} + \eta) \circ \tilde{\varphi}_t \brA_{H^s} > N$.
\item \emph{Phase condition:} $\hat{\eta}(e_1) \neq 0 $ where $e_1=(1,0)$.
\end{enumerate}}

\smallskip
Assuming this claim holds, we consider the map $\mathcal{F} \colon \bar{B}_{H^s_0}(\omega_0, r) \to H^s_0(\xT^2)$
defined by
$$
\mathcal{F}(f) \defn \tilde{f} + \eta(\tilde{f}) = \omega_0 + P(f - \omega_0) + \eta(\tilde{f}).
$$

We further claim that one can choose the perturbation so that

\smallskip
\noindent\textbf{Claim 2.} \textit{The mapping $f \mapsto \mathcal{F}(f)$ is continuous
and its image is contained in a compact subset of $\bar{B}_{H^s_0}(\omega_0, r)$.}

\smallskip
Schauder's fixed point theorem then ensures the existence of a fixed point
$f^* \in \bar{B}_{H^s_0}(\omega_0, r)$ such that $\mathcal{F}(f^*) = f^*$.
Substituting the definition of $\mathcal{F}$, this identity reads:
$$
\omega_0 + P(f^* - \omega_0) + \eta(\tilde{f}^*) = f^*.
$$
Rearranging the terms, we find that $\eta(\tilde{f}^*) = f^* - \omega_0 - P(f^* - \omega_0)$.
Observing that the right-hand side is simply $(\Id - P)(f^* - \omega_0)$,
we apply the projector $P$ to the equation. Since $P(\Id - P) = 0$, we obtain
$$
P \eta(\tilde{f}^*) = 0.
$$
However, the third property of Claim 1 implies that $\hat{\eta}(\tilde{f}^*)(e_1) \neq 0$, which ensures $P\eta \neq 0$ and
this is the wanted contradiction.
The proof of Proposition~\ref{P71} is thus reduced to establishing Claim 1
and Claim 2, which we address in the following subsections. Claim 1 
is proved in subsection~\ref{S:Construction} and Claim 2 is proved in subsection~\ref{S:Claim2}. 
As a preparation, in subsection~\ref{S:Perturbation}, we construct an auxiliary function used to define the perturbation.
\end{proof}

\subsection{Construction of the Perturbation}
\label{S:Perturbation}

We start by defining an auxiliary function used to define a perturbation of the initial data. 

Let $\theta \in C_0^\infty(\xR^2)$ be a standard radial cut-off function,
satisfying $\theta(y) = 1$ for $\la y \ra \le 1/2$ and $\theta(y) = 0$ for $\la y \ra \ge 1$.
We define the scalar localized perturbation $\zeta_{\delta,\rho}^{(y_0,\mathbf{v})} \colon \xT^2 \to \xR$
associated with a position $y_0 \in \xT^2$, an amplitude $\delta > 0$,
a localization scale $\rho \in (0, 1)$, and a direction vector $\mathbf{v} \in \xR^2$
(with $\la\mathbf{v}\ra=1$) by the formula:
\begin{equation}\label{n851}
\zeta_{\delta,\rho}^{(y_0,\mathbf{v})}(y) \defn \delta \rho^{s-2} \theta\left(\frac{y-y_0}{\rho}\right) \mathbf{v} \cdot (y-y_0).
\end{equation}
Here, we identified the torus $\xT^2$ with the periodic box $[-\pi, \pi]^2$.
Since the support of the perturbation is of size $\rho < 1$, the use of
Euclidean coordinates is unambiguous. The following proposition summarizes the key properties of this
perturbation.

\begin{proposition}\label{P:Perturbation}
Let $s > 2$ be a real number. 
The perturbation $\zeta_{\delta,\rho}$ satisfies the following properties.
\begin{enumerate}
\item \emph{Sobolev Control:} There exists a constant $C_s > 0$ such that for all $\delta > 0$,
$\rho \in (0,1)$, all $y_0\in\xT^2$ and all unit vectors $\mathbf{v}$:
$$
\lA \zeta_{\delta,\rho}^{(y_0,\mathbf{v})} \rA_{H^{s}(\xT^2)} \le C_s \delta.
$$
\item \emph{Geometric Lower Bound:} if $\varphi\in \SDiff^s(\xT^2)$ and 
$x_0 = \varphi^{-1}(y_0)$, then there exist constants $c_s > 0$ (depending only on $s$) and $c_\varphi > 0$
(depending on $\lA D\varphi\rA_{L^\infty}$) such that
$$
\lA \zeta_{\delta,\rho}^{(y_0,\mathbf{v})} \circ \varphi \rA_{H^{s}} \ge c_s \, \delta \, \rho^{s-2} \,
\lA \mathbf{v} \cdot \varphi\rA_{\dot{H}^s(B(x_0, c_\varphi \rho))}.
$$
\item \emph{Generic Scales:} For almost every scale $\rho \in (0, 1)$,
the Fourier coefficients satisfy the non-vanishing condition
$$
\hat{\zeta}_{\delta,\rho}^{(y_0,\mathbf{v})} (k) \neq 0 \quad \text{for all } k \in \xZ^2 \setminus \{0\} \text{ such that } \mathbf{v} \cdot k \neq 0.
$$
\end{enumerate}
\end{proposition}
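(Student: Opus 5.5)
For the Sobolev control, I would use scaling. Writing $\zeta(y)=\delta\rho^{s-1}\Theta((y-y_0)/\rho)$ with $\Theta(z)\defn\theta(z)\,\mathbf{v}\cdot z$ a fixed smooth compactly supported profile, the homogeneous norm scales as $\lA \Theta((\cdot-y_0)/\rho)\rA_{\dot H^{s}(\xR^2)}=\rho^{1-s}\lA\Theta\rA_{\dot H^s}$ in dimension two. Hence $\lA\zeta\rA_{\dot H^s}\le \delta\lA\Theta\rA_{\dot H^s}$. The $L^2$ part is $\delta\rho^{s-1}\cdot\rho\lA\Theta\rA_{L^2}\le C\delta$ since $\rho<1$. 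Interpolating between these gives the inhomogeneous bound $\lA\zeta\rA_{H^s(\xT^2)}\le C_s\delta$. Periodization is harmless because the support has size $\rho<1$, and $\lA\Theta\rA_{H^s}$ depends only on $s$ (rotation invariance makes it independent of $\mathbf v$).

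For the geometric lower bound, I would localize. Since $\varphi$ is Lipschitz with $D\varphi^{-1}=(D\varphi)^{-1}$ controlled by $\lA D\varphi\rA_{L^\infty}$ when $\det=1$ (entries are rearranged), there is $c_\varphi>0$ with $\varphi(B(x_0,c_\varphi\rho))\subset B(y_0,\rho/2)$. On that ball $\theta\equiv1$, so
$$\zeta\circ\varphi=\delta\rho^{s-2}\,\mathbf v\cdot(\varphi-y_0).$$
The constant $y_0$ is killed by $\dot H^s$ with $s>2$. It then remains to use that the global $H^s$ norm dominates a local Sobolev–Slobodeckij seminorm on a ball, i.e. $\lA F\rA_{H^s}\ge c_s\lA F\rA_{\dot H^s(B)}$, where the local homogeneous norm is defined intrinsically (Gagliardo seminorm of $D^{[s]}F$ on $B$). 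This is the point needing care: with the intrinsic local definition, monotonicity under restriction is immediate and $c_s$ is universal. I would define $\dot H^s(B)$ that way.

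For the generic scales, I would compute the Fourier coefficient directly:
$$\hat\zeta(k)=\frac{\delta\rho^{s-2}}{(2\pi)^2}e^{-ik\cdot y_0}\rho^3\int\theta(z)\,\mathbf v\cdot z\, e^{-i\rho k\cdot z}\,\dz=c\,\delta\rho^{s+1}e^{-ik\cdot y_0}\,i\,\mathbf v\cdot\nabla_\xi\hat\theta(\rho k).$$
Since $\theta$ is radial, $\hat\theta(\xi)=F(|\xi|)$ and $\nabla\hat\theta(\xi)=F'(|\xi|)\xi/|\xi|$, so $\hat\zeta(k)\neq0$ iff $\mathbf v\cdot k\neq0$ and $F'(\rho|k|)\neq0$. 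Now $F$ is real analytic (compactly supported $\theta$) and not constant, so $F'$ has a discrete zero set $Z\subset(0,\infty)$. The bad set of $\rho$ is $\bigcup_{k}\{\rho:\rho|k|\in Z\}$, a countable union of countable sets, hence of measure zero.

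The main obstacle is the second item: making the local homogeneous seminorm precise for fractional $s$ and ensuring the ball inclusion constant depends only on $\lA D\varphi\rA_{L^\infty}$. The remaining items are routine scaling and analyticity.
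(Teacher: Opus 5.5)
Your proposal is correct and follows the paper's proof essentially step by step. You use the same scaling of the profile $\theta(z)\,\mathbf{v}\cdot z$ for the Sobolev bound. You use the same ball inclusion $\varphi(B(x_0,c_\varphi\rho))\subset B(y_0,\rho/2)$ followed by restriction of the seminorm; this inclusion needs only the Lipschitz constant of $\varphi$, so your remark about $(D\varphi)^{-1}$ is superfluous. You also use the same radial Fourier formula and analyticity argument for the generic scales. Your choice to define $\dot{H}^s(B)$ intrinsically, as a Gagliardo seminorm of $D^{[s]}F$ on $B$, makes explicit what the paper leaves implicit when it restricts the integration domain. Under the nonlocal reading $\|\,|D|^s F\|_{L^2(B)}$, one could not replace $\zeta\circ\varphi$ by $\delta\rho^{s-2}\,\mathbf{v}\cdot\varphi$ inside $B$.
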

\begin{proof}
For the sake of notational simplicity, we simply write $\zeta$ instead of $\zeta_{\delta,\rho}^{(y_0,\mathbf{v})}$.

\smallbreak
\noindent\textit{1. Sobolev Control.} 
Since $\rho < 1$, the support of $\zeta$ 
is contained in a ball of radius smaller than~$\pi$. 
Consequently, we may identify $\zeta$ with a 
function defined on $\xR^2$: 
$\lA \zeta \rA_{\dot{H}^s(\xT^2)}
= \Vert \tilde{\zeta} \Vert_{\dot{H}^s(\xR^2)}$ with 
$$
\tilde{\zeta}(y) = \delta \rho^{s-1} G\left(\frac{y-y_0}{\rho}\right),
$$
where $G(z) \defn \theta(z) \mathbf{v}\cdot z$ 
is a smooth function with compact support in $\xR^2$.
We can now compute the homogeneous Sobolev norm 
using the standard scaling property in $\xR^2$: 
$\lA u(\cdot/\rho) \rA_{\dot{H}^s(\xR^2)} = \rho^{1-s} \lA u \rA_{\dot{H}^s(\xR^2)}$. 
This yields:
$$
\lA \zeta \rA_{\dot{H}^s(\xT^2)} 
= \Vert \tilde{\zeta} \Vert_{\dot{H}^s(\xR^2)} 
= \delta \rho^{s-1} \cdot \rho^{1-s} \lA G \rA_{\dot{H}^s(\xR^2)} 
= \delta \lA G \rA_{\dot{H}^s(\xR^2)}.
$$
For the lower order $L^2$ norm, the scaling 
gives 
$\Vert \tilde{\zeta} \Vert_{L^2}
= \delta \rho^{s-1} \cdot \rho \lA G \rA_{L^2} 
= \delta \rho^s \lA G \rA_{L^2}$.
Combining these estimates gives 
$\lA \zeta \rA_{H^s} \le C_s \delta$.

\smallbreak
\noindent\textit{2. Geometric lower bound.}
Introduce the set $\Omega_\rho \defn \varphi^{-1}(B(y_0, \rho/2))$.
For any point $x$ in $\Omega_\rho$, the cut-off function satisfies
$\theta(\rho^{-1}(\varphi(x)-y_0))=1$, hence
$$
\zeta(\varphi(x)) = \delta \rho^{s-2} \mathbf{v} \cdot (\varphi(x)-y_0).
$$
Consequently, 
$$
\lA \zeta \circ \varphi \rA_{H^{s}(\xT^2)}
\ge \lA \zeta \circ \varphi \rA_{\dot{H}^s(\Omega_\rho)}
= \delta \rho^{s-2} \lA \mathbf{v} \cdot \varphi \rA_{\dot{H}^s(\Omega_\rho)}.
$$
To conclude, it remains to relate the domain $\Omega_\rho$ to a ball centered at
$x_0 = \varphi^{-1}(y_0)$. Since $\varphi$ is a diffeomorphism of class $C^1$,
standard estimates ensure the existence of a radius $r > 0$ such that
$B(x_0, r) \subset \varphi^{-1}(B(y_0, \rho/2))$. Specifically, this inclusion holds
provided that
$$
r \le \frac{\rho}{2 \sup_{x \in \xT^2} \la D \varphi(x) \ra},
$$
where $\la \cdot \ra$ denotes the operator norm. By choosing $c_\varphi$ sufficiently small
with respect to $\lA D\varphi\rA_{L^\infty}^{-1}$, we ensure that the ball
$B(x_0, c_\varphi \rho)$ is contained in $\Omega_\rho$. The desired inequality follows
by restricting the integration domain to this ball.

\smallbreak
\noindent\textit{3. Generic Scales.} Finally, we address the generic 
non-vanishing of Fourier coefficients. Since $\rho < 1$, the support of $\zeta$ 
is localized in $B(y_0, \rho)$, allowing us to compute the Fourier transform as an 
integral over $\mathbb{R}^2$. 
The Fourier transform is given by the integral formula
$$
\hat{\zeta}(k) = \frac{1}{(2\pi)^2} \int_{\xR^2} \delta \rho^{s-2} \theta\left(\frac{y-y_0}{\rho}\right) \mathbf{v}\cdot(y-y_0) e^{-ik\cdot y} \dy.
$$
We perform the change of variables $y = y_0 + \rho z$, to get
$$
\hat{\zeta}(k) = \frac{\delta \rho^{s+1}}{(2\pi)^2} e^{-ik\cdot y_0} \int_{\xR^2} \theta(z) (\mathbf{v}\cdot z) e^{-ik \cdot (\rho z)} \dz.
$$
Observing that multiplication by $z_j$ corresponds to $i \partial_{\xi_j}$ in frequency domain, the integral
is equal to $i \mathbf{v} \cdot \nabla_\xi \widehat{\theta}(\rho k)$.
Since $\theta$ is radial, its Fourier transform is of the form
$\widehat{\theta}(\xi) = \Psi(\la \xi \ra)$. Computing the gradient, we obtain
\begin{equation}\label{n852}
\hat{\zeta}(k) = i \frac{\delta \rho^{s+1}}{(2\pi)^2} e^{-ik\cdot y_0} (\mathbf{v} \cdot k) \frac{\Psi'(\rho \la k \ra)}{\la k \ra}.
\end{equation}

Since $\mathbf{v} \cdot k \neq 0$ by assumption, the coefficient~$\hat{\zeta}_{\delta,\rho}(k)$ 
vanishes if and only if $\Psi'(\rho \la k \ra) = 0$. Now, since $\theta$ has compact support, 
the profile $\Psi$ is real-analytic on $\xR$, which proves that the zeros of $\Psi'$ form a discrete set. 
As a result, for any fixed $k$ such that $\mathbf{v} \cdot k \neq 0$, the set 
$\mathcal{B}_k \defn \left\{ \rho \in (0,1) : \Psi'(\rho \la k \ra) = 0 \right\}$ is countable (and finite away from 0). 
The union of these sets over all relevant $k \in \xZ^2$ has Lebesgue measure zero. This completes the proof.
\end{proof}

\subsection{Proof of Claim 1 (Construction of the Perturbation)}
\label{S:Construction}

We now turn to the proof of Claim 1. Let $N > 0$ be the required growth threshold.
Consider an arbitrary function $f \in \bar{B}_{H^s_0}(\omega_0, r)$. We define the
defect $u$ and the regularized profile $\tilde{f}$ by
$$
u \defn P(f - \omega_0) \quad \text{and} \quad \tilde{f} \defn \omega_0 + u.
$$
Since the operator norm of the orthogonal projector $P$ is bounded by $1$,
we have $\lA u \rA_{H^s} \le \lA f - \omega_0 \rA_{H^s} \le r$, which implies
that $\tilde{f} \in \bar{B}_{H^s_0}(\omega_0, r)$. Furthermore, by our specific
choice of projector, $u$ takes the explicit form
$$
u(x) = \alpha \cos(x^1),
$$
for some coefficient $\alpha \in \xR$ satisfying $|\alpha| \lA \cos(x^1) \rA_{H^s} \le r$.

We seek a perturbation $\eta$ of the form:
$$
\eta = \lambda \zeta_{\delta,\rho}^{(y_0, \mathbf{v})},
$$
where $\lambda \in \{-1, 1\}$ is a sign, $\rho \in (0,1)$ is a scale,
$\delta > 0$ is an amplitude, $y_0 \in \xT^2$ is a center, and
$\mathbf{v} \in \xR^2$ is a unit direction vector.
These parameters must be chosen to satisfy the stability condition
$\tilde{f} + \eta \in \bar{B}_{H^s_0}(\omega_0, r)$ and the norm inflation
condition for the flow $\tilde{\varphi}_t$.

\smallbreak
\noindent\textbf{Step 1: The Interaction Function.}
We first compute the scalar product between the defect $u$ and the perturbation profile.
Let us fix a scale $\rho \in (0, 1)$ outside the negligible set of "bad scales"
identified in Proposition~\ref{P:Perturbation}, ensuring that the Fourier
coefficients of the profile do not vanish.
For any center $y_0$ and direction $\mathbf{v}$, we define the 
function $J(y_0, \mathbf{v})$ as the inner product:
$$
J(y_0, \mathbf{v}) \defn \big\langle u, \zeta_{1,\rho}^{(y_0, \mathbf{v})} \big\rangle_{H^s(\xT^2)}.
$$
To avoid notational confusion, we recall that we denote the first 
coordinate by $v^1 = \mathbf{v} \cdot e_1$ and $y_0^1 = e_1 \cdot y_0$. 
Also, $\zeta_{1,\rho}^{(y_0, \mathbf{v})}$ stands for  $\zeta_{\delta,\rho}^{(y_0, \mathbf{v})}$ with $\delta=1$.

\begin{lemma}\label{L:Interaction}
Let $u(x) = \alpha \cos(x^1)$. Then 
\begin{equation*}
J(y_0, \mathbf{v}) = \alpha \, \mathcal{A}(\rho) \, v^1 \sin(y_0^1),
\end{equation*}
where $\mathcal{A}(\rho) \neq 0$ is a real coefficient depending only on the profile $\theta$, the scale $\rho$, and the regularity $s$.
\end{lemma}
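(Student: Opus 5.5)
The plan is to compute $J$ entirely on the Fourier side. The defect $u$ has only two nonzero Fourier modes, and the Fourier coefficients of $\zeta_{1,\rho}^{(y_0,\mathbf{v})}$ are given explicitly by formula~\eqref{n852} in the proof of Proposition~\ref{P:Perturbation}.

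We write $u(x)=\alpha\cos(x^1)=\frac{\alpha}{2}(e^{ix^1}+e^{-ix^1})$, so $\hat u(\pm e_1)=\alpha/2$ and all other coefficients vanish. The $H^s$ inner product, with the paper's normalization of the Sobolev norm and up to the harmless constant $(2\pi)^2$ coming from Parseval, then reduces to
$$
J(y_0,\mathbf{v}) = 2^s\,\frac{\alpha}{2}\Big(\overline{\hat\zeta(e_1)}+\overline{\hat\zeta(-e_1)}\Big),
$$
since $(1+|e_1|^2)^s=2^s$.

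Next we substitute \eqref{n852} with $\delta=1$ and $|k|=1$:
$$
\hat\zeta(\pm e_1)=\pm i\,\frac{\rho^{s+1}}{(2\pi)^2}\,e^{\mp i y_0^1}\,v^1\,\Psi'(\rho).
$$
Taking complex conjugates and adding the two terms gives
$$
-i\frac{\rho^{s+1}}{(2\pi)^2}v^1\Psi'(\rho)\big(e^{iy_0^1}-e^{-iy_0^1}\big)=\frac{2\rho^{s+1}}{(2\pi)^2}v^1\Psi'(\rho)\sin(y_0^1).
$$
Here we use that $\Psi$ is real-valued, because $\theta$ is real and radial, hence even, so its Fourier transform is real. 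This yields $J=\alpha\,\mathcal{A}(\rho)\,v^1\sin(y_0^1)$ with
$$
\mathcal{A}(\rho)=\frac{2^s\rho^{s+1}}{(2\pi)^2}\Psi'(\rho)
$$
(times the Parseval constant). Thus $\mathcal{A}$ depends only on $\theta$, $\rho$ and $s$, and it is real.

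It remains to show $\mathcal{A}(\rho)\neq0$. This follows from the choice of $\rho$ outside the bad set of Proposition~\ref{P:Perturbation}(3), applied with $k=e_1$. One subtlety is that the statement there requires $\mathbf{v}\cdot e_1\neq0$, whereas $\mathcal{A}(\rho)$ does not depend on $\mathbf{v}$. So we should instead invoke directly the set $\mathcal{B}_{e_1}=\{\rho:\Psi'(\rho)=0\}$ from that proof: it is discrete, because $\Psi$ is real-analytic. We assume, as the proof of Proposition~\ref{P:Perturbation} does, that $\Psi'$ is not identically zero; this holds since $\widehat\theta$ is nonconstant. Excluding $\mathcal{B}_{e_1}$ gives $\Psi'(\rho)\neq0$.

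The only delicate points are bookkeeping: keeping the conjugation and sign conventions in the inner product consistent, and noting that $\Psi$ is real. Neither affects the structure of the formula or the nonvanishing of $\mathcal{A}(\rho)$.
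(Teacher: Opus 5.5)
Your proposal is correct and follows essentially the same route as the paper: Parseval on the two modes $\pm e_1$, substitution of the explicit coefficients from \eqref{n852}, which gives $\mathcal{A}(\rho)=2^s(2\pi)^{-2}\rho^{s+1}\Psi'(\rho)$, and nonvanishing from the choice of scale with $\Psi'(\rho)\neq 0$. The only differences are cosmetic. You evaluate \eqref{n852} directly at $k=-e_1$ rather than using $\hat\zeta(-e_1)=\overline{\hat\zeta(e_1)}$, and you state explicitly two points the paper leaves implicit: that $\Psi$ is real, and that only the set $\mathcal{B}_{e_1}$ needs to be avoided.
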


\begin{proof}
By Parseval's identity, $J(y_0, \mathbf{v}) = \sum_{k} (1+|k|^2)^s \hat{u}_k \overline{\hat{\zeta}_k}$, 
where $\hat{\zeta}_k$ denotes the Fourier coefficient of $\zeta_{1,\rho}^{(y_0, \mathbf{v})}$.
The function $u$ is supported on frequencies $k = \pm e_1$, with $\hat{u}_{\pm e_1} = \alpha/2$.
Using the explicit formula \eqref{n852} from the proof of Proposition~\ref{P:Perturbation} and noting that $|e_1|=1$, we have:
$$
\hat{\zeta}_{e_1} = i \frac{\rho^{s+1}}{(2\pi)^2} (\mathbf{v} \cdot e_1) \Psi'(\rho) e^{-i e_1 \cdot y_0}.
$$
Set $C_\rho \defn (2\pi)^{-2} \rho^{s+1} \Psi'(\rho)$, so that $\hat{\zeta}_{e_1} = i C_\rho v^1 e^{-i y_0^1}$.
Since the perturbation $\zeta$ is real-valued, we have $\hat{\zeta}_{-e_1} = \overline{\hat{\zeta}_{e_1}}$ and hence
\begin{align*}
J(y_0, \mathbf{v}) &= (1+|e_1|^2)^s \left( \hat{u}_{e_1} \overline{\hat{\zeta}_{e_1}} + \hat{u}_{-e_1} \overline{\hat{\zeta}_{-e_1}} \right) \\
&= 2^{s-1} \alpha C_\rho v^1 \left( i e^{-i y_0^1} - i e^{i y_0^1} \right) \\
&= 2^{s-1} \alpha C_\rho v^1 \left( 2 \sin(y_0^1) \right).
\end{align*}
Setting $\mathcal{A}(\rho) \defn 2^s C_\rho$, and recalling that $\Psi'(\rho) \neq 0$ by our choice of scale, we conclude that $\mathcal{A}(\rho) \neq 0$.
\end{proof}

\smallbreak
\noindent\textbf{Step 2: Stability and Amplitude Selection.}
We must ensure that the perturbation satisfies the strict stability condition $\lA u + \eta \rA_{H^s} < r$.
Recall from Proposition~\ref{P:Perturbation} that $\lA \zeta_{\delta,\rho} \rA_{H^s} \le C_s \delta$.
Expanding the squared norm, the condition reads:
$$
\lA u \rA_{H^s}^2 + 2\lambda \delta J(y_0, \mathbf{v}) + \lA \zeta_{\delta,\rho} \rA_{H^s}^2 < r^2.
$$
We define the "admissible amplitude" $\delta(y_0, \mathbf{v})$ as follows:
\begin{itemize}
\item \emph{Case 1: $u \equiv 0$.} We choose a constant amplitude $\delta_0 = r / (2C_s)$. 
Then $\lA u + \eta \rA_{H^s} \le C_s \delta_0 = r/2 < r$. The sign $\lambda$ is irrelevant; 
we set $\lambda=1$.
\item \emph{Case 2: $u \not\equiv 0$.} We choose the sign $\lambda \defn -\operatorname{sgn}(J(y_0, \mathbf{v}))$ 
to make the cross-term negative. The stability condition is satisfied if $-2\delta |J| + C_s^2 \delta^2 < 0$, which 
holds if $\delta < \frac{2}{C_s^2} |J|$.
To ensure strict stability, we set
\begin{equation*}
\delta(y_0, \mathbf{v}) \defn \frac{1}{C_s^2} |J(y_0, \mathbf{v})| = \frac{1}{C_s^2} |\alpha \mathcal{A}(\rho) v^1 \sin(y_0^1)|.
\end{equation*}
With this choice, the squared norm of the perturbation decreases strictly:
$$
\lA u + \eta \rA_{H^s}^2 \le \lA u \rA_{H^s}^2 - \frac{1}{C_s^2} J(y_0, \mathbf{v})^2.
$$
Thus, $\lA u + \eta \rA_{H^s} < \lA u \rA_{H^s} \le r$.
\end{itemize}

\smallbreak
\noindent\textbf{Step 3: Optimization and Conclusion.}
It remains to show that we can choose the parameters so that the norm inflation occurs, while 
ensuring the spectral support condition.

By the reverse triangle inequality it suffices to find parameters such that $\lA \eta \circ \tilde{\varphi}_t \rA_{H^s} > 2 N$.
Using the geometric lower bound (Proposition~\ref{P:Perturbation}, item 2), we have:
$$
\lA \eta \circ \tilde{\varphi}_t \rA_{H^s} \ge c_s \delta(y_0, \mathbf{v}) \rho^{s-2} \lA |D|^s (\mathbf{v} \cdot \tilde{\varphi}_t) \rA_{L^2(B(x_0, c_\rho))},
$$
where $x_0 = \tilde{\varphi}_t^{-1}(y_0)$. Substituting the definition of $\delta(y_0, \mathbf{v})$, we seek to maximize the functional:
$$
\mathcal{Q}(t, y_0, \mathbf{v}) \defn |\sin(y_0^1)| \cdot |v^1| \cdot \lA |D|^s (\mathbf{v} \cdot \tilde{\varphi}_t) \rA_{L^2(B(x_0, c_\varphi \rho))}.
$$

By hypothesis~\eqref{n703}, there exists a sequence of times $t_n \to \infty$ such that the 
global norms diverge:
$$
\Lambda_n \defn \lA \tilde{\varphi}_{t_n} \rA_{\dot{H}^s} \to +\infty.
$$
An elementary covering argument implies the existence of a constant $c>0$ and a sequence 
of balls $B(x_n, r)$ on which the local norms satisfy
\be\label{eq:loc_blowup}
\lA \tilde{\varphi}_{t_n} \rA_{H^s(B(x_n, r))} \ge c \Lambda_n.
\ee
Let $y_n = \tilde{\varphi}_{t_n}(x_n)$. We introduce the set $\mathcal{Z} \defn \{ y \in \xT^2 : \sin(y^1) = 0 \}$, 
where the weight $|\sin(y^1)|$ vanishes. Since $\mathcal{Z}$ has empty 
interior, the continuity of the map
$$
y \mapsto \lA \tilde{\varphi}_{t_n} \rA_{\dot{H}^s(\tilde{\varphi}_{t_n}^{-1}(B(y, \rho)))}
$$ 
allows us to select a perturbed center $y_n^*$ 
satisfying $\operatorname{dist}(y_n^*, \mathcal{Z}) \ge \eps > 0$, while ensuring that the pullback 
ball centered at $\tilde{\varphi}_{t_n}^{-1}(y_n^*)$ still 
captures a comparable amount of energy to that in~\eqref{eq:loc_blowup}.
Finally, the diagonal directions $\mathbf{v}_{\pm} = \frac{1}{\sqrt{2}}(1, \pm 1)$ satisfy 
the constraint $|v^1| > 0$ as well as the algebraic identity
$$
||D|^s \varphi|^2 = ||D|^s (\mathbf{v}_+ \cdot \varphi)|^2 + ||D|^s (\mathbf{v}_- \cdot \varphi)|^2.
$$
Consequently, there exists a direction $\mathbf{v}^* \in \{\mathbf{v}_+, \mathbf{v}_-\}$ 
such that the associated term captures at least half of the total gradient energy. 
For $n$ large enough, the functional $\mathcal{Q}(t_n, y_n^*, \mathbf{v}^*)$ 
thus exceeds the required threshold $2N$.

Finally, we address the spectral support. The topological argument in the next 
step requires the perturbation to satisfy $\hat{\eta}(e_1) \neq 0$.
From Lemma~\ref{L:Interaction}, this coefficient is proportional to $\sin(y_0^1)$.
In the construction above, we explicitly selected the center $y^*$ 
such that $\operatorname{dist}(y^*, \mathcal{Z}) \ge \eps$. 
Since $\mathcal{Z}$ is the zero-locus of the sine function, 
this geometric separation guarantees that the interaction coefficient 
is strictly non-zero.
Thus, the condition $\hat{\eta}(e_1) \neq 0$ is naturally satisfied.

\subsection{Proof of Claim 2 (Continuity and Compactness)}
\label{S:Claim2}

We finally establish the properties of the mapping $\mathcal{F}$ required to apply
Schauder's fixed point theorem. Recall that the map is defined by
$$
\mathcal{F}(f) = \tilde{f} + \eta(\tilde{f}), \quad \text{with} \quad \tilde{f} = \omega_0 + P(f - \omega_0).
$$
Since $P$ is a finite-rank projector, the set of regularized profiles
$$
\mathcal{K} \defn \left\{ \omega_0 + P(f - \omega_0) : f \in \bar{B}_{H^s}(\omega_0, r) \right\}
$$
is a compact subset of $H^s(\xT^2)$. To construct a continuous assignment
$\tilde{f} \mapsto \eta(\tilde{f})$, we rely on a partition of unity argument.

For every profile $g \in \mathcal{K}$, Claim 1 guarantees the existence of a
perturbation $\eta_g$ satisfying three properties:
\begin{enumerate}
\item \emph{Strict Stability:} The perturbed profile lies strictly inside the ball, that is
$$
\lA g + \eta_g - \omega_0  \rA_{H^s} < r.
$$
\item \emph{Norm Inflation:} There exists a time $t_g \ge 0$ such that
$$
\blA (g + \eta_g) \circ \varphi_{t_g}^g \brA_{H^s} > N,
$$
where $\varphi^g$ denotes the inverse flow generated by $g$.
\item \emph{Phase condition:} 
$\hat{\eta}_g(e_1) \neq 0$ where $e_1=(1,0)$.
\end{enumerate}

Since $\lA g + \eta_g - \omega_0  \rA_{H^s} < r$, by the triangle inequality, there exists
an open neighborhood $V_g$ of $g$ in $\mathcal{K}$ such 
that for all $\tilde{f} \in V_g$, we have $\blA \tilde{f} + \eta_g - \omega_0 \brA_{H^s} < r$.

Similarly, we can ensure the growth condition is preserved.
The map which associates the solution at time $t_g$ to the 
initial data is continuous
from $H^s$ to $H^s$ (owing to the well-posedness of the Euler equations).
Therefore, the function
$$
\tilde{f} \mapsto \blA (\tilde{f} + \eta_g) \circ \varphi_{t_g}^{\tilde{f}} \brA_{H^s}
$$
is continuous. Since this value is strictly greater than $N$ at $\tilde{f}=g$,
it remains strictly greater than $N$ in a neighborhood of $g$.
By intersecting these neighborhoods, we may assume without loss of generality that
for all $\tilde{f} \in V_g$, the growth condition holds.

Since $\mathcal{K}$ is compact, we can extract a finite subcover $V_{g_1}, \dots, V_{g_m}$.
Let $\chi_1, \dots, \chi_m$ be a continuous partition of unity subordinate to this cover.
(Recall that such a partition can be constructed explicitly using distance functions,
for instance by setting $\chi_j=w_j/\sum_{1\le i\le m}w_i$ 
with $w_j(\tilde{f}) \defn \operatorname{dist}(\tilde{f}, \mathcal{K} \setminus V_{g_j})$).
We define the map $\eta : \mathcal{K} \to H^s(\xT^2)$ by
$$
\eta(\tilde{f}) \defn \sum_{j=1}^m \chi_j(\tilde{f}) \eta_{g_j}.
$$
This map is continuous by construction. 
It remains only to verify that it satisfies the required properties.

\smallskip
\noindent\emph{Stability.} First, we verify that $\mathcal{F}(f) \in \bar{B}_{H^s}(\omega_0, r)$. 
To do so, write
$$
\mathcal{F}(f) - \omega_0 = \tilde{f}+ \eta(\tilde{f}) - \omega_0
= \sum_{j=1}^m \chi_j(\tilde{f}) \left( \tilde{f}+ \eta_{g_j}  - \omega_0 \right),
$$
where we used $\sum_{1\le j\le m}\chi_j=1$. For any index $j$ such that $\chi_j(\tilde{f}) > 0$,
we have $\tilde{f} \in V_{g_j}$. By our choice of neighborhoods, this implies
$\lA \tilde{f} + \eta_{g_j} - \omega_0 \rA_{H^s} < r$.
By convexity of the norm, the sum remains in the ball:
$$
\lA \mathcal{F}(f) - \omega_0 \rA_{H^s} \le \sum_{j=1}^m \chi_j(\tilde{f})
\lA \tilde{f}+ \eta_{g_j} - \omega_0  \rA_{H^s} < r.
$$

\smallskip
\noindent\emph{Compactness.} The image of the map $f \mapsto \eta(\tilde{f})$
is contained in the finite-dimensional vector space spanned by $\{ \eta_{g_1}, \dots, \eta_{g_m} \}$.
Consequently, the image of $\mathcal{F}$ lies in the finite-dimensional affine subspace
$$
E \defn \omega_0 + \operatorname{Im}(P) + \operatorname{Span}\left( \{ \eta_{g_1}, \dots, \eta_{g_m} \} \right).
$$
Since $\mathcal{F}$ is continuous and bounded, its closure is compact.

\smallskip
\noindent\emph{Non-Vanishing Spectral Support.} 
Finally, we address the spectral support. Recall that for the contradiction argument, 
we must ensure that the projection $P\eta$ does not vanish. 
The construction above guarantees that $\hat{\eta_g}(e_1)$ is strictly non-zero.
Moreover, since $P$ projects onto the one-dimensional space spanned by $\cos(x^1)$, 
the projection $P\eta$ is a real scalar function. Its non-vanishing implies that it has a 
strictly positive or strictly negative sign.
By continuity of the flow with respect to the initial data $f$, this sign is locally constant 
in $\bar{B}_{H^s}(\omega_0, r)$.
By choosing the covering $\{V_{g_j}\}$ fine enough, we ensure that on any overlap, 
the perturbations $\eta_{g_j}$ share the same spectral sign, preventing any cancellation 
in the partition of unity. This ensures that $P\eta(\tilde{f}) \neq 0$ for all $\tilde{f} \in \mathcal{K}$.

\medskip

This establishes Claim 2. As mentioned above, applying Schauder's fixed point theorem to $\mathcal{F}$,
we obtain a fixed point $f^*$ which must satisfy $P \eta(\tilde{f}^*) = 0$.
This contradicts the spectral support property.
The proof of Proposition~\ref{P71} is complete.

\section{Lagrangian stretching implies instability}\label{S:Koch}
After the dichotomy proven in Proposition \ref{P71}, the next main step 
to prove Theorem \ref{T11} it suffices to prove the following
\begin{proposition}\label{PK:7.1}
Consider $\omega_0\in H_0^s$ with $s> 1$ such that the inverse flow $\varphi_t^{\omega_0}$ satisfies
\begin{equation}\label{n:1201}
\sup_{t \ge 0} \lA D\varphi_t^{\omega_0}\rA_{L^\infty} =+\infty.
\end{equation}
Then for any radius $r > 0$ and any threshold $N>0$, there exists an initial data
$f_0 \in \bar{B}_{H^s_0}(\omega_0, r)$ such that
\begin{equation*}
\sup_{t \ge 0} \lA f_0 \circ \varphi_t^{f_0}\rA_{H^s} > N.
\end{equation*}
\end{proposition}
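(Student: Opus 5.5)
We argue by contradiction, following Koch's strategy as outlined in Step 4 of the introduction. Fix $r>0$ and $N>0$, and suppose that every $f_0\in\bar B_{H^s_0}(\omega_0,r)$ satisfies $\sup_{t\ge0}\lA f_0\circ\varphi^{f_0}_t\rA_{H^s}\le N$. Fix $\Lambda\gg1$; by \eqref{n:1201} there are a time $T$, a point $y_*$ and a unit vector $e$ with $|(D\varphi^{\omega_0}_T(y_*))^\top e|\ge\Lambda$. Since $\varphi_T$ is $C^1$, this stretching persists on a small ball $B(y_*,\rho_0)$.

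\textbf{Construction of the perturbation.} Take
$$f_0=\omega_0+\eta_{\init},\qquad \eta_{\init}(y)=\kappa\,\delta^{s-1}\theta\Big(\frac{y-x_*}{\rho_0}\Big)\sin\Big(\frac{e\cdot y}{\delta}\Big),$$
where $x_*$ is the preimage of the stretched region at time $T$, chosen so that the stretching acts on $\eta_{\init}$. Concretely, the phase of $\eta_{\init}\circ\varphi^{f_0}_T$ has gradient $(D\varphi_T)^\top e/\delta$. The amplitude $\kappa$ is chosen so that $\lA\eta_{\init}\rA_{H^s}\les\kappa\,\delta^{-1}\rho_0$; after normalizing the cutoff this gives $\lA\eta_{\init}\rA_{H^s}\le r$ uniformly in $\delta$. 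Only $s>1$ is needed here, so no cusp machinery is required.

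\textbf{Stability of the flow under the perturbation.} The key quantitative step is to show that for fixed $T$, $\lA D\varphi^{f_0}_T-D\varphi^{\omega_0}_T\rA_{L^\infty}\to0$ as $\delta\to0$. The perturbation $\eta_{\init}$ tends to $0$ in $H^{s-\epsilon}$, indeed in $H^{s'}$ for every $s'<s$, with rate $\delta^{s-s'}$. The plan is to propagate this with a Grönwall/Lipschitz estimate for Euler in a lower norm on the fixed interval $[0,T]$:
\begin{itemize}
\item We use Theorem~\ref{T:classique} together with the a priori bound $N$ on $H^s$ norms, which holds uniformly by the contradiction hypothesis.
\item The difference of velocities is controlled in $C^{1,\gamma}$ by the $H^{s'}$ norm of the vorticity difference, provided $s'>1$. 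When $s$ is close to $1$ this requires interpolating between the $H^s$ bound $N$ and the $L^2$ stability of the vorticity.
\end{itemize}
Differentiating the flow ODE then gives $D\varphi^{f_0}_T\to D\varphi^{\omega_0}_T$ uniformly.

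\textbf{Lower bound by Faà di Bruno.} Evaluate $\lA f_0\circ\varphi^{f_0}_T\rA_{H^s}\ge\lA\eta_{\init}\circ\varphi^{f_0}_T\rA_{H^s}-\lA\omega_0\circ\varphi^{f_0}_T\rA_{H^s}$.
\begin{itemize}
\item The second term is bounded uniformly in $\delta$, since $\varphi^{f_0}_T$ is bounded in $H^{s+1}$ by \eqref{Kato}.
\item For the first term, we localize to the ball where the stretching holds. The $s$-th derivative, handled via frequency localization when $s$ is not an integer, is dominated by the term in which every derivative falls on the oscillation. That term has size $\kappa\,\delta^{-1}|(D\varphi)^\top e|^s\gtrsim\kappa\,\delta^{-1}\Lambda^s$ in $L^2$ of the ball, times $\rho_0$. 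All other terms lose at least one factor $\delta^{-1}$ and are therefore lower order as $\delta\to0$.
\end{itemize}
For noninteger $s$, the cleanest route is to compute the Fourier transform: $\eta_{\init}\circ\varphi$ is a wave packet at frequency $|(D\varphi)^\top e|/\delta$, so its $H^s$ norm is comparable to its $L^2$ norm times that frequency to the power $s$.

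Choosing $\Lambda$ large enough that $c\,r\Lambda^s>2N+C$ gives a contradiction. In fact this yields the conclusion directly, without the contradiction setup.

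\textbf{Main obstacle.} The delicate point is the stability step. We need $D\varphi$ to be stable under a perturbation that is small only in norms strictly weaker than $H^s$. For $s$ near $1$, the velocity difference is only barely Lipschitz, so we must exploit that the velocity gains one derivative over the vorticity. We also need the $L^2$ difference of vorticities to be small. That requires $\lA\eta_{\init}\rA_{L^2}\sim\delta^{s-1}\cdot\delta^{\cdot}\to0$, together with a log-Lipschitz Yudovich-type estimate to get uniform convergence of $D\varphi$.
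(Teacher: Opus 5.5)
Your argument has a genuine gap at the last step, in the treatment of the background term $\omega_0\circ\varphi^{f_0}_T$. You bound $\|\omega_0\circ\varphi^{f_0}_T\|_{H^s}$ by a constant $C$ coming from \eqref{Kato}. That constant is of size $\exp(\exp(C_s\|f_0\|_{L^\infty}T))$, and $T$ is chosen only after $\Lambda$, so ``choose $\Lambda$ with $cr\Lambda^s>2N+C$'' is circular. Your own contradiction hypothesis makes this concrete. It gives $\|\nabla u\|_{L^\infty}\lesssim N$ for the unperturbed solution, hence $\Lambda\le e^{CNT}$, i.e.\ $T\gtrsim \log\Lambda/N$. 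The bound you invoke is then at least $\exp(\Lambda^{c})$ for some $c>0$, which no choice of $\Lambda$ can beat. So the remark that no contradiction setup is needed is unfounded.

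The only $T$-independent control on the background comes from the hypothesis, or from the case distinction ``if the unperturbed solution already exceeds $N$, take $f_0=\omega_0$''. It gives $\|\omega_0\circ\varphi^{\omega_0}_T\|_{H^s}\le N$ for the \emph{unperturbed} inverse flow, not for $\varphi^{f_0}_T$. The missing step is to show $\|\omega_0\circ\varphi^{f_0}_T-\omega_0\circ\varphi^{\omega_0}_T\|_{H^s}\to0$ as $\delta\to0$ at fixed $T$. Since $\omega_0$ is only in $H^s$, composition is not Lipschitz in the diffeomorphism at this regularity, so you need a regularization step:
\begin{itemize}
\item approximate $\omega_0$ by a smooth $g$;
\item use $\|(\omega_0-g)\circ\varphi\|_{H^s}\le C\|\omega_0-g\|_{H^s}$, uniformly for both inverse flows, which are bounded in $H^{s+1}$;
\item use $g\circ\varphi^{f_0}_T-g\circ\varphi^{\omega_0}_T\to0$ in $H^s$, since the inverse flows are bounded in $H^{s+1}$ and converge uniformly.
\end{itemize}
This is exactly the role of the paper's mollified datum $\omega_0^{\delta'}$ and its terms (1)--(2). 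Term (1) is bounded by $N$ through the hypothesis applied to $\omega_0^{\delta'}\in\bar B_{H^s_0}(\omega_0,r)$. Every $T$-dependent constant $C_T$ then multiplies only a positive power of $\delta$, and $\delta$ is chosen last.

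With that repaired, the rest of your route is a legitimate variant of the paper's. You use a packet of fixed width $\rho_0$ and read the stretching directly off the perturbed flow. This needs only $D\varphi^{f_0}_T\to D\varphi^{\omega_0}_T$ uniformly. Your interpolation between $L^2$-stability of the vorticity and the $H^s$ bounds gives this for every $s>1$; no Yudovich-type estimate is needed, since the velocities are Lipschitz. The paper instead uses a packet of width $\delta$, linearizes $\varphi_T$ at a point (Lemma~\ref{L:7.4}), and compares $\eta_{\init}\circ\varphi'$ with $\eta_{\init}\circ\varphi$ through term (3), which requires $\|\varphi'-\varphi\|_{L^\infty}=o(\delta)$.

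Two small corrections. With fixed width $\rho_0$ and frequency $1/\delta$, the amplitude must be of order $r\delta^{s}/\rho_0$, so your $\kappa$ must be proportional to $\delta$. You should also center the phase or subtract the mean, so that $f_0\in H^s_0$.

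For non-integer $s$ you can avoid Fa\`a di Bruno altogether. Set $w=\eta_{\init}\circ\varphi^{f_0}_T$ and use the interpolation inequality $\|w\|_{\dot H^s}\ge\|w\|_{\dot H^1}^{s}\|w\|_{L^2}^{1-s}$. Combine it with $\|w\|_{L^2}=\|\eta_{\init}\|_{L^2}$ and with a lower bound on $\|\nabla w\|_{L^2}=\|(D\Phi^{f_0}_T)^{-\top}\nabla\eta_{\init}\|_{L^2}$, computed in Lagrangian variables. This gives $\|w\|_{H^s}\gtrsim r\Lambda^s$ directly.
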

\begin{remark}
The proof of this proposition is rooted in the 
instability mechanism identified by Koch in~\cite{zbMATH01801548} which shows that Lagrangian 
stretching drives Eulerian growth. Specifically, our proof highlights 
that this mechanism is allowed by the stability of the data-to-solution map under high frequency perturbation. 
This allows to introduce regularized initial data to bypass the fact that the data-to-solution 
map is not $C^1$ for quasilinear problems.
\end{remark}

The end of this section is devoted to the proof of Proposition~\ref{PK:7.1}.

We proceed by contradiction and suppose that for all $f_0 \in \bar{B}_{H^s_0}(\omega_0, r)$
\be\label{n745}
\sup_{t \ge 0} \lA f_0 \circ \varphi_t^{f_0}\rA_{H^s} \leq N.
\ee
Henceforth $\Phi$ will denote the flow associated to $\omega_0$.

\subsection{Construction of the perturbation}
We aim to construct an initial perturbation $\eta_{\init}$ satisfying
\begin{equation}\label{n:7.1}
\forall \mu\leq s, \quad \lA \eta_{\init} \rA_{H^{\mu}}\leq r\delta^{s-\mu},
\end{equation}
where $r$ is the radius of the ball and $\delta$ is a small parameter ($\delta \ll 1$), 
while ensuring that at some target time $T$:
\begin{equation}\label{n:7.2}
\lA \eta_{\init}\circ \Phi_T^{-1} \rA_{H^s}\geq 5 N.
\end{equation}

By the blow-up hypothesis \e{n:1201}, there exists a time $T>0$, a point $x_0\in\xT^2$ 
and a unit vector $e\in\xS^{1}$ such that:
\be\label{n:7.10}
\left| (D\Phi_T(x_0))^{-\top} e \right| > \left( 10^3 \frac{N}{r} \right)^{\frac{1}{s}}.
\ee

Once $T$, $x_0$ and $e$ have been determined, we rely on the construction from 
Subsection~\ref{S:Perturbation} with a slight change of notation: the localization scale 
is denoted by $\delta$ (instead of $\rho$) and the amplitude is determined by the radius $r$. 
Also, we need to introduce a high-frequency sinusoidal profile (parameterized by a 
frequency $\mu \ge 1$) instead of a simple linear one. Namely, given a standard 
radial cut-off function $\theta \in C_0^\infty(\xR^2)$ 
satisfying $\theta(y) = 1$ for $\la y \ra \le 1/2$ and $\theta(y) = 0$ for $\la y \ra \ge 1$, we set
$$
\eta_{\init}(x) = \frac{r}{5} \delta^{s-1}\theta\left(\frac{x-x_0}{\delta}\right)
\sin\left( \mu \frac{x-x_0}{\delta} \cdot e\right).
$$
Again, we identify the torus $\xT^2$ with the periodic box $[-\pi, \pi]^2$ and, since we will 
choose $\delta \ll 1$, the support of the perturbation is localized and the use of Euclidean 
coordinates is unambiguous. Note that by symmetry of the sine term, the mean vanishes: $\int_{\xT^2}\eta_{\init}(x)\dx=0$.

\begin{remark}
Analogous to the first statement of Proposition~\ref{P:Perturbation}, we have the following estimates:
\be\label{n818}
\lA \eta_{\text{init}} \rA_{H^{\nu}}\leq C_\mu r\delta^{s-\nu}.
\ee
These estimates imply that for $\nu<s$, the $H^{\nu}$ norms of $\eta_{\text{init}}$ are small. 
This scaling contrasts fundamentally with the construction in the previous section, where 
the various Sobolev norms were comparable. 
Indeed, while the estimate $\lA \nabla^k \zeta \rA_{L^2} \les \delta \rho^{s-k}$ 
in Section~6 is formally similar to \eqref{n818} (with $\rho$ playing the role of $\delta$), the scale $\rho$ 
was there fixed of order $1$. Consequently, the growth mechanisms differ. Considering the Faà di Bruno 
formula for the $s$-th derivative of $\eta_{\init}\circ \varphi$, the growth is presently generated by the term where all 
derivatives act on the profile $\eta_{\init}$, whereas in the previous 
section, the growth was driven by the term where derivatives acted on the inverse flow 
$\varphi$. These two terms evolve through drastically 
different regimes, necessitating distinct arguments.
\end{remark}

\begin{lemma}\label{L:7.4}
There exists a constant $C_s$ depending only on $s$ such that, if 
$$
\delta C(T) \le 1\quad\text{where}\quad C(T)=C_s\exp(\exp(C_s\|\omega_0\|_{L^\infty}T)),
$$
then
$$
\lA \eta_{\init}\circ \Phi^{-1}(T)\rA_{H^s}\geq 5N.
$$
\end{lemma}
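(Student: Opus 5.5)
Set $\varphi=\Phi_T^{-1}$ and $y_0=\Phi_T(x_0)$; the idea is to localize near the point $y_0$ where the profile $\eta_{\init}\circ\varphi$ lives. There $\eta_{\init}\circ\varphi(y)=\frac r5\delta^{s-1}\theta\big(\frac{\varphi(y)-x_0}{\delta}\big)\sin\big(\mu\frac{\varphi(y)-x_0}{\delta}\cdot e\big)$. Write $\varphi(y)-x_0 = A(y-y_0)+E(y)$ with $A=D\varphi(y_0)=(D\Phi_T(x_0))^{-1}$ and $|E(y)|\le \|D^2\varphi\|_{L^\infty}|y-y_0|^2$. On the support, $|y-y_0|\lesssim \|D\Phi_T\|_{L^\infty}\delta$, so $E/\delta=O(C(T)\delta)$, using \eqref{Kato} to bound $\|\Phi_T-\Id\|_{H^{s+1}}$ and $\|D^2\varphi\|_{L^\infty}$ (for $s>2$; for $1<s\le 2$ I would instead use a $C^{1,\gamma}$ bound of the flow, which gives $E/\delta=O(C(T)\delta^\gamma)$). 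The model function is therefore $\eta_{\mathrm{lin}}(y)=\frac r5\delta^{s-1}\theta\big(\frac{A(y-y_0)}{\delta}\big)\sin\big(\mu\frac{(y-y_0)\cdot A^\top e}{\delta}\big)$. This is an affine rescaling of a fixed wave packet: the frequency vector is $\mu A^\top e/\delta$, whose modulus is $\mu|(D\Phi_T(x_0))^{-\top}e|/\delta=:\mu\Lambda/\delta$, and the envelope has scale $\delta$ along directions distorted by $A^{-1}=D\Phi_T(x_0)$ (volume preserving, since $\det A=1$).

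The first main step is a lower bound for the model. Since $\det A=1$, the $L^2$ norm of $\eta_{\mathrm{lin}}$ equals that of the undistorted packet, namely $\approx \frac r5\delta^{s-1}\cdot\delta$. I would choose $\mu$ large relative to $\|A\|$ (both depend only on $T$), so that the Fourier transform of $\eta_{\mathrm{lin}}$ is concentrated, up to $O(\mu^{-\infty})$ tails, in a ball of radius $\|A^{-1}\|\delta^{-1}$ around $\pm\mu A^\top e/\delta$. The Fourier side then gives
\[
\|\eta_{\mathrm{lin}}\|_{\dot H^s}\ \ge\ c\,\frac r5\,\delta^{s}\Big(\frac{\mu\Lambda}{\delta}\Big)^{s}\big(1-o_{\mu\to\infty}(1)\big)\ \ge\ c'\, r\,\mu^s\Lambda^s .
\]
By \eqref{n:7.10}, $\Lambda^s>10^3N/r$, so this is at least $10N$ once the constant $c'$ is tracked (the normalization $10^3$ and $\mu\ge1$ give room). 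This is exactly the Faà di Bruno term in which all $s$ derivatives fall on the phase.

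The second step is the error $\eta_{\init}\circ\varphi-\eta_{\mathrm{lin}}$. By the mean value theorem, the argument perturbation inside $\theta$ and inside the sine is $O(\mu C(T)\delta)$ pointwise, together with its derivatives at the natural scale $\delta/\mu$. For integer $s$ I would apply the chain rule to $k\le s$ derivatives, obtaining $L^2$ bounds of order $r\delta^{s-1}\cdot\delta\cdot(\mu\Lambda'/\delta)^{s}\cdot\mu C(T)\delta$; for non-integer $s$ I would interpolate between the two neighbouring integers. The resulting error is at most $C(T,\mu)\,r\,\delta$ times the main term. Fixing $\mu$ first and then requiring $\delta C(T)\le 1$ with $C_s$ enlarged to absorb the $\mu$-dependence makes this error at most $5N$, which gives the claimed bound $\ge 5N$.

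The main obstacle is making the error estimate precise for fractional $s$ and low regularity $1<s\le2$, where $D^2\varphi$ need not be bounded. To handle it I would replace $\varphi$ by a mollified $\varphi_\kappa$ near $y_0$ at scale $\kappa\ll\delta$ and control the difference using $H^{s+1}$ bounds on the flow from \eqref{Kato} and composition estimates (Proposition~\ref{P:2025}).
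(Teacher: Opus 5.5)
Your proposal is correct and follows the paper's own proof. In both, you linearize $\varphi=\Phi_T^{-1}$ at $y_0=\Phi_T(x_0)$, compute the $\dot H^s$ norm of the distorted packet $\eta_{\init}\circ\varphi_{\lin}$ on the Fourier side (its frequency $\mu A^\top e/\delta$ produces the factor $|(D\Phi_T(x_0))^{-\top}e|^s$), and make the Taylor remainder small via the double-exponential flow bounds once $\delta C(T)\le 1$. You estimate that remainder by rescaling and the chain rule where the paper quotes a composition-difference estimate, which is only a cosmetic difference.

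One caution: the factor $\mu^s$ in your lower bound is not real extra room. $\|\eta_{\init}\|_{H^s}$ carries the same factor, and the surrounding argument needs $\|\eta_{\init}\|_{H^s}\lesssim r$. The paper therefore normalizes so that $\|G\|_{\dot H^s}\approx 1$, and the genuine gain is the amplification ratio $|A^\top e|^s>10^3N/r$, which already suffices.
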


\begin{proof}
Let $K_T \defn \sup_{x \in \xT^2} \la D\Phi_T(x) \ra$ and consider the 
domain $\Omega \defn B(y_0, 2 K_T \delta)$. Thanks to 
the assumption $\delta C(T) \le 1$, we can assume that the 
radius $2 K_T \delta$ is strictly smaller than $1$. This allows us to identify 
the domain $\Omega \subset \xT^2$ with a Euclidean ball in $\xR^2$ without 
ambiguity regarding periodicity. On this ball, we approximate the inverse 
flow $\varphi \defn \Phi_T^{-1}$ by its first-order Taylor expansion at $y_0$:
$$
\varphi_{\lin}(y) \defn x_0 + A (y - y_0), \quad \text{where } A \defn D\varphi(y_0) = (D\Phi_T(x_0))^{-1}.
$$
Note that the support of the linearized composition $\eta_{\init} \circ \varphi_{\lin}$ 
is also contained in $\Omega$ since $|A^{-1}| \le K_T$.
We write $\eta_{\init} \circ \varphi = \eta_{\init} \circ \varphi_{\lin} + \mathcal{R}$.

We first establish the lower bound for the linear part. Recall that
$$
\eta_{\init}(x) = \frac{r}{5} \delta^{s-1} G\left(\frac{x-x_0}{\delta}\right), 
\quad \text{with } G(z) = \theta(z) \sin(\mu z \cdot e).
$$
Substituting the linear map, we 
have $\eta_{\init} \circ \varphi_{\lin}(y) = \frac{r}{5} \delta^{s-1} G( A (y-y_0)/\delta )$.
Observe that
$$
\lA \eta_{\init} \circ \varphi_{\lin} \rA_{\dot{H}^s} 
= \frac{r}{5} \delta^{s-1} \cdot \delta^{-(s-1)} \lA G(A \cdot) \rA_{\dot{H}^s} 
= \frac{r}{10\pi} \left(\int_{\xR^2}\vert A^\top k\vert^{2s} \vert \hat{G}(k)\vert^2\diff \! k \right)^{\frac12}.
$$
Now, define the constant $c \in (0,1)$ by
$$
c \defn \left( \inf_{k\in \supp \hat{G}} \frac{\vert A^\top k\vert^{2s}}{\vert A^\top e\vert^{2s}
\vert k\vert^{2s}} \right)^{\frac{1}{2}} 
\ge \left( \inf_{k\in \supp \hat{G}} \frac{\vert k\cdot e\vert^{2s}}{ \vert k\vert^{2s}} \right)^{\frac{1}{2}}.
$$
Using this lower bound in the integral, we get
$$
\lA \eta_{\init} \circ \varphi_{\lin} \rA_{\dot{H}^s} \ge c \frac{r}{10\pi} |A^{\top} e|^{s} \lA G \rA_{\dot{H}^s}.
$$
Since $\hat{G}(k)
=\frac{1}{2i}\big(\hat{\theta}(k-\mu e)-\hat{\theta}(k+\mu e)\big)$, the spectrum is 
concentrated around $\pm \mu e$. Thus, 
for $\mu$ large enough (depending on the support 
of $\hat{\theta}$), we have $|k \cdot e| \approx |k|$ on the support, ensuring $c \ge 1/2$. Assuming further 
that $\lA G \rA_{\dot{H}^s} \approx 1$ (by normalization of $\theta$), and using the hypothesis~\e{n:7.10} 
on $|A^\top e|$, we obtain:
$$
\lA \eta_{\init} \circ \varphi_{\lin} \rA_{\dot{H}^s} 
\ge \frac{1}{2} \frac{r}{10\pi} \left( 10^3 \frac{N}{r} \right) \ge 6N.
$$

It remains to bound the remainder term 
$\mathcal{R} \defn \eta_{\init} \circ \varphi - \eta_{\init} \circ \varphi_{\lin}$.
Recall that the supports of both $\eta_{\init} \circ \varphi$ and $\eta_{\init} \circ \varphi_{\lin}$ 
are strictly contained in $\Omega$.
Since the functions vanish outside $\Omega$, we can apply the difference estimate 
for composition directly on this domain:
\begin{equation*}
\begin{aligned}
\lA \mathcal{R} \rA_{H^s(\xT^2)} &= \lA \mathcal{R} \rA_{H^s(\Omega)} \\
&\le C_s \left( \lA \eta_{\init} \rA_{H^{s+1}} \lA \varphi - \varphi_{\lin} \rA_{L^\infty(\Omega)}
+ \lA \eta_{\init} \rA_{W^{1,\infty}} \lA \varphi - \varphi_{\lin} \rA_{H^s(\Omega)} \right).
\end{aligned}
\end{equation*}
Now, observe that
\begin{align*}
\lA \eta_{\init} \rA_{H^s} &\sim \delta^{s-1} \cdot \delta^{1-s} = O(1), \\
\lA \eta_{\init} \rA_{H^{s+1}} &\sim \delta^{s-1} \cdot \delta^{1-(s+1)} = O(\delta^{-1}),\\
\lA \eta_{\init} \rA_{W^{1,\infty}} &\les \delta^{-1} \lA \eta_{\init} \rA_{L^\infty} 
\sim \delta^{-1} \cdot \delta^{s-1} = \delta^{s-2}.
\end{align*}
Fix $0\leq \nu \ll 1$, we directly estimate
$$
\lA \varphi - \varphi_{\lin} \rA_{L^\infty(\Omega)} \leq \left\Vert  \varphi \right\Vert_{C^{1+\nu}}(2 K_T\delta)^{1+\nu}.
$$
since $\varphi_{\lin}$ is the first-order Taylor approximation of $\varphi$ 
this allows for the estimate for highest-order derivatives 
$$
\lA \varphi - \varphi_{\lin} \rA_{H^s(\Omega)}  \le \lA \varphi \rA_{H^{s+1}} |\Omega|^{1/2} \les\lA \varphi \rA_{H^{s+1}}  \delta.
$$

Plugging these explicit bounds back into the difference estimate, we obtain:
\begin{align*}
\lA \mathcal{R} \rA_{H^s} &\le C_s \left[ (\delta^{-1}) \cdot (\lA \varphi \rA_{C^{1+\nu}} \delta^{1+\nu}) + (\delta^{s-2}) \cdot (\lA \varphi \rA_{H^{s+1}} \delta) \right] \\
&\le C_s \left[ \delta^{\nu} \lA \varphi \rA_{C^2} + \delta^{s-1} \lA \varphi \rA_{H^{s+1}} \right].
\end{align*}
Now, recall that the flow norms grow at most double exponentially in time:
$$
\lA \varphi \rA_{C^{1+\nu}} \le \lA \varphi \rA_{H^{s+1}} \le C(T)\defn C_s \exp(\exp(C_s \|\omega_0\|_{L^\infty} T)).
$$
Thus, assuming that $\delta C(T) \le 1$ with $C_s$ sufficiently large, we infer that
$$
\lA \eta_{\init} \circ \varphi \rA_{H^s} \ge \lA \eta_{\init} \circ \varphi_{\lin} \rA_{H^s} - \lA \mathcal{R} \rA_{H^s} \ge 6N - N = 5N.
$$
This concludes the proof.
\end{proof}

\subsection{Stability of the growth}
We begin by regularizing the initial data. 
Introduce a Friedrichs mollifier, that is an approximation of the 
identity of the form $\theta(\delta' D_x)$ 
where $\theta\in C^\infty_0(\xR^2)$ with $\theta(\xi)=1$ for $|\xi|\le 1/2$ 
as above, and define $\omega_0^{\delta'}=\theta(\delta'D_x)\omega_0$. 
Standard approximation estimates yield
$$
\blA \omega_0- \omega_0^{\delta'} \brA_{H^{s'}}\le 
C \blA \omega_0\brA_{H^s}\delta'^{s-s'} \quad \text{and}
\quad \blA \omega_0- \omega_0^{\delta'} \brA_{H^{s}}=o_{\delta'\to 0}(1).
$$

We consider the perturbed initial vorticity defined by
$$
\omega'_0 \defn \omega^{\delta'}_0+\eta_{\init}.
$$
Notice that $\int_{\xT^2}\omega'_0\dx=0$. Let $\Phi$, $\Phi'$ 
and $\Phi^{\delta'}$ be the flows associated with the solutions 
arising from initial data $\omega_0$, $\omega'_0$ and $\omega^{\delta'}_0$ 
respectively. We denote their inverses by $\varphi \defn \Phi^{-1}$, $\varphi' \defn (\Phi')^{-1}$ 
and $\varphi^{\delta'} \defn (\Phi^{\delta'})^{-1}$. At time $T$, 
the perturbed vorticity is given by $\omega'(T) = \omega'_0 \circ \varphi'(T)$. We decompose it as
\begin{align*}
\underbrace{\omega_0^{\delta'}\circ\varphi^{\delta'}(T)}_{=(1)}
+\underbrace{\omega_0^{\delta'}\circ\varphi'(T)-\omega_0^{\delta'}\circ\varphi^{\delta'}(T)}_{=(2)}
+\underbrace{\eta_{\init}\circ \varphi'(T)-\eta_{\init}\circ \varphi(T)}_{=(3)}
+\underbrace{\eta_{\init}\circ \varphi(T)}_{=(4)}.
\end{align*}

We proceed to estimate each term.
First, we choose the regularization 
parameter $\delta'$ sufficiently small such that
$$
\blA \omega_0- \omega_0^{\delta'} \brA_{H^{s}}\le \frac{r}{4}.
$$
Consequently, it follows from the stability hypothesis \eqref{n745} 
that the evolution of the regularized profile remains bounded:
$$
\lA (1) \rA_{H^{s}} < N.
$$
On the other hand, by the linear amplification constructed in 
Lemma \ref{L:7.4}, we have the lower bound
$$
\lA (4) \rA_{H^{s}} \geq 5N.
$$

It remains to control the error terms (2) and (3). 
Henceforth, $C_T$ denotes a large constant depending only on the bound $M(T)$ defined by
$$
M(T) \defn \sup_{t\in [0,T]}\blA u(t)\brA_{H^{s+1}} + \sup_{t\in [0,T]}\blA u'(t)\brA_{H^{s+1}}.
$$

To estimate the term $(2)$, again, we write the difference as an integral:
$$
(2) = [\varphi'(T)-\varphi^{\delta'}(T)] \cdot \int_0^1
\nabla \omega_0^{\delta'} \left((1-\tau)\varphi'(T)+\tau\varphi^{\delta'}(T)\right) \dtau.
$$
Applying Moser-type product estimates (see \eqref{Moser-uv}) and the composition estimate from Proposition~\ref{P:2025}, we obtain:
\begin{align*}
\blA (2) \brA_{H^{s}} \les C_{T} \blA \varphi'(T)-\varphi^{\delta'}(T) \brA_{L^\infty}
\blA \nabla \omega_0^{\delta'} \brA_{H^{s}} + \blA \varphi'(T)-\varphi^{\delta'}(T) \brA_{H^{s}} \blA \nabla \omega_0^{\delta'} \brA_{L^\infty}.
\end{align*}
Through the mollification we have 
$\blA \nabla \omega_0^{\delta'} \brA_{H^k} \lesssim (\delta')^{-1} \blA \omega_0 \brA_{H^k}$. Thus:
$$
\blA (2) \brA_{H^{s}}\le C_{T}\left({\delta'}^{-1}\blA \varphi'(T)-\varphi^{\delta'}(T) \brA_{L^\infty}
+\blA \varphi'(T)-\varphi^{\delta'}(T) \brA_{H^{s}}\right).
$$
To bound the flow difference, we use the stability estimate 
for the inverse flow \eqref{eq: est inverse flow from flow} and Proposition \ref{P:diff}. For any integer $\mu \le s$:
$$
\blA \varphi'(T)-\varphi^{\delta'}(T) \brA_{H^{\mu}}
\le C_{T} \blA \Phi'(T)-\Phi^{\delta'}(T) \brA_{H^{\mu}} \le C_{T} \blA u_0'-u_0^{\delta'}\brA_{H^{\mu}}.
$$
Recall that $u_0'-u_0^{\delta'}$ is the velocity associated with $\eta_{\init}$. 
By the Biot-Savart law and the estimates \eqref{n818}, 
we have $\blA u_0'-u_0^{\delta'}\brA_{H^{\mu}} \le C r \delta^{s+1-\mu}$.
Applying this with $\mu=s$ and $\mu=1+\nu$, for some $0<\nu\ll 1$ (using the Sobolev 
embedding $H^{1+\nu}(\xT^2) \subset L^\infty(\xT^2)$), we get:
$$
\blA (2) \brA_{H^{s}}\le C_{T}r\left({\delta'}^{-1}\delta^{s-\nu}+\delta\right).
$$

The estimate of $(3)$ is similar. Using the product estimates \eqref{Moser-uv} 
and Proposition~\ref{P:2025}, we find that
\begin{align*}
\blA (3) \brA_{H^{s}} \le C_{T} \Big( &\blA \varphi'(T)-\varphi(T) \brA_{L^\infty} \blA \eta_{\init} \brA_{H^{s+1}} \\
&+ \blA \varphi'(T)-\varphi(T) \brA_{H^{s}} \blA \eta_{\init} \brA_{W^{1,\infty}} \Big).
\end{align*}
The distance between the initial velocities is bounded by triangle inequality:
$$
\blA u_0'-u_0 \brA_{H^\mu} \le \blA u_0^{\delta'}-u_0 \brA_{H^\mu} + \blA u'_0-u_0^{\delta'} \brA_{H^\mu}.
$$
Using the approximation properties of the mollifier and the size of the perturbation \eqref{n818}, we have:
$$
\blA \varphi'(T)-\varphi(T) \brA_{H^{\mu}} \le C_{T} \blA u_0'-u_0\brA_{H^{\mu}}
\le C_{T} \left(\delta'^{s+1-\mu} + r \delta^{s+1-\mu}\right).
$$
Using the scaling $\blA \eta_{\init} \brA_{H^{s+1}} \sim \delta^{-1}$ and 
$\blA \eta_{\init} \brA_{W^{1,\infty}} \sim \delta^{s-2}$, we find:
$$
\blA (3) \brA_{H^s} \le C_T \left[ (\delta'^{s-\nu} 
+ r\delta^{s-\nu})\delta^{-1} + (\delta' + r\delta)\delta^{s-2} \right].
$$

We are now at a position to complete the proof. We 
set the regularization parameter $\delta' = \delta$. With this choice, 
and since $s>1$, the error terms $(2)$ and $(3)$ vanish as $\delta \to 0$. 
Explicitly, combining the previous bounds:
$$
\lA (2) \rA_{H^s} \les \delta^{s-1-\nu} \quad \text{and} \quad \lA (3) \rA_{H^s} \les \delta^{s-1-\nu}.
$$
By choosing $\delta$ sufficiently small (specifically $\delta \ll (N/C_T)^{\frac{1}{s-1-\nu}}$), 
we ensure that $\lA (2) \rA_{H^s} + \lA (3) \rA_{H^s} \le N$.
Combining all estimates, we conclude:
$$
\blA \omega'_{T} \brA_{H^s} \geq \blA (4) \brA_{H^s} 
- \blA (1) \brA_{H^s} - \blA (2) \brA_{H^s} - \blA (3) \brA_{H^s} \geq 5N - N - N = 3N.
$$
Since we constructed a solution starting arbitrarily close to $\omega_0$ 
(in terms of $r$ and $\delta$) that exits the ball of radius $2N$, 
this contradicts the stability hypothesis \eqref{n745}.

\section{Proof of the main theorem: Generic unboundedness}
\label{S:Baire}

In this final section, we 
combine the results of the previous sections 
to conclude the proof of Theorem~\ref{T11}. 
To upgrade the existence of blowing-up solutions 
to a genericity result, we rely on a Baire category 
argument introduced by Hani~\cite{zbMATH06303378} 
in the context of dispersive equations (see also~\cite{zbMATH08109714,zbMATH08101466}).

Recall that we aim to prove that the set of initial data 
leading to an unbounded growth of the $H^s$-norm 
is a dense $G_\delta$ subset of $H^s_0(\xT^2)$.
We define the set of blowing-up initial data as:
$$
\mathcal{B} \defn \left\{ \omega_0 \in H^s_0(\xT^2) : \limsup_{t \to +\infty} \lA S(t)\omega_0 \rA_{H^s} = +\infty \right\}.
$$
(Recall that the solution map $S(t)$, which maps the initial 
data to the vorticity at time $t$, is as defined in Section~\ref{S:classique}.) 
Then, we rewrite $\mathcal{B}$ as a countable intersection:
$$
\mathcal{B} = \bigcap_{N \in \xN^*} \mathcal{U}_N, \quad \text{where} \quad
\mathcal{U}_N \defn \left\{ \omega_0 \in H^s_0(\xT^2) : \sup_{t \ge 0} \lA S(t)\omega_0 \rA_{H^s} > N \right\}.
$$
Since $H^s_0(\xT^2)$ is a complete 
metric space, the Baire Category Theorem 
ensures that if each $\mathcal{U}_N$ is open and dense, 
then their intersection $\mathcal{B}$ is a dense $G_\delta$ set. 
The proof of Theorem~\ref{T11} therefore reduces to the 
following two lemmas.

\begin{lemma}
For every $N \in \xN^*$, the set $\mathcal{U}_N$ 
is open in $H^s_0(\xT^2)$.
\end{lemma}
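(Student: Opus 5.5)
The set $\mathcal{U}_N$ is a union over times of open sets. We will write
$$
\mathcal{U}_N=\bigcup_{t\ge 0}\left\{\omega_0\in H^s_0(\xT^2) : \lA S(t)\omega_0\rA_{H^s}>N\right\},
$$
which holds by the definition of the supremum. Indeed, $\sup_{t\ge0}\lA S(t)\omega_0\rA_{H^s}>N$ if and only if there exists some $t_0\ge 0$ with $\lA S(t_0)\omega_0\rA_{H^s}>N$. It therefore suffices to show that, for each fixed $t\ge 0$, the set $\{\omega_0 : \lA S(t)\omega_0\rA_{H^s}>N\}$ is open, since an arbitrary union of open sets is open.

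For a fixed time $t$, Theorem~\ref{T:classique} states that the solution map $S(t)\colon H^s_0(\xT^2)\to H^s(\xT^2)$ is continuous. The map $\omega_0\mapsto \lA S(t)\omega_0\rA_{H^s}$ is then continuous as the composition of $S(t)$ with the norm. Hence the set above is the preimage of the open interval $(N,+\infty)$ under a continuous real-valued function, and so it is open.

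The proof is essentially routine. The one point that needs care is that the continuity in Theorem~\ref{T:classique} is only pointwise in time, not uniform on $[0,\infty)$. This does not matter here: the witness time $t_0$ is fixed once $\omega_0$ is given, and we only invoke continuity at that single time. Explicitly, if $\lA S(t_0)\omega_0\rA_{H^s}=N+\kappa$ with $\kappa>0$, continuity of $S(t_0)$ at $\omega_0$ gives a radius $\rho>0$ such that $\lA S(t_0)\omega_0'-S(t_0)\omega_0\rA_{H^s}<\kappa$ whenever $\lA\omega_0'-\omega_0\rA_{H^s}<\rho$. For every such $\omega_0'$ we then get $\lA S(t_0)\omega_0'\rA_{H^s}>N$, so the ball $B_{H^s_0}(\omega_0,\rho)$ is contained in $\mathcal{U}_N$.
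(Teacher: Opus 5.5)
Your proof is correct and follows essentially the same route as the paper: pick a witness time $t_0$ with $\lA S(t_0)\omega_0\rA_{H^s}>N$ and use the continuity of the fixed-time solution map $S(t_0)$ from Theorem~\ref{T:classique} to get a neighborhood of $\omega_0$ contained in $\mathcal{U}_N$. Writing $\mathcal{U}_N$ as a union over $t$ of preimages of $(N,+\infty)$ is the same argument, just phrased more globally.
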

\begin{proof}
If $\omega_0 \in \mathcal{U}_N$ then 
there exists a time $t_0 \ge 0$ such that 
$\lA S(t_0)\omega_0 \rA_{H^s} > N$. 
According to the classical well-posedness 
theory (Theorem~\ref{T:classique}), the 
solution map $S(t_0) : H^s_0(\xT^2) \to 
H^s_0(\xT^2)$ is continuous. 
Consequently, the map 
$\omega \mapsto \lA S(t_0)\omega \rA_{H^s}$ is continuous. 
There exists a small neighborhood $V$ of $\omega_0$ in $H^s_0(\xT^2)$ such that $\lA S(t_0)\omega \rA_{H^s} > N$ 
for all $\omega \in V$. This proves that $\mathcal{U}_N$ is open.
\end{proof}

\begin{lemma}
For every $N \in \xN^*$, the set $\mathcal{U}_N$ is dense in $H^s_0(\xT^2)$.
\end{lemma}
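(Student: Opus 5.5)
We prove density by fixing $N\in\xN^*$, an arbitrary $f\in H^s_0(\xT^2)$ and a radius $r>0$, and producing an element of $\mathcal{U}_N$ in the closed ball $\bar{B}_{H^s_0}(f,r)$.

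\emph{Step 1 (reduction to generic data).} By Proposition~\ref{prop:density}, valid since $s>4>3$, there exists $\omega_0\in\mathcal{H}^s(\xT^2)$ with $\lA \omega_0-f\rA_{H^s}<r/2$. It therefore suffices to find an element of $\mathcal{U}_N$ in $\bar{B}_{H^s_0}(\omega_0,r/2)$, since this ball is contained in $\bar{B}_{H^s_0}(f,r)$. If $\omega_0$ itself lies in $\mathcal{U}_N$, we are done, so we assume it does not.

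\emph{Step 2 (the dichotomy).} We apply Proposition~\ref{P71} to $\omega_0$, with radius $r/2$ and threshold $N$. There are two alternatives.
\begin{itemize}
\item If alternative (1) holds, we obtain $f_0\in\bar{B}_{H^s_0}(\omega_0,r/2)$ satisfying $\sup_{t\ge0}\lA f_0\circ\varphi^{f_0}_t\rA_{H^s}>N$. Since $f_0\circ\varphi_t^{f_0}=S(t)f_0$, this says exactly that $f_0\in\mathcal{U}_N$.
\item If alternative (2) holds, then $\sup_{t\ge0}\lA D\varphi^{\omega_0}_t\rA_{L^\infty}=+\infty$. This is precisely hypothesis~\e{n:1201} of Proposition~\ref{PK:7.1}, which applies because $s>4>1$. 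It yields $f_0\in\bar{B}_{H^s_0}(\omega_0,r/2)$ with $\sup_{t}\lA S(t)f_0\rA_{H^s}>N$, so again $f_0\in\mathcal{U}_N$.
\end{itemize}

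\emph{Step 3 (conclusion).} In either case $\mathcal{U}_N$ meets every ball $\bar{B}_{H^s_0}(f,r)$, which proves density. Combined with the openness lemma and the Baire category theorem, this completes the proof of Theorem~\ref{T11}.

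\emph{Bookkeeping.} The only points needing care are the following.
\begin{itemize}
\item Both propositions are formulated with closed balls and the non-strict inequality ``$>N$''; this is harmless, since we used a radius $r/2<r$.
\item The zero-mean condition holds: $f_0\in H^s_0$ because both constructions preserve vanishing mean (by property (iv) of Definition~\ref{defi:GP} and the explicit zero mean of the perturbations).
\item The threshold in Proposition~\ref{P71} is an arbitrary real number, so we may take it equal to the given integer $N$.
\end{itemize}
No step is a genuine obstacle at this stage: all the analytic difficulty lies in Propositions~\ref{P71} and~\ref{PK:7.1}.
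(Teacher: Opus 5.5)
Your proof is correct and follows the paper's argument: approximate $f$ by an element of $\mathcal{H}^s(\xT^2)$ via Proposition~\ref{prop:density}, apply the dichotomy of Proposition~\ref{P71}, and in the second alternative invoke Proposition~\ref{PK:7.1}. Your $r/2$ splitting of the radius is a cleaner version of the paper's bookkeeping, which ties the dichotomy radius to the distance $\lA f-f_\delta\rA_{H^s}$ even though the approximant was itself chosen depending on that radius.
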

\begin{proof}
Let $f \in H^s_0(\xT^2)$ be an arbitrary initial 
data and let $\delta > 0$. We seek a function 
$h \in \mathcal{U}_N$ such that $\lA f - h \rA_{H^s} < \delta $. 

Proposition~\ref{prop:density} ensures that there exists a function $f_r$ in $\mathcal{H}^s(\xT^2)$ 
such that $\lA f - f_r \rA_{H^s} < r$. 
Now, we can apply the Dichotomy Theorem 
(Proposition~\ref{P71}) and hence, choosing a radius 
$r < \delta  - \lA f - f_\delta  \rA_{H^s}$, we conclude that one of the 
following two assertions holds:
\begin{enumerate}
\item \emph{Instability of the vorticity:} There exists 
an initial data $h \in \bar{B}_{H^s_0}(f_\delta , r)$ 
such that the solution exceeds $N$. 
Then $h \in \mathcal{U}_N$ and $\lA h-f\rA_{H^s}<\delta$. 
The proof is 
done.
\item \emph{Blow-up of the flow:} 
The inverse flow $\varphi_t^{f_\delta }$ blows up 
in the Lipschitz norm:
$$
\sup_{t \ge 0} \blA D\varphi_t^{f_\delta }\brA_{L^\infty} =+\infty.
$$
In this case, we apply the Koch argument 
(Proposition~\ref{PK:7.1}). This 
proposition ensures that there exists a 
perturbation $h \in \bar{B}_{H^s_0}(f_\delta , r)$ 
such that the solution grows arbitrarily large. Specifically, 
there exists $h$ such that $\sup_{t \ge 0} \lA S(t)h \rA_{H^s} > N$.
\end{enumerate}
In both scenarios, we have found an 
element $h \in \mathcal{U}_N$ such 
that $\lA f - h \rA_{H^s} < \delta $. Thus $\mathcal{U}_N$ is dense.
\end{proof}

This concludes the proof of Theorem~\ref{T11}.

\subsection{Proof of Corollary~\ref{C:1.2}}
The Baire's argument explained in the previous section applies here {\em in verbatim}, 
hence it is sufficient to prove that any smooth function can 
be approximated in the Fréchet space $C^\infty(\xT^2)$ by an initial data generating a 
solution that blows up in a certain $C^k$ space, or equivalently in a Sobolev space~$H^m$.

\begin{proposition}\label{P:Patrick}
Let $\mathcal{O}$ be a non-empty open subset of $C^\infty(\xT^2)$. 
There exist a function $h \in \mathcal{O}$ and a real number 
$m > 3$ such that:
$$
\limsup_{t \to +\infty} \lA S(t)h \rA_{H^m} = +\infty.
$$
\end{proposition}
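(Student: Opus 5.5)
Since the topology of $C^\infty(\xT^2)$ is generated by the seminorms $\lA\cdot\rA_{H^k}$, the open set $\mathcal{O}$ contains a basic neighbourhood
$$
\{h\in C^\infty(\xT^2): \lA h-f\rA_{H^k}<r\}
$$
for some smooth $f\in\mathcal{O}$, some integer $k$ and some $r>0$. Replacing $k$ by a larger integer shrinks this neighbourhood, so we may assume $k>5$. We may also work inside $C^\infty_0(\xT^2)$, since the corollary concerns zero-mean data. We then fix a non-integer regularity index $m$ with $k<m$ and $2\alpha<m-4$ admissible, for instance $m=k+1/2$.

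The aim is to find a smooth $h$ with $\lA h-f\rA_{H^k}<r$ whose solution blows up in $H^m$. Theorem~\ref{T11}, applied with index $m$, shows that the set $\mathcal{B}_m$ of data in $H^m_0$ with $\limsup\lA S(t)\,\cdot\,\rA_{H^m}=+\infty$ is a dense $G_\delta$ set. This set, however, consists of merely $H^m$ functions; the elements produced through $\mathcal{H}^m$ have exact $H^m$ regularity and are never smooth. So density of $\mathcal{B}_m$ in $H^m_0$ alone does not give a smooth $h$. The fix is to use the finite-time openness already in the Baire argument. Pick $g\in\mathcal{B}_m$ with $\lA g-f\rA_{H^m}<r/2$; this also gives $\lA g-f\rA_{H^k}<r/2$ because $k<m$. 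Fix any threshold $N$. There is a time $t_0$ with $\lA S(t_0)g\rA_{H^m}>N$. By continuity of $S(t_0)$ on $H^m_0$ (Theorem~\ref{T:classique}), every $h$ in a small $H^m$-ball around $g$ still satisfies $\lA S(t_0)h\rA_{H^m}>N$, and smooth functions, for instance $h=\theta(\eps D)g$, lie in that ball. This yields smooth data that are large at some finite time, but only for one threshold $N$ at a time.

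To obtain a single $h$ with $\limsup=+\infty$, run the Baire argument directly in the Fréchet space $C^\infty_0(\xT^2)$, which is complete metrizable. Set
$$
\mathcal{U}_N^{m}=\{h\in C^\infty_0:\sup_{t\ge0}\lA S(t)h\rA_{H^m}>N\}.
$$
Each $\mathcal{U}_N^m$ is open in $C^\infty_0$, because the $H^m$ topology is coarser than the $C^\infty$ topology and $S(t_0)$ is continuous. The step that needs care is density in $C^\infty_0$. Given a smooth $f$ and a $C^\infty$-neighbourhood, which is controlled by some $H^k$-ball, set $m=k+1/2$. The previous paragraph produces a smooth $h$ in the $H^k$-ball around $f$ with $\sup_t\lA S(t)h\rA_{H^m}>N$. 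This gives density, but only for an index $m$ depending on the neighbourhood. This is exactly why the statement allows $m$ to depend on $\mathcal{O}$.

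We therefore prove Proposition~\ref{P:Patrick} within a fixed $\mathcal{O}$, after shrinking it to the basic $H^k$-ball intersected with $C^\infty_0$. Fix $m=k+1/2$ and let $\mathcal{O}'$ be the closure in $C^\infty_0$ of a slightly smaller basic ball inside $\mathcal{O}$. Then $\mathcal{O}'$ is a closed subset of a Fréchet space, hence a Baire space. The sets $\mathcal{U}_N^m\cap\mathcal{O}'$ are relatively open, and they are dense by the argument above. That argument applies to any point of $\mathcal{O}'$ with any finer $H^{k'}$-neighbourhood: take $m'=\max(m,k'+1/2)$, and note that $H^{m'}$-largeness at a finite time does not imply $H^m$-largeness. \textbf{This mismatch of indices is the main obstacle.} To handle it, we use that Theorem~\ref{T11} holds for every index $>4$, together with finite-time continuity in $H^{m'}$. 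We take $g\in\mathcal{B}_m$ close to the point in $H^{m'}$; this is possible because $\mathcal{B}_m\cap H^{m'}$ is dense in $H^{m'}$. The reason is that $\mathcal{B}_m$ contains $C^\infty+$(exact-$H^m$) perturbations of arbitrarily small $H^m$ size, which need not be small in $H^{m'}$, so a genuinely new ingredient is needed. We would supply it by applying Propositions~\ref{P71} and~\ref{PK:7.1} with the smooth datum replaced by a smooth approximant of an $\mathcal{H}^m$ function. Their perturbations are smooth wave packets, so they produce smooth $H^m$-growth that is $H^{m'}$-close to the point. The finite-time growth from that construction is then approximated by a smooth datum. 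Baire's theorem on $\mathcal{O}'$ then gives $h\in\bigcap_N\mathcal{U}_N^m\cap\mathcal{O}'$, as required.
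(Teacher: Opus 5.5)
Your second and third paragraphs reproduce the paper's proof essentially verbatim. You fix $m>\max(k_0,3)$, use Theorem~\ref{T11} at index $m$, openness of $\{\sup_{t\ge0}\lA S(t)\cdot\rA_{H^m}>N\}$ and smoothing to place a smooth element of $\mathcal{V}_N$ near any smooth $g$, and then apply Baire in $C^\infty(\xT^2)$.

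The paper concludes from this that $\mathcal{V}_N$ is dense in $C^\infty(\xT^2)$. Your objection applies to that step as written. The construction only makes $\lA h-g\rA_{H^m}$ small, while Fr\'echet density requires $\lA h-g\rA_{H^{k'}}$ small for every $k'$, including $k'>m$.

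This cannot be repaired by soft arguments. Consider the linear model $\widehat{S(t)u}(n)=\min(e^{t},\langle n\rangle)\,\hat u(n)$. Every $S(t)$ is bounded on every $H^s$, and $\limsup_{t\to\infty}\lA S(t)u\rA_{H^s}=\lA u\rA_{H^{s+1}}$. Its blow-up set $H^s_0\setminus H^{s+1}$ is therefore a dense $G_\delta$ in $H^s_0$ for every $s$. Yet no smooth datum blows up in any $H^m$.

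Your proposed fix does not close the gap. The claim that $\mathcal{B}_m\cap H^{m'}$ is dense in $H^{m'}$ is exactly the missing statement. The route through Propositions~\ref{P71} and~\ref{PK:7.1} breaks at three concrete points.

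(a) Proposition~\ref{P71} needs the base datum in $\mathcal{H}^m$. Property (i) of Definition~\ref{defi:GP} puts such a datum outside $H^{m'}$ for every $m'>m$, so it is never $H^{m'}$-close to a smooth point. A smooth approximant, on the other hand, loses the spectral lower bound on which Propositions~\ref{P:aeps} and~\ref{P:4.1} rest.

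(b) Alternative (1) of Proposition~\ref{P71} comes out of a Schauder contradiction argument. It only provides some $f_0$ in an $H^m$-ball, with no smoothness and no $H^{m'}$ control. It is not an explicit wave packet added to your point.

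(c) The Koch packet of Section~\ref{S:Koch} has $\lA\eta_{\init}\rA_{H^{m'}}\sim r\delta^{m-m'}\to\infty$ as $\delta\to0$. Normalizing it in $H^{m'}$ instead multiplies its $H^m$ growth by $\delta^{m'-m}$. Nothing shows that the stretching factor $|(D\Phi_T(x_0))^{-\top}e|^m$ beats this loss when $\delta\lesssim C(T)^{-1}$.

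What is needed is a quantitative, Euler-specific statement. For fixed $m$ and every $m'$, one must produce smooth perturbations of arbitrarily small $H^{m'}$ norm that drive $\sup_t\lA S(t)\cdot\rA_{H^m}$ above $N$. Neither your proposal nor the paper's proof provides it.
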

\begin{proof}
Let $f \in \mathcal{O}$. By definition of the Fréchet topology of
$C^\infty(\xT^2)$, the openness of $\mathcal{O}$ implies the
existence of an index $k_0 \in \xN$ and a radius $\eps > 0$ such that
$$
B_{k_0}(f, \eps) \defn \left\{ g \in C^\infty(\xT^2) :
\lA f - g \rA_{H^{k_0}} < \eps \right\} \subset \mathcal{O}.
$$
Next, we fix a real number $m > \max(k_0, 3)$. For any integer
$N \in \xN^*$, we define the set of smooth functions whose
trajectories exceed the threshold $N$ in $H^m$:
$$
\mathcal{V}_N \defn \left\{ u \in C^\infty(\xT^2) :
\sup_{t \ge 0} \lA S(t)u \rA_{H^m} > N \right\}.
$$
Since the map $\Phi \colon u \mapsto \sup_{t \ge 0} \lA S(t)u \rA_{H^m}$
is lower semi-continuous on $C^\infty(\xT^2)$, the set
$\mathcal{V}_N = \Phi^{-1}((N, +\infty])$ is open in $C^\infty(\xT^2)$.
We now prove that these sets are also dense. Consider an element
$g \in C^\infty(\xT^2)$ and let $\delta > 0$. Theorem~\ref{T11} asserts
that the set of initial data blowing up in $H^m$ is dense in $H^m(\xT^2)$.
Therefore, there exists an unstable function $u^* \in H^m$ satisfying
$\lA g - u^* \rA_{H^m} < \delta/2$. Since the strict inequality
$\sup \lA S(t)u^* \rA_{H^m} > N$ is an open condition in $H^m$, it holds
on a small ball around $u^*$. By the density of $C^\infty$ in $H^m$,
this ball contains a smooth function $h \in \mathcal{V}_N$ arbitrarily
close to $u^*$, and thus close to $g$. This proves that $\mathcal{V}_N$
is dense in $C^\infty(\xT^2)$.

Finally, since $C^\infty(\xT^2)$ is a Fréchet space, it is a Baire space.
The countable intersection of dense open sets is therefore dense:
$$
\mathcal{I} \defn \bigcap_{N \in \xN^*} \mathcal{V}_N \quad
\text{is dense in } C^\infty(\xT^2).
$$
It follows that the intersection $\mathcal{I} \cap B_{k_0}(f, \eps)$ is
non-empty. Thus, there exists a function $h \in \mathcal{O}$ such that
for all $N \in \xN^*$, $\sup_{t \ge 0} \lA S(t)h \rA_{H^m} > N$.
This is equivalent to
$$
\limsup_{t \to +\infty} \lA S(t)h \rA_{H^m} = +\infty,
$$
which concludes the proof.
\end{proof}

\section{Proof of Proposition \ref{T:R2}}\label{sec: shankar}

The proof of Proposition \ref{T:R2} follows directly by applying the 
results of Section \ref{S:Koch} and the following proposition.

\begin{proposition}\label{prop: gradient growth on R2}
For any non-zero initial data $\omega_0$ in $C^{\infty}_c(\xR^2)$ with $\int_{\xR^2} \omega_0(x) \dx = 0$, 
the associated flow satisfies
$$
\sup_{t \ge 0} \lA D\Phi_t \rA_{L^\infty} =+\infty.
$$
\end{proposition}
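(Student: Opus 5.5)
We argue by contradiction and rely only on the far-field structure of the velocity on $\xR^2$, i.e.\ on the fact that spatial infinity acts as an effective stagnation region for the Lagrangian flow (Shankar's observation). Assume that
$$
L \defn \sup_{t\ge 0}\lA D\Phi_t\rA_{L^\infty}<+\infty .
$$
Since $\det D\Phi_t=1$, the inverse flow $\varphi_t=\Phi_t^{-1}$ is then also uniformly Lipschitz. Hence $\omega(t)=\omega_0\circ\varphi_t$ is supported in a set of diameter at most $L\,\diam(\supp\omega_0)$ around a moving point $c(t)$, which we take to be $\Phi_t$ of a fixed point of $\supp\omega_0$. In addition, $\lA\omega(t)\rA_{W^{1,\infty}}\le L\lA\omega_0\rA_{W^{1,\infty}}$ for all times. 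So the vorticity is a compact, uniformly Lipschitz cluster of bounded size for all $t\ge0$.

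\emph{Step 1: multipole expansion of the velocity.} Since $\int\omega=0$, the Biot--Savart kernel expanded around $c(t)$ gives, for $|x-c(t)|\ge R\gg L\,\diam(\supp\omega_0)$,
$$
u(t,x)=\nabla^\perp\Bigl(\frac{p\cdot (x-c(t))}{2\pi |x-c(t)|^2}\Bigr)+O(|x-c(t)|^{-3}),\qquad p=-\int y\,\omega(t,y)\,\dy .
$$
The impulse $p$ is conserved in time and does not depend on the choice of origin, because $\int\omega=0$. The error term is uniform in $t$, thanks to the bounded support. If $p\neq0$, then $c(t)$ moves with a bounded velocity, and the leading far field is a fixed steady dipole in the frame of $c(t)$. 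If $p=0$, the leading term is the quadrupole $\nabla^\perp\partial^2_{ij}\log|x-c|\,I^{ij}(t)$, where $I^{ij}(t)=\int y_iy_j\omega(t,y)\,\dy$. Its trace, $\int|y|^2\omega$, is conserved. The case in which every multipole moment vanishes at some time is excluded: a compactly supported function all of whose moments vanish has vanishing Fourier transform at all orders at $0$, which is compatible with $\omega\ne0$. However, the conserved energy would then force the $O(|x|^{-k})$ far field to vanish for every $k$, and we will argue that this contradicts $\omega_0\neq0$ through the energy. If this route fails, I would fall back on the lowest nonvanishing moment and control its time variation using the bounded support.

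\emph{Step 2: accumulated shear along far trajectories.} Take two particles $x,x+h e$ at distance $R$ from the cluster. The linearized flow satisfies $\fract D\Phi=(Du\circ\Phi)\,D\Phi$. Along a far trajectory, $Du$ equals the Hessian of the dipole (or quadrupole) stream function evaluated at relative position $\Phi_t(x)-c(t)$, plus errors of size $R^{-4}$. The plan is to show that the time integral $\int_0^t Du(\tau,\Phi_\tau(x))\,\dtau$ has a nonzero, sign-definite shear component that accumulates. In the dipole case, work in the comoving frame of $c(t)$. There, far particles stream past the cluster along the closed or open streamlines of the steady dipole, and the period of revolution varies with the streamline. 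The twist map $x\mapsto$ (period of the streamline through $x$) has nonzero derivative in $R$, so $D\Phi_t$ grows linearly in $t$. This is the classical shear-by-differential-rotation mechanism, and it is precisely the stagnation-at-infinity effect. In the quadrupole case, use $\int|y|^2\omega\neq0$ together with the rotating quadrupole: its time average produces a nonzero differential angular velocity $\sim R^{-4}$, which again twists circles of different radii.

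\emph{Step 3: closing the contradiction.} Because the errors from Step 1 are uniform in time and the perturbation of trajectories stays controlled for $t\lesssim R^{k}$, we obtain $|D\Phi_t(x)|\gtrsim tR^{-k-1}-C$ on this time scale. Choosing first $R$ large and then $t\sim R^{k+1}\cdot A$ with $A\gg L$ contradicts the bound $L$.

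The main obstacle is Step 2 in the degenerate and time-dependent situations. Specifically, one needs a lower bound on the averaged differential rotation in the far field that is uniform in time, when the cluster is deforming and the quadrupole tensor $I^{ij}(t)$ varies. I would first treat the case $p\neq0$ in the comoving frame, where the leading field is exactly steady. For $p=0$, I would use only the conserved scalar $\int|y|^2\omega$ together with averaging of the time-dependent traceless part, which is bounded and whose contribution to the twist should average out.
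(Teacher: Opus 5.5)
There is a genuine gap. Your argument needs the exterior field to carry a nondegenerate, accumulating differential rotation, and this fails in general.

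\emph{The exterior field can vanish identically.} The far field is determined only by the harmonic moments $\int (y_1+iy_2)^k\omega(t,y)\,\dy$, not by all monomial moments. Vanishing of all monomial moments would force $\omega=0$, but the harmonic moments can all vanish with $\omega_0\neq 0$. Take $\omega_0$ radial with $\int\omega_0=0$. The solution is steady, $u=\frac{\Gamma(r)}{2\pi r}e_\theta$ with $\Gamma(r)=\int_{B_r}\omega_0$, so $u\equiv0$ outside the support. Hence the exclusion in Step 1 is unfounded, and the energy cannot rescue it, since it sits entirely inside the support. In this example the only source of growth of $D\Phi_t$ is the shear inside the support, which your scheme never sees.

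\emph{No first-order mean rotation exists even for nondegenerate data.} Outside the support the stream function is harmonic with no $\log$ term (because $\int\omega=0$), so it has no axisymmetric part at any order. In particular $\int|y|^2\omega$ does not enter the quadrupole at all, since $\delta_{ij}\partial_i\partial_j\log|x|=\Delta\log|x|=0$ for $x\neq0$. What survives is the traceless, time-dependent part of $I^{ij}$, which is exactly the part you plan to average out. So there is no linear-order mean differential rotation $\sim R^{-4}$.

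\emph{The dipole case does not give a twist map.} The streamlines of a pure dipole are circles through the centre, so no far streamline stays in the far field. Moreover, the comoving frame carries the time-dependent uniform stream $-\dot c(t)$, since $\dot c(t)=u(t,c(t))$ varies, so the field there is not steady. If the cluster translates, far particles are swept past essentially once and nothing accumulates.

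\emph{Step 3 is inconsistent as written.} On the window $t\lesssim R^k$ your bound gives only $tR^{-k-1}\lesssim R^{-1}$, and the choice $t\sim AR^{k+1}$ lies outside that window.

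The paper avoids any structural analysis of the far field. It uses Shankar's scaling identity. With $Y=\Phi_t-(x\cdot\nabla)\Phi_t$ one has $\frac{d}{dt}\int\partial_t\Phi_t\cdot Y\,\dx=2\lA u_0\rA_{L^2}^2$; the pressure term drops because $Y\circ\Phi_t^{-1}$ is divergence free. This gives $\int|(x\cdot\nabla)\xi|^2\,\dx\ge\lA u_0\rA_{L^2}^2\,t^2$ for the displacement $\xi=\Phi_t-\Id$. The bound requires only $u_0\neq0$, and it captures interior shear as well: in the radial example it detects exactly the differential rotation inside the support. Under a Lipschitz bound $A$, this quadratic growth is then ruled out. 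The contribution of a ball $B(0,M)$ is bounded independently of $t$. The displacement of the support grows only like $\sqrt t$. Away from the support, the mere decay $|Du|\lesssim|x|^{-3}$ (valid as soon as $\int\omega=0$, with no nondegeneracy of any moment) gives a contribution $O(t^2A^8/M^2)$. Choosing $M$ and then $t$ large closes the argument.
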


To prove this proposition, we first 
require standard velocity field estimates on $\xR^2$ 
(see e.g., \cite{zbMATH01644218,zbMATH00785410}).

\begin{lemma}\label{lem: kernel decay}
For any $\omega \in C^{\infty}_c(\xR^2)$, the associated velocity $u$ given by the Biot-Savart law 
satisfies
$$
\lA u \rA_{L^\infty} \le C \lA \omega \rA_{L^1}^{1/2} \lA \omega \rA_{L^\infty}^{1/2}.
$$
If, moreover, $\int_{\xR^2} \omega(x) \dx = 0$, then for all 
$x$ in $\xR^2$ such that $\la x \ra \ge 2 \la y \ra$ for all $y \in \supp \omega$,
$$
\la u(x) \ra \le \frac{ C}{\la x \ra^2}\int_{\xR^2} \la y \ra \la \omega(y)\ra\dy \quad
\text{and} \quad \la D u(x) \ra \le \frac{C }{\la x \ra^3}\int_{\xR^2} \la y \ra \la \omega(y)\ra\dy.
$$
In particular, $u \in L^2(\xR^2)$.
\end{lemma}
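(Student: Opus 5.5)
The statement is Lemma~\ref{lem: kernel decay}, and I would prove it directly from the Biot--Savart kernel on $\xR^2$:
$$
u(x)=\int_{\xR^2} K(x-y)\,\omega(y)\dy,\qquad K(z)=\frac{1}{2\pi}\frac{z^\perp}{\la z\ra^2},
$$
so that $\la K(z)\ra\le C\la z\ra^{-1}$, $\la DK(z)\ra\le C\la z\ra^{-2}$ and $\la D^2K(z)\ra\le C\la z\ra^{-3}$ for $z\neq 0$.

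\textbf{Uniform bound.} Fix $x$ and a radius $R>0$, and split the integral into $\la x-y\ra<R$ and $\la x-y\ra\ge R$.
\begin{itemize}
\item On the near region, bounding $\omega$ by its sup norm gives $\lA \omega\rA_{L^\infty}\int_{\la z\ra<R}\la z\ra^{-1}\dz = 2\pi R\lA\omega\rA_{L^\infty}$.
\item On the far region, $\la K\ra\le (2\pi R)^{-1}$, so this part is at most $R^{-1}\lA\omega\rA_{L^1}$.
\end{itemize}
Choosing $R=(\lA\omega\rA_{L^1}/\lA\omega\rA_{L^\infty})^{1/2}$ yields $\la u(x)\ra\le C\lA\omega\rA_{L^1}^{1/2}\lA\omega\rA_{L^\infty}^{1/2}$.

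\textbf{Decay of $u$.} Assume $\int\omega=0$, and let $x$ satisfy $\la x\ra\ge 2\la y\ra$ for every $y\in\supp\omega$. Subtracting $K(x)\int\omega=0$ gives
$$
u(x)=\int\big(K(x-y)-K(x)\big)\omega(y)\dy .
$$
For $y$ in the support and $\tau\in[0,1]$, the segment point satisfies $\la x-\tau y\ra\ge \la x\ra-\la y\ra\ge \la x\ra/2$. The mean value theorem then gives $\la K(x-y)-K(x)\ra\le \la y\ra\sup_\tau\la DK(x-\tau y)\ra\le C\la y\ra\la x\ra^{-2}$. Integrating against $\la\omega(y)\ra$ gives the bound for $u$.

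\textbf{Decay of $Du$.} Since $x$ lies outside the support of $\omega$, there is no singularity and we may differentiate under the integral: $Du(x)=\int DK(x-y)\,\omega(y)\dy$. Subtracting $DK(x)\int\omega=0$ and applying the same mean value argument with $D^2K$ gives $C\la x\ra^{-3}\int\la y\ra\la\omega(y)\ra\dy$.

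\textbf{$u\in L^2$.} On the compact set $\{\la x\ra\le 2\sup_{\supp\omega}\la y\ra\}$, $u$ is bounded by the first estimate. Outside it, $\la u(x)\ra\les\la x\ra^{-2}$, which is square integrable at infinity in two dimensions since $\int^\infty r^{-4}\,r\,\dr<\infty$.

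The only step needing care is the geometric condition $\la x-\tau y\ra\ge\la x\ra/2$. It is exactly what the hypothesis $\la x\ra\ge2\la y\ra$ provides, and it legitimizes both the Taylor expansion of $K$ and differentiation under the integral sign. Everything else is routine.
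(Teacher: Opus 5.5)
Your proof is correct. The near/far splitting of the kernel with the optimized radius $R=(\lA\omega\rA_{L^1}/\lA\omega\rA_{L^\infty})^{1/2}$, the subtraction of $K(x)\int\omega$ (resp.\ $DK(x)\int\omega$) using $\la x-\tau y\ra\ge\la x\ra/2$ along the segment, and the resulting $\la x\ra^{-2}$ decay giving $u\in L^2$ are exactly the standard argument. The paper gives no proof of this lemma and simply refers to the classical treatments of Bertozzi--Majda and Chemin, which contain this same reasoning, so your route is the intended one.
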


Next, we establish identities inspired by the study by Shankar~\cite{zbMATH06654624} of the 
conserved quantities associated with the scaling symmetry of the Euler equations on $\xR^2$. 
\begin{lemma}\label{lem: shankar ident}
For $\omega_0 \in C^{\infty}_c(\xR^2)$ with $\int_{\xR^2} \omega_0(x) \dx = 0$, 
let $G(t)$ be defined by 
$$
G(t) \defn \int_{\xR^2} \partial_t \Phi_t(x) \cdot Y(t,x) \dx,
$$ 
where $Y(t,x) = \Phi_t(x) - (x \cdot \nabla_x) \Phi_t(x)$. Then we have
$$
\fract G(t) = 2\lA u_0 \rA_{L^2}^2.
$$
This identity implies the lower bound:
$$
\int_{\xR^2} \la (x \cdot \nabla_x) \xi(t,x) \ra^2 \dx \ge \lA u_0 \rA_{L^2}^2 \, t^2,
$$
where $\xi(t,x) \defn \Phi_t(x) - x$ is the displacement field.
\end{lemma}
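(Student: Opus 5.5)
The identity is a computation on $G$ using the Lagrangian momentum equation, and the lower bound then follows from Cauchy--Schwarz together with a scaling identity for the operator $A \defn x\cdot\nabla_x$. Throughout I write $\xi=\Phi_t-\Id$, so that $Y=\xi-A\xi$, because $\Id - A\,\Id=0$.

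\emph{Step 1: the identity.} Write $\partial_t\Phi=u\circ\Phi$ and differentiate $G$:
\[
\fract G=\int \partial_t(u\circ\Phi)\cdot Y\dx+\int u\circ\Phi\cdot\partial_tY\dx .
\]
For the second integral, note that $\partial_t Y=u\circ\Phi-A(u\circ\Phi)$. Then use $u\circ\Phi\cdot A(u\circ\Phi)=\tfrac12 x\cdot\nabla|u\circ\Phi|^2$ and integrate by parts, with $\cn x=2$. This gives
\[
\int |u\circ\Phi|^2\dx+\int|u\circ\Phi|^2\dx .
\]
Since $\Phi$ preserves measure and the energy is conserved, this equals $2\lA u_0\rA_{L^2}^2$.

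For the first integral, the Lagrangian form of Euler gives $\partial_t(u\circ\Phi)=-(\nabla p)\circ\Phi$. Changing variables,
\[
\int(\nabla p)\circ\Phi\cdot\Phi\dx=\int\nabla p(y)\cdot y\,\dy .
\]
By the chain rule, $(\nabla p)\circ\Phi\cdot A\Phi=A(p\circ\Phi)$. Both pieces then reduce to $-2\int p$ after integration by parts, so they cancel.

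\emph{Main obstacle: decay at infinity.} The pressure need not be integrable, since $p\sim|x|^{-2}$ in general, so the cancellation above is only formal. I plan to do every integration by parts on a ball $B_R$, using the form $Y=\xi-A\xi$. By Lemma~\ref{lem: kernel decay}, far from the support we have $|u|\les|x|^{-2}$ and $|Du|\les|x|^{-3}$. Since the vorticity support stays bounded on $[0,t]$, the velocity bound gives $|\xi|\les t|x|^{-2}$ for $|x|$ large. For $A\xi$ I would differentiate $\partial_t\xi=u\circ\Phi$ in $x$ and use the bound on $Du$, which gives $|A\xi|\les t|x|^{-2}$ as well. The pressure solves $-\Delta p=\partial_i\partial_j(u^iu^j)$, so $|\nabla p|\les|x|^{-3}$. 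With these rates every integrand is integrable, and the boundary terms on $\partial B_R$ are $O(R^{-1})$ or smaller, so the computation is justified in the limit $R\to\infty$. This also shows $\xi\in L^2$.

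\emph{Step 2: the lower bound.} At $t=0$ we have $Y=0$, so $G(t)=2\lA u_0\rA_{L^2}^2t$. Cauchy--Schwarz and measure preservation give
\[
G\le\lA u\rA_{L^2}\lA Y\rA_{L^2}=\lA u_0\rA_{L^2}\lA Y\rA_{L^2},
\quad\text{hence}\quad \lA Y\rA_{L^2}\ge 2\lA u_0\rA_{L^2}t .
\]
Next, integrating by parts, $\int\xi\cdot A\xi\dx=-\lA\xi\rA_{L^2}^2$. By Cauchy--Schwarz this yields $\lA\xi\rA_{L^2}\le\lA A\xi\rA_{L^2}$. Therefore $\lA Y\rA_{L^2}\le\lA\xi\rA_{L^2}+\lA A\xi\rA_{L^2}\le2\lA A\xi\rA_{L^2}$. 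Combining the two bounds gives $\lA A\xi\rA_{L^2}\ge\lA u_0\rA_{L^2}t$, which is the claimed inequality after squaring.
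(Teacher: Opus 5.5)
Your proof is correct and is essentially the paper's: the same split of $\fract G$ into a pressure part and a kinetic part, the same Cauchy--Schwarz step, and the same identity $\int\xi\cdot(x\cdot\nabla)\xi\dx=-\lA\xi\rA_{L^2}^2$ (you use the triangle inequality where the paper expands $\lA\xi-(x\cdot\nabla)\xi\rA_{L^2}^2=3\lA\xi\rA_{L^2}^2+\lA(x\cdot\nabla)\xi\rA_{L^2}^2$, and both give the same constant). For the pressure term the paper instead shows that $Y\circ\Phi_t^{-1}$ is divergence-free, and this form is also the right way to run your truncation on $B_R$, which the paper does not discuss: since $\cn\big((D\Phi_t)^{-1}Y\big)=0$, one gets $\int_{B_R}(\nabla p)\circ\Phi_t\cdot Y\dx=\int_{\partial B_R}(p\circ\Phi_t)\,(D\Phi_t)^{-1}Y\cdot n\,\diff\sigma=O(tR^{-3})$, whereas the boundary terms of your two separate $-2\int p$ pieces are not shown to be small by the crude bound $|p|\les|x|^{-2}$ and only become so after being combined.
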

\begin{proof}
Differentiating $G(t)$ with respect to time, we obtain:
$$
\fract G(t) = \int_{\xR^2} \partial_t^2 \Phi_t \cdot Y \dx 
+ \int_{\xR^2} \partial_t \Phi_t \cdot \partial_t Y \dx = \mathcal{I}_{p} + \mathcal{I}_{k}.
$$
From the Euler equations, $\partial_t^2 \Phi_t = - (\nabla p)(t, \Phi_t)$, hence
$$
\mathcal{I}_{p} = - \int_{\xR^2} (\nabla p)(t, \Phi_t(x)) \cdot Y(t,x) \dx.
$$
By setting $W(t,y) \defn Y(t, \Phi_t^{-1}(y))$ and using the fact that $\det(D\Phi_t) = 1$, 
the change of variables $y = \Phi_t(x)$ yields:
$$
\mathcal{I}_{p} = - \int_{\xR^2} \nabla_y p(y) \cdot W(t,y) \dy = \int_{\xR^2} p(y) \, (\nabla_y \cdot W(t,y)) \dy.
$$
Let $X(x) = x$ be the identity vector field. We observe that $W(t,y) = y - Z(t,y)$, 
where $Z \defn (\Phi_t)_* X$ is the push-forward of $X$ by the flow $\Phi_t$. 
Since the flow is volume-preserving, the divergence transforms 
as $\nabla_y \cdot ((\Phi_t)_* X) = (\nabla_x \cdot X) \circ \Phi_t^{-1}$. 
Since $\nabla_x \cdot X = 2$, we obtain $\nabla_y \cdot Z = 2$. 
This implies $\nabla_y \cdot W = \nabla_y \cdot y - \nabla_y \cdot Z = 2 - 2= 0$, 
which proves that $\mathcal{I}_{p} = 0$.

It remains to compute the term $\mathcal{I}_{k}$. Let $v(t,x) = \partial_t \Phi_t(x)$, then:
$$
\mathcal{I}_{k} = \int_{\xR^2} \la v \ra^2 \dx - \int_{\xR^2} v \cdot ((x \cdot \nabla_x) v) \dx.
$$
Since $\det(D\Phi_t)=1$, the first term equals $ \int_{\xR^2} \la v\circ \Phi_t \ra^2 \dx=\lA u_0 \rA_{L^2}^2$. 
For the second one, an integration by parts gives:
$$
\int_{\xR^2} v \cdot ((x \cdot \nabla_x) v) \dx = 
- \frac{1}{2} \int_{\xR^2} (\nabla_x \cdot x) \la v \ra^2 \dx = -  \int_{\xR^2} \la v \ra^2 \dx = -\lA u_0 \rA_{L^2}^2. 
$$
Finally, we obtain $\mathcal{I}_{k} = \lA u_0 \rA_{L^2}^2 - (-\lA u_0 \rA_{L^2}^2) = 2\lA u_0 \rA_{L^2}^2$. 
We have thus proved that
$$
\fract G(t) = 2\lA u_0 \rA_{L^2}^2.
$$
Since $Y(0,x)=0$, this implies $G(t) =2\lA u_0 \rA_{L^2}^2 t$. By definition of $G(t)$, the 
Cauchy-Schwarz inequality implies:
$$
\la G(t) \ra^2 \le \lA \partial_t \Phi_t \rA_{L^2}^2 \lA Y \rA_{L^2}^2 
= \lA u_0 \rA_{L^2}^2 \int_{\xR^2} \la Y(t,x) \ra^2 \dx.
$$
It follows that $4\lA u_0 \rA_{L^2}^4 t^2 \le \lA u_0 \rA_{L^2}^2 \int_{\xR^2} \la Y(t,x) \ra^2 \dx$, 
which is equivalent to:
$$
\int_{\xR^2} \la \Phi_t - (x \cdot \nabla_x) \Phi_t \ra^2 \dx \ge 4\lA u_0 \rA_{L^2}^2 t^2.
$$
With $\xi(t,x) = \Phi_t(x) - x$, we have $\Phi_t - (x \cdot \nabla_x) \Phi_t = \xi - (x \cdot \nabla_x) \xi$, 
and by developing the square and integrating by parts:
$$
\int_{\xR^2} \la \xi(t,x) - (x \cdot \nabla_x) \xi(t,x) \ra^2 \dx = 3 \int_{\xR^2} \la \xi(t,x) \ra^2 \dx 
+ \int_{\xR^2} \la (x \cdot \nabla_x) \xi(t,x) \ra^2 \dx.
$$
In dimension $2$, we use the inequality 
$\int_{\xR^2} \la \xi \ra^2 \dx  \le \int_{\xR^2} \la (x \cdot \nabla_x) \xi \ra^2 \dx$, 
which follows from the relation $\int_{\xR^2} \la \xi \ra^2 \dx 
= -\frac{1}{2} \int_{\xR^2} x \cdot \nabla \la \xi \ra^2 \dx$ and Cauchy-Schwarz. 
We thus deduce:
$$
\int_{\xR^2} \la (x \cdot \nabla_x) \xi(t,x) \ra^2 \dx \ge \lA u_0 \rA_{L^2}^2 \, t^2,
$$
which is the wanted result.
\end{proof}

\begin{proof}[Proof of Proposition \ref{prop: gradient growth on R2}]
Assume for contradiction that
$$
\sup_{t \ge 0} \lA D \Phi_t \rA_{L^\infty} = A < +\infty.
$$
Then $\la \Phi(t,x) - \Phi(t,y) \ra \le A \la x - y \ra$ for any $x,y$ and $t$. 
Now, denote by $K(t)$ the support of $\omega(t,\cdot)$. Then
$$
\la K(t)\ra=\la K_0\ra \text{ and } \diam(K(t))\le A \diam(K_0), \text{ where }\diam(K)\defn \sup_{x,y\in K}\la x-y\ra.
$$
We assume, without loss of generality, that $A\ge 1$, $\diam(K_0)\ge 1$ and $t\ge 1$. 

Next, we claim that the displacement at the origin, $\xi(t,0) = \Phi_t(0) - 0$, satisfies the following pointwise estimate:
\be\label{n2019}
\la \xi(t,0) \ra \le 2 \sqrt{t} \left( \frac{(A+1)\lA u_0 \rA_{L^2}}{\sqrt{\pi}} \right)^{1/2}.
\ee
To establish this, we first bound the $L^2$ norm of the displacement field. 
Using the conservation of kinetic energy and the measure-preserving property of the flow, we have:
$$ \fract \lA \xi(t,\cdot) \rA_{L^2}^2 = 2 \int_{\xR^2} \xi \cdot (u \circ \Phi_t) \dx 
\le 2 \lA \xi \rA_{L^2} \lA u_0 \rA_{L^2}, $$
which integrates to $\lA \xi(t,\cdot) \rA_{L^2} \le t \lA u_0 \rA_{L^2}$. 
On the other hand, the Lipschitz bound $\lA D\Phi_t \rA_{L^\infty} \le A$ implies that for any $x \in \xR^2$, 
$$
\la \xi(t,x) \ra \ge \la \xi(t,0) \ra - (A+1)\la x \ra.
$$
In particular, on the ball $B_R$ of radius $R = \frac{\la \xi(t,0) \ra}{2(A+1)}$, 
the displacement is bounded from below by $\la \xi(t,x) \ra \ge \frac{1}{2} \la \xi(t,0) \ra$. 

Integrating this lower bound over $B_R$ yields:
$$
\lA \xi \rA_{L^2}^2 \ge \int_{B_R} \la \xi(t,x) \ra^2 \dx 
\ge \left( \pi \frac{\la \xi(t,0) \ra^2}{4(A+1)^2} \right) \frac{\la \xi(t,0) \ra^2}{4} 
= \frac{\pi \la \xi(t,0) \ra^4}{16(A+1)^2}.
$$
Comparing this with the $L^2$ upper bound, we find:
$$
\frac{\sqrt{\pi} \la \xi(t,0) \ra^2}{4 (A+1)} \le \lA \xi(t,\cdot) \rA_{L^2} \le t \lA u_0 \rA_{L^2},
$$
which leads to the desired inequality.

Now, to reach a contradiction, we partition the domain $\Omega \defn \xR^2 \setminus B(0,M)$ into two regions:
$$
\Omega_1(t) \defn \Omega \cap B(\xi(t,0), 2A \diam(K_0)), 
\quad \text{and} \quad \Omega_2(t) \defn \Omega \setminus B(\xi(t,0), 2A \diam(K_0)).
$$
We then decompose the integral as:
$$
\int_{\Omega} \la x\cdot\nabla \xi \ra^2 \dx = \int_{\Omega_1(t)} \la x\cdot\nabla \xi \ra^2 \dx 
+ \int_{\Omega_2(t)} \la x\cdot\nabla \xi \ra^2 \dx.
$$
In view of \e{n2019}, the first integral is bounded by 
$$
\la \Omega_1(t)\ra \cdot \sup_{\Omega_1(t)}\la x\ra^2\cdot \sup \la D\xi\ra^2\les \diam(K_0)^2  A^4 t.
$$

We now move to the second integral. Observe that $\blA D\Phi_t^{-1}\brA\le A$ so $\la \Phi_t(x)\ra\ges \la x\ra/A$. 
Then, using $\fract D\xi = (Du \circ \Phi_t) D\Phi_t$ and Lemma \ref{lem: kernel decay}, we deduce that
$$
\la D\xi(t,x) \ra \le t \frac{C A^4}{\la x \ra^3} \implies \int_{\Omega_2(t)} 
\la x\cdot\nabla \xi \ra^2 \dx \le t^2 \frac{C A^8}{M^2}.
$$
Now, Lemma \ref{lem: shankar ident} implies the following lower bound:
$$
\int_{B(0,M)} \la x\cdot\nabla \xi \ra^2 \dx \ge \lA u_0 \rA_{L^2}^2 \, t^2 - t^2 \frac{C A^8}{M^2} - C \diam(K_0)^2 t A^2.
$$
On the other hand, the most straightforward estimates give
$$
\int_{B(0,M)} \la x\cdot\nabla \xi \ra^2 \dx \les M^4 A^2.
$$
Remembering that $A$ is fixed, choosing $M$ and $t$ sufficiently large provides the desired contradiction.
\end{proof}

\appendix
\section{Review of Paradifferential Calculus}\label{appendix}

In this appendix, we review the basic notations and results 
of Bony's paradifferential calculus \cite{zbMATH03779807}. 
We refer to \cite{Hormander1997,MR1121019,MePise} for 
the general theory.
For simplicity, we restrict our attention to the $2$-dimensional torus $\xT^2 \defn (\xR /2\pi\xZ)^2$.

\subsection{Littlewood-Paley Decomposition}\label{s1_app}
We represent a function $u \in L^2(\xT^2)$ by its Fourier series, which 
converges in the $L^2$ topology:
$$
u(x) = \sum_{\xi\in \xZ^2} \hat{u}(\xi) e^{i\xi\cdot x}, \quad \text{where} 
\quad \hat{u}(\xi) \defn \frac{1}{(2\pi)^2} \int_{\xT^2}u(x)e^{-i\xi\cdot x}\dx.
$$
The Littlewood-Paley decomposition is constructed using a smooth 
cut-off function $\varphi \in C^\infty_0(\xR^2)$ supported in 
the annulus $\{\xi \in \xR^2 : 1/2\le |\xi|\le 2\}$, such that
$$
\sum_{j=1}^\infty\varphi(2^{-j}\xi)=1 \quad \text{for all } \xi \in \xZ^2 \setminus \{0\}.
$$
Using this partition of unity, we decompose any function $u \in L^2(\xT^2)$ 
as a sum of spectrally localized blocks $u = \sum_{j=0}^\infty \Delta_j u$. 
The \emph{dyadic block operators} $\Delta_j$ are defined by their Fourier multipliers:
\begin{align*}
\widehat{\Delta_0 u}(\xi) &\defn (1-\sum_{j=1}^\infty\varphi(2^{-j}\xi))\hat{u}(\xi), \\
\widehat{\Delta_j u}(\xi) &\defn \varphi(2^{-j}\xi)\hat{u}(\xi) \quad \text{for } j\geq 1.
\end{align*}
We also define the partial sum operator $S_j$ by
$$
S_j u \defn \sum_{0 \le k \le j} \Delta_k u.
$$

This decomposition characterizes the standard function spaces. 
For any real number $s\in \xR$, the Sobolev space $H^s(\xT^2)$ is the 
set of those periodic distributions $u$ such that
$$
\lA u\rA_{H^s} \approx \left( \sum_{j=0}^{\infty} 2^{2js} \lA \Delta_j u \rA_{L^2}^2 \right)^{1/2} < +\infty.
$$
Similarly, for $r\in (0,+\infty)\setminus\xN$, the Hölder space $C^r(\xT^2)$ is characterized by
$$
\lA u\rA_{C^r} \approx \sup_{j\geq0} 2^{jr} \lA \Delta_ju \rA_{L^\infty}.
$$
By the Sobolev embedding theorem, the continuous 
embedding $H^s(\xT^2) \hookrightarrow C^{s-1}(\xT^2)$ holds whenever $s>1$ and $s-1\not\in\xN$.

Finally, given $\sigma\in \xR$, we denote by $C^\sigma_*(\xT^2)$ the Zygmund space consisting of 
those periodic distributions $u$ which satisfy
\begin{equation}\label{eq-besov-def:Zyg}
\lA u\rA_{C^\sigma_*} \defn \sup_{j\ge 0}2^{j\sigma}\lA\Delta_j u\rA_{L^\infty}< +\infty.
\end{equation}
We recall that for 
$1 \le p \le + \infty$, the continuous embedding 
$L^p(\xT^2) \hookrightarrow C^{-2/p}_*(\xT^2)$ holds in 
dimension two as a consequence of Bernstein's inequality.
\subsection{Paraproducts}\label{s2_app}
Given two functions $a$ and $u$ on $\xT^2$, the paraproduct $T_a u$ is a 
bilinear operator defined by isolating the part of the product where the frequencies 
of $a$ are much lower than those of $u$.

\begin{definition}
Given a function $a \in L^\infty(\xT^2)$, the paraproduct operator $T_a$ is defined by
\begin{equation}\label{p1}
T_a u \defn \sum_{j \ge 3} S_{j-3}a \, \Delta_j u.
\end{equation}
\end{definition}

We also recall the following definition.
\begin{definition}
Let $m \in \xR$. A linear operator $T$ 
is said to be of \emph{order $m$} if, 
for all $s \in \xR$, it defines a bounded 
operator from $H^s(\xT^2)$ to $H^{s-m}(\xT^2)$.
\end{definition}

We collect below several classical continuity properties for the 
paraproduct. The main property is that the 
operator $T_a$ is 
of order $0$ on Sobolev spaces provided $a \in L^\infty(\xT^2)$; 
however, it acts as an operator of positive order (thereby 
inducing a loss of derivatives) when $a$ is assumed only to 
belong to a Zygmund space of negative 
regularity. 

\begin{theorem}[Theorem 2.82 in \cite{BCD}]\label{T:para1}
\begin{enumerate}
\item If $a \in L^\infty(\xT^2)$, then $T_a$ is of order $0$: 
For all $s\in\xR$, it maps $H^s(\xT^2)$ to $H^s(\xT^2)$ with
\be\label{cont}
\lA T_a u \rA_{H^s} \lesssim_s \lA a \rA_{L^\infty}
\lA u \rA_{H^s}.
\ee
\item Let $\sigma\in (0,+\infty)$. 
If $a \in C^{-\sigma}_*(\xT^2)$, then $T_a$ is of order $\sigma$: 
For all $s\in\xR$, it maps $H^s(\xT^2)$ to $H^{s-\sigma}(\xT^2)$ with
\be\label{cont-sigma}
\lA T_a u \rA_{H^{s-\sigma}} \lesssim_{s,\sigma} \lA a \rA_{C^{-\sigma}_*}
\lA u \rA_{H^s}.
\ee
In particular, there holds
\be\label{niS}
\lA T_a u\rA_{H^{s-1}}\les_s  \lA a\rA_{L^2}\lA u\rA_{H^{s}}\quad\text{and}\quad
\lA T_a u\rA_{H^{s-2}}\les_s  \lA a\rA_{L^1}
\lA u\rA_{H^{s}}.
\ee
\item Let~$s_0,s_1,s_2$ be such that 
$s_0\le s_2$ and~$s_0 < s_1 +s_2 -1$, 
then
\begin{equation}\label{boundpara}
\lA T_a u\rA_{H^{s_0}}\les_{s_0,s_1,s_2} \lA a\rA_{H^{s_1}}\lA u\rA_{H^{s_2}}.
\end{equation}
\end{enumerate}
\end{theorem}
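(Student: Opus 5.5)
The statement to prove is Theorem~\ref{T:para1}, the classical continuity of paraproducts. My plan is to deduce all three items from the dyadic structure $T_a u=\sum_{j\ge3}S_{j-3}a\,\Delta_j u$. The key observation is that each term $S_{j-3}a\,\Delta_j u$ has Fourier support in an annulus $\{c2^j\le|\xi|\le C2^j\}$. This follows from the support bounds $|\xi|\le 2^{j-2}$ for $S_{j-3}a$ and $2^{j-1}\le|\xi|\le2^{j+1}$ for $\Delta_ju$. By the standard almost-orthogonality lemma, a series $\sum_j f_j$ with $\hat f_j$ supported in such annuli satisfies
$$
\Big\lA \sum_j f_j\Big\rA_{H^{s}}^2\lesssim_s \sum_j 2^{2js}\lA f_j\rA_{L^2}^2
$$
for every real $s$. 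This lemma holds because only boundedly many $f_j$ meet a given dyadic block $\Delta_k$. With it, every item reduces to a bound on $\lA S_{j-3}a\,\Delta_ju\rA_{L^2}$.

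\textbf{Item (1).} I would write $\lA S_{j-3}a\,\Delta_ju\rA_{L^2}\le\lA S_{j-3}a\rA_{L^\infty}\lA\Delta_ju\rA_{L^2}$. The operator $S_{j-3}$ is convolution with a kernel whose $L^1$ norm is bounded uniformly in $j$. On the torus one uses the periodization of the $\xR^2$ kernel, whose $L^1$ norm is still uniformly bounded. Hence $\lA S_{j-3}a\rA_{L^\infty}\lesssim\lA a\rA_{L^\infty}$. Summing with the weight $2^{2js}$ gives \eqref{cont}.

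\textbf{Item (2).} For $a\in C^{-\sigma}_*$ with $\sigma>0$, I would use
$$
\lA S_{j-3}a\rA_{L^\infty}\le\sum_{k\le j-3}\lA\Delta_ka\rA_{L^\infty}\le\lA a\rA_{C^{-\sigma}_*}\sum_{k\le j}2^{k\sigma}\lesssim_\sigma 2^{j\sigma}\lA a\rA_{C^{-\sigma}_*}.
$$
The geometric sum converges precisely because $\sigma>0$. Inserting this into the almost-orthogonality bound with index $s-\sigma$ gives \eqref{cont-sigma}. The two estimates in \eqref{niS} follow from the Bernstein embeddings $L^2\hookrightarrow C^{-1}_*$ and $L^1\hookrightarrow C^{-2}_*$ in dimension two, applied with $\sigma=1$ and $\sigma=2$. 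For the first, note $\lA\Delta_ka\rA_{L^\infty}\lesssim 2^{k}\lA\Delta_ka\rA_{L^2}$.

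\textbf{Item (3).} This is the only step needing care, since $a\in H^{s_1}$ may be unbounded. I would split into cases according to $s_1$.
\begin{enumerate}
\item If $s_1>1$, then $H^{s_1}\subset L^\infty$, and item (1) together with $s_0\le s_2$ gives the bound.
\item If $s_1<1$, then $H^{s_1}\hookrightarrow C^{s_1-1}_*$ by Bernstein. Item (2) with $\sigma=1-s_1>0$ gives $H^{s_2}\to H^{s_2+s_1-1}$, and this suffices because $s_0<s_1+s_2-1$.
\item If $s_1=1$, the strict inequality gives $s_0<s_2$. I would then replace $s_1$ by some $s_1'<1$ with $s_0<s_1'+s_2-1$ and apply the previous case.
\end{enumerate}

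I expect the main obstacle to be bookkeeping rather than ideas. The points to handle are the almost-orthogonality lemma for negative $s$ and the periodic version of the uniform $L^1$ bound on the kernel of $S_j$. On the torus one could alternatively verify these directly on Fourier coefficients. Since the statement is cited from \cite{BCD} (Theorem 2.82), I would follow that proof, adapting only the periodization.
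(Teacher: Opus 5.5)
Your proposal is correct and is essentially the standard dyadic argument behind Theorem 2.82 in \cite{BCD}; the paper states the result with that citation and gives no proof of its own. Your reduction of item (3) to items (1)--(2) is also sound: you use $H^{s_1}\subset L^\infty$ for $s_1>1$, $H^{s_1}\hookrightarrow C^{s_1-1}_*$ for $s_1<1$, and lower $s_1$ slightly when $s_1=1$, which is exactly what the strict inequality $s_0<s_1+s_2-1$ allows.
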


\subsection{Symbolic Calculus and Commutators}\label{S:A.3}
Paradifferential calculus allows us to replace standard 
algebraic operations with paraproducts, up to smoothing error terms.

If $s>1$ then $H^s(\xT^2)$ is an algebra and we can 
decompose a product of two functions as follows:
$$
u v = T_u v + T_v u + \RBony(u, v),
$$
where the term $\RBony(u,v)$ is called the Bony remainder.

\begin{theorem}[Theorem 2.85 in \cite{BCD}]\label{T:para2}
\begin{enumerate}
\item If $u\in H^s(\xT^2)$ and $v\in H^{s'}(\xT^2)$ with $s,s'>1$, 
then $\RBony(u, v) \in H^{s+s'-1}(\xT^2)$ with
$$
\lA \RBony(u, v) \rA_{H^{2s-1}} \lesssim_{s}
\lA u \rA_{H^s} \lA v \rA_{H^s}.
$$
\item More generally, for any $r>0$ and $s\in (-r,+\infty)$, 
if $u \in C^r(\xT^2)$ and $v \in H^s(\xT^2)$, 
then $\RBony(u,v) \in H^{s+r}(\xT^2)$ with
$$
\lA \RBony(u, v) \rA_{H^{s+r}} \lesssim_{s,r} \lA u \rA_{C^r} \lA v \rA_{H^s}.
$$
\item Let $\sigma\in (0,+\infty)$ and $s\in (\sigma,+\infty)$. 
If $u \in C^{-\sigma}_*(\xT^2)$ and $v\in H^s(\xT^2)$, then 
$\RBony(u,v) \in H^{s-\sigma}(\xT^2)$ with
$$
\lA \RBony(u, v) \rA_{H^{s-\sigma}} \lesssim_{s,r} \lA u \rA_{C^{-\sigma}_*} \lA v \rA_{H^s}.
$$
\end{enumerate}
\end{theorem}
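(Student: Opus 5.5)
We would follow the classical dyadic argument (as in \cite{BCD}), which reduces all three statements to one summation lemma plus Bernstein's inequality. With the convention $T_a u=\sum_{j\ge 3}S_{j-3}a\,\Delta_j u$, the Bony remainder is
$$
\RBony(u,v)=uv-T_uv-T_vu=\sum_{|j-k|\le 2}\Delta_j u\,\Delta_k v,
$$
up to finitely many low-frequency terms with $j,k\le 3$. Those terms have Fourier support in a fixed ball and are bounded in every $H^\mu$ by $\lA u\rA_{L^2}\lA v\rA_{L^2}$, so we treat them separately. We regroup the main sum as $\RBony(u,v)=\sum_j R_j$ with $R_j\defn\sum_{|k-j|\le 2}\Delta_j u\,\Delta_k v$. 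The key structural fact is that $\widehat{R_j}$ is supported in a \emph{ball} $B(0,C2^j)$, not in an annulus, since two frequencies of size $\sim2^j$ may nearly cancel.

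\textbf{Summation lemma.} If $\mu>0$ and $f=\sum_j f_j$ with $\operatorname{supp}\widehat{f_j}\subset B(0,C2^j)$, then
$$
\lA f\rA_{H^\mu}\les_\mu \big\lA (2^{j\mu}\lA f_j\rA_{L^2})_j\big\rA_{\ell^2}.
$$
To prove it, note that $\Delta_q f_j=0$ unless $j\ge q-N_0$. Hence $2^{q\mu}\lA\Delta_q f\rA_{L^2}\le\sum_{j\ge q-N_0}2^{(q-j)\mu}\,2^{j\mu}\lA f_j\rA_{L^2}$, which is a convolution with the kernel $2^{-m\mu}\mathbf 1_{m\ge -N_0}$. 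Young's inequality gives the claim, and this kernel is summable precisely because $\mu>0$. This positivity requirement is the only real constraint, and it is exactly where the hypotheses $s+s'-1>0$, $s+r>0$ and $s-\sigma>0$ enter. We regard it as the main (though mild) point.

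\textbf{The three estimates.} In each case we bound $\lA R_j\rA_{L^2}\le\sum_{|k-j|\le2}\lA\Delta_j u\rA_{L^\infty}\lA\Delta_k v\rA_{L^2}$ and then estimate the $L^\infty$ factor.

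For (1), Bernstein's inequality in dimension $2$ gives $\lA\Delta_j u\rA_{L^\infty}\les 2^{j}\lA\Delta_j u\rA_{L^2}$. Therefore $2^{j(s+s'-1)}\lA R_j\rA_{L^2}\les\sum_{|k-j|\le2}(2^{js}\lA\Delta_ju\rA_{L^2})(2^{ks'}\lA\Delta_kv\rA_{L^2})$. This is a product of two $\ell^2$ sequences, hence in $\ell^1\subset\ell^2$ with norm $\les\lA u\rA_{H^s}\lA v\rA_{H^{s'}}$. Applying the lemma with $\mu=s+s'-1>1$ gives the bound, and taking $s'=s$ recovers the displayed form.

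For (2), we use $\lA\Delta_ju\rA_{L^\infty}\les2^{-jr}\lA u\rA_{C^r}$, which gives $2^{j(s+r)}\lA R_j\rA_{L^2}\les\lA u\rA_{C^r}\,2^{js}\lA\Delta_jv\rA_{L^2}$ summed over $|k-j|\le2$. This sequence lies in $\ell^2$, and the lemma applies because $s+r>0$.

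For (3), we use $\lA\Delta_ju\rA_{L^\infty}\le2^{j\sigma}\lA u\rA_{C^{-\sigma}_*}$. The same computation with $\mu=s-\sigma>0$ concludes.
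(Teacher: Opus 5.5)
Your argument is correct and is exactly the standard dyadic proof behind Theorem 2.85 of \cite{BCD}: ball-supported blocks $R_j$, the summation lemma that needs a positive target index, and Bernstein or the Zygmund characterization applied to the factor $u$. The paper does not prove this statement itself; it only cites \cite{BCD}.

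Two cosmetic remarks. With the paper's convention $T_a u=\sum_{j\ge 3}S_{j-3}a\,\Delta_j u$ one has exactly $\RBony(u,v)=\sum_{|j-k|\le 2}\Delta_j u\,\Delta_k v$, so no extra low-frequency terms arise. In (2), the summand should read $2^{ks}\lA \Delta_k v\rA_{L^2}$ rather than $2^{js}\lA \Delta_j v\rA_{L^2}$.
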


We refer the reader to \cite{MePise} for the proofs of the 
subsequent three results, which 
arise as special cases of general 
paradifferential calculus.

The first theorem establishes that the composition 
of two paraproducts coincides with the paraproduct of their product, 
up to a remainder that is regularizing.

\begin{theorem}\label{T:para3}
Let $a$ and $b$ be two functions in $L^\infty(\xT^2)$ and define the 
algebraic remainder
$$
R_{\mathrm{alg}}(a, b) = T_a \circ T_b - T_{ab}.
$$
If we further assume that $a$ and $b$ belong to $C^r(\xT^2)$ for some real 
number~$r > 0$, then $R_{\mathrm{alg}}(a, b)$ is a smoothing operator of order $-r$. Specifically, for all 
$s \in \xR$, there holds
$$
\lA R_{\mathrm{alg}}(a, b) u \rA_{H^{s+r}} \les_{s,r} \lA a \rA_{C^r} \lA b \rA_{C^r} \lA u \rA_{H^s}.
$$
\end{theorem}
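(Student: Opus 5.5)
The plan is to work at the level of Littlewood--Paley blocks. By definition,
\[
T_aT_b u=\sum_{j\ge3}S_{j-3}a\,\Delta_j\Big(\sum_{k\ge 3}S_{k-3}b\,\Delta_k u\Big),
\qquad
T_{ab}u=\sum_{j\ge 3}S_{j-3}(ab)\,\Delta_j u .
\]
The spectrum of $S_{k-3}b\,\Delta_k u$ lies in an annulus of size $2^k$. Hence only indices with $|j-k|\le 2$ survive in the first sum, and we may rewrite
\[
T_aT_bu=\sum_k S_{k-3}a\,S_{k-3}b\,\Delta_k u+\mathcal{E}_1,
\]
where $\mathcal{E}_1$ collects the differences $(S_{j-3}a-S_{k-3}a)$ for $|j-k|\le2$, together with the error from replacing $\Delta_j\Delta_k$-type localizations by $\Delta_k$. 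That last replacement is harmless once we note that $\Delta_j(S_{k-3}b\,\Delta_k u)-S_{k-3}b\,\Delta_j\Delta_k u=[\Delta_j,S_{k-3}b]\Delta_k u$.

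There are three error sources, and each is controlled by the Hölder regularity of $a$ or $b$.

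First, the blocks $S_{j-3}a-S_{k-3}a$ with $|j-k|\le 2$ are finite sums of $\Delta_\ell a$ with $\ell\sim k$. Their $L^\infty$ norm is therefore $\lesssim 2^{-kr}\lA a\rA_{C^r}$.

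Second, the commutator $[\Delta_j,S_{k-3}b]$ is estimated by the classical kernel bound $\lA[\Delta_j,c]f\rA_{L^2}\lesssim 2^{-j}\lA\nabla c\rA_{L^\infty}\lA f\rA_{L^2}$. For $r\ge1$ this gains a full derivative, which suffices after also expanding to the order $[r]$ via Taylor expansion of the kernel. More simply, for $0<r<1$ we use $|c(x)-c(y)|\le\lA c\rA_{C^r}|x-y|^r$ directly in the convolution kernel, which gives a gain of $2^{-jr}$. For $r\ge1$ this crude gain is only $2^{-j}$, and we iterate the expansion, writing the commutator as a sum of $\partial^\alpha c\,\partial^\alpha_\xi$ terms. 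This iteration is where I expect the most care to be needed: one must check that the resulting terms are again of paraproduct form and can be absorbed into the main comparison.

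Third, the main term requires comparing $S_{k-3}a\,S_{k-3}b$ with $S_{k-3}(ab)$. I would write
\[
ab-S_{k-3}a\,S_{k-3}b=(a-S_{k-3}a)b+S_{k-3}a\,(b-S_{k-3}b).
\]
Both pieces have $L^\infty$ norm $\lesssim2^{-kr}\lA a\rA_{C^r}\lA b\rA_{C^r}$, since $\lA a-S_ka\rA_{L^\infty}\lesssim 2^{-kr}\lA a\rA_{C^r}$. In addition, $\lA ab-S_{k-3}(ab)\rA_{L^\infty}\lesssim2^{-kr}\lA ab\rA_{C^r}$, and $C^r$ is an algebra. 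Together these show that the symbol difference $d_k\defn S_{k-3}a\,S_{k-3}b-S_{k-3}(ab)$ satisfies $\lA d_k\rA_{L^\infty}\lesssim 2^{-kr}\lA a\rA_{C^r}\lA b\rA_{C^r}$.

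To conclude, each error term has the form $\sum_k d_k\,\Delta_k u$ (possibly after replacing $\Delta_k$ by a neighboring block), with $\lA d_k\rA_{L^\infty}\lesssim2^{-kr}\lA a\rA_{C^r}\lA b\rA_{C^r}$ and spectrum contained in a ball of radius $\sim 2^k$. The standard lemma on sums $\sum_k f_k$ with $\operatorname{supp}\hat f_k\subset B(0,C2^k)$ and $\sum 2^{2k\sigma}\lA f_k\rA_{L^2}^2<\infty$ then applies; it requires $\sigma>0$, and here $\sigma=s+r$ with the gain. When $s+r\le0$, the terms $d_k\Delta_ku$ are not annularly localized, so I would keep the genuine annulus localization instead. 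Concretely, I would expand $d_k$ into blocks of frequency $\ll 2^k$ plus high-frequency parts, estimate the latter separately, and use $\ell^2$ orthogonality. This gives $\lA R_{\mathrm{alg}}(a,b)u\rA_{H^{s+r}}\lesssim\lA a\rA_{C^r}\lA b\rA_{C^r}\lA u\rA_{H^s}$.
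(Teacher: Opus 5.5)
The paper does not prove this statement; it quotes it from M\'etivier's book as the $\xi$-independent case of the composition theorem for $T_aT_b-T_{a\#b}$, where $a\#b=ab$. Your direct dyadic route is the natural elementary proof, and your two core bounds are correct: $\|S_{j-3}a-S_{k-3}a\|_{L^\infty}\lesssim 2^{-kr}\|a\|_{C^r}$ for $|j-k|\le 2$, and $\|d_k\|_{L^\infty}\lesssim 2^{-kr}\|a\|_{C^r}\|b\|_{C^r}$. However, two steps do not work as you describe them.

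\textbf{The commutator step fails for $r>1$, and it is not needed.} This is also the regime the paper uses, e.g.\ in Proposition~\ref{P24}. Taken term by term, $[\Delta_j,S_{k-3}b]\Delta_ku$ contains the first-order piece $\pm i\,2^{-j}\,\nabla S_{k-3}b\cdot(\nabla\varphi)(2^{-j}D)\Delta_ku$. This piece is in general of size $2^{-k}\|\Delta_ku\|_{L^2}$, even for $b=\cos x^1$. No Taylor iteration of the kernel improves it to $2^{-kr}$, and nothing in the ``main comparison'' absorbs it. What actually happens is an exact cancellation. The function $S_{k-3}b\,\Delta_ku$ has spectrum in an annulus on which $\sum_{|j-k|\le 2}\varphi(2^{-j}\xi)=1$. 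Hence $\sum_{|j-k|\le2}\Delta_j(S_{k-3}b\,\Delta_ku)=S_{k-3}b\,\Delta_ku$, and
\[
T_aT_bu=\sum_k S_{k-3}a\,S_{k-3}b\,\Delta_ku+\sum_k\sum_{|j-k|\le 2}\big(S_{j-3}a-S_{k-3}a\big)\,\Delta_j\big(S_{k-3}b\,\Delta_ku\big)
\]
holds exactly, with no commutator. Equivalently, your commutators sum to zero in $j$ once $S_{j-3}a$ is frozen to $S_{k-3}a$. For $0<r\le 1$ your kernel bound would have sufficed, but even there it is superfluous.

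\textbf{The summation for $s+r\le 0$ is not an argument.} The ball lemma only covers $s+r>0$. Your proposed fix is to split $d_k$ into low- and high-frequency parts and estimate the latter ``separately''. But the high part times $\Delta_ku$ is exactly where annular localization is lost, its $L^\infty$ size is still only $2^{-kr}$, and you do not say how to control it in $H^{s+r}$. The fix also says nothing about the $(S_{j-3}a-S_{k-3}a)$ terms, whose coefficients live entirely at frequency $\sim 2^k$.

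The clean repair is to localize the whole difference rather than its pieces. A paraproduct maps a function with spectrum in $\{c2^k\le|\xi|\le C2^k\}$ to one with spectrum in a comparable annulus. Therefore $(T_aT_b-T_{ab})\Delta_ku$ is annularly localized at $2^k$, even though the individual pieces are not. Since the annular summation lemma holds for every real index, it suffices to prove
\[
\|(T_aT_b-T_{ab})\Delta_ku\|_{L^2}\lesssim 2^{-kr}\|a\|_{C^r}\|b\|_{C^r}\|\Delta_ku\|_{L^2}.
\]
This follows from the identity above applied to $\Delta_ku$ (only $k'\in\{k-1,k,k+1\}$ contribute) together with your $L^\infty$ bounds. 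It gives the estimate for all $s\in\xR$ and all $r>0$ at once.
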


In a similar vein, the operation of taking the adjoint $(T_a)^*$ of a paraproduct $T_a$ 
behaves essentially as the complex conjugation of its symbol. Consequently, 
the paraproduct associated with a real-valued function is self-adjoint, 
up to a remainder exhibiting the exact same regularizing properties.

\begin{theorem}\label{T:para-adjoint}
Consider a real-valued function $a$ belonging to $C^r(\xT^2)$ for 
some real number $r > 0$. Then, the 
difference  
$(T_a)^*-T_a$ 
is smoothing of order $-r$. 
Specifically, for all $s \in \xR$, there holds
$$
\lA ((T_a)^*-T_a)u \rA_{H^{s+r}} \les_{s,r} \lA a \rA_{C^r} \lA u \rA_{H^s}.
$$
\end{theorem}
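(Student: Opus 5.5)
The statement to prove is Theorem~\ref{T:para-adjoint}: for a real-valued $a\in C^r(\xT^2)$ with $r>0$, the difference $(T_a)^*-T_a$ is smoothing of order $-r$.

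The plan is to compute the adjoint of the paraproduct explicitly from definition~\eqref{p1} and compare it with $T_a$ block by block. Since $a$ is real-valued, multiplication by $S_{j-3}a$ is self-adjoint, and each $\Delta_j$ is a self-adjoint Fourier multiplier because its symbol is real and even. Hence, for test functions $u,w$,
$$
\langle T_a u, w\rangle_{L^2}=\sum_{j\ge3}\langle \Delta_j u, S_{j-3}a\, w\rangle_{L^2},
$$
which gives $(T_a)^*w=\sum_{j\ge3}\Delta_j(S_{j-3}a\,w)$. Therefore
$$
(T_a)^*w-T_aw=\sum_{j\ge3}\big(\Delta_j(S_{j-3}a\,w)-S_{j-3}a\,\Delta_j w\big)=\sum_{j\ge3}[\Delta_j,S_{j-3}a]\,w.
$$

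The next step localizes each commutator in frequency. The function $S_{j-3}a$ has spectrum in $|\xi|\le 2^{j-2}$, and $\Delta_j$ sees frequencies of size about $2^j$. So only the pieces $\Delta_k w$ with $|k-j|\le 3$ contribute, and the output of the $j$-th commutator is spectrally localized in an annulus of size about $2^j$, for $j$ larger than a fixed constant.

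For each such block I will use the standard kernel commutator estimate, proved by a first-order Taylor expansion of $S_{j-3}a$ in the convolution kernel of $\Delta_j$:
$$
\lA [\Delta_j,b]f\rA_{L^2}\les 2^{-j}\lA\nabla b\rA_{L^\infty}\lA f\rA_{L^2}.
$$
This bound by itself only gains one derivative. To obtain exactly $r$ derivatives, I split into two cases.

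\emph{Case $r\le1$.} I avoid the Taylor expansion and use the decomposition $S_{j-3}a=a-(a-S_{j-3}a)$. The key estimate is $\lA a-S_{j-3}a\rA_{L^\infty}\les 2^{-jr}\lA a\rA_{C^r}$, applied to both terms of the commutator. A finer version is cleaner: write $S_{j-3}a=\sum_{k\le j-3}\Delta_k a$, apply the Taylor commutator bound to each $\Delta_k a$, and use $\lA\nabla\Delta_k a\rA_{L^\infty}\les 2^{k(1-r)}\lA a\rA_{C^r}$. Summing over $k\le j$ gives $2^{-j}2^{j(1-r)}=2^{-jr}$ when $r<1$, with a harmless logarithmic issue at $r=1$ that can be avoided by slightly lowering $r$ or by using the Zygmund characterization.

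\emph{Case $r>1$.} I expand the kernel to order $\lfloor r\rfloor$. The intermediate terms of this expansion produce operators of the form $\sum_j (\partial^\alpha S_{j-3}a)\,\Delta_j^{(\alpha)}w$, where $\Delta_j^{(\alpha)}$ has symbol $\partial_\xi^\alpha\varphi(2^{-j}\xi)$. These terms do not vanish identically. They are the lower-order symbols of the adjoint, namely $\tfrac1{i^{|\alpha|}\alpha!}\partial_x^\alpha\partial_\xi^\alpha$ of the symbol, which vanishes for a symbol independent of $\xi$. Each such term involves $\partial_\xi^\alpha$ of the full multiplier $\sum_j\varphi(2^{-j}\xi)$, which equals $1$ away from the origin. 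I therefore expect the sum over $j$ of these terms to telescope to a low-frequency, hence infinitely smoothing, contribution plus terms where the derivative falls on the cutoff transition between $S_{j-3}$ and neighbouring blocks.

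This telescoping bookkeeping is the main obstacle. If it proves messy, my fallback is to cite the general adjoint theorem for paradifferential operators with $C^r$ symbols, for example Métivier's Theorem~6.2.4 in \cite{MePise}, since $T_a$ is the paradifferential operator with symbol $a(x)$, which is independent of $\xi$.

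To conclude, each block $[\Delta_j,S_{j-3}a]w$ is bounded in $L^2$ by $2^{-jr}\lA a\rA_{C^r}\sum_{|k-j|\le3}\lA\Delta_kw\rA_{L^2}$ and is spectrally localized at $2^j$. Summing with weights $2^{j(s+r)}$ then gives the $H^s\to H^{s+r}$ bound.
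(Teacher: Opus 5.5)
\textbf{Gap in the case $r>1$.} Your identity $(T_a)^*-T_a=\sum_{j\ge3}[\Delta_j,S_{j-3}a]$ is correct, and for $0<r\le1$ your block-by-block argument works. At $r=1$ there is no logarithm, since $\|\nabla S_{j-3}a\|_{L^\infty}\lesssim\|\nabla a\|_{L^\infty}$; ``lowering $r$'' would lose the endpoint. The gap is $r>1$, which is the case the paper needs (in Lemma~\ref{L:6.3}, $a=\nabla^\perp g\in C^{s-2}$ with $s>4$).

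Your concluding step asserts $\|[\Delta_j,S_{j-3}a]w\|_{L^2}\lesssim 2^{-jr}\|a\|_{C^r}\sum_{|k-j|\le3}\|\Delta_kw\|_{L^2}$. This is false for $r>1$. Take $a=\cos x_1$, so that $S_{j-3}a=a$ for $j\ge3$, and $w=e^{in\cdot x}$ with $2^{-j}n$ close to a point where $\partial_1\varphi\neq0$. Then
\[
[\Delta_j,a]w=\tfrac12\sum_{\pm}\big(\varphi(2^{-j}(n\pm e_1))-\varphi(2^{-j}n)\big)e^{i(n\pm e_1)\cdot x}
\]
has $L^2$ norm of size $2^{-j}\|w\|_{L^2}$. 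So a single $j$-block gains only one derivative, however smooth $a$ is. Any further gain comes from cancellations between overlapping neighbouring annuli, which an annulus-by-annulus summation cannot see.

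The ``telescoping'' is therefore the entire content of the case $r>1$, and as sketched it does not close. The coefficients $\partial^\alpha S_{j-3}a$ depend on $j$, so they cannot be pulled out of $\sum_j\partial_\xi^\alpha\varphi(2^{-j}\xi)$. Freezing them to $\partial^\alpha a$ leaves $\partial^\alpha a$ times a smoothing operator, which is only $C^{r-|\alpha|}$, not $H^{s+r}$.

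\textbf{The missing idea: sum by the frequency of $a$, not of $w$.} Insert $S_{j-3}a=\sum_{k\le j-3}\Delta_ka$ and interchange the sums. With no Taylor expansion at all, this gives
\[
(T_a)^*-T_a=\sum_{k\ge0}\Big[\sum_{j\ge k+3}\Delta_j,\Delta_ka\Big]=\sum_{k\ge0}[\Id-S_{k+2},\Delta_ka]=-\sum_{k\ge0}[S_{k+2},\Delta_ka].
\]
The $k$-th term has the following properties.
\begin{itemize}
\item It vanishes on $w$-frequencies outside a band of size about $2^k$, so it equals $-[S_{k+2},\Delta_ka]\tilde\Delta_kw$ with $\tilde\Delta_k=\sum_{l=k}^{k+4}\Delta_l$.
\item It has spectrum in the ball $\{|\xi|\le2^{k+4}\}$.
\item It is bounded in $L^2$ by $2\|\Delta_ka\|_{L^\infty}\|\tilde\Delta_kw\|_{L^2}\lesssim2^{-kr}\|a\|_{C^r}\|\tilde\Delta_kw\|_{L^2}$, for every $r>0$.
\end{itemize}
Because the pieces are localized only in balls, the standard summation lemma gives the $H^s\to H^{s+r}$ bound for $s+r>0$. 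The range $s\le-r$ follows by duality: $(T_a)^*-T_a$ is skew-adjoint, so its $H^s\to H^{s+r}$ norm equals its $H^{-s-r}\to H^{-s}$ norm, which the first case covers because $-s>0$.

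\textbf{Your fallback.} The paper gives no proof and simply refers to \cite{MePise}, so citing Métivier's adjoint theorem is exactly the paper's treatment. Strictly, you should add that the Littlewood--Paley cut-off defining $T_a$ is admissible in Métivier's sense. Then the two definitions of $T_a$ differ by an operator of order $-r$, and his theorem applies.
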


Finally, we address the commutation properties between paraproducts 
and Fourier multipliers. We consider Fourier multipliers with symbols 
homogeneous of degree $0$, 
such as the Beurling transform appearing in the study of the parametrix for the 
paradifferential Laplace--Beltrami operator.

\begin{theorem}\label{T:para4}
Let $m(D)$ be a Fourier multiplier whose symbol $m(\xi)$ is homogeneous of 
degree $0$ and smooth outside the origin. Let $r \in [0,1]$ and $a \in C^r(\xT^2)$. Then the commutator
$$
[T_a, m(D)] \defn T_a m(D) - m(D) T_a
$$
is a smoothing operator of order $-r$. Specifically, for any $s \in \xR$:
$$
\lA [T_a, m(D)] u \rA_{H^{s+r}} \les_s \mathcal{N}_r(m) \lA a \rA_{C^r} \lA u \rA_{H^s},
$$
where the constant $\mathcal{N}_r(m)$ depends on the symbol $m$ via 
the semi-norm involving derivatives up to order $N_r \defn [r]+3$:
$$
\mathcal{N}_r(m) \defn \sup_{|\beta| \le N_r} \sup_{|\xi| \in [1/2, 2]} |\partial_\xi^\beta m(\xi)|.
$$
\end{theorem}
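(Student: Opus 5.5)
The statement to prove is Theorem~\ref{T:para4}: for a Fourier multiplier $m(D)$ whose symbol is homogeneous of degree $0$, the commutator $[T_a,m(D)]$ is of order $-r$. My plan is to argue block by block with the Littlewood--Paley decomposition, using that $m$ is smooth on each dyadic annulus.

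\textbf{Reduction to single blocks.} Write
$$
T_a u=\sum_{j\ge3} S_{j-3}a\,\Delta_j u .
$$
Since $S_{j-3}a\,\Delta_j u$ has spectrum in an annulus of size $2^j$, choose $\tilde\varphi$ supported in a slightly larger annulus with $\tilde\varphi\equiv1$ on $\{1/4\le|\xi|\le 4\}$. Set $m_j(\xi)\defn m(\xi)\tilde\varphi(2^{-j}\xi)$. Because both $\Delta_j u$ and $S_{j-3}a\,\Delta_j u$ are spectrally localized where $m_j=m$, we get
$$
[T_a,m(D)]u=\sum_{j\ge3}\bigl(S_{j-3}a\; m_j(D)\Delta_j u - m_j(D)(S_{j-3}a\,\Delta_j u)\bigr).
$$
By homogeneity, $m_j(\xi)=\mu(2^{-j}\xi)$ with $\mu=m\tilde\varphi$. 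The function $\mu$ is smooth and compactly supported, and its $C^{N_r}$ norm is bounded by $\mathcal{N}_r(m)$ (up to enlarging the annulus in the definition of $\mathcal N_r$, or covering it by rescaled copies).

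\textbf{Kernel estimate.} Let $K_j(z)=2^{2j}\check\mu(2^jz)$ be the convolution kernel of $m_j(D)$. Since $\mu$ has $N_r=[r]+3$ bounded derivatives, integration by parts gives
$$
|\check\mu(z)|\lesssim \mathcal N_r(m)(1+|z|)^{-3},
$$
so $\int|z|\,|\check\mu(z)|\,dz<\infty$. On the torus, periodize the kernel; the same bounds hold for large $j$.

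Each summand is then
$$
\int K_j(z)\bigl(S_{j-3}a(x)-S_{j-3}a(x-z)\bigr)\Delta_ju(x-z)\,dz .
$$
For $r\in(0,1]$, use
$$
|S_{j-3}a(x)-S_{j-3}a(x-z)|\lesssim \|a\|_{C^r}\min(|z|^r,1)
$$
when $r<1$. When $r=1$, use $\|\nabla S_{j-3}a\|_{L^\infty}\lesssim\|a\|_{C^1}$ (Lipschitz, so the gradient bound is fine). Since $\int|K_j(z)||z|^r\,dz\lesssim 2^{-jr}\mathcal N_r(m)$, Minkowski's inequality gives an $L^2$ bound of $2^{-jr}\|a\|_{C^r}\|\Delta_ju\|_{L^2}$. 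For $r=0$ the bound is immediate from the $L^\infty$ norm.

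\textbf{Resummation.} Each summand has spectrum in an annulus of size $2^j$, since both terms are localized there. The almost-orthogonality lemma for annulus-localized pieces then gives
$$
\|\cdot\|_{H^{s+r}}^2\lesssim\sum_j 2^{2j(s+r)}\,2^{-2jr}\,\|a\|_{C^r}^2\mathcal N_r(m)^2\,\|\Delta_ju\|_{L^2}^2,
$$
which is the claimed estimate.

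\textbf{Main obstacle.} The main technical point is the kernel step. It requires the decay $(1+|z|)^{-2-r-\epsilon}$ of $\check\mu$ from finitely many derivatives, together with careful periodization on $\xT^2$ for small $j$. For those finitely many small $j$ the estimate is trivial, since all the operators involved are bounded on $L^2$.
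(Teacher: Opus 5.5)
Your proof is correct, but it does not follow the paper. The paper gives no argument for this theorem. It defers to the general paradifferential calculus of \cite{MePise}, where the result is a special case of the composition theorem:
\begin{itemize}
\item one views $m(\xi)$ as an $x$-independent symbol of order $0$, whose quantization differs from $m(D)$ by a smoothing operator;
\item for regularity $r\le 1$ the composition formula $a\# b=\sum_{|\alpha|<r}(i^{|\alpha|}\alpha!)^{-1}\partial_\xi^\alpha a\,\partial_x^\alpha b$ reduces to the product;
\item hence $T_a m(D)$ and $m(D)T_a$ both agree with $T_{am}$ modulo operators of order $-r$.
\end{itemize}
This also explains why the statement stops at $r=1$: for $r>1$ the term $-i\nabla_\xi m\cdot\nabla_x a$ enters $m\# a$.

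You instead argue by hand. You localize $m(D)$ to $\mu(2^{-j}D)$ on each block and write the block commutator as $\int K_j(z)\bigl(S_{j-3}a(x)-S_{j-3}a(x-z)\bigr)\Delta_ju(x-z)\,dz$. You then use the H\"older/Lipschitz increment of $S_{j-3}a$ together with the moment bound $\int|K_j||z|^r\lesssim 2^{-jr}$, and resum by almost orthogonality. Your route is self-contained, in line with the paper's stated wish to avoid the general machinery, and it shows where $N_r$ comes from. The cited route is shorter and covers far more general symbols. However, it requires checking that M\'etivier's admissible cut-off and his quantization of $m$ match $\sum_j S_{j-3}a\,\Delta_j$ and $m(D)$ modulo order $-r$.

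\textbf{One slip to fix in the kernel step.} Decay $|\check\mu(z)|\lesssim(1+|z|)^{-3}$ does \emph{not} give $\int_{\xR^2}|z|\,|\check\mu(z)|\,dz<\infty$; the integral diverges logarithmically in dimension two. What you actually need is $\int|z|^r|\check\mu(z)|\,dz<\infty$, i.e.\ decay $(1+|z|)^{-N}$ with $N>2+r$. The $N_r$ derivatives provide this through $|z^\beta\check\mu(z)|\lesssim\|\partial^\beta\mu\|_{L^1}$ for $|\beta|=N_r$. Three derivatives suffice for $r<1$, and $N_1=4$ is exactly what is needed for $r=1$, which is precisely why the statement uses $[r]+3$.

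Two simplifications are also available:
\begin{itemize}
\item The bound $\|\mu\|_{C^{N_r}}\lesssim\mathcal{N}_r(m)$ follows directly from homogeneity. Since $\partial^\beta m$ is homogeneous of degree $-|\beta|$, it is controlled on $\supp\tilde\varphi$ by its values on the unit circle, so no enlargement of the annulus is needed.
\item No periodization is needed. For periodic $f\in L^2(\xT^2)$ one has $m_j(D)f=K_j*f$ with the $\xR^2$ kernel, so Minkowski's inequality in $L^2(\xT^2)$ applies uniformly for all $j\ge 3$.
\end{itemize}
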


\section{Composition Estimates}\label{A:C}

We begin with the standard algebra property and the tame 
estimate for the composition of functions.

\begin{theorem}[Moser-type estimates]
\label{T:Moser}
Consider a real number $s > 1$.
\begin{enumerate}
\item The Sobolev space $H^s(\xT^2)$ is a Banach algebra and for all $u, v \in H^s(\xT^2)$,
\begin{equation}\label{Moser-uv}
\lA uv \rA_{H^s} \les_s \lA u \rA_{L^\infty}
\lA v \rA_{H^s} + \lA u \rA_{H^s} \lA v \rA_{L^\infty}.
\end{equation}
\item Let $F \in C^\infty(\xR)$ be a smooth 
function with $F(0)=0$. For any $u \in H^s(\xT^2)$, the composition $F \circ u$ belongs to $H^s(\xT^2)$ 
and satisfies
\begin{equation}\label{Moser-F(u)}
\lA F \circ u \rA_{H^s} \le C(\lA u \rA_{L^\infty})
\lA u \rA_{H^s},
\end{equation}
where $C(\cdot)$ is a non-decreasing function depending 
on $F$ and $s$.
\end{enumerate}
\end{theorem}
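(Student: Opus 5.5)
The statement to be proved is Theorem~\ref{T:Moser}, the classical Moser estimates on $\xT^2$. I will derive both parts from the paraproduct machinery of Appendix~\ref{appendix}, using only Theorems~\ref{T:para1} and~\ref{T:para2}.

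\emph{Part (1).} Start from Bony's decomposition
$$
uv = T_u v + T_v u + \RBony(u,v).
$$
By \eqref{cont}, $\lA T_u v\rA_{H^s}\les \lA u\rA_{L^\infty}\lA v\rA_{H^s}$, and symmetrically for $T_v u$. For the remainder, write $\RBony(u,v)=\sum_{|j-k|\le 2}\Delta_j u\,\Delta_k v$. Each term is spectrally supported in a ball of radius $\les 2^j$. Since $s>0$, the standard ball-summation lemma gives
$$
\lA \RBony(u,v)\rA_{H^s}\les \sup_j\lA\Delta_j u\rA_{L^\infty}\Big(\sum_k 2^{2ks}\lA\Delta_k v\rA_{L^2}^2\Big)^{1/2}\les \lA u\rA_{L^\infty}\lA v\rA_{H^s}.
$$
This is the $C^0_*$ endpoint of Theorem~\ref{T:para2}(2), using $L^\infty\subset C^0_*$. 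Summing the three bounds gives \eqref{Moser-uv}. The algebra property then follows from the embedding $H^s\subset L^\infty$ for $s>1$ in dimension two.

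\emph{Part (2).} I would use Meyer's telescoping paralinearization. Write
$$
F(u)=\sum_{j\ge0}\big(F(S_ju)-F(S_{j-1}u)\big)=\sum_j m_j\,\Delta_j u,\qquad m_j\defn\int_0^1F'(S_{j-1}u+\tau\Delta_ju)\,\dtau,
$$
with $S_{-1}u=0$ and $F(0)=0$ used for the first term. By Bernstein's inequality, $\lA\partial^\beta S_ju\rA_{L^\infty}\les 2^{j|\beta|}\lA u\rA_{L^\infty}$. Combined with Faà di Bruno, this gives the symbol bounds
$$
\lA\partial^\beta m_j\rA_{L^\infty}\le C_\beta(\lA u\rA_{L^\infty})\,2^{j|\beta|}.
$$
Next, split $m_j\Delta_j u=\sum_k\Delta_k(m_j\Delta_ju)$. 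For $k\le j+N_0$ the product is localized at frequencies $\les 2^j$. For $k>j+N_0$, integrate by parts $M$ times with $M>s$ to obtain the decay
$$
\lA\Delta_k(m_j\Delta_ju)\rA_{L^2}\les 2^{-(k-j)M}C(\lA u\rA_{L^\infty})\lA\Delta_ju\rA_{L^2}.
$$
Weighting by $2^{ks}$ and applying Schur's test (the convolution kernel $2^{-(k-j)(M-s)}$ is summable) bounds the high part by $C(\lA u\rA_{L^\infty})\lA u\rA_{H^s}$. The low part is handled by the same ball-summation lemma as in Part~(1), which requires $s>0$.

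\emph{Main obstacle.} The main difficulty is bookkeeping the constant. One must check that it depends only on $\lA u\rA_{L^\infty}$ and never on any Sobolev norm of $u$. This works because every derivative falling on $m_j$ lands on some $S_ju$ and is paid for by Bernstein's inequality in terms of $\lA u\rA_{L^\infty}$ alone. For non-integer $s$ one chooses $M>s$ as above. Alternatively, one can interpolate between integer orders, but the direct Littlewood--Paley argument avoids this.
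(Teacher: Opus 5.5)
Your proof is correct. The paper does not prove Theorem~\ref{T:Moser}: it records it in Appendix~\ref{A:C} as a classical fact, in the same way the paraproduct results of Appendix~\ref{appendix} are quoted from Bahouri--Chemin--Danchin. Your argument is the standard textbook proof behind it. For (1) you use Bony's decomposition together with the $L^\infty\subset C^0_*$ endpoint of the remainder estimate. For (2) you use Meyer's telescoping linearization $F(u)=\sum_j m_j\Delta_j u$, with Bernstein and Fa\`a di Bruno bounds on $m_j$ and a low/high frequency split.

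Two small precisions:

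\begin{enumerate}
\item Theorem~\ref{T:para2}(2) as stated in the paper only covers $r>0$. Your direct ball-summation argument, which is valid because $s>0$, is therefore genuinely needed for the $r=0$ endpoint and cannot simply be cited from the appendix.
\item In Part (2), the product $m_j\Delta_j u$ is not itself spectrally localized, since $m_j$ is a nonlinear function of $S_j u$ and has unbounded spectrum. What is localized in a ball of radius $\lesssim 2^j$ is its truncation $S_{j+N_0}(m_j\Delta_j u)$. Its $L^2$ norm is bounded by $\lA m_j\rA_{L^\infty}\lA \Delta_j u\rA_{L^2}$, which is all the ball lemma needs.
\end{enumerate}
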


We now turn to analogous estimates for the \emph{right-composition}, 
namely the composition of a function $v$ with a diffeomorphism $\varphi$. 
For a comprehensive treatment of these results, we refer the reader 
to~\cite{zbMATH03327104,zbMATH06304950,zbMATH06725184}.

\begin{proposition}[Right Composition] 
\label{P:2025}
Consider a real number $s>2$. There exists a non-decreasing 
function $\mathcal{F}\colon \xR_+\to \xR_+$ such that the following 
properties hold for all $\phi \in \SDiff^k(\xT^2)$.

$i)$ Consider $\sigma\in [0,s]$. For all $v \in H^\sigma(\xT^2)$, 
the composition $v \circ \phi$ belongs to $H^\sigma(\xT^2)$ and satisfies
\begin{equation}\label{n568}
\lA v \circ \phi \rA_{H^\sigma} \le \mathcal{F}( \lA D\phi \rA_{H^{s-1}} ) \lA v \rA_{H^\sigma}.
\end{equation}

$ii)$ Let $\varphi = \phi^{-1}$. The norms of the displacements 
$u = \phi - \Id$ and $v = \varphi - \Id$ satisfy
\begin{equation}\label{n567}
\lA v \rA_{H^s} \le \mathcal{F}(\lA u \rA_{H^s}).
\ee
\end{proposition}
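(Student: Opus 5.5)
Both items reduce to the Moser and paraproduct estimates of Appendix~\ref{appendix}, combined with the two-dimensional identity $\det(D\phi)=1$. In that identity the entries of $(D\phi)^{-1}$ are, up to sign, those of $D\phi$. The plan is to prove $i)$ first and deduce $ii)$ from it, since the inverse satisfies $D\varphi=(D\phi)^{-1}\circ\varphi$.

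For $i)$, I will start with the endpoints $\sigma=0$ and $\sigma=1$. At $\sigma=0$ the change of variables $y=\phi(x)$ has unit Jacobian, which gives the exact identity $\lA v\circ\phi\rA_{L^2}=\lA v\rA_{L^2}$. At $\sigma=1$ the chain rule $\nabla(v\circ\phi)=(D\phi)^\top(\nabla v)\circ\phi$, together with $\lA D\phi\rA_{L^\infty}\les 1+\lA D\phi\rA_{H^{s-1}}$ (valid since $s-1>1$), gives the $H^1$ bound.

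For integer $\sigma=k\le s$, I will induct on $k$ using the same formula. I write
\[
\lA v\circ\phi\rA_{H^k}\les\lA v\circ\phi\rA_{L^2}+\lA (D\phi)^\top(\nabla v)\circ\phi\rA_{H^{k-1}},
\]
and bound the product. When $k-1\le s-1$ and $s-1>1$, the space $H^{s-1}$ acts as a multiplier on $H^{k-1}$. This follows from Theorem~\ref{T:para1}(3) applied to $T_{D\phi}w$ and $T_w D\phi$, and Theorem~\ref{T:para2} applied to the remainder. The result is
\[
\lA (D\phi)^\top w\rA_{H^{k-1}}\les \lA D\phi\rA_{H^{s-1}}\lA w\rA_{H^{k-1}},
\]
with $w=(\nabla v)\circ\phi$, and the induction hypothesis on $w$ closes the step. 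Non-integer $\sigma\in[0,s]$ then follows by complex interpolation between consecutive integers, together with the top endpoint $\sigma=s$. The endpoint $\sigma=s$ itself is handled by the same induction with $k-1$ replaced by $s-1$: $H^{s-1}$ is an algebra and $\nabla v\in H^{s-1}$.

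For $ii)$, I write $\varphi=\phi^{-1}$ and observe that $D\varphi=(D\phi)^{-1}\circ\varphi$. Because $\det D\phi=1$, the matrix $(D\phi)^{-1}$ is a signed permutation of the entries of $D\phi$, so $(D\phi)^{-1}-\Id\in H^{s-1}$ with norm at most $\lA u\rA_{H^s}$. Applying $i)$ with $\phi$ replaced by $\varphi$ then gives
\[
\lA D\varphi-\Id\rA_{H^{s-1}}\le\mathcal{F}(\lA D\varphi\rA_{H^{s-1}})\lA u\rA_{H^s}.
\]
This estimate is circular, and breaking the circularity is the main obstacle.

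To break it, I will bootstrap in regularity. The base case is $\lA D\varphi\rA_{L^\infty}=\lA D\phi\rA_{L^\infty}$, which holds because the entries agree up to sign and the composition does not change the sup norm. Using this Lipschitz bound, I apply the integer-step chain-rule argument from $i)$ at regularity $\min(1,s-1)$, then $\min(2,s-1)$, and so on. At each step only lower-regularity norms of $D\varphi$ enter on the right, through the tame product estimate \eqref{Moser-uv}, until the level $H^{s-1}$ is reached. Finally, the mean of $v$ is bounded by $\lA v\rA_{L^\infty}\le\pi\sqrt2$, after normalizing $\varphi$ through its lift to the fundamental cell. Combined with Poincaré's inequality, this upgrades the $\dot H^s$ bound to the full $H^s$ bound \eqref{n567}.
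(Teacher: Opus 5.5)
\textbf{Part $i)$ and the overall plan.} The paper does not prove this proposition; it cites Ebin--Marsden and Inci--Kappeler--Topalov. Your argument therefore has to stand on its own.

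Part $i)$ holds up. The $L^2$ isometry from $\det D\phi=1$, the chain rule, the fact that $H^{s-1}$ (with $s-1>1$) multiplies $H^{k-1}$ for $0\le k-1\le s-1$, and interpolation together give \eqref{n568}.

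The architecture of $ii)$ is also the standard one: write $D\varphi=(D\phi)^{-1}\circ\varphi$, start from $\|D\varphi\|_{L^\infty}=\|D\phi\|_{L^\infty}$, and climb in regularity.

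\textbf{The gap: the first rung above $H^1$.} Set $G=(D\phi)^{-1}-\Id$. Then $Dv=G\circ\varphi$, and $G\in H^{s-1}$ with norm at most $\|u\|_{H^s}$. Passing from $Dv\in H^1$ to $Dv\in H^\sigma$ with $\sigma>1$ requires a bound on $\|(D\varphi)^\top(\nabla G)\circ\varphi\|_{H^{\sigma-1}}$.
\begin{itemize}
\item The tame estimate \eqref{Moser-uv} puts the second factor in $L^\infty$, so it needs $\nabla G\in L^\infty$. But $\nabla G$ only lies in $H^{s-2}$, which is not contained in $L^\infty$ when $2<s\le 3$.
\item The non-tame alternative, where $D\varphi$ acts as a multiplier on $H^{\sigma-1}$, needs $D\varphi\in H^{1+\epsilon}$ for some $\epsilon>0$. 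That is exactly what you are trying to prove.
\end{itemize}
So for $2<s\le 3$, which the statement covers, your bootstrap never leaves $H^1$.

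For $s>3$ the tame step does work up to $\sigma=2$. Beyond that, however, the factor $(\nabla G)\circ\varphi$ at level $\sigma-1>1$ cannot be bounded by $i)$ as stated, because its constant involves $\|D\varphi\|_{H^{s-1}}$. You would have to rerun $i)$ with constants depending only on lower norms of $D\varphi$; the $H^2$ bound then supplies these.

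\textbf{How to close it.} The missing idea is a non-circular way to put $D\varphi$ strictly above the critical level using only its Lipschitz bound. H\"older spaces do this.
\begin{itemize}
\item For $0<\gamma<\min(s-2,1)$ we have $G\in C^\gamma$, and $\|G\circ\varphi\|_{C^\gamma}\le (1+\|D\varphi\|_{L^\infty})^\gamma\|G\|_{C^\gamma}$. Hence $D\varphi\in C^\gamma$, with a bound in terms of $\|u\|_{H^s}$.
\item $C^\gamma$ multiplies $H^a$ for $0\le a<\gamma$. Composition with $\varphi$ at a level $a\le 1$ costs only $(1+\|D\varphi\|_{L^\infty})^a$, by interpolating the $L^2$ and $H^1$ bounds. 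Together these give $Dv\in H^{1+a}$.
\item From then on, $H^{1+a}$ with $1+a>1$ is a multiplier on $H^b$ for $0\le b\le 1+a$. The climb to $H^{s-1}$ then closes with constants depending only on lower norms of $D\varphi$. For $s\le 3$, a single further step reaches $\sigma=s-1$.
\end{itemize}

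\textbf{The $L^2$ part.} Your treatment of the mean of $v$ is wrong as written. You cannot renormalize the lift, since that changes $v$ by a lattice vector. Also, $\|v\|_{L^\infty}\le\pi\sqrt2$ fails for $\phi=\Id+c$ with $|c|$ large. It is unnecessary anyway: $v=-u\circ\varphi$, so $\|v\|_{L^2}=\|u\|_{L^2}$ exactly.
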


By using the previous result, we deduce the following lemma.
\begin{lemma}\label{L:A.2}
Let $s > 2$. Consider a zero-mean velocity field $u \in C(\xR; H^s_0(\xT^2))$. 
The associated flow map $\Phi_t$ satisfies the following estimate for any $t \in \xR$:
$$
\lA D\Phi_t \rA_{H^{s-1}} \le C_s \exp\left( C_s \int_0^{|t|} \lA \nabla u(\tau) \rA_{L^\infty} \dtau \right)
\left( 1 + |t| \sup_{\tau \in [0,t]} \lA u(\tau) \rA_{H^s} \right).
$$
\end{lemma}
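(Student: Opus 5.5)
We prove Lemma~\ref{L:A.2} by working on the variational equation for $D\Phi_t$ and running a Grönwall argument in two stages: first at the $L^\infty$ level, then at the $H^{s-1}$ level. Throughout we take $t\ge 0$; negative times follow by time reversal.

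\emph{Stage 1: the variational equation and the $L^\infty$ bound.} Differentiating $\partial_t\Phi_t=u(t,\Phi_t)$ in $x$ gives
$$
\partial_t D\Phi_t=(Du\circ\Phi_t)\,D\Phi_t,\qquad D\Phi_0=\Id .
$$
Taking sup norms yields the pointwise bound
$$
\lA D\Phi_t\rA_{L^\infty}\le \exp\Bigl(\int_0^t\lA\nabla u\rA_{L^\infty}\dtau\Bigr)\defn E(t).
$$
Since $\det D\Phi_t=1$, the entries of $(D\Phi_t)^{-1}$ are a signed rearrangement of those of $D\Phi_t$. Hence $D(\Phi_t^{-1})$ obeys the same $L^\infty$ bound, and $\Phi_t$ is bi-Lipschitz with constants controlled by $E(t)$.

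\emph{Stage 2: the $H^{s-1}$ bound.} Write $D\Phi_t=\Id+D\xi_t$ with $\xi_t=\Phi_t-\Id$, and integrate the variational equation:
$$
D\xi_t=\int_0^t (Du\circ\Phi_\tau)(\Id+D\xi_\tau)\dtau .
$$
We estimate the integrand in $H^{s-1}$ with the tame product estimate~\eqref{Moser-uv}, which applies because $s-1>1$. This gives
$$
\lA (Du\circ\Phi)(\Id+D\xi)\rA_{H^{s-1}}\les \lA Du\circ\Phi\rA_{H^{s-1}}\bigl(1+\lA D\xi\rA_{L^\infty}\bigr)+\lA Du\rA_{L^\infty}\lA D\xi\rA_{H^{s-1}} .
$$
Applying Proposition~\ref{P:2025} directly to $\lA Du\circ\Phi\rA_{H^{s-1}}$ would produce a factor $\mathcal{F}(\lA D\Phi\rA_{H^{s-1}})$, which is nonlinear in the unknown, so we do not do that. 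Instead we use the tame form of the composition estimate,
$$
\lA v\circ\Phi\rA_{H^{\sigma}}\le C\bigl(\lA D\Phi\rA_{L^\infty}\bigr)\bigl(\lA v\rA_{H^\sigma}+\lA \nabla v\rA_{L^\infty}\lA D\Phi\rA_{H^{\sigma-1}}\bigr),
$$
which is proved by Faà di Bruno at integer $\sigma$ and extended by interpolation; the Jacobian factor is harmless because $\det D\Phi=1$. Taking $\sigma=s-1$ and $v=Du$, the bad term is $\lA\nabla Du\rA_{L^\infty}$ times a lower Sobolev norm of $D\Phi$. We interpolate that lower norm between $L^\infty$ and $H^{s-1}$ and absorb the product into $\lA u\rA_{H^s}$. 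The outcome is a linear integral inequality
$$
y(t)\le C\int_0^t\Bigl(\lA u\rA_{H^s}E(\tau)^{K}+\lA\nabla u\rA_{L^\infty}\,y(\tau)\Bigr)\dtau,\qquad y(t)\defn \lA D\xi_t\rA_{H^{s-1}},
$$
with some exponent $K$. Grönwall's lemma then gives
$$
y(t)\le C\exp\Bigl(C\int_0^t\lA\nabla u\rA_{L^\infty}\Bigr)\,t\sup_{[0,t]}\lA u\rA_{H^s}.
$$
The powers $E^K$ are exponentials of the same integral, so they fold into the single factor $e^{C_s\int\lA\nabla u\rA_{L^\infty}}$. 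Adding back the identity part of $D\Phi_t$ gives the stated bound.

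\emph{Main obstacle.} The delicate step is the composition estimate in Stage 2. We must prove it in a form that is tame and linear in $\lA D\Phi\rA_{H^{s-1}}$, with constants depending only on $\lA D\Phi\rA_{L^\infty}$. For non-integer $s$ we would first do the integer case by induction on derivatives using Faà di Bruno, then interpolate. If that interpolation is awkward, the fallback is paralinearization. Write $Du\circ\Phi=T_{D^2u\circ\Phi}\,\xi+\Phi^*(Du)+\dots$ using the paracomposition of Section~\ref{S:paracomp}; here the paraproduct term is linear in $\xi$ with an $L^\infty$ symbol, and the remaining terms involve no high-frequency part of $D\Phi$.
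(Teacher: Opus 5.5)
\paragraph{The gap.} It is in Stage 2, at the step ``interpolate that lower norm between $L^\infty$ and $H^{s-1}$ and absorb the product into $\|u\|_{H^s}$.'' You split $D(u\circ\Phi)=(Du\circ\Phi)\,D\Phi$ and then apply the tame composition bound with $v=Du$, $\sigma=s-1$. The term you must absorb is therefore $\|\nabla Du\|_{L^\infty}\|D\Phi\|_{H^{s-2}}$.

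For $2<s\le 3$ this term need not be finite, since $D^2u\in H^{s-2}\not\subset L^\infty$; yet the lemma is claimed for every $s>2$.

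For $s>3$ it still cannot be absorbed. Interpolation gives at best
$\|D^2u\|_{L^\infty}\lesssim\|\nabla u\|_{L^\infty}^{1-\frac{1}{s-2}}\|u\|_{H^s}^{\frac{1}{s-2}}$ and $\|D\Phi\|_{H^{s-2}}\lesssim\|D\Phi\|_{L^\infty}^{\frac{1}{s-1}}\|D\Phi\|_{H^{s-1}}^{\frac{s-2}{s-1}}$. The top-order exponents add up to $\frac{1}{s-2}+\frac{s-2}{s-1}>1$, so no Young inequality produces $C\|\nabla u\|_{L^\infty}\,y+E^K\|u\|_{H^s}$.

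In fact no such bound holds. Take $\Phi(x)=(x_1,\,x_2+\mu^{-1}\sin(\mu x_1))$ and $u=\nabla^\perp\big(\lambda^{-2}\psi(\lambda x)\big)$ with a fixed bump $\psi$. Then $\|D\Phi\|_{L^\infty}\le 2$, $\|\nabla u\|_{L^\infty}\approx 1$, and
\[
\|D^2u\|_{L^\infty}\|D\Phi\|_{H^{s-2}}\approx\lambda\mu^{s-2},
\qquad
\|\nabla u\|_{L^\infty}\|D\Phi\|_{H^{s-1}}+\|u\|_{H^s}\approx\mu^{s-1}+\lambda^{s-2}.
\]
The ratio blows up when $\lambda^{(s-3)/(s-2)}\ll\mu\ll\lambda$.

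The genuine Fa\`a di Bruno term $(D^2u\circ\Phi)\,D^{s-1}\Phi$ (for integer $s$) is harmless in this example. Its $L^2$ norm is $\approx\mu^{s-2}$, because $D^2u$ lives on a set of area $\lambda^{-2}$. It is the $L^\infty\times L^2$ splitting that loses. Your paralinearization fallback has the same defect: its paraproduct symbol is $D^2u\circ\Phi$, which is not bounded for $s\le 3$ and has the same bad size for $s>3$.

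\paragraph{How to repair it.} You need a bound in which $\|\nabla u\|_{L^\infty}$, not $\|\nabla^2u\|_{L^\infty}$, multiplies the top norm of $D\Phi$. For integer $s$ there are two equivalent options:
\begin{itemize}
\item apply Fa\`a di Bruno to $D^s(u\circ\Phi)$ directly; or
\item keep your splitting but use the Gagliardo--Nirenberg form $\|v\circ\Phi\|_{\dot H^\sigma}\le C(E)\big(\|v\|_{\dot H^\sigma}+\|v\|_{L^\infty}\|D\Phi\|_{\dot H^{\sigma}}\big)$ with $v=Du$, $\sigma=s-1$.
\end{itemize}
In either case, bound each intermediate term by H\"older in $L^{p_i}$ and Gagliardo--Nirenberg between $L^\infty$ and $\dot H^{s-1}$, for both $Du$ and $D\Phi$. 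The exponents then balance exactly.

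You must also use Young's inequality with weights, so that every power of $E$ lands on $\|u\|_{H^s}$. The coefficient of $y$ must be $C\|\nabla u\|_{L^\infty}$ with $C$ independent of $E$. An estimate carrying an overall factor $C(\|D\Phi\|_{L^\infty})$ naively gives a coefficient $C(E)\|\nabla u\|_{L^\infty}$ instead. After Gr\"onwall that leads to $\exp\big(C E(t)^K\big)$, not the stated single exponential.

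Non-integer $s$ needs its own argument. A tame bound containing $\|\nabla v\|_{L^\infty}$ does not simply interpolate.

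The paper itself gives no more than a pointer to Proposition~\ref{P:2025}. As you correctly note, that result carries a nonlinear factor $\mathcal{F}(\|D\Phi\|_{H^{s-1}})$ and does not by itself give the linear bound.
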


We also need to estimates the differences. To do so, consider two 
diffeomorphisms $\Phi$ and $\Phi'$ and denote their inverses by $\varphi$ 
and $\varphi'$. Set $Y=\Phi\circ\varphi'$ and $X=\Id-Y$, to get
$$
\varphi-\varphi'=\varphi\circ (Y+X)-\varphi\circ Y=\left(\int_0^1 D\varphi(Y+\tau X)\dtau\right)\cdot X.
$$
We then apply Proposition~\ref{P:2025} and the Moser estimate~\e{Moser-uv}, to get
\begin{equation}\label{eq: est inverse flow from flow}
\lA \varphi - \varphi' \rA_{H^s} \le C(\lA D\varphi\rA_{H^{s}})\lA \Phi - \Phi' \rA_{H^s}.
\end{equation}

This leads to the following result.

\begin{proposition}\label{P:diff}
Let $s>2$. Consider two initial velocities $u_0, u_0' \in H^{s+1}_0(\xT^2)$ and their associated velocity fields $u, u'$. For any $T\ge 0$, we define
\[
M(T)\defn \sup_{t\in [0,T]}\lA u(t)\rA_{H^{s+1}} + \sup_{t\in [0,T]}\lA u'(t)\rA_{H^{s+1}}.
\]
The differences satisfy the following estimates:
\begin{align}
\sup_{t\in [0,T]}\blA \omega(t) - \omega'(t) \brA_{H^{s-1}}
&\leq \mathcal{F}_s(T,M(T)) \lA u_0-u_0' \rA_{H^{s}},\label{n:d1} \\
\sup_{t\in [0,T]}  \blA \Phi^{\omega_0}_t - \Phi^{\omega_0'}_t \brA_{H^s}
&\leq \mathcal{F}_s(T,M(T)) \lA u_0-u_0' \rA_{H^{s}},\label{n:d2}
\end{align}
where $\mathcal{F}_s : \xR_+^2 \to \xR_+$ is a continuous function, non-decreasing in its arguments.
\end{proposition}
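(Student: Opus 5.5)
The natural approach is a Gronwall argument on the Lagrangian difference, closed by the transport structure of the vorticity. Write $\delta\Phi(t) \defn \Phi^{\omega_0}_t - \Phi^{\omega_0'}_t$, $\delta u \defn u-u'$, $\delta\omega \defn \omega-\omega'$. Throughout, $C$ denotes constants depending only on $s$, $T$ and $M(T)$. By Lemma~\ref{L:A.2}, and since $\lA \nabla u\rA_{L^\infty}\les \lA u\rA_{H^{s+1}}\le M(T)$, the norms $\lA D\Phi_t\rA_{H^{s-1}}$ and $\lA D\Phi'_t\rA_{H^{s-1}}$ (with $\Phi'_t \defn \Phi^{\omega_0'}_t$) are bounded by $C$ on $[0,T]$. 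By Proposition~\ref{P:2025}, the same holds for the inverse flows $\varphi_t=\Phi_t^{-1}$ and $\varphi'_t=(\Phi'_t)^{-1}$, and right composition by any of these maps is bounded on $H^\sigma$ for $\sigma\in[0,s]$.

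\emph{Eulerian estimate \eqref{n:d1}.} The difference satisfies
$$
\partial_t\delta\omega+u\cdot\nabla\delta\omega=-\delta u\cdot\nabla\omega'.
$$
I would run a standard $H^{s-1}$ energy estimate, handling the commutator $[\Lambda^{s-1},u\cdot\nabla]$ by the Kato--Ponce inequality. Since $\cn u=0$ and $\lA u\rA_{H^{s+1}}\le M$, this gives
$$
\frac{\diff}{\dt}\lA\delta\omega\rA_{H^{s-1}}\le C\lA\delta\omega\rA_{H^{s-1}}+\lA\delta u\cdot\nabla\omega'\rA_{H^{s-1}}.
$$
Since $s-1>1$, $H^{s-1}$ is an algebra, so the forcing is at most $\lA\delta u\rA_{H^{s-1}}\lA\omega'\rA_{H^{s}}\le CM\lA\delta\omega\rA_{H^{s-2}}$. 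Gronwall then yields $\lA\delta\omega(t)\rA_{H^{s-1}}\le C\lA\delta\omega_0\rA_{H^{s-1}}\le C\lA u_0-u_0'\rA_{H^s}$. This step is routine; the only care needed is that no derivative falls on $\delta\omega$ beyond order $s-1$, which the divergence-free structure ensures.

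\emph{Lagrangian estimate \eqref{n:d2}.} From $\partial_t\delta\Phi=u\circ\Phi-u'\circ\Phi'$ and $\delta\Phi(0)=0$, I would decompose
$$
\partial_t\delta\Phi=(u-u')\circ\Phi+\bigl(u'\circ\Phi-u'\circ\Phi'\bigr).
$$
By \eqref{n568}, the first term is bounded in $H^s$ by $C\lA\delta u\rA_{H^s}\le C\lA\delta\omega\rA_{H^{s-1}}$, which is controlled by the previous step. For the second term I write it as $\bigl(\int_0^1 Du'(\Phi'+\tau\delta\Phi)\dtau\bigr)\,\delta\Phi$. The intermediate maps $\Phi'+\tau\delta\Phi$ are not diffeomorphisms in general, so I would instead use the chain rule directly. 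At top order,
$$
\partial^s(u'\circ\Phi)=(D^s u'\circ\Phi)(D\Phi)^{\otimes s}+\cdots,
$$
so the difference is bounded by $\lA u'\rA_{H^{s+1}}\lA\delta\Phi\rA_{H^s}$ plus lower-order terms. A cleaner route to the same bound is the paralinearization $u'\circ\Phi-u'\circ\Phi'=T_{Du'\circ\Phi}\delta\Phi+\text{(error)}$, estimated with Theorems~\ref{T:para1}--\ref{T:para2}. This is where the assumption $u'\in H^{s+1}$, i.e. one extra derivative, is used.

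Combining the two terms gives
$$
\frac{\diff}{\dt}\lA\delta\Phi\rA_{H^s}\le C\lA\delta\Phi\rA_{H^s}+C\lA u_0-u_0'\rA_{H^s},
$$
and Gronwall closes \eqref{n:d2}. I expect the main obstacle to be this tame estimate for $u'\circ\Phi-u'\circ\Phi'$ in $H^s$: it requires controlling interpolating maps that need not be volume-preserving. The fallback is to estimate them crudely, using that they are close to the identity in $C^1$ with norm bounded by $C$, together with the Faà di Bruno expansion.
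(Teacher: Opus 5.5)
The paper states this proposition without proof, so your argument has to stand on its own. The Eulerian step is fine, with one caveat: the textbook Kato--Ponce bound contains $\|u\|_{H^{s-1}}\|\nabla\delta\omega\|_{L^\infty}$, which $\|\delta\omega\|_{H^{s-1}}$ does not control when $2<s\le 3$. You need the divergence-free form $\|\nabla u\|_{L^\infty}\|\delta\omega\|_{H^{s-1}}+\|\nabla u\|_{H^{s-1}}\|\delta\omega\|_{L^\infty}$. The term $(u-u')\circ\Phi$ is also fine.

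The gap is the term $u'\circ\Phi-u'\circ\Phi'$, exactly where you located it, and neither remedy you offer closes it. The interpolants $\Psi_\tau\defn\Phi'+\tau\delta\Phi$ are bounded in $C^1$ but not close to the identity, since the flows themselves are not after time $T$. More seriously, $D\Psi_\tau=(1-\tau)D\Phi'+\tau D\Phi$ can be singular when $\Phi$ and $\Phi'$ are far apart. For example, if locally $D\Phi'\approx R_{-\theta}$ and $D\Phi\approx R_{\theta}$ are opposite rotations, then $D\Psi_{1/2}\approx\cos\theta\,\Id$.

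For integer $s$, the top-order Fa\`a di Bruno term of $\partial^s(Du'\circ\Psi_\tau)$ is $(D^{s+1}u'\circ\Psi_\tau)(D\Psi_\tau)^{\otimes s}$, and $D^{s+1}u'$ is only in $L^2$. Composition with a map whose Jacobian may vanish is not bounded on $L^2$. So $C^1$ bounds on $\Psi_\tau$ alone cannot give $\|Du'\circ\Psi_\tau\|_{H^s}\les\|u'\|_{H^{s+1}}$.

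The paralinearization route only relocates the difficulty. With $u'\circ\Phi=\Phi^*u'+T_{Du'\circ\Phi}\Phi$, your unspecified error contains $\Phi^*u'-\Phi'^*u'$. This is a difference of paracompositions, about which Theorems~\ref{T:para1}--\ref{T:para2} say nothing.

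The missing idea is a smallness reduction, which costs nothing because you only need a bound linear in $\delta\Phi$. Since $s>2$, $\|\delta\Phi\|_{C^1}\les\|\delta\Phi\|_{H^s}$.

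If $\|\delta\Phi\|_{H^s}\ge\eps_0$, use the trivial bound $\|u'\circ\Phi\|_{H^s}+\|u'\circ\Phi'\|_{H^s}\le C\le C\eps_0^{-1}\|\delta\Phi\|_{H^s}$, which follows from \eqref{n568}.

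If $\|\delta\Phi\|_{H^s}<\eps_0$, choose $\eps_0$ small in terms of $\sup_{t\le T}\|D\Phi'_t\|_{L^\infty}$; this also bounds $(D\Phi')^{-1}$ since $\det D\Phi'=1$. Then $\det D\Psi_\tau=\det(\Id+\tau(D\Phi')^{-1}D\delta\Phi)\ge 1/2$. So every $\Psi_\tau$ is a diffeomorphism of $\xT^2$ with Jacobian bounded below: it has positive Jacobian and is homotopic to the identity, as in the remark following Definition~\ref{D:SDiff}. The analogue of \eqref{n568} for such diffeomorphisms gives $\|Du'\circ\Psi_\tau\|_{H^s}\le C\|u'\|_{H^{s+1}}$ uniformly in $\tau$. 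Then \eqref{Moser-uv} yields $\|u'\circ\Phi-u'\circ\Phi'\|_{H^s}\le C\|\delta\Phi\|_{H^s}$.

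The same linear bound holds in both regimes, so your Gronwall argument closes. The paper's own derivation of \eqref{eq: est inverse flow from flow}, which interpolates along the possibly degenerate maps $Y+\tau X$, needs the same repair.
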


\end{document}